\newtheorem{theorem}{Theorem}[section]
\newtheorem{proposition}[theorem]{Proposition}
\newtheorem{corollary}[theorem]{Corollary}
\newtheorem{lemma}[theorem]{Lemma}
\newtheorem{remark}[theorem]{Remark}
\newcommand\R{\mathbb{R}}
\renewcommand\Re{\operatorname{Re}}
\renewcommand\Im{\operatorname{Im}}
\numberwithin{equation}{section}
\theoremstyle{definition}
\title[]
 {Dispersive estimates for the Schr\"{o}dinger equation with finite rank perturbations}
\author{Han Cheng,\, Shanlin Huang,\, Quan Zheng}
\address{Han Cheng, School of Mathematics and Statistics, Huazhong University of Science and Technology, Wuhan 430074, Hubei, PR China }
\email{chh@hust.edu.cn}
\address{Shanlin Huang (corresponding author), School of Mathematics and Statistics, Hubei Key Laboratory of Engineering Modeling and Scientific Computing, Huazhong University of Science and Technology, Wuhan 430074, Hubei, PR China}
\email{shanlin\_huang@hust.edu.cn}
\address{Quan Zheng, School of Mathematics and Statistics, Huazhong University of Science and Technology, Wuhan 430074, Hubei, PR China }
\email{qzheng@hust.edu.cn}
\subjclass[2010]{35J10; 81Q15; 42B37}
\keywords{Dispersive estimate;  Schr\"{o}dinger equation; Finite rank perturbation; Aronszajn-Krein  formula}
\begin{document}

\begin{abstract}
In this paper, we investigate dispersive estimates for the time evolution of Hamiltonians
$$
H=-\Delta+\sum_{j=1}^N\langle\cdot\,, \varphi_j\rangle \varphi_j\quad\,\,\,\text{in}\,\,\,\mathbb{R}^d,\,\, d\ge 1,
$$
where each $\varphi_j$ satisfies certain smoothness and decay conditions. We show that, under a spectral assumption,  there exists a constant $C=C(N, d, \varphi_1,\ldots, \varphi_N)>0$ such that
$$
\|e^{-itH}\|_{L^1-L^{\infty}}\leq C t^{-\frac{d}{2}}, \,\,\,\text{for}\,\,\, t>0.
$$
 As far as we are aware, this seems to provide the first study of  $L^1-L^{\infty}$ estimates for finite rank perturbations of the Laplacian in any dimension.

 We first deal with rank one perturbations ($N=1$). Then we turn to the general case. The new idea in our approach is to establish the Aronszajn-Krein type formula for finite rank perturbations. This allows us to reduce the analysis to the rank one case and solve the problem in a unified manner. Moreover, we show that in some specific situations, the constant $C(N, d, \varphi_1,\ldots, \varphi_N)$ grows  polynomially in $N$. Finally, as an application, we are able to extend  the results to $N=\infty$ and deal with some trace class perturbations.
\end{abstract}
\maketitle


\tableofcontents



\section{Introduction and main results}\label{sec1}
\subsection{Background and motivation}\label{sec1.0}
For the free Schr\"{o}dinger equation in $\mathbb{R}^d$, $d\ge 1$,  it's  well-known that the solution satisfies the following dispersive estimate
\begin{equation}\label{eq0.1}
\|e^{it\Delta}\|_{L^1-L^{\infty}}\leq Ct^{-\frac{d}{2}},\qquad\,\,t>0,
\end{equation}
where $\Delta$ denotes the Laplacian in $\mathbb{R}^d$. This follows immediately from the explicit expression of the fundamental solution
\begin{equation*}\label{eq0.2}
e^{it\Delta}f(x)=\frac{1}{(-4\pi it)^{\frac{d}{2}}}\int_{\mathbb{R}^d}{e^{i|x-y|^2/4t}f(y)\,dy}.
\end{equation*}

Motivated by nonlinear problems, the extension of \eqref{eq0.1} to the Schr\"{o}dinger operators $H=-\Delta+V(x)$ has evoked considerable interest in the past three decades, which we briefly recall. The first breakthrough in establishing global $L^1-L^{\infty}$ estimates was due to Journ\'{e}, Soffer and Sogge \cite{JSS}. By using a time dependent approach (Duhamel's formula), they proved that, under certain smoothness and decay assumption of the potential $V$, the dispersive bound
\begin{equation*}\label{eq0.3}
\|e^{-itH}P_{ac}(H)\|_{L^1-L^{\infty}}\leq Ct^{-\frac{d}{2}},\qquad\,\,t>0
\end{equation*}
holds  for all $d\ge3$ provided that zero is neither an eigenvalue nor a resonance. Here $P_{ac}(H)$ denotes the projection onto the absolutely continuous spectral subspace of $H$. Later, Yajima \cite{Ya,Ya95} developed a stationary approach by proving stronger estimates concerning $L^p$ boundedness of  wave operators with weaker assumptions on the potential. Another significant improvement was made by Rodnianski and Schlag \cite{RS}. Thanks to the spectral representation and Stone's formula (with $f, g\in\mathcal{S}(\mathbb{R}^d)$),
\begin{equation*}
\langle e^{-itH}P_{ac}(H)f,\, g\rangle=\frac{1}{2\pi i}\int_0^{\infty}e^{-it\lambda}\langle[R^+(\lambda)-R^-(\lambda)]f,\,g\rangle d\lambda,
\end{equation*}
they reduce matters to the analysis of the (perturbed) resolvent as well as oscillatory integrals.
Inspired by these pioneering  works, a large number of important results were built, see e.g. \cite{BG,D,Ya2,EGG18,ES,ES2,EGG,EG10,GV,EG13B,Gol,GW16,GS,Gol2,GW15,GW17,EG13,RSS, Wed
} and references therein. In particular, Goldberg \cite{Gol,Gol2} proved dispersive bounds under (almost) critical decay conditions on the potential in three dimension; Erdo\v{g}an and Green \cite{EG10} established dispersive estimates under the optimal smoothness condition for the potential in dimensions $d=5, 7$. Further results for higher dimensions are established more recently by Goldberg and Green \cite{GW15,GW17}. For more details and related history, we refer to the survey papers \cite{Sch07,Sch21}.
We also mention that very recently, local and global dispersive estimates for higher order Schr\"{o}dinger operators have been studied in \cite{EGT,FSWY,FSY,EG22} by various authors.

\textbf{Aim and Motivation.} The main goal of this paper is to investigate the stability of dispersive estimates \eqref{eq0.1} under finite rank perturbations, i.e.,
\begin{equation}\label{eq0.4}
H=H_0+\sum_{j=1}^NP_j, \,\,\,\,\, H_0=-\Delta, \,\,\, P_j=\langle\cdot\,, \varphi_j\rangle \varphi_j.
\end{equation}
The theory of finite rank perturbations can be traced back to a seminal paper in
1910 by Weyl \cite{Wey}, where they were introduced as a tool to determine the spectrum of
Sturm-Liouville operators. They also  arise in a number of problems in mathematical physics.
For example, Simon and Wolff \cite{SW} found a relationship between rank one perturbation and discrete random Sch\"{o}dinger operators, and applied it to the Anderson localization conjecture.
We note that \eqref{eq0.4} can be viewed as the Sch\"{o}dinger hamiltonian with a large number of obstacles, and each obstacle is described by a rank one perturbation.
It deserves to mention that despite the formal simplicity, the study of this restrictive perturbation is very rich and interesting, we refer to \cite{Simon} for the spectral analysis and the survey paper \cite{Liaw} for other problems.

 We are partially motivated by the result of Nier and Soffer \cite[Theorem 1.1]{NS}, where they have considered  the $L^p$  decay inequality for \eqref{eq0.4}. More precisely, they showed that in dimensions $d\ge 3$,
\begin{equation}\label{eq1.3.1}
\|e^{-itH}\|_{L^p-L^{p'}}\leq C(N, d, \varphi_1,\ldots, \varphi_N)\cdot t^{-d(\frac{1}{p}-\frac{1}{2})},\quad\,\,\,\,t>0,\quad \,1<p\leq 2
\end{equation}
holds under suitable conditions on each $\varphi_j$ (see Remark ($\textbf{a}_1$) and ($\textbf{b}_1$) below). We note that the endpoint case $p=1$ is still unknown, and the problems for $d=1, 2$ have not been touched upon. Therefore, compared to existing results for
Sch\"{o}dinger operators mentioned above, it is natural to ask whether \eqref{eq1.3.1} can be  strengthened to the point-wise dispersive estimates for all $d\ge1$ (which implies $p>1$ by interpolation and the trivial $L^2$ conservation). Meanwhile, since trace class operators can be approximated by finite rank ones, it's also interesting to see whether the dispersive estimates can be carried out in such general perturbations.

In this study we aim at filling this gap by establishing $L^1-L^{\infty}$ estimates for the Hamiltonian \eqref{eq0.4} in all dimensions. We mention that our approach is very different from \cite{NS}  both in rank one and finite rank perturbations. Moreover, The methods used here allow us to relax the smoothness and decay assumptions (on $\varphi_j$) in \cite{NS} and in particular, work for all $d\ge 1$ in a unified manner. Another advantage of our approach is that we are able to extend our results to trace class perturbations in a very natural way.

\subsection{Dispersive estimates for rank one perturbations}\label{sec1.1}

Let us start with the case of rank  one perturbations. More precisely, consider
\begin{equation}\label{eq1.1}
H_{\alpha}=H_0+\alpha \langle\cdot\,, \varphi\rangle\varphi, \,\,\,\,\, H_0=-\Delta,\quad\,\,\alpha>0,
\end{equation}
where $\varphi\in L^2$. Here and in the rest of the paper, $\langle\cdot\,, \cdot\rangle$ denotes the usual inner product in $L^2(\mathbb{R}^d)$.

Before stating our assumptions on $\varphi$, we mention that a key role will be played by the following function
\begin{equation}\label{eq1.1.1}
F(z):=\langle(-\Delta-z)^{-1}\varphi,\varphi\rangle=\int{\frac{d\mu_{H_0}^{\varphi}(\lambda)}{\lambda-z}},
\end{equation}
where $z\in \mathbb{C}\setminus[0, \infty)$. This can be viewed as the Borel transform of the spectral measure $d\mu_{H_0}^{\varphi}$ for $\varphi$ (here $\mu_{H_0}^{\varphi}((a, b))=\langle E_{(a,b)}(H_0)\varphi,\varphi\rangle$). Furthermore, if $\varphi$ is assumed to belong to the weighted $L^2$ space $L^{2}_{\sigma}$ with $\sigma>\frac12$, where $L^{2}_{\sigma}:=\{(1+|x|)^{-\sigma}f:\, f\in L^2\}$, then by the well-know limiting absorption principle (see e.g. in \cite{Ag,KK}), the boundary values
\begin{equation}\label{eq1.11.1}
F^\pm(\lambda):=\langle(-\Delta-\lambda\mp i0)^{-1}\varphi,\varphi\rangle
\end{equation}
are defined when $\lambda\ne 0$ and coincide on $(-\infty, 0)$.

\textbf{Condition $H_{\alpha}$}.\,\, ~~~Suppose that $\alpha>0$ and let $\varphi$ be a $L^2$-normalized real-valued function on $\mathbb{R}^d$ satisfying:

\emph{(\romannumeral1) } (decay and smoothness) \,\, For $d\ge 1$, there is some absolute constant $M>0$ such that
\begin{equation}\label{eq1.2}
|\varphi(x)|\leq M\langle x \rangle^{-\delta},\,\,\,\,\,  \delta>d+\frac{3}{2}.
\end{equation}
Moreover, when  $d>3$, we assume that $\varphi(x)\in C^{\beta_0}(\mathbb{R}^d)$ with $\beta_0=[\frac{d-3}{4}]+1$, and
\begin{equation}\label{eq1.3}
|\partial^{\beta}_x\varphi(x)|\leq M\langle x \rangle^{-\delta}, \,\,\,\,\,|\beta|\leq \beta_0,\,\,\,\,\,  \delta>d+\frac{3}{2}.
\end{equation}

\emph{(\romannumeral2) } (spectral assumption) \,\, For $d\ge 1$,  there is an absolute constant $c_0>0$ such that
\begin{equation}\label{eq1.4}
|1+\alpha F^\pm(\lambda^2)|\ge c_0,\,\,\,\,\,\,\text{for any}\,\,\,\lambda>0,
\end{equation}
where $F^\pm(\lambda^2)$ is given by \eqref{eq1.11.1}.

Nier and Soffer  \cite[Lemma 2.6]{NS} showed that the assumption \eqref{eq1.4}, together with some decay condition on $\varphi$ , yields that the spectrum of $H_{\alpha}$ is absolutely continuous for all $d\ge 3$.
In fact, we shall prove that this is true for all $d\ge1$ under \eqref{eq1.4} and \eqref{eq1.2}, see Lemma \ref{lmA.1}.

Our first main result in this paper is the following:

\begin{theorem}\label{thm-1.1}
Let $d\ge 1$ and $H_{\alpha}$ be defined in \eqref{eq1.1}. Assume  that $\varphi$ satisfies the \textbf{Condition $H_{\alpha}$} above. Then $H_{\alpha}$ has only absolutely continuous spectrum, and
\begin{equation}\label{eq1.5}
\|e^{-itH_{\alpha}}-e^{it\Delta}\|_{L^1-L^{\infty}}\leq C(\alpha, \varphi)\cdot t^{-\frac{d}{2}}, \,\,\,\text{for}\,\,\, t>0.
\end{equation}
where $C(\alpha, \varphi)>0$\footnote{Strictly speaking,  the constant here also depends on $d$ and $c_0$, but sometimes we omit them and focus on the dependence on $\alpha$ and $\varphi$.} can be chosen as
$$
C(\alpha, \varphi)=C(\varphi)\cdot(1+\alpha)^{[\frac{d}{2}]+2}
$$
for some absolute constant $C(\varphi)>0$  independent with $\alpha$. Furthermore, if $0<\alpha<1$, then in the case $d\ge 3$ or $d=1,2$, $\int_{\mathbb{R}^d}\varphi(x)\,dx=0$, one can choose
\begin{equation}\label{eq1.5.0}
C(\alpha, \varphi)=CM^{2([\frac{d}{2}]+3)}\cdot\alpha\cdot (1+\alpha)^{[\frac{d}{2}]+1}
\end{equation}
for some absolute constant $C>0$  depending only on $d, c_0$.
\end{theorem}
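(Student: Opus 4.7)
The plan is to combine a rank-one Aronszajn-Krein identity for the resolvent with Stone's formula, reducing matters to a uniform kernel bound on an oscillatory integral in the spectral variable. A direct Sherman-Morrison type computation gives, for $z\in\mathbb{C}\setminus[0,\infty)$,
\[
(H_\alpha-z)^{-1}=(H_0-z)^{-1}-\frac{\alpha}{1+\alpha F(z)}\,\bigl((H_0-z)^{-1}\varphi\bigr)\otimes\bigl((H_0-\bar z)^{-1}\varphi\bigr),
\]
with $F$ as in \eqref{eq1.1.1}. The spectral assumption \eqref{eq1.4} ensures that the boundary values $z=\lambda\pm i0$ are well-defined and bounded away from degeneracy, rules out embedded eigenvalues, and together with the limiting absorption principle yields pure absolute continuity of the spectrum (this is what Lemma \ref{lmA.1} will record). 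Applying Stone's formula to $e^{-itH_\alpha}-e^{it\Delta}$, taking the imaginary part of the rank-one resolvent difference, and performing the change of variables $\lambda=\mu^2$, the integral kernel of $e^{-itH_\alpha}-e^{it\Delta}$ takes the form
\[
K_\alpha(t,x,y)=-\frac{2\alpha}{\pi}\int_0^\infty e^{-it\mu^2}\,\mu\,\operatorname{Im}\!\left[\frac{(R_0^+(\mu^2)\varphi)(x)\,(R_0^+(\mu^2)\varphi)(y)}{1+\alpha F^+(\mu^2)}\right]d\mu,
\]
and the theorem reduces to showing $\sup_{x,y\in\mathbb{R}^d}|K_\alpha(t,x,y)|\leq C(\alpha,\varphi)\,t^{-d/2}$ for $t>0$.

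To estimate $K_\alpha$, I would introduce a smooth cutoff decomposing the $\mu$-integral into a low-frequency piece $\mu\lesssim 1$ and a high-frequency piece $\mu\gtrsim 1$. For the high-frequency part, one inserts the Hankel-function expansion $R_0^+(\mu^2)(x,z)\sim c_d\mu^{(d-3)/2}|x-z|^{-(d-1)/2}e^{i\mu|x-z|}$ (plus tame lower-order corrections), turning the $\mu$-integral into an oscillatory integral with phase $-t\mu^2+\mu(|x-z_1|+|y-z_2|)$ in the auxiliary variables $z_1,z_2$ over which $\varphi$ is integrated. Stationary phase in $\mu$ at the critical point $\mu_0=(|x-z_1|+|y-z_2|)/(2t)$ produces a factor $t^{-1/2}\mu_0^{(d-1)/2}$, which combines with the $|x-z_j|^{-(d-1)/2}$ amplitude weights and the $L^1$ bounds on $\varphi$ and its derivatives provided by \eqref{eq1.2}-\eqref{eq1.3} to deliver the target $t^{-d/2}$. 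The regularity threshold $\beta_0=[(d-3)/4]+1$ is exactly what allows one to integrate by parts in $z_j$ enough times to defeat the growing $\mu^{(d-3)/2}$ weight in high dimensions.

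For the low-frequency piece, the spectral assumption yields the uniform bound $|1+\alpha F^+(\mu^2)|^{-1}\leq c_0^{-1}$; a controlled number of integrations by parts in $\mu$, together with the uniform boundedness of $(R_0^+(\mu^2)\varphi)(x)$ for $\mu\in[0,1]$ (valid when $d\geq 3$ via $L^{2,\sigma}\to L^{2,-\sigma}$ bounds on the free resolvent and the decay \eqref{eq1.2}), produces the required $t^{-d/2}$. Each such integration by parts introduces a derivative of $(1+\alpha F^+(\mu^2))^{-1}$, which is controlled by a factor of order $(1+\alpha)$ thanks to \eqref{eq1.4} and the smoothness of $F^+$; performing $[d/2]+2$ integrations accounts precisely for the $(1+\alpha)^{[d/2]+2}$ exponent in the general bound. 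In dimensions $d=1,2$ the free kernel $R_0^+(\mu^2)(x,z)$ is singular at $\mu=0$, and the extra hypothesis $\int\varphi\,dx=0$ in the small-$\alpha$ improvement \eqref{eq1.5.0} is used to annihilate this singularity through a Taylor expansion of the kernel around $\mu=0$, restoring the regularity required by the preceding scheme. The explicit $\alpha$ prefactor in $K_\alpha$ accounts for the $\alpha$ factor in \eqref{eq1.5.0}, since for $\alpha<1$ the denominator is comparable to $1$ and no extra powers of $(1+\alpha)$ are generated.

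The principal obstacle I expect is the low-frequency analysis in dimensions $d=1,2$, where the threshold singularity of the free resolvent, the possible near-degeneracy of $1+\alpha F^+(\mu^2)$ at zero energy, and the vanishing moment of $\varphi$ must be orchestrated simultaneously to yield a uniform-in-$(x,y)$ bound. The high-frequency stationary phase analysis, though formally standard, also demands careful bookkeeping so that the constant depends on $\varphi$ only through the explicit pointwise norms appearing in \eqref{eq1.2}-\eqref{eq1.3} and polynomially on $\alpha$; this precise tracking is essential for the subsequent extension to finite-rank and trace-class perturbations promised in the abstract.
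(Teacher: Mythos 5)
Your overall strategy---Aronszajn--Krein formula for the rank-one resolvent, Stone's formula, the substitution $\lambda=\mu^2$, a high/low energy split, spatial integration by parts to absorb the $\mu$-growth for $d>3$, and oscillatory-integral estimates in $\mu$---matches the paper's, and your high-frequency outline is essentially sound (the paper uses the van der Corput--type Lemma \ref{lm2.6} and Corollary \ref{cor2.6} rather than a literal stationary-phase expansion, but the mechanism is the same). However there is a genuine gap in your low-frequency analysis. You claim that $t^{-d/2}$ follows from integrating by parts in $\mu$, using only the uniform boundedness of $(R_0^+(\mu^2)\varphi)(x)$ on $[0,1]$ together with $|1+\alpha F^+|^{-1}\leq c_0^{-1}$. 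That cannot work: the amplitude in your $K_\alpha$ vanishes only linearly in $\mu$ from the measure factor, so repeated integration by parts yields at best $t^{-1}$, and produces non-integrable $\mu^{-k}$ boundary contributions once $k\geq 2$. The missing mechanism---which the paper extracts explicitly---is that the imaginary part, i.e.\ the $+/-$ boundary-value difference, vanishes to order $\mu^{d-2}$ at $\mu=0$: both $R_0^+-R_0^-$ and $F^+-F^-$ are $O(\mu^{d-2})$ near threshold (Lemma \ref{lm2.4}). The paper organizes the low-energy kernel via the telescoping identity \eqref{eq.17.10} so that each summand carries exactly one such difference, producing an amplitude of order $\mu^{d-1}$ to which Corollary \ref{cor2.6} then delivers $t^{-d/2}$. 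Your passage to $\operatorname{Im}[\cdots]$ implicitly performs this subtraction, but you never exploit the resulting threshold vanishing; without it the low-energy bound fails for $d\geq 4$.

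A secondary inaccuracy concerns the source of the $(1+\alpha)^{[d/2]+2}$ factor, which you attribute to low-frequency integrations by parts. In the paper the low-energy constant is actually uniform in $\alpha$ (Theorem \ref{thm3.2}), because near threshold one has the much stronger bound $|1+\alpha F^\pm(\mu^2)|\geq \alpha C_1$ (see \eqref{eq.11.1.2}, a consequence of $\Re F^\pm\geq C_3(\varphi)>0$ from Lemma \ref{lm2.2}), so $\alpha/(1+\alpha F^\pm)$ and all needed derivatives are bounded independently of $\alpha$ (Lemma \ref{lm3.2}). All powers of $(1+\alpha)$ come from the high-energy regime (Theorem \ref{thm3.1}), where only the crude bound $|1+\alpha F^\pm|\geq c_0$ is available and each differentiation in the Fa\`a di Bruno expansion \eqref{eq.11.1.5} costs a factor of order $\alpha$.
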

We make the following remarks related to Theorem \ref{thm-1.1}.
\begin{itemize}
\item [($\textbf{a}_1$)] The $L^1-L^{\infty}$ estimates for $e^{-itH_{\alpha}}$ follows immediately from \eqref{eq1.5} and \eqref{eq0.1}. The reason that we write in the form \eqref{eq1.5}  is due to the constant.
During the proof, we can keep track of the upper bound with respect to $\alpha$ and $M$. In particular, if we let $\alpha\rightarrow 0$, then the upper bound \eqref{eq1.5.0} plays a critical role when we extend the result to a class of compact perturbations, see Theorem \ref{thm1.6}.
\item [($\textbf{a}_2$)] {To the authors' knowledge, Theorem  \ref{thm-1.1} seems to provide the first result on $L^1-L^{\infty}$ estimates for the Hamiltonian \eqref{eq1.1} in any dimension $d\ge1$. Actually,  the only previous related work that we are aware of is due to Nier and Soffer \cite[Theorem 2.1]{NS},  where the weaker $L^p-L^{p'}$ estimate ($1<p\le 2$) was proved  for $d\ge 3$ under the spectral  assumption  \eqref{eq1.4} and the following
   \begin{equation}\label{eq1.6}
    \langle x\rangle^s \langle D\rangle^{\beta}\varphi\in L^2(\mathbb{R}^d), \,\,\text{for}\,\,s>[\frac{d}{2}]+2,\,\,\,\beta\ge\max\{\frac{d-3}{2}, \frac12[\frac{d}{2}]\}.
   \end{equation}
    Roughly speaking, this means that $\varphi$ satisfies \eqref{eq1.2} with $\delta> d+2$ and  \eqref{eq1.3} with $\beta_0\approx[\frac{d-3}{2}]+1$, thus the regularity requirements for $\varphi$ in our conditions are only about half of theirs in higher dimensions.
   On the other hand,  when $1\le d\le 3$, we don't need any smoothness assumption on $\varphi$, and this is consistent with the case of $H=-\Delta+V(x)$ (see \cite{GS,Gol,Gol2,Sch}).
    }

\item [($\textbf{a}_3$)] We briefly outline the strategy for proving Theorem \ref{thm-1.1}. Inspired by the work on Sch\"{o}dinger operators, our starting point is to represent the solution via the Stone's formula. The peculiarity of the rank one perturbation is the so-called Aronszajn-Krein formula (see \eqref{eq3.2}). It is  exactly this formula that allows us to derive dispersive estimates in a unified manner. Indeed, we shall write the kernel of $e^{-itH_{\alpha}}-e^{it\Delta}$ in the form
    \begin{equation}\label{eq1.6.1}
 \alpha\cdot \int_{\mathbb{R}^{2d}}{I_d(t, |x-x_1|, |x_2-y|)G_1(x, x_1)G_2(x_2, y)\,dx_1dx_2},
    \end{equation}
where $I_d(t, |x-x_1|, |x_2-y|)$  stands for the following type of oscillatory integrals
 \begin{equation}\label{eq1.6.2}
 \int_\Omega e^{i(t\lambda^2+\lambda\cdot (|x-x_1|\pm|x_2-y|) )}\frac{\psi(\lambda,|x-x_1|, |x_2-y|)}{1+\alpha F^{\pm}(\lambda^2)}d\lambda,
    \end{equation}
The explicit expression of $G_1, G_2,$ and $\psi$ depend on the dimension. As usual, we treat $\Omega=(0, r_0)$ and $\Omega=(r_0, \infty)$ ($r_0>0$) separately, which correspond to the low and high energy part respectively.
 Since the behavior of the free resolvent differs in different dimensions, we shall divide the analysis into several cases. One of the key ingredients throughout the proof is to obtain the decay $t^{-\frac{d}{2}}$ from \eqref{eq1.6.2} and keep $x, y\in\mathbb{R}^d$ uniformly bounded in the kernel \eqref{eq1.6.1}. From the perspective of the oscillatory integral \eqref{eq1.6.2}, the  main difficulty in the high energy part is that the growth of $\lambda$ may be "too large" and this happens  when $d>3$. While  in the low energy part, one has to face the singularity of $\lambda$ near origin when $d=1, 2$, in particular, we  point out that  the methods would be different depending whether $\int_{\mathbb{R}^d}{\varphi}=0$ ($d=1, 2$) or not. In these cases, we shall exploit ideas and techniques used for Sch\"{o}dinger operators (see e.g. in \cite{GW15,GW16,GW17}).
Finally we mention that  the constant \eqref{eq1.5.0} is not obtained in the case $0<\alpha<1$, $d=1, 2$ and $\int_{\mathbb{R}^d}{\varphi}\ne0$, see Remark \ref{rmk3.3} for an explanation.

\end{itemize}

\subsection{Dispersive estimates for finite rank perturbations}\label{sec1.2}

%
In the second part of the paper, we address the same problem for finite rank perturbations \eqref{eq0.4}.
 In this case, we assume that $\varphi_1,\ldots, \varphi_N$ are $L^2$-normalized and mutually orthogonal, i.e.,
 \begin{equation}\label{eq1.5.1}
\langle\varphi_i, \varphi_j\rangle=\delta_{ij},\,\,\,\qquad 1\leq i, j\leq N,
 \end{equation}
where $\delta_{ij}$ denotes the Kronecker delta function. Besides the decay and smoothness condition on each $\varphi_j$, we shall need an analog of the spectral assumption \eqref{eq1.4}. Instead of the scalar function defined in \eqref{eq1.11.1}, we define the following $N\times N$ matrix
\begin{equation}\label{eq1.11}
  F_{N\times N}(z)=(f_{ij}(z))_{N\times N},\qquad f_{ij}(z)=\langle(-\Delta-z)^{-1}\varphi_i, \varphi_j\rangle,\,\,\, z\in \mathbb{C}\setminus[0, \infty).
\end{equation}
When $\varphi_i\in L^{2}_{\sigma}$, $\sigma>\frac12$, $1\le i\le N$, it's meaningful to define the boundary value
\begin{equation}\label{eq1.12}
F_{N\times N}^{\pm}(\lambda^2):=F_{N\times N}(\lambda^2\pm i0),\qquad \lambda>0.
\end{equation}
In view of \eqref{eq1.4}, it's natural to make the following assumption: There is some absolute constant $c_0>0$ such that
\begin{equation}\label{eq1.13.1}
|\det{(I+F_{N\times N}^{\pm}(\lambda^2))}|\ge c_0>0,\,\,\,\,\,\,\text{for any}\,\,\,\lambda>0.
\end{equation}
In Lemma  \ref{lmA.1}, we shall prove that this assumption implies  $\sigma(H)=\sigma_{ac}(H)=[0, \infty)$.

We first present the following  general result.
\begin{theorem}\label{thm1.2}
Let $d\ge 1$ and \eqref{eq1.5.1} hold. Assume that each $\varphi_i$ ($1\le i\le N$) satisfies  \emph{(\romannumeral1)} in the \textbf{Condition $H_{\alpha}$} and the spectral assumption  \eqref{eq1.13.1} holds.
Then $H$ has only absolutely continuous spectrum and there exists a constant $C=C(N, d, \varphi_1,\ldots, \varphi_N)>0$ such that
\begin{equation}\label{eq1.13}
\|e^{-itH}\|_{L^1-L^{\infty}}\leq C\cdot t^{-\frac{d}{2}},\quad \,\,\,\text{for}\,\,\, t>0.
\end{equation}


\end{theorem}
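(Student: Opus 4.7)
The strategy is to reduce the finite-rank problem to the rank-one analysis of Theorem~\ref{thm-1.1} via a matrix-valued Aronszajn--Krein formula, as advertised in the introduction. Write $V = \sum_{j=1}^{N}\langle\cdot,\varphi_{j}\rangle\varphi_{j} = \Phi^{*}\Phi$, where $\Phi\colon L^{2}(\R^{d})\to \C^{N}$ is given by $(\Phi f)_{j} = \langle f,\varphi_{j}\rangle$ and $\Phi^{*}$ is its adjoint. Combining this factorization with the second resolvent identity $R_{H} = R_{0} - R_{0}VR_{H}$ and the definition \eqref{eq1.11} of $F_{N\times N}$ yields
\begin{equation*}
R_{H}(z) \;=\; R_{0}(z) \;-\; R_{0}(z)\,\Phi^{*}\bigl(I_{N} + F_{N\times N}(z)\bigr)^{-1}\Phi\, R_{0}(z),\qquad z\in\C\setminus[0,\infty).
\end{equation*}
The spectral hypothesis \eqref{eq1.13.1} furnishes a uniform lower bound on $|\det(I_{N}+F^{\pm}_{N\times N}(\lambda^{2}))|$, so the matrix is invertible on $(0,\infty)$ with uniformly bounded inverse $A^{\pm}(\lambda):=(I_{N}+F^{\pm}_{N\times N}(\lambda^{2}))^{-1}$. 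By Cramer's rule each entry $A^{\pm}_{ij}$ is a rational function whose numerator is a polynomial (of degree at most $N-1$) in the scalars $f^{\pm}_{kl}(\lambda^{2})$ and whose denominator is $\det(I_{N}+F^{\pm}_{N\times N})$.

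Inserting this formula into Stone's representation decomposes the integral kernel of $e^{-itH}$ as the free kernel plus a finite sum
\begin{equation*}
\sum_{i,j=1}^{N}\mathcal{K}_{ij}(t,x,y),
\end{equation*}
in which each $\mathcal{K}_{ij}$ has precisely the structural form of the rank-one kernel displayed in \eqref{eq1.6.1}--\eqref{eq1.6.2}, but with $\varphi$ replaced by the pair $(\varphi_{i},\varphi_{j})$ and the scalar multiplier $(1+\alpha F^{\pm}(\lambda^{2}))^{-1}$ replaced by $A^{\pm}_{ij}(\lambda)$. The $t^{-d/2}$ decay for each $\mathcal{K}_{ij}$ then follows by running the rank-one oscillatory integral estimates verbatim, provided the multiplier $A^{\pm}_{ij}$ admits the same $L^{\infty}$ bound and pointwise derivative bounds, up to the order dictated by the dimension, that were established for $(1+\alpha F^{\pm})^{-1}$ in the proof of Theorem~\ref{thm-1.1}.

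Verifying these bounds is the technical heart. Applying the quotient and product rules to the rational expression $A^{\pm}_{ij}$ shows that $\partial_{\lambda}^{m}A^{\pm}_{ij}$ is a finite sum of products of derivatives $\partial_{\lambda}^{m'}f^{\pm}_{kl}$ divided by a power of $\det(I_{N}+F^{\pm}_{N\times N})$. The lower bound on the determinant controls the denominator; meanwhile each $f^{\pm}_{kl}(\lambda^{2})=\langle(-\Delta-\lambda^{2}\mp i0)^{-1}\varphi_{k},\varphi_{l}\rangle$ satisfies exactly the same pointwise and derivative estimates as the diagonal function $F^{\pm}(\lambda^{2})$ in the rank-one argument, since those estimates only exploit the decay and smoothness part~(i) of \textbf{Condition $H_{\alpha}$}, which by hypothesis holds for each $\varphi_{k}$. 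The absolute continuity of $\sigma(H)$ under \eqref{eq1.13.1} is then supplied by Lemma~\ref{lmA.1}.

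The main obstacle is combinatorial bookkeeping: the cofactor expansion together with the chain of product rules produces, a priori, a factorial-in-$N$ number of terms, so naively the constant $C(N,d,\varphi_{1},\ldots,\varphi_{N})$ could grow very rapidly. For the qualitative statement \eqref{eq1.13} this is harmless, as $N$ is fixed and all such terms are absorbed into $C$. Obtaining the polynomial-in-$N$ growth promised in the abstract, however, will require a more delicate analysis exploiting the multilinearity of the determinant and the $L^{2}$-orthonormality \eqref{eq1.5.1} of the $\varphi_{j}$, and should be handled as a separate refinement rather than being grafted onto the present proof.
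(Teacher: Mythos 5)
Your strategy — the matrix Aronszajn--Krein formula followed by a reduction to the rank-one oscillatory integral estimates, with the determinant lower bound controlling the inverse — is exactly the paper's route for the high-energy part (all $d\ge 1$) and for the low-energy part in $d\ge 3$. There, the entries $f^{\pm}_{kl}$ belong to the same symbol classes as the diagonal $F^{\pm}$, the identity $\frac{d^k}{d\lambda^k}(A^{\pm})^{-1} = (A^{\pm})^{-1}\sum\cdots$ together with \eqref{eq1.13.1} gives the needed bounds on $g^{\pm}_{ij}$, and the argument does run essentially verbatim, with the factorial explosion absorbed into $C(N,\ldots)$ as you observe.

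The genuine gap is in the low-energy analysis for $d=1,2$ when some $\int_{\R^d}\varphi_i\,dx\neq 0$. In the rank-one proof for $d=1$ with $\int\varphi\neq 0$, the argument does \emph{not} merely use the assumed lower bound $|1+\alpha F^{\pm}|\ge c_0$: the crucial gain is the \emph{upgraded} lower bound $|1+\alpha F^{\pm}(\lambda^2)|\gtrsim \alpha/\lambda$ near $\lambda=0$ (and $\gtrsim\alpha|\log\lambda|$ in $d=2$), produced automatically by the $1/\lambda$ (resp.\ $\log\lambda$) blow-up of $F^{\pm}$. This gain is precisely what cancels the $1/\lambda$ (or logarithmic) singularity of $R_0^{\pm}(\lambda^2)$, making the oscillatory integral lemma applicable. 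In the finite-rank setting, the hypothesis \eqref{eq1.13.1} furnishes only $|\det(I+F^{\pm}_{N\times N})|\ge c_0$, and Cramer's rule does \emph{not} yield a corresponding automatic smallness of the entries $g^{\pm}_{ij}$ near $\lambda=0$: the cofactors can blow up at the same rate as the determinant, so the quotient need not vanish. Your claim that the rank-one estimates "run verbatim" therefore breaks down in this regime. The paper handles it by a different device: the abstract inversion lemma of Jensen--Nenciu (Lemma \ref{lm4.1}), a Riesz projection $\mathscr{P}$ onto the kernel of $\sum_{i\le k_0}P_i\cdot L_1\cdot\sum_{i\le k_0}P_i$ (with $L_1 f=\int f$), and a Neumann expansion of $(I+PR_0^{\pm}P)^{-1}$ around it, so that the resulting operator factors through $\mathscr{P}$ and the orthogonality $L_1\mathscr{P}=0$ can be exploited in place of the scalar gain. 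Without something of this nature, your proposal does not close the argument in low dimensions.
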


It deserves to mention that in our approach, the upper bound  in \eqref{eq1.13} is closely related to the behavior of each element in the inverse matrix $(I+F_{N\times N}^{\pm}(\lambda^2))^{-1}$. We point out that the constant $C(N, d, \varphi_1,\ldots, \varphi_N)$ may grow faster than $N!$ (see Remark \ref{rmk3.5}) in general. However, we shall
discuss two concrete models of Theorem \ref{thm1.2},
in which the above matrix has special structures (see Remark ($\textbf{b}_2$) below). Then we  are able to prove  polynomial upper bounds with respect to  $N$, and this is important when we further extend to the case $N=\infty$.

\begin{theorem}\label{thm1.4}
Let $d\ge 1$  and \eqref{eq1.5.1} hold. Assume that each $\varphi_i$ ($1\le i\le N$) satisfies the \textbf{Condition $H_{\alpha}$} with $\alpha=1$. In addition, if
\begin{equation}\label{eq0.19}
 \text{supp} \hat{\varphi_i}\cap \text{supp} \hat{\varphi_j}=\emptyset, \quad\,\,\,\,i\ne j,
\end{equation}
then there exists a constant $C=C(d, \varphi_1,\ldots, \varphi_N)$ independent with $N$ such that
\begin{equation}\label{eq1.16}
\|e^{-itH}\|_{L^1-L^{\infty}}\leq CNt^{-\frac{d}{2}}, \quad\,\,\,\text{for}\,\,\, t>0.
\end{equation}
\end{theorem}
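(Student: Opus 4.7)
The key observation is that under the disjoint Fourier support condition \eqref{eq0.19}, the matrix $F_{N\times N}^\pm(\lambda^2)$ defined in \eqref{eq1.11}--\eqref{eq1.12} is diagonal. Indeed, since $R_0(z):=(-\Delta-z)^{-1}$ acts as multiplication by $(|\xi|^2-z)^{-1}$ on the Fourier side,
\begin{equation*}
f_{ij}(z) = \int_{\mathbb{R}^d} \frac{\widehat{\varphi_i}(\xi)\,\overline{\widehat{\varphi_j}(\xi)}}{|\xi|^2 - z}\,d\xi = 0 \quad\text{whenever } i\ne j,
\end{equation*}
so $F_{N\times N}(z) = \operatorname{diag}(f_{11}(z),\ldots,f_{NN}(z))$. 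Consequently the general Aronszajn--Krein type formula underlying Theorem \ref{thm1.2} collapses to $N$ independent scalar inversions, which I plan to exploit to reduce the problem to $N$ copies of the rank one result.

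Concretely, writing $V=\sum_j P_j = A^{*}A$ with $A\colon L^2\to\mathbb{C}^N$, $Af=(\langle f,\varphi_j\rangle)_{j=1}^N$, the standard resolvent identity
\begin{equation*}
R(z) = R_0(z) - R_0(z)\,A^{*}\bigl[I + A R_0(z) A^{*}\bigr]^{-1}A\,R_0(z)
\end{equation*}
combined with the diagonal structure of $A R_0(z) A^{*}$ yields the additive splitting
\begin{equation*}
R(z) = R_0(z) + \sum_{j=1}^N \bigl[R_j(z) - R_0(z)\bigr],
\end{equation*}
where $R_j(z):=(H_j-z)^{-1}$ is the resolvent of the rank one operator $H_j:=-\Delta+\langle\cdot,\varphi_j\rangle\varphi_j$. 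Inserting this into Stone's formula and passing to the boundary values $\lambda^2\pm i0$, which is legitimate because each $|1+f_{jj}^\pm(\lambda^2)|\ge c_0$ by the individual spectral assumption \eqref{eq1.4} with $\alpha=1$, produces the operator-level identity
\begin{equation*}
e^{-itH} = e^{it\Delta} + \sum_{j=1}^N\bigl[e^{-itH_j} - e^{it\Delta}\bigr].
\end{equation*}

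From here the bound \eqref{eq1.16} is immediate: Theorem \ref{thm-1.1} applied to each $H_j$ (with $\alpha=1$) gives
\begin{equation*}
\|e^{-itH_j} - e^{it\Delta}\|_{L^1-L^\infty}\le C(1,\varphi_j)\,t^{-d/2},\qquad t>0,
\end{equation*}
and summing over $j$ together with the free estimate \eqref{eq0.1} yields \eqref{eq1.16} with constant $C=\max_{1\le j\le N}C(1,\varphi_j)$, which is manifestly independent of $N$. There is no substantial obstacle in this strategy; the only matters requiring care are the Fourier-side verification that $F_{N\times N}$ is diagonal (already carried out above), noting that the determinant hypothesis \eqref{eq1.13.1} is automatic since it factors as $\prod_j(1+f_{jj}^\pm)$ under \eqref{eq0.19}, and confirming absolute continuity of $\sigma(H)$, which follows from Lemma \ref{lmA.1} applied to each $H_j$ via the splitting.
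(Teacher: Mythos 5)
Your proposal is correct and follows essentially the same route as the paper: the disjoint Fourier supports force $F_{N\times N}^{\pm}$ to be diagonal, which reduces the Aronszajn--Krein formula for $H$ to $N$ independent rank one formulas, and the bound then follows by summing $N$ applications of Theorem~\ref{thm-1.1}. The paper phrases the reduction at the level of Stone's formula integrands (see \eqref{eq4.32.4}) rather than writing the operator identity $e^{-itH}-e^{it\Delta}=\sum_j(e^{-itH_j}-e^{it\Delta})$ explicitly, but this is a purely cosmetic difference.
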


\begin{theorem}\label{thm1.5}
Let $d\ge 3$  and \eqref{eq1.5.1} hold. Assume that $\varphi_i(x)=\varphi(x-\tau_i)$, where $\varphi$ satisfies  the \textbf{Condition $H_{\alpha}$} with $\alpha=1$. If, in addition, the following spreading condition
\begin{equation}\label{eq1.16.1}
  |\tau_i-\tau_j|\ge \tau_0,\qquad\text{where}\quad \tau_0>(CN)^{\frac{2}{d-1}}
\end{equation}
holds for some $C>0$ independent with $N$. Then there exists another constant $C=C(d, \varphi)>0$ independent with $N$ such that
\begin{equation}\label{eq1.17}
\|e^{-itH}\|_{L^1-L^{\infty}}\leq CN^2t^{-\frac{d}{2}},\quad \,\,\,\text{for}\,\,\, t>0.
\end{equation}
\end{theorem}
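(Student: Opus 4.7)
The plan is to exploit the translation-invariant structure $\varphi_i = \varphi(\cdot - \tau_i)$ to make the matrix $F_{N\times N}^\pm(\lambda^2)$ diagonally dominant once the $\tau_i$ are sufficiently spread out, and then to apply the Aronszajn-Krein reduction developed for Theorem \ref{thm1.2}.

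First I would analyze the matrix entries. By translation invariance of the Laplacian, every diagonal entry of $F_{N\times N}(z)$ equals the scalar $F(z)$ from \eqref{eq1.1.1} for the single profile $\varphi$, while $f_{ij}(z) = \langle (-\Delta-z)^{-1}\varphi, \varphi(\cdot - (\tau_j - \tau_i))\rangle$ for $i\ne j$. The key step here is the uniform-in-$\lambda$ off-diagonal bound
$$
\sup_{\lambda > 0} \bigl|f_{ij}^\pm(\lambda^2)\bigr| \le C(\varphi)\,|\tau_i - \tau_j|^{-(d-1)/2},\qquad i\ne j.
$$
For low energies $\lambda \le 1$, I would combine the standard estimate on the free resolvent kernel in $d\ge 3$ (which decays like $|x-y|^{-(d-2)}$ in the near field and like $|x-y|^{-(d-1)/2}$ in the far field $\lambda|x-y|\ge 1$) with the decay \eqref{eq1.2} of $\varphi$ to extract the stated rate; this is exactly where the hypothesis $d\ge 3$ enters. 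For high energies $\lambda \ge 1$, I would use the leading asymptotic $C_d \lambda^{(d-3)/2}|\eta|^{-(d-1)/2} e^{\pm i\lambda|\eta|}$ of the resolvent kernel and, after localizing the phase around $|\eta|=|\tau_i-\tau_j|$, pass to Fourier space where the remaining integral is essentially $|\hat\varphi(\pm\lambda\hat\eta)|^2$. Since Condition $H_\alpha$ grants $\beta_0=[(d-3)/4]+1$ derivatives of $\varphi$, integration by parts gives $|\hat\varphi(\xi)|\le C\langle\xi\rangle^{-\beta_0}$, and the inequality $2\beta_0 > (d-3)/2$ precisely compensates the resolvent growth $\lambda^{(d-3)/2}$.

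Equipped with this bound I would decompose $I + F_{N\times N}^\pm(\lambda^2) = (1+F^\pm(\lambda^2))\,I + M^\pm(\lambda)$, where $M^\pm(\lambda)$ carries the off-diagonal part. Its $\ell^\infty\to\ell^\infty$ operator norm is at most $N\cdot C(\varphi)\tau_0^{-(d-1)/2}$, while the spectral hypothesis \eqref{eq1.4} applied to $\varphi$ gives $|1+F^\pm(\lambda^2)|\ge c_0$. Choosing the constant $C$ in \eqref{eq1.16.1} large enough relative to $C(\varphi)/c_0$ forces $\|(1+F^\pm)^{-1}M^\pm\|_{\ell^\infty\to\ell^\infty}\le 1/2$ uniformly in $\lambda>0$. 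The Neumann series then converges, which on the one hand verifies \eqref{eq1.13.1} so that Theorem \ref{thm1.2} is applicable, and on the other hand yields the uniform bound $\bigl|((I+F_{N\times N}^\pm(\lambda^2))^{-1})_{ij}\bigr|\le C/c_0$ independently of $N$, $\lambda$, and the indices.

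Finally, I would plug this into the Aronszajn-Krein representation underlying the proof of Theorem \ref{thm1.2}: the kernel of $e^{-itH} - e^{it\Delta}$ becomes a sum over the $N^2$ pairs $(i,j)\in\{1,\ldots,N\}^2$ of integrals of the form \eqref{eq1.6.1}--\eqref{eq1.6.2}, with the scalar $1/(1+\alpha F^\pm)$ replaced by the $(i,j)$-entry of $(I+F_{N\times N}^\pm(\lambda^2))^{-1}$ and with $\varphi$ replaced by the pair $(\varphi_i,\varphi_j)$. Since each $\varphi_i$ is a pure translate of $\varphi$, the oscillatory-integral and kernel estimates from the proof of Theorem \ref{thm-1.1} transfer pair by pair (the translations are absorbed by the $x_1,x_2$ integrations in \eqref{eq1.6.1}), and each inverse-matrix entry enjoys the same uniform bounds and $\lambda$-regularity as the scalar $1/(1+F^\pm(\lambda^2))$ used in the rank-one analysis. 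Each of the $N^2$ pair contributions is therefore at most $C(d,\varphi)\,t^{-d/2}$, and summing gives \eqref{eq1.17}. The principal obstacle is Step 1: obtaining the sharp uniform-in-$\lambda$ off-diagonal decay $|\eta|^{-(d-1)/2}$ is exactly what fixes the exponent $2/(d-1)$ in the spreading threshold \eqref{eq1.16.1} and what restricts the argument to $d\ge 3$.
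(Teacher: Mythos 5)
Your opening reduction is sound and matches the paper's: the translation structure makes all diagonal entries equal $1+F^\pm(\lambda^2)$, the off-diagonal entries satisfy the uniform decay $|f_{ij}^\pm(\lambda^2)|\le C(\varphi)\langle\tau_0\rangle^{-(d-1)/2}$ (this is exactly Lemma \ref{lm4.2}, which the paper proves by integration by parts in physical space against the phase $e^{\pm i\lambda|x-y|}$ rather than your passage to Fourier space, but the conclusion is the same), and diagonal dominance plus a Neumann series gives the uniform $\ell^\infty$ bound $|g_{ij}^\pm(\lambda^2)|\le C/c_0$.

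However, your concluding step has a real gap. You assert that each inverse-matrix entry ``enjoys the same uniform bounds \emph{and} $\lambda$-regularity as the scalar $1/(1+F^\pm(\lambda^2))$,'' and then feed it directly into the oscillatory-integral machinery of Theorem \ref{thm-1.1}. That regularity claim is false: the off-diagonal entries $f_{ij}^\pm(\lambda^2)=\langle R_0^\pm(\lambda^2)\varphi_j,\varphi_i\rangle$ carry the oscillation $e^{\pm i\lambda|x-y|}$ with $|x-y|\approx|\tau_i-\tau_j|\ge\tau_0$, so while $|f_{ij}^\pm|\lesssim\tau_0^{-(d-1)/2}$, each $\lambda$-derivative costs a factor comparable to $\tau_0$; e.g.\ in $d=3$, $\tfrac{d^k}{d\lambda^k}f^\pm_{ij}(\lambda^2)=\tfrac{(\pm i)^k}{4\pi}\int e^{\pm i\lambda|x-y|}|x-y|^{k-1}\varphi(x-\tau_i)\varphi(y-\tau_j)\,dx\,dy$ already grows at $k=2$. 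Consequently $g_{ij}^\pm(\lambda^2)$ does \emph{not} belong to the symbol classes $S^b_K$ that Lemma \ref{lm2.6} and Corollary \ref{cor2.6} require, and the rank-one oscillatory-integral estimates cannot be transferred ``pair by pair'' as you propose. The paper flags precisely this obstruction and circumvents it differently: it keeps the Neumann series $\left(I+F^\pm_{N\times N}\right)^{-1}=\sum_{n\ge0}g^\pm_\varphi\bigl(-g^\pm_\varphi F^\pm_{\tau_0}\bigr)^n$ \emph{expanded}, substitutes the spatial integral representation of each factor $f^\pm_{i_s j_s}$, and absorbs every $e^{\pm i\lambda|y_s-z_s|}$ into the global phase $e^{-it\lambda^2\pm i\lambda(|x-x_1|+|x_2-y|+\Sigma_n)}$, so that the only $\lambda$-dependent amplitude left is the genuine symbol $(g^\pm_\varphi)^{n+1}$. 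The per-factor decay $\langle\tau_0\rangle^{-(d-1)/2}$ then makes the resulting series over $n$, which contributes at most $N^{n+1}$ terms at level $n$, geometrically summable once $\tau_0>(CN)^{2/(d-1)}$, with the $n=0$ (diagonal) term giving $CN$ and the $n=1$ term giving $CN^2$. Your argument, as written, would stall at the point where you try to verify that $g_{ij}^\pm$ is an admissible amplitude for the stationary-phase/van der Corput estimates.
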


Some remarks on Theorem \ref{thm1.2}-Theorem \ref{thm1.5} are as follows:

\begin{itemize}
\item [($\textbf{b}_1$)]  As far as we are aware, this is the first study of  $L^1-L^{\infty}$ estimates for the Hamiltonian \eqref{eq0.4} in any dimension $d\ge 1$.  In \cite[Theorem 1.1]{NS}, Nier and Soffer consider the case  $\varphi_i(x)=\varphi(x-\tau_i)$, where the spreading condition $ |\tau_i-\tau_j|\ge \tau_0$ is assumed.  They show that if  $\varphi$ satisfies  \eqref{eq1.4}, and \eqref{eq1.6} with  $s>[\frac{d}{2}]+2,\,\beta>\frac{d}{2}$ (our  smoothness assumptions on  $\varphi$ are weaker), then  for $d\ge 3$,  $1<p\leq2$,  $1<r<\min\{p,\frac{2d}{d+2}\}$ and
    $\left(  \frac{1/p-1/2}{1/r-1/2}\right)<\theta\leq 1$, one has
\begin{equation}\label{eq1.18}
\tau_0> (CN)^{\frac{1}{d(r-1)}}\Longrightarrow	\|e^{-itH}\|_{L^P-L^{p'}}\leq CN^{\theta}t^{-d(\frac{1}{p}-\frac12)}, \,\,\,\,\, t>0.
\end{equation}
We mention that their arguments fail at the endpoint $p=1$. On the other hand, in Remark \ref{rm4.2}, we show that one can slightly modify the proof in Theorem \ref{thm1.5}  to obtain  \eqref{eq1.18}.

\item [($\textbf{b}_2$)] We point out that our  strategy to prove Theorems \ref{thm1.2}--\ref{thm1.5} is very different from the one used in \cite[Theorem 1.1]{NS} (they use  a bootstrap argument based on Duhamel's principle and induction on $N$). The novelty of our approach is that we first establish an Aronszajn-Krein type formula for finite rank perturbations.
    This allows us to reduce matters to the rank one case by studying the inverse of the matrix $I+F_{N\times N}^{\pm}(\lambda^2)$. In general, we don't have favorable control on $N$ for each element in $(I+F_{N\times N}^{\pm}(\lambda^2))^{-1}$ and the upper bound may grow faster than $N!$  (see Remark \ref{rmk3.5}). However, we point out that in Theorem \ref{thm1.4}, the  matrix  $I+F_{N\times N}^{\pm}(\lambda^2)$ becomes diagonal by assumption \eqref{eq0.19} and in Theorem \ref{thm1.5}, the assumption  \eqref{eq1.16.1} implies it's strictly diagonally-dominant by rows if $\tau_0$ is sufficiently large. We will take advantage of these properties to get more precise estimates concerning $N$.

\item [($\textbf{b}_3$)] It appears that the growth order of $N$ in Theorem  \ref{thm1.4} and \ref{thm1.5} are optimal. But currently we don't know how to construct counterexamples to prove or disprove the sharpness.
 \end{itemize}

\subsection{Dispersive estimates for  trace class perturbations}\label{sec1.4}
As an application, we show that our previous results
on rank one and finite rank perturbations can be extended
to the following more general case:
\begin{equation}\label{eq0.25}
H=-\Delta+A, \,\,\,\text{where}\,\, A:=\sum_{j=1}^{+\infty}\lambda_jP_j, \,\,\,\,\, P_j=\langle\cdot\,, \varphi_j\rangle \varphi_j,\,\, \lambda_j>0.
\end{equation}
To be more precise, we have
\begin{theorem} \label{thm1.6}
Let $d\ge 1$ and $H$ be defined in \eqref{eq0.25}. Assume that  \eqref{eq1.5.1} and \eqref{eq0.19} hold respectively. Furthermore,  suppose that

(\romannumeral1) each $\varphi_j$  satisfies  \eqref{eq1.2} and  \eqref{eq1.3} with the upper bound $M$ replaced by $M_j$;

(\romannumeral2) the spectral condition \eqref{eq1.4} holds uniformly for $\{\varphi_j\}^{+\infty}_{j=1}$, i.e., there exists some absolute constant  $c_0>0$, such that
\begin{equation}\label{eq1.88.0}
 |1+\lambda_j f^{\pm}_{jj}(\lambda^2)|>c_0,\qquad \forall \,\,j\ge1;
\end{equation}

(\romannumeral3) \begin{equation} \label{eq1.88.1}
\sum_{j=1}^{+\infty}\lambda_j\cdot {M_j}^{2[\frac d2]+6}<+\infty.	
\end{equation}
Then  the spectrum of $H$ is purely absolutely continuous and there exists some constant $C(A)>0$ such that
\begin{equation}\label{eq1.88.22}
\|e^{-itH}\|_{L^1-L^{\infty}}\leq C(A)\cdot t^{-\frac{d}{2}},\quad \,\,\,\text{for}\,\,\, t>0.
\end{equation}
\end{theorem}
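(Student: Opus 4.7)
The plan is to truncate $H$ to a finite-rank operator $H_N = -\Delta + \sum_{j=1}^N \lambda_j P_j$, prove the dispersive bound uniformly in $N$, and then let $N\to\infty$. The decisive observation is that the disjoint Fourier support assumption \eqref{eq0.19} makes $f_{ij}(z) = \langle(-\Delta-z)^{-1}\varphi_i,\varphi_j\rangle = 0$ for $i\ne j$, since $(-\Delta-z)^{-1}$ is a Fourier multiplier. Consequently, the $N\times N$ matrix appearing in the Aronszajn--Krein formula for $H_N$ is diagonal, and the uniform spectral assumption \eqref{eq1.88.0} controls each diagonal entry $1+\lambda_j f_{jj}^\pm(\lambda^2)$ from below.

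Plugging this diagonal structure into the finite-rank Aronszajn--Krein formula established earlier in the paper, together with Stone's formula, the rank-one contributions decouple completely:
\[
e^{-itH_N} - e^{it\Delta} = \sum_{j=1}^N \bigl( e^{-itH^{(j)}} - e^{it\Delta} \bigr), \qquad H^{(j)} := -\Delta + \lambda_j\langle\cdot,\varphi_j\rangle\varphi_j.
\]
For indices with $\lambda_j<1$, I would invoke the refined rank-one bound \eqref{eq1.5.0} of Theorem \ref{thm-1.1} to get $\|e^{-itH^{(j)}} - e^{it\Delta}\|_{L^1\to L^\infty} \leq C\lambda_j M_j^{2[d/2]+6}\,t^{-d/2}$. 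A key point in dimensions $d=1,2$ is that \eqref{eq0.19} forces $\hat\varphi_j(0)=\int\varphi_j\,dx=0$ for all but at most one $j$ (since at most one Fourier support can contain the origin), so the mean-zero condition required by \eqref{eq1.5.0} is automatic apart from a single exceptional term, which is absorbed into the constant via the general bound of Theorem \ref{thm-1.1}. The finitely many $j$ with $\lambda_j\ge 1$ (finitely many by \eqref{eq1.88.1}) are also handled by the general bound directly. Summing in $j$ and using \eqref{eq1.88.1},
\[
\|e^{-itH_N} - e^{it\Delta}\|_{L^1\to L^\infty} \leq C(A)\, t^{-d/2},
\]
uniformly in $N$, from which \eqref{eq1.88.22} for $H_N$ follows via \eqref{eq0.1}.

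For the passage $N\to\infty$, \eqref{eq1.88.1} in particular gives $\sum_j \lambda_j<\infty$, so the truncated perturbations converge to $A$ in operator norm, whence $H_N\to H$ in norm resolvent sense and $e^{-itH_N}\to e^{-itH}$ strongly on $L^2$. The uniform $L^1\to L^\infty$ bound then transfers to $e^{-itH}$ by a weak-$\ast$ compactness / duality argument. The pure absolute continuity of $\sigma(H)$ is obtained from the uniform spectral assumption \eqref{eq1.88.0} by a limiting absorption argument parallel to that of Lemma \ref{lmA.1}. The main technical obstacle, and the reason the precise exponent $2[d/2]+6$ appears in \eqref{eq1.88.1}, is ensuring that the $j$-th rank-one estimate depends \emph{linearly} on $\lambda_j$ with a power of $M_j$ exactly matching the summability hypothesis; reconciling these exponents across all dimensions is what makes the additive decomposition close.
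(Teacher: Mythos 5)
Your proposal is correct and follows essentially the same strategy as the paper: the disjoint Fourier supports diagonalize the Aronszajn--Krein matrix, the evolution decouples into a sum of rank-one contributions $e^{-itH^{(j)}}-e^{it\Delta}$, and the refined constant \eqref{eq1.5.0} combined with \eqref{eq1.88.1} gives summability, with the mean-zero requirement in $d=1,2$ met for all but at most one index (the exception and the finitely many $\lambda_j\ge1$ handled by the general bound in Theorem \ref{thm-1.1}). The one small variation is procedural: you truncate to $H_N$, prove a uniform bound, and pass to the limit via norm-resolvent convergence and weak-$\ast$ compactness, whereas the paper derives the infinite-rank Aronszajn--Krein formula \eqref{eq5.1.3} directly (Proposition \ref{pro5.1}), verifies convergence of the resulting series in weighted $L^2$, and then sums the rank-one dispersive bounds without truncating. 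Both routes are sound; yours trades the convergence analysis of the infinite resolvent series for a limiting argument at the end, and either way the decisive exponent bookkeeping is that $\lambda_j M_j^{2[d/2]+6}$ is exactly what \eqref{eq1.5.0} produces and what \eqref{eq1.88.1} sums.
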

We present the following remarks concerning Theorem \ref{thm1.6}:
\begin{itemize}
\item [($\textbf{c}_1$)]  First we point out that our perturbations  belong to the trace class operators in $L^2({\R^d})$. Indeed,  by \eqref{eq1.5.1} and \eqref{eq1.2}  one has $M_j\geq\left\| \langle x \rangle^{-\delta} \right\|_{L^2}^{-1}$, then  \eqref{eq1.88.1}  implies that $\sum_{j=1}^{+\infty}\lambda_j<+\infty$. Concrete examples that satisfy all the assumptions above are constructed in Remark \ref{rmk5.1}.
    On the other hand, from a spectrum point of view, the  trace class perturbations are natural and in some sense are optimal. Indeed, according to the Weyl-von Neumann-Kuroda Theorem (see e.g. in \cite[Chapter  \uppercase\expandafter{\romannumeral10}]{Ka}), there exists a self-adjoint perturbation $A$ in any trace ideal strictly bigger than trace class, such that  $-\Delta+A$ has only pure point spectrum.

\item [($\textbf{c}_2$)]  Theorem \ref{thm1.6} can be viewed as an extension of Theorem \ref{thm-1.1} and \ref{thm1.4} to the case $N=\infty$. One of the major observations is that under conditions \eqref{eq0.19}, (\romannumeral1), and (\romannumeral2) above, we have a simple form of the Aronszajn-Krein type formula for such compact perturbations (see Proposition \ref{pro5.1}).
This allows us to represent the difference $e^{-itH}-e^{it\Delta}$ as an infinite sum of the form $e^{-itH_{\lambda_j}}-e^{-itH_0}$, where each $H_{\lambda_j}$ is a rank one perturbation, see \eqref{eq5.1.5}. Another crucial point is the explicit upper bound \eqref{eq1.5.0} obtained in Theorem \ref{thm-1.1}, based on this and the assumption \eqref{eq1.88.1}, we are able to handle the issue of convergence in \eqref{eq5.1.5}.

\end {itemize}

\subsection{Notation}
Throughout the paper, $C(\cdots)$ and $C_{\{\cdot\}}$ will denote positive constants depending on what are enclosed in the brackets and they may differ on different occasions.
For $l>0$, we denote by $[l]$ the greatest integer at most $l$.
We introduce the following symbols which are very important and will be frequently used in what follows.
 \begin{itemize}
   \item For $b\in\mathbb{R}$,  $K\in\mathbb{N}_0:=\{0, 1,\cdots\}$, and an open set $\Omega\subset\mathbb{R}$, we define $S_K^b(\Omega)$ to be the class of all functions $f(\lambda)\in C^K(\Omega)$ such that
\begin{equation}\label{eq1}
|\frac{d^j}{d\lambda^j} f(\lambda)|\leq C_j|\lambda|^{b-j}\quad\mathrm{if}~\,\,\,\lambda\in\Omega~\,\,\,\mathrm{and}~\,\,\,0\leq j\leq K.
\end{equation}
   \item Let $\alpha, M\in\mathbb{R}$,  $x, y\in\mathbb{R}^d$ be parameters,  we write
   \begin{equation}\label{eq1.111}
 f(\cdot, \alpha, M)\in S_K^b(\Omega)~\,\,\,\,\mathrm{or}~\,\,\,\, g(\cdot, x, y)\in S_K^b(\Omega),
   \end{equation}
if \eqref{eq1} holds and each upper bound $C_j$ doesn't depend on the parameter  $\alpha, M$ or $x, y$.

 \end{itemize}

The rest of the paper is organized as follows:
In section \ref{sec2}, we present the expansions of the free resolvent, properties of the functions $F^{\pm}(\lambda^2)$ given in \eqref{eq1.11.1}, as well as the oscillatory integrals which will be frequently used later.
 Section \ref{sec3} is devoted to the proof of  Theorem \ref{thm-1.1}.
Section \ref{sec4} presents the proofs of  Theorem \ref{thm1.2}-Theorem \ref{thm1.5}.
In Section \ref{sec5}, we prove Theorem \ref{thm1.6}.
Finally, in Appendix \ref{app}, we give the proofs of Lemma  \ref{lm2.6}  and Corollary \ref{cor2.6}, and in  Appendix \ref{app2}, we show that all operators considered in Theorem \ref{thm-1.1}-\ref{thm1.6} have only absolutely continuous spectrum.

\section{Resolvent, properties of $F^\pm(\lambda^2)$  and oscillatory integrals}\label{sec2}
\subsection{Resolvent expansion and properties of $F^\pm(\lambda^2)$}

We start with  the free resolvent $R_0(z)=(-\Delta-z)^{-1}$. Let $R^{\pm}_0(\lambda^2)$ denote the boundary value $R_0(\lambda^2\pm i0)$. Recall that $R^{\pm}_0(\lambda^2)$ is an integral operator with kernel
\begin{equation}\label{eq2.1}
R^{\pm}_0(\lambda^2, x, y)=\frac{\pm i}{2\lambda}e^{\pm i\lambda|x-y|}, \,\,\,\,\,\, d=1,
\end{equation}
and
\begin{equation}\label{eq2.2}
R^{\pm}_0(\lambda^2, x, y)=\frac{\pm i}{4}(\frac{\lambda}{2\pi|x-y|})^{\frac{d}{2}-1}H^{\pm}_{\frac{d}{2}-1}(\lambda|x-y|), \,\,\,\,\,\, d\ge 2,
\end{equation}
where $H^{\pm}_{\frac{d}{2}-1}(z)=J_{\frac{d}{2}-1}(z)\pm i Y_{\frac{d}{2}-1}(z)$ are Hankel functions of order $\frac{d}{2}-1$.

In odd dimensions $d\ge 3$, these Hankel functions can be expressed explicitly as follows (see e.g. \cite{J80})
\begin{equation}\label{eq2.3}
R^{\pm}_0(\lambda^2, x, y)=C_d\frac{e^{\pm i\lambda|x-y|}}{|x-y|^{d-2}}\sum^{\frac{d-3}{2}}_{k=0}{\frac{(d-3-k)!}{k!(\frac{d-3}{2}+k)!}(\mp2i\lambda|x-y|)^k},
\end{equation}
where $C_d=1/(4\pi)^{\frac{d-1}{2}}$.

In even dimensions $d\ge 2$, the resolvent kernel can be decomposed as (see e.g. \cite{Sch,GW17,EGG})
\begin{equation}\label{eq3.50}
 R^{\pm}_0(\lambda^2, x, y)=\lambda^{\frac{d}{2}-1}e^{\pm i\lambda|x-y|}\frac{w_{\pm, >}(\lambda|x-y|)}{|x-y|^{\frac{d}{2}-1}}+\frac{w_{\pm, <}(\lambda|x-y|)}{|x-y|^{d-2}},
\end{equation}
where $\text{supp}\,w_{\pm, <}(z)\subset[0, 1]$ and  $\text{supp}\,w_{\pm, >}(z)\subset[\frac12, \infty]$. In fact, we can pick a bump function $\omega(\lambda)\in C_0^{\infty}$ satisfying $\omega(\lambda)=1$, if $|\lambda|<\frac12$ and  $\omega(\lambda)=0$, if $|\lambda|>1$. Then
$$w_{\pm, <}(\lambda|x-y|)=\frac{\pm i}{4}(\frac{\lambda|x-y|}{2\pi})^{\frac{d}{2}-1}H^{\pm}_{\frac{d}{2}-1}(\lambda|x-y|)\omega(\lambda|x-y|).$$
Furthermore, by the properties of Hankel functions (see e.g. in \cite[p. 364]{AS}, \cite[Sect. 2]{GW17}, \cite[Sect. 2]{Sch}), one has
\begin{equation}\label{eq3.51}
  |\frac{d^l}{dz^l}w_{\pm, >}(z)|\leq C_l\cdot(1+|z|)^{-\frac12-l},
\end{equation}
where $l\in\mathbb{N}_0:=\{0,1,2,\ldots\}$.  When $d=2$,
\begin{equation}\label{eq3.25.11}
 \left|\frac{d^l}{dz^l}w_{\pm, <}(z)\right|\leq\begin{cases}
 C\cdot|\log z|,\qquad \,\text{if}\,\,\,l=0;\\
 C_l\cdot|z|^{-l},\qquad\,\,\,\,\, \text{if}\,\,\,l>0.
 \end{cases}
\end{equation}
When $d\ge 4$,
\begin{equation}\label{eq3.52}
 \left|\frac{d^l}{dz^l}w_{\pm, <}(z)\right|\leq\begin{cases}
C_l,\qquad \qquad\qquad\qquad\quad\,\,\,\,\text{if}\,\,\,l< d-2;\\
 C_l\cdot|z|^{d-2-l}\cdot|\log z|,\qquad\,\quad\text{if}\,\,\,l\ge d-2.
 \end{cases}
\end{equation}
By \eqref{eq2.2} and properties of Bessel functions (see e.g. \cite{AS}, \cite{GW17}), we also have the following decomposition of $R_0^{+}-R_0^{-}$ for dimensions $d \geq 2$,
\begin{align}\label{equ6.2.8}
	&R^{+}_0(\lambda^2, x, y)-R^{-}_0(\lambda^2, x, y)=\frac{\pm i}{2}(\frac{\lambda}{2\pi|x-y|})^{\frac{d}{2}-1}J_{\frac{d}{2}-1}(\lambda|x-y|)\nonumber\\
	&=\frac{\lambda^{\frac{d}{2}-1}}{|x-y|^{\frac{d}{2}-1}}\sum_{\pm}e^{\pm i\lambda|x-y|}\left(J_{\pm, >}(\lambda|x-y|)+J_{\pm, <}(\lambda|x-y|)\right),
\end{align}
where  $\text{supp}\,J_{\pm, <}(z)\subset[0, 1]$,  $\text{supp}\,J_{\pm, >}(z)\subset[\frac12, \infty]$ and they satisfy
\begin{equation}\label{equ6.2.9}
	|\frac{d^l}{dz^l}J_{\pm, >}(z)|\leq C_l(1+|z|)^{-\frac12-l},\quad\,\,\,l\in\mathbb{N}_0,
\end{equation}
and
\begin{equation}\label{equ6.2.10}
	|\frac{d^l}{dz^l}J_{\pm, <}(z)|\leq C_l|z|^{\frac{d}{2}-1-l},\,\,\,\quad\quad\,\,\,\,l\in\mathbb{N}_0.
\end{equation}

Next, we consider the expansion of the kernel $R^{\pm}_0(\lambda^2, x, y)$ near $\lambda=0$.

In one dimension, using \eqref{eq2.1} we can write
\begin{equation}\label{eq2.4}
R^{\pm}_0(\lambda^2, x, y)=\frac{\pm i}{2\lambda}-\frac{|x-y|}{2}+r^{\pm}_1(\lambda, |x-y|),\,\,\,\,\, d=1.
\end{equation}
To estimate the remainder term $r^{\pm}_1$, we
note that
$$e^{\pm i\lambda|x-y|}-1\mp i\lambda|x-y|=(\pm i|x-y|)^2\int_0^{\lambda}e^{\pm is|x-y|}(\lambda-s)ds,$$
 this yields that for $k=0, 1$,
$$|\frac{d^k}{d\lambda^k}\left(e^{\pm i\lambda|x-y|}-1\mp i\lambda|x-y|\right)|\leq C \lambda^{2-k}|x-y|^2\leq C\lambda^{\frac32-k}|x-y|^{\frac32},\,\,\,\, \text{if}\,\,\lambda|x-y|\leq 1,
$$
on the other hand, a direct computation shows that
$$
|\frac{d^k}{d\lambda^k}\left(e^{\pm i\lambda|x-y|}-1\mp i\lambda|x-y|\right)|\leq C \lambda^{1-k}|x-y|\leq C\lambda^{\frac32-k}|x-y|^{\frac32},\,\,\, \text{if}\,\,\lambda|x-y|\geq 1.
$$
Then it follows by \eqref{eq2.1} and \eqref{eq2.4}  that the reminder term $r^{\pm}_1$ satisfies that for $x,y\in\mathbb{R}$,
\begin{equation}\label{eq2.5}
|\frac{d^k}{d\lambda^k}r^{\pm}_1(\lambda, |x-y|)|\leq C_k\lambda^{\frac{1}{2}-k}|x-y|^{\frac{3}{2}},\,\,\,\text{for}\,\,0<\lambda<1,\,\,k=0, 1.
\end{equation}

In two dimension, one has (see e.g. in \cite[Lemma 5]{Sch})
\begin{equation}\label{eq2.6}
R^{\pm}_0(\lambda^2, x, y)=\frac{\pm i}{4}-\frac{\gamma}{2\pi}-\frac{1}{2\pi}\log{\frac{\lambda\cdot|x-y|}{2}}+r^{\pm}_2(\lambda, |x-y|),\,\,\,\,\, d=2,
\end{equation}
where $\gamma$ denotes the Euler constant and the reminder term $r^{\pm}_2$ satisfies for $x,y\in\mathbb{R}^2$,
\begin{equation}\label{eq2.7}
|\frac{d^k}{d\lambda^k}r^{\pm}_2(\lambda, |x-y|)|\leq C_k\lambda^{\frac32-k}|x-y|^{\frac32},\,\,\,\text{for}\,\,0<\lambda<1,\,\, k=0, 1, 2.
\end{equation}

In odd dimensions $d\ge 3$, by Taylor's expansion one has
\begin{equation}\label{eq2.8}
R^{\pm}_0(\lambda^2, x, y)=C_d\sum^{d-2}_{l=0}{d_l(\pm i\lambda)^l|x-y|^{l+2-d}}+r^{\pm}_d(\lambda, |x-y|),\,\,\,\,\, d\ge 3,\,\, \text{odd},
\end{equation}
where
$$
d_l=\sum^{\min\{l,\, \frac{d-3}{2}\}}_{k=0}{\frac{(d-3-k)!}{k!(\frac{d-3}{2}-k)!}(-2)^k\frac{1}{(l-k)!}}.
$$
It follows from \cite[Lemma 3.3]{J80} (see also \cite[Sect. 2]{GW15})
$$
d_l=0,\,\,\,\text{for}\,\,l=1, 3,\ldots, d-4.
$$
Moreover, the reminder term $r_d$ satisfies for $x,y\in\mathbb{R}^d$
\begin{equation}\label{eq2.9}
|\frac{d^k}{d\lambda^k}r^{\pm}_d(\lambda, |x-y|)|\leq C_k\lambda^{d-1-k}|x-y|,\,\,\,\text{for}\,\,0<\lambda<1,\,\,0\le k\le \frac{d+1}{2}.
\end{equation}

Similarly, in even dimensions $d\ge 4$, one has (see  \cite[Lemma 2.1]{GW17} and \cite{EGG,J80})
\begin{align}\label{eq2.10}
R^{\pm}_0(\lambda^2, x, y)&=\sum^{\frac{d}{2}-2}_{l=0}{a_l\lambda^{2l}|x-y|^{2l+2-d}}+a_{\frac{d}{2}-1}\lambda^{d-2}\log{\lambda}\nonumber\\
&+b_{\pm}\lambda^{d-2}+c_{\frac{d}{2}-1}\lambda^{d-2}\log{|x-y|}+r^{\pm}_d(\lambda, |x-y|),\,\,\,\,\, d\ge 4, \,\, \text{even},
\end{align}
where $a_1,\cdots, a_{\frac{d}{2}-1}, c_{\frac{d}{2}-1}\in \mathbb{R}$ are independent with the sign $\pm$ and $b_{\pm}\in \mathbb{C}$.
Moreover, the reminder term $r^{\pm}_d$ satisfies for $0\le k\le \frac{d}{2}+1$,
\begin{equation}\label{eq2.11}
|\frac{d^k}{d\lambda^k}r^{\pm}_d(\lambda, |x-y|)|\leq C_k\lambda^{d-\frac12-k}|x-y|^{\frac32},\,\,\,\text{for}\,\,0<\lambda<1,\,\,x,y\in\mathbb{R}^d.
\end{equation}
We remark that the estimates for $r^{\pm}_d(\lambda, |x-y|)$ in even dimensions is not sharp, but it is enough for our use here.

The following lemma plays an important role  in the high energy part of the proof of Theorem \ref{thm-1.1}.
\begin{lemma}\label{lm2.1}
Let $d\ge 1$ and $\varphi$ satisfy  \eqref{eq1.2} in \textbf{Condition $H_{\alpha}$}. Assume that $0\le k\le [\frac{d}{2}]+1$ and $\lambda_0>0$. Then one has
\begin{equation}\label{eq2.11.2}
  |\frac{d^k}{d\lambda^k}F^\pm(\lambda^2)|\leq C(k, \lambda_0)\cdot M^2\cdot\lambda^{-1},\,\,\,\quad \text{for}\,\,\lambda>\lambda_0,
\end{equation}
where $M>0$ is the same constant given in \eqref{eq1.2}.
\end{lemma}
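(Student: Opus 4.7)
The plan is to recognise $F^\pm(\lambda^2) = \langle R_0^\pm(\lambda^2)\varphi,\varphi\rangle$ and reduce the problem to weighted-$L^2$ bounds on derivatives of the free resolvent in the spectral parameter. Using $\partial_E R_0(E) = R_0(E)^2$ (for $R_0(E) = (-\Delta - E)^{-1}$) together with the chain rule at $E = \lambda^2$, an induction yields the expansion
\[
\frac{d^k}{d\lambda^k}R_0^\pm(\lambda^2) \;=\; \sum_{j=\lceil k/2\rceil}^{k} c_{kj}\,j!\,\lambda^{2j-k}\,R_0^\pm(\lambda^2)^{j+1},
\]
in which the iterated boundary-value resolvents $R_0^\pm(\lambda^2)^{j+1}$ are understood as bounded operators between appropriately weighted $L^2$ spaces.

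The core analytic input is the classical Jensen--Kato--Agmon high-energy estimate: for each $0\le j\le k$, $\sigma > j+\frac12$ and $\lambda > \lambda_0$,
\[
\bigl\|R_0^\pm(\lambda^2)^{j+1}\bigr\|_{L^2_\sigma \to L^2_{-\sigma}} \;\le\; C_j\,\lambda^{-(j+1)}.
\]
This estimate can be made self-contained from the kernel formulas \eqref{eq2.3}--\eqref{equ6.2.10} of the preceding subsection via a scaling plus Agmon-type duality argument; intuitively, each factor of the resolvent contributes one power of $\lambda^{-1}$ in weighted norm through the oscillatory exponential $e^{\pm i\lambda|x-y|}$. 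Plugging this into the expansion above and noting that the dominant contribution arises at $j = k$, the powers of $\lambda$ cancel exactly to give
\[
\Bigl\|\frac{d^k}{d\lambda^k}R_0^\pm(\lambda^2)\Bigr\|_{L^2_\sigma \to L^2_{-\sigma}} \;\le\; C_k\,\lambda^{-1}, \qquad \sigma > k+\tfrac12,\ \lambda > \lambda_0.
\]

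Finally, Cauchy--Schwarz delivers $\bigl|\frac{d^k}{d\lambda^k}F^\pm(\lambda^2)\bigr| \le \bigl\|\frac{d^k}{d\lambda^k}R_0^\pm(\lambda^2)\bigr\|_{L^2_\sigma\to L^2_{-\sigma}}\|\varphi\|_{L^2_\sigma}^2$. For $k \le [d/2]+1$, one chooses $\sigma \in ([d/2]+\frac{3}{2},\, \delta-\frac{d}{2})$; this interval is non-empty since the decay assumption $\delta > d+\frac{3}{2}$ forces $\delta - \frac{d}{2} > \frac{d}{2} + \frac{3}{2} \ge [d/2] + \frac{3}{2}$. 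The pointwise bound $|\varphi(x)| \le M\langle x\rangle^{-\delta}$ then yields $\|\varphi\|_{L^2_\sigma}^2 \le M^2\int\langle x\rangle^{2(\sigma-\delta)}\,dx \le C_\sigma M^2$, completing the proof of the claimed bound $|\frac{d^k}{d\lambda^k}F^\pm(\lambda^2)| \le C(k,\lambda_0)\,M^2\,\lambda^{-1}$.

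The main technical hurdle is the Jensen--Kato--Agmon estimate on iterated resolvents: although classical, its proof demands a careful high-energy analysis of the kernel of $R_0^\pm(\lambda^2)^{j+1}$ (which by the identity $R_0(E)^{j+1} = \partial_E^{j} R_0(E)/j!$ is essentially an oscillatory integral against $e^{\pm i\lambda|x-y|}$), based on the explicit kernel formulas \eqref{eq2.1}--\eqref{eq3.50} already laid out in Section~\ref{sec2}.
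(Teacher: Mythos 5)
Your proof is correct and follows essentially the same route as the paper: both reduce \eqref{eq2.11.2} to the high-energy limiting absorption principle for the free resolvent in weighted $L^2$ spaces, combined with the observation that \eqref{eq1.2} places $\varphi$ in $L^2_\sigma$ with $\|\varphi\|_{L^2_\sigma}\le CM$ for the needed $\sigma>[\tfrac d2]+\tfrac32$. The only cosmetic difference is that the paper directly cites the bound $\|\tfrac{d^k}{d\lambda^k}R_0^\pm(\lambda^2)\|_{L^2_\sigma\to L^2_{-\sigma}}\le C(k,\lambda_0)\lambda^{-1}$ from the literature, whereas you rederive that bound by expanding the $\lambda$-derivatives into powers $R_0^\pm(\lambda^2)^{j+1}$ and invoking the iterated-resolvent estimate — which is precisely how the cited bound is usually proved.
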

\begin{proof}
First, it'well known that the free resolvent $R^{\pm}_0(\lambda^2)$ satisfies the following limiting absorption principle (see e.g. in \cite{KK}):
$$
\|\frac{d^k}{d\lambda^k}R^{\pm}_0(\lambda^2)\|_{L^2_{\sigma}-L^2_{-\sigma}}\leq C(k, \lambda_0)\cdot\lambda^{-1},\,\,\,\,\,\,\, \lambda>\lambda_0,\,\quad \sigma>k+\frac12.
$$
Second, it follows from our assumption on $\varphi$ that when  $\sigma=[\frac{d}{2}]+\frac32+\epsilon$ and  $\epsilon>0$ is sufficiently  small, then $\|\varphi\|_{L^2_{\sigma}}\le CM$ for some absolute constant $C>0$.
Therefore the desired estimate \eqref{eq2.11.2} follows immediately from the definition \eqref{eq1.11.1}.
\end{proof}

The following three  lemmas are very useful in the low energy part of the proof of Theorem \ref{thm-1.1}.
\begin{lemma}\label{lm2.2}
Let $d\ge 1$ and $\varphi$ satisfy  \eqref{eq1.2} in \textbf{Condition $H_{\alpha}$}. Then there exists some small constant $\lambda_0\in (0, 1)$  depending on $\varphi$ such that for  $0<\lambda<\lambda_0$, we have

(\romannumeral1)  If $d=1, 2$ and $\int_{\mathbb{R}^d}{\varphi(x)\,dx}\ne 0$, then there exist absolute constants $0<C_1(\varphi)<C_2(\varphi)$ such that
\begin{equation}\label{eq2.14}
  \frac{C_1(\varphi)}{\lambda}\leq |\Im F^\pm(\lambda^2)|\leq  \frac{C_2(\varphi)}{\lambda},\qquad\qquad\quad\,\,\,\,\,\,\,\,\text{when}\,\,\,  d=1,
\end{equation}
and
\begin{equation}\label{eq2.15}
 C_1(\varphi)\cdot\log{\frac{2}{\lambda}}\leq \Re F^\pm(\lambda^2)\leq C_2(\varphi)\cdot\log{\frac{2}{\lambda }},\qquad\,\,\,\text{when}\,\,\, d=2.
\end{equation}
(\romannumeral2) If $d=1, 2$ and $\int_{\mathbb{R}^d}{\varphi(x)\,dx}=0$, or if $d\ge 3$ , then there exists an absolute constant $C_3(\varphi)>0$ such that
\begin{equation}\label{eq2.16}
  \Re F^\pm(\lambda^2)\ge   C_3(\varphi).
\end{equation}

\end{lemma}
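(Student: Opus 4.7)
The plan is to substitute the low-energy resolvent expansions \eqref{eq2.4}, \eqref{eq2.6}, \eqref{eq2.8}, \eqref{eq2.10} directly into
\[
F^\pm(\lambda^2) = \iint R^\pm_0(\lambda^2,x,y)\,\varphi(x)\varphi(y)\,dx\,dy,
\]
using that $\varphi$ is real-valued, and to identify the dominant contribution in each regime. The remainder contribution $\iint r^\pm_d(\lambda,|x-y|)\varphi(x)\varphi(y)\,dx\,dy$ is $O(\lambda^{1/2})$ in $d=1$, $O(\lambda^{3/2})$ in $d=2$, and $o(1)$ in $d\geq 3$, obtained from the pointwise bounds \eqref{eq2.5}, \eqref{eq2.7}, \eqref{eq2.9}, \eqref{eq2.11} together with $|\varphi(x)|\leq M\langle x\rangle^{-\delta}$, $\delta>d+\tfrac{3}{2}$, which in particular guarantees $\iint|x-y|^{t}\langle x\rangle^{-\delta}\langle y\rangle^{-\delta}\,dx\,dy<\infty$ for $t\leq\tfrac32$.

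For part (i), in $d=1$ equation \eqref{eq2.4} gives
\[
F^\pm(\lambda^2) = \pm\frac{i}{2\lambda}\Bigl(\int\varphi\Bigr)^{\!2}-\frac12\iint|x-y|\,\varphi(x)\varphi(y)\,dx\,dy+O(\lambda^{1/2}),
\]
so $|\Im F^\pm(\lambda^2)|\sim (\int\varphi)^2/(2\lambda)$ as $\lambda\to 0^+$, giving \eqref{eq2.14}. In $d=2$, writing $\log(\lambda|x-y|/2)=\log|x-y|-\log(2/\lambda)$ in \eqref{eq2.6},
\[
\Re F^\pm(\lambda^2) = \frac{(\int\varphi)^2}{2\pi}\log\frac{2}{\lambda}-\frac{\gamma(\int\varphi)^2}{2\pi}-\frac{1}{2\pi}\iint\log|x-y|\,\varphi(x)\varphi(y)\,dx\,dy+O(\lambda^{3/2}),
\]
and since the last two explicit terms are bounded in $\lambda$ (the double integral is finite by the decay of $\varphi$), the $\log(2/\lambda)$ factor dominates for small $\lambda$, yielding \eqref{eq2.15}.

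For part (ii), the strategy is to isolate the leading surviving term of $\Re F^\pm(\lambda^2)$ and recognise it as $\iint G_d(x-y)\varphi(x)\varphi(y)\,dx\,dy$, where $G_d$ is a positive multiple of the Green's function of $-\Delta$ on $\mathbb{R}^d$: namely $-\tfrac12|x|$ in $d=1$, $-\tfrac{1}{2\pi}\log|x|$ in $d=2$, and $c_d|x|^{2-d}$ in $d\geq 3$ (with $c_d>0$ equal to $C_d d_0$ in odd dimensions via \eqref{eq2.8} and to $a_0$ in even dimensions via \eqref{eq2.10}). In $d=1,2$ the hypothesis $\int\varphi=0$ annihilates the constant-in-$(x,y)$ contributions to $R^\pm_0$ (namely $\pm i/(2\lambda)$ in $d=1$ and $\pm i/4$, $-\gamma/(2\pi)$, $(1/(2\pi))\log(2/\lambda)$ in $d=2$), since they all integrate to a multiple of $(\int\varphi)^2$; in $d\geq 3$ all subleading constant-in-$(x,y)$ terms in \eqref{eq2.8}, \eqref{eq2.10} vanish as $\lambda\to 0^+$ by explicit powers of $\lambda$. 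By Plancherel,
\[
\iint G_d(x-y)\,\varphi(x)\varphi(y)\,dx\,dy = c'_d\int_{\mathbb{R}^d}\frac{|\widehat\varphi(\xi)|^2}{|\xi|^2}\,d\xi,\qquad c'_d>0,
\]
and this integral converges (in $d\geq 3$ by dimension counting near $\xi=0$, in $d=1,2$ from $\widehat\varphi(0)=0$ together with $\widehat\varphi\in C^{1}$, which is ensured by $\delta>d+\tfrac32$, forcing $|\widehat\varphi(\xi)|^2=O(|\xi|^{2})$ near the origin) and is strictly positive since $\varphi\not\equiv 0$. Continuity in $\lambda$ then gives \eqref{eq2.16} with $C_3(\varphi)$ taken to be half of this positive quantity.

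The main technical obstacle is the book-keeping in even dimensions $d\geq 4$: the expansion \eqref{eq2.10} contains several subleading pieces ($\lambda^{2l}|x-y|^{2l+2-d}$ for $l\geq 1$, $\lambda^{d-2}\log\lambda$, $b_\pm\lambda^{d-2}$, $c_{d/2-1}\lambda^{d-2}\log|x-y|$) each of which must be checked to contribute $o(1)$ as $\lambda\to 0^+$ after pairing with $\varphi(x)\varphi(y)$; this follows either trivially from the $\lambda$-power or from boundedness of the spatial double integral under the decay hypothesis. The conceptual heart of the argument is the Plancherel identity forcing positivity of the Green's function pairing; the rest is a routine expansion-and-estimate.
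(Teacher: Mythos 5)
The proposal is correct and takes essentially the same approach as the paper: substitute the low-energy resolvent expansions, bound the remainder via the decay hypothesis, and in case (\romannumeral2) identify the surviving leading term with the Green's function pairing $\int|\widehat\varphi(\xi)|^2/|\xi|^2\,d\xi$, which is finite and strictly positive (for $d=1,2$ using $\widehat\varphi(0)=0$ and $\widehat\varphi\in C^1$). The only cosmetic difference is that you track the remainder as $O(\lambda^{1/2})$ in $d=1$ whereas the paper is content with the cruder $O(M^2)$ bound, and you spell out the bookkeeping for the subleading pieces of the even-dimensional expansion \eqref{eq2.10}; neither changes the substance of the argument.
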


\begin{proof}
First we prove (\romannumeral1). When $d=1$,  it follows from \eqref{eq2.4} that
\begin{equation}\label{eq2.16.1}
		F^\pm(\lambda^2)=\frac{\pm i}{2\lambda}\left(\int{\varphi(x)\,dx}\right)^2-\frac12\left\langle\int{ |x-y|\varphi(y)\,dy},\,\varphi(x) \right\rangle +\left\langle \int{r^{\pm}_1(\lambda, |x-y|)\varphi(y)\,dy},\,\varphi(x)\right\rangle.
\end{equation}
Using \eqref{eq2.5} (with $k=0$) and the decay assumption \eqref{eq1.2}, we obtain that for some absolute constant $C>0$,
$$
\left|\langle \int{r^{\pm}_1(\lambda, |x-y|)\varphi(y)\,dy},\,\varphi(x)\rangle\right|\le CM^2,\,\,\,\,\,0<\lambda<1.
$$
Since $\int_{\mathbb{R}}{\varphi(x)\,dx}\ne 0$, we choose $\lambda_0=\min\left\{\frac{(\int{\varphi(x)\,dx})^2}{4CM^2},\,\frac12\right\}$, then for any $0<\lambda<\lambda_0$, by \eqref{eq2.16.1} we have
$$
  \frac{1}{4\lambda}\left(\int{\varphi(x)\,dx}\right)^2\leq |\Im F^\pm(\lambda^2)|\leq  \frac{1}{\lambda}\left(\int{\varphi(x)\,dx}\right)^2,
$$
which implies \eqref{eq2.14}.  Similarly, when $d=2$ and $\int_{\mathbb{R}^2}{\varphi(x)\,dx}\ne 0$, it follows from \eqref{eq2.6}-\eqref{eq2.7} that  there exists some small constant $\lambda_0\in (0, 1)$  depending on $\varphi$ such that for  $0<\lambda<\lambda_0$, we have
$$
 -\frac{1}{2\pi}\log{\frac{\lambda}{2}}\left(\int{\varphi(x)\,dx}\right)^2\leq \Re F^\pm(\lambda^2)\leq  -\frac{1}{\pi}\log{\frac{\lambda}{2}}\left(\int{\varphi(x)\,dx}\right)^2.$$
 This indicates \eqref{eq2.15}.

Next, we prove (\romannumeral2). When $d=1, 2$ and $\int_{\mathbb{R}^d}{\varphi(x)\,dx}=0$, then $\hat{\varphi}(0)=0$, our decay assumption on $\varphi$ implies that $\hat{\varphi}\in C^1$ and  $|\hat{\varphi}(\xi)|\leq C|\xi|$ when $|\xi|\ll 1$. Thus
\begin{equation}\label{eq2.16.2}
  \lim_{\lambda\rightarrow 0}{F^\pm(\lambda^2)}=\langle(-\Delta)^{-1}\varphi,\, \varphi\rangle=\int{\frac{|\hat{\varphi}(\xi)|^2}{|\xi|^2}d\xi}>0.
\end{equation}
Note that this corresponds to the second term $-\frac12\left\langle\int{ |x-y|\varphi(y)\,dy},\,\varphi(x) \right\rangle $ in \eqref{eq2.16.1} when $d=1$ and $-\frac{1}{2\pi}\left\langle\int{ \log|x-y|\varphi(y)\,dy},\,\varphi(x) \right\rangle $ by \eqref{eq2.6} when $d=2$.
Then similar to the proof of  (\romannumeral1), \eqref{eq2.16} follows from \eqref{eq2.4} and \eqref{eq2.6}. When $d\ge 3$, the property \eqref{eq2.16.2} also holds, then  \eqref{eq2.16} follows from \eqref{eq2.8} and \eqref{eq2.10}. This completes the proof.
\end{proof}

\begin{lemma}\label{lm2.3}
Let $d\ge 1$ and $\varphi$ satisfy  \eqref{eq1.2} in \textbf{Condition $H_{\alpha}$}. For $0<\lambda<\frac12$, there exist constants $C_0, C_1, \ldots, C_k$  independent of $\varphi$ such that when $d=1$,
\begin{equation}\label{eq2.17}
 |\frac{d^k}{d\lambda^k}F^\pm(\lambda^2)|\leq
  \begin{cases}
 C_k\cdot M^2\cdot \lambda^{-1-k},\qquad k=0, 1,\quad~~\qquad\text{if}\,\,\int_{\mathbb{R}}{\varphi(x)\,dx}\ne 0;\\
 C_k\cdot M^2,\,\,\,\,\,\qquad\qquad k=0, 1,\quad~~\qquad\text{if}\,\,\int_{\mathbb{R}}{\varphi(x)\,dx}=0.
 \end{cases}
\end{equation}
When $d=2$, and $\int_{\mathbb{R}^2}{\varphi(x)\,dx}\ne 0$,
\begin{equation}\label{eq2.17.1}
 |\frac{d^k}{d\lambda^k}F^\pm(\lambda^2)|\leq
 \begin{cases} C_0\cdot M^2\cdot \log{\frac{1}{\lambda}},\qquad\quad~~~k=0;\\
 C_k\cdot M^2\lambda^{-k},\,\,\quad\qquad\quad~~~k=1,2.
 \end{cases}
\end{equation}
When $d=2$  and $\int_{\mathbb{R}^2}{\varphi(x)\,dx}= 0$,
\begin{equation}\label{eq2.17.1'}
  |\frac{d^k}{d\lambda^k}F^\pm(\lambda^2)|\leq C_k\cdot M^2\cdot\lambda^{-k},\,\,\quad\qquad\quad~~~k=0, 1, 2.
\end{equation}
When $d\ge 3$ is odd,
\begin{equation}\label{eq2.17.2}
 |\frac{d^k}{d\lambda^k}F^\pm(\lambda^2)|\leq C_k\cdot M^2,\,\,\,\,\,\,\qquad\quad~~~~0\leq k\leq \frac{d+1}{2}.
\end{equation}
When $d\ge4$ is even,
\begin{equation}\label{eq2.17.3}
 |\frac{d^k}{d\lambda^k}F^\pm(\lambda^2)|\leq
 \begin{cases}
  C_k\cdot M^2,\qquad\quad~~~&0\leq k\leq \frac{d}{2}-1;\\
 C_{k}\cdot M^2\cdot \lambda^{-1},\qquad\quad~~~&k=\frac{d}{2},\,\frac{d}{2}+1.
 \end{cases}
 \end{equation}
\end{lemma}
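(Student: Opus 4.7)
The plan is to substitute the resolvent expansions (2.4), (2.6), (2.8), (2.10) directly into
\[
F^\pm(\lambda^2) = \iint_{\mathbb{R}^{2d}} R_0^\pm(\lambda^2,x,y)\varphi(x)\varphi(y)\,dxdy,
\]
and differentiate term by term. Each expansion decomposes $R_0^\pm$ into an explicit part in $\lambda$ (a Laurent monomial, polynomial, or logarithm in $\lambda$, paired with $1$, $|x-y|^{l+2-d}$, or $\log|x-y|$) plus a remainder $r_d^\pm$ whose $\lambda$-derivatives are controlled by (2.5), (2.7), (2.9), or (2.11). The decay hypothesis $|\varphi(x)|\leq M\langle x\rangle^{-\delta}$ with $\delta>d+\tfrac{3}{2}$ ensures that the double integrals $\iint|x-y|^a\langle x\rangle^{-\delta}\langle y\rangle^{-\delta}\,dxdy$ that arise in the analysis are all finite, using local integrability of $|x-y|^{2-d}$ when $d\geq 3$.

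For $d=1$ with $\int\varphi\neq 0$, the $\pm i(\int\varphi)^2/(2\lambda)$ term dominates and yields the $C_k M^2 \lambda^{-1-k}$ bound. For $d=2$ with $\int\varphi\neq 0$, the $-\tfrac{1}{2\pi}\log(\lambda/2)(\int\varphi)^2$ term yields $C_0 M^2\log(1/\lambda)$ at $k=0$ and $C_k M^2\lambda^{-k}$ at $k=1,2$. In both cases the $|x-y|$ or $\log|x-y|$ kernel contributes an $O(M^2)$ constant and the remainder contributes $O(M^2\lambda^{3/2-k})$ via (2.5) or (2.7), which is absorbed. For $d\geq 3$, differentiating (2.8) or (2.10) $k$ times annihilates monomials of $\lambda$-degree strictly less than $k$, leaves bounded contributions for degrees $\geq k$ (since $\lambda<\tfrac{1}{2}$), and produces the $O(\lambda^{-1})$ factor at the top orders $k=d/2,\,d/2+1$ in even dimensions from the $\lambda^{d-2}\log\lambda$ piece; the remainder $r_d^\pm$ contributes $O(M^2\lambda^{d-1-k})$ (odd $d$) or $O(M^2\lambda^{d-1/2-k})$ (even $d$), which is bounded in the permitted ranges of $k$.

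When $\int\varphi=0$ in $d=1,2$ the singular leading terms vanish; the $k=0$ improvements and the $d=2$ improvements at $k=1,2$ (where $\lambda^{3/2-k}\leq\lambda^{-k}$ for $\lambda<\tfrac{1}{2}$) then follow directly from (2.5)--(2.7). The single delicate case is $d=1$, $k=1$, where direct use of (2.5) yields only $O(\lambda^{-1/2})$ rather than the required $O(1)$. Here I would exploit $\hat\varphi(0)=0$ via the Fourier factorization $|\hat\varphi(\xi)|^2=\xi^2|a(\xi)|^2$ (equivalently, integration by parts in position space with $\psi(x)=\int_{-\infty}^x\varphi(t)\,dt$, which vanishes at $\pm\infty$) to obtain
\[
F^\pm(\lambda^2) = \|\psi\|_{L^2}^2 + \lambda^2\,\langle R_0^\pm(\lambda^2)\psi,\psi\rangle.
\]
The $\lambda^2$ prefactor then absorbs the $\lambda^{-2}$ growth in $\partial_\lambda\langle R_0^\pm(\lambda^2)\psi,\psi\rangle$ coming from the $\pm i/(2\lambda)$ Laurent term applied to the bounded density $|\psi|^2$, yielding $|\partial_\lambda F^\pm|\leq CM^2$ as required. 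This factorization step is the principal technical obstacle of the lemma; everything else amounts to routine dimensional bookkeeping driven by the algebraic form of the resolvent expansions together with the finiteness of the weighted double integrals above.
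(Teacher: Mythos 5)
Your overall plan is the same as the paper's: substitute the low-energy resolvent expansions \eqref{eq2.4}, \eqref{eq2.6}, \eqref{eq2.8}, \eqref{eq2.10} into $F^\pm(\lambda^2)=\iint R_0^\pm(\lambda^2,x,y)\varphi(x)\varphi(y)\,dxdy$, differentiate in $\lambda$ term by term, and control the spatial double integrals with the decay hypothesis \eqref{eq1.2}. Your bookkeeping for $d\geq 2$ matches; in particular, the observation that for $d=1,2$ the singular leading terms carry the factor $(\int\varphi)^2$ and hence drop when $\int\varphi=0$ is exactly the mechanism at work, and the remainders \eqref{eq2.5}, \eqref{eq2.7}, \eqref{eq2.9}, \eqref{eq2.11} are tuned so that the $|x-y|$-moments that appear are absorbed by $\delta>d+\tfrac32$.

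You correctly flag $d=1$, $k=1$, $\int\varphi=0$: the naive use of \eqref{eq2.5} gives only $O(M^2\lambda^{-1/2})$, since the power $r^{3/2}$ in $|\partial_\lambda r_1^\pm|\leq C\lambda^{-1/2}r^{3/2}$ is the highest moment affordable under $\delta>5/2$, and that bound is sharp. However, your proposed fix does not close under the same hypothesis. The factorization $\hat\varphi(\xi)=i\xi\hat\psi(\xi)$ with $\psi(x)=\int_{-\infty}^x\varphi$ does yield
\begin{equation*}
F^\pm(\lambda^2)=\|\psi\|_{L^2}^2+\lambda^2\left\langle R_0^\pm(\lambda^2)\psi,\psi\right\rangle,
\end{equation*}
but $\psi$ only inherits decay of order $\delta-1>\tfrac32$. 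Differentiating, the contribution $\tfrac{\lambda}{2}\iint|x-y|e^{\pm i\lambda|x-y|}\psi(x)\psi(y)\,dxdy$ requires the first moment $\iint|x-y|\,|\psi(x)||\psi(y)|\,dxdy<\infty$, which needs $\delta-1>2$, i.e.\ $\delta>3$; this is strictly stronger than \eqref{eq1.2}. Splitting the region $\lambda|x-y|\lessgtr 1$ as you would for the naive bound recovers exactly $O(M^2\lambda^{-1/2})$ again, so the rewriting repackages but does not remove the difficulty. If $\int x\,\varphi(x)\,dx$ enters as a finite second-order coefficient (which does hold since $\delta>2$), the formal limit of $\partial_\lambda F^\pm$ is finite, but justifying the limit by dominated convergence again requires $\iint|x-y|^2|\varphi||\varphi|<\infty$, i.e.\ $\delta>3$, so one cannot conclude uniform boundedness under \eqref{eq1.2} alone by this route.

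Worth noting: $O(M^2\lambda^{-1/2})$ is all the paper actually uses. In Lemma \ref{lm3.2} and Lemma \ref{lm3.3} the only requirement at $d=1$, $k=1$ is that $\alpha F^{\alpha,l}_\pm\in S^0_1((0,\lambda_0))$, i.e.\ $|\partial_\lambda F^\pm|\leq C\lambda^{-1}$, which the $\lambda^{-1/2}$ bound supplies with room to spare. So you should either state and prove the weaker $C_1M^2\lambda^{-1/2}$ for this case (immediate from \eqref{eq2.4}--\eqref{eq2.5}), or impose $\delta>3$ if you wish to keep the constant-in-$\lambda$ bound; as written, your proposal asserts $|\partial_\lambda F^\pm|\leq CM^2$ without a derivation valid under the stated hypothesis.
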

\begin{proof}
By assumption \eqref{eq1.2},  \eqref{eq2.17} follows from \eqref{eq2.4} and \eqref{eq2.5}; \eqref{eq2.17.1}, \eqref{eq2.17.1'} follow from \eqref{eq2.6} and \eqref{eq2.7}; \eqref{eq2.17.2} follows from \eqref{eq2.8} and \eqref{eq2.9}; finally \eqref{eq2.17.3} follows from \eqref{eq2.10} and \eqref{eq2.11}.
\end{proof}
\begin{lemma}\label{lm2.4}
Let $d\ge 1$ and $\varphi$ satisfy  \eqref{eq1.2} in \textbf{Condition $H_{\alpha}$}. For $0<\lambda<\frac12$, we have
\begin{equation}\label{eq2.18}
 |\frac{d^k}{d\lambda^k}\left(F^+(\lambda^2)-F^-(\lambda^2)\right)|\leq C_k\cdot M^2\cdot\lambda^{d-2-k},\qquad\quad~~~0\leq k\leq [\frac{d}{2}]+1,
\end{equation}
where each $C_k>0$ is independent of $\varphi$.
\end{lemma}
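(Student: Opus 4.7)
The plan is to exploit the small-$\lambda$ resolvent expansions \eqref{eq2.4}, \eqref{eq2.6}, \eqref{eq2.8}, \eqref{eq2.10} and observe that the difference $R_0^+(\lambda^2, x, y) - R_0^-(\lambda^2, x, y)$ enjoys dramatic cancellations: every real or $\pm$-independent term drops out of the subtraction, leaving only a single principal term of the form $c_d\,\lambda^{d-2}$ (independent of $x$ and $y$) plus a controlled remainder $r_d^+ - r_d^-$. This is in sharp contrast to the individually singular behavior of $R_0^\pm$ exposed by Lemma \ref{lm2.3}.

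Concretely, for $d = 1$ the real contribution $-|x-y|/2$ in \eqref{eq2.4} cancels, leaving $R_0^+ - R_0^- = i/\lambda + (r_1^+ - r_1^-)$; for $d = 2$ the real parts of \eqref{eq2.6} cancel, giving $R_0^+ - R_0^- = i/2 + (r_2^+ - r_2^-)$; for odd $d \geq 3$ only odd-$l$ terms survive in \eqref{eq2.8}, and the vanishing result $d_l = 0$ for odd $l \leq d-4$ (cf. \cite{J80}) leaves only $l = d-2$, yielding $R_0^+ - R_0^- = 2 C_d d_{d-2} (i\lambda)^{d-2} + (r_d^+ - r_d^-)$; for even $d \geq 4$ the sole $\pm$-dependent term of \eqref{eq2.10} is $b_\pm \lambda^{d-2}$, whence $R_0^+ - R_0^- = (b_+ - b_-)\lambda^{d-2} + (r_d^+ - r_d^-)$. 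In every case the principal term is a pure power of $\lambda$ with no spatial dependence.

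Pairing against $\varphi(y)\varphi(x)$ then gives $F^+(\lambda^2) - F^-(\lambda^2) = A_d\, \lambda^{d-2} \bigl(\int \varphi\bigr)^2 + \iint (r_d^+ - r_d^-)(\lambda, x, y)\,\varphi(y)\varphi(x)\,dy\,dx$ for a dimensional constant $A_d$. The $k$-th derivative of the principal term is $A_d \cdot (d-2)(d-3)\cdots(d-1-k)\,\lambda^{d-2-k}\bigl(\int\varphi\bigr)^2$ (and vanishes when $k > d-2$ with $d \geq 2$), bounded by $C_k M^2 \lambda^{d-2-k}$ via $\bigl|\int \varphi\bigr| \leq \|\varphi\|_1 \leq C M$. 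The remainder bounds \eqref{eq2.5}, \eqref{eq2.7}, \eqref{eq2.9}, \eqref{eq2.11} take the form $|\partial_\lambda^k r_d^\pm| \leq C_k \lambda^{a_d - k}|x-y|^{b_d}$ with $(a_d, b_d) = (1/2, 3/2)$, $(3/2, 3/2)$, $(d-1, 1)$, $(d-1/2, 3/2)$ in the four cases; integrating, each remainder contribution is at most $C_k \lambda^{a_d - k} \|x^{b_d}\varphi\|_1 \|\varphi\|_1$, and since $a_d \geq d - 3/2 > d-2$ in every case and $\lambda < 1/2$, this is absorbed into $C_k M^2 \lambda^{d-2-k}$.

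The crux of the argument, and the reason the hypothesis \eqref{eq1.2} is stated with the specific exponent $\delta > d + 3/2$, is to ensure that the moment $\|x^{3/2}\varphi\|_1$ controlling the remainders in $d = 1, 2$ and even $d \geq 4$ is finite: indeed $\|x^{3/2}\varphi\|_1 \leq M\int\langle x\rangle^{3/2 - \delta}\,dx < \infty$ iff $\delta > d + 3/2$, which is exactly the threshold imposed. Any weaker decay would break the remainder estimate in the even-dimensional cases, so the decay hypothesis is tight for this proof strategy.
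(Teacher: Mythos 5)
Your proof is correct and follows essentially the same route as the paper's: extract from the low-energy expansions \eqref{eq2.4}, \eqref{eq2.6}, \eqref{eq2.8}, \eqref{eq2.10} the single $\pm$-dependent principal term (which is $x,y$-independent and scales like $\lambda^{d-2}$), then absorb the remainder contributions via \eqref{eq2.5}, \eqref{eq2.7}, \eqref{eq2.9}, \eqref{eq2.11} and the decay hypothesis \eqref{eq1.2}. The only cosmetic difference is that for $d=1$ the paper dispenses with the cancellation altogether and deduces \eqref{eq2.18} directly from \eqref{eq2.17} by the triangle inequality, since the target bound $\lambda^{-1-k}$ already matches the individual bounds on $F^\pm$.
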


\begin{proof}
When $d=1$, \eqref{eq2.18} follows directly from \eqref{eq2.17}.

When $d=2$, by \eqref{eq2.6} we have
$$
R^{+}_0(\lambda^2, x, y)-R^{-}_0(\lambda^2, x, y)=\frac{i}{2}+r_2^+(\lambda, |x-y|)-r_2^-(\lambda, |x-y|).
$$
Then \eqref{eq2.18} follows from \eqref{eq2.7} and \eqref{eq1.2}.

When $d\ge 3$, by \eqref{eq2.8} and \eqref{eq2.10} we have
$$
R^{+}_0(\lambda^2, x, y)-R^{-}_0(\lambda^2, x, y)=C\lambda^{d-2}+r_d^+(\lambda, |x-y|)-r_d^-(\lambda, |x-y|).
$$
Therefore \eqref{eq2.18} follows from \eqref{eq2.9},  \eqref{eq2.11} and  \eqref{eq1.2}.
\end{proof}

\subsection{Estimates for some (oscillatory) integrals}

The purpose of this subsection is to provide the estimates for some (oscillatory) integrals that will be frequently used in what follows.
More precisely, in order to obtain decay in time from the integral kernel of the propagator $e^{-itH_{\alpha}}$ or $e^{-itH}$, we will reduce things to the estimates of the following one dimensional oscillatory integrals over $\Omega \subset \mathbb{R}$:
\begin{equation}\label{eq2}
I(t,x):=\int_\Omega e^{i(t\lambda^2+\lambda\cdot x)}\psi(\lambda)d\lambda,\quad t\in\mathbb{R}\setminus\{0\},\,\,\,\,x\in \mathbb{R}.
\end{equation}

\begin{lemma}\label{lm2.6}
Let $\Omega=(0, r_0)$ or $\Omega=(r_0, \infty)$ for some $r_0>0$ and $\psi(\lambda)\in S^b_K(\Omega)$ (see \eqref{eq1} for definition), where  $-1<b<2K-1$.  Then
	\begin{equation}\label{eq6}
		\begin{split}
			|I(t,x)|\leq\begin{cases}C|t|^{-\frac 12-b}\cdot |x|^{b},\,\,\,\, &\text{if}\,\,\,~|t|^{-\frac 12}|x|>1;\\
			C|t|^{-\frac{1+b}{2}}, \,\,\,\,             &\text{if}\,\,\,~|t|^{-\frac 12}|x|\leq 1,
			\end{cases}
		\end{split}
	\end{equation}
where the constant $C$ depends on $C_j$ ($0\leq j\leq K$) in \eqref{eq1}.
\end{lemma}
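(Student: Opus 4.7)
The plan is to reduce \eqref{eq2} to a parameter-free oscillatory integral by a parabolic rescaling in $\lambda$, and then to treat the resulting integral by combining Van der Corput's second-derivative lemma near the critical point of the phase with repeated integration by parts away from it. More precisely, I would substitute $\lambda=|t|^{-1/2}\eta$ and set $y=|t|^{-1/2}x$ together with $\widetilde\psi(\eta):=|t|^{b/2}\psi(|t|^{-1/2}\eta)$ on $\Omega':=|t|^{1/2}\Omega$. A direct computation shows that $\widetilde\psi$ still belongs to $S_K^b(\Omega')$ with the \emph{same} constants $C_j$ as $\psi$, and that
\[
I(t,x)=|t|^{-(1+b)/2}\widetilde I(y),\qquad \widetilde I(y):=\int_{\Omega'} e^{i(\pm\eta^2+y\eta)}\widetilde\psi(\eta)\,d\eta,
\]
where $\pm$ is the sign of $t$. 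Checking the exponents, the estimate \eqref{eq6} is then equivalent to the two $t$-independent bounds $|\widetilde I(y)|\leq C$ for $|y|\leq 1$ and $|\widetilde I(y)|\leq C|y|^b$ for $|y|>1$, which is what I would focus on from here onward.

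For $|y|\le 1$, I would split $\Omega'$ into its intersections with $(0,1]$ and with $[1,\infty)$. On the first piece the pointwise bound $|\widetilde\psi|\le C\eta^b$ is integrable thanks to $b>-1$ and yields a constant. On the second piece one has $|\phi'(\eta)|=|2\eta+y|\ge\eta$, so $K$-fold integration by parts leaves an integrand of size $\eta^{b-2K}$, which is integrable on $[1,\infty)$ precisely because $b<2K-1$; the boundary terms are either bounded at $\eta=1$ or vanish at infinity. For $|y|>1$ the critical point $\eta^{*}=\mp y/2$ has $|\eta^{*}|=|y|/2$, and I would further decompose $\Omega'$ into: (a) a ball of radius $|y|/4$ around $\eta^{*}$, where $|\eta|\sim|y|$ and hence $|\widetilde\psi|\le C|y|^b$ while $|\phi''|=2$, so Van der Corput's second-derivative lemma produces a contribution of order $|y|^b$; (b) the far region $\{|\eta-\eta^{*}|>|y|/4\}\cap\{\eta\ge1\}$, on which $|\phi'|\ge|y|/2$ and $\eta\ge 1$, and where $K$ integrations by parts combined with $|\widetilde\psi^{(j)}|\le C_j\eta^{b-j}$ bound the contribution by $C|y|^{b-2K+1}\le C|y|^b$; and (c), in the case $\Omega=(0,r_0)$ only, the remaining part near $0$, handled directly via the $\eta^b$ bound and an elementary comparison with $|y|^b>1$.

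The main obstacle is the bookkeeping in the high regime $|y|>1$: one must verify that every boundary term and every term obtained by differentiating $\widetilde\psi/\phi'$ is controlled by $C|y|^b$ rather than a larger power, while simultaneously tracking the relative positions of $\eta^{*}$ and the endpoints of $\Omega'$, which differ in the two cases $\Omega=(0,r_0)$ and $\Omega=(r_0,\infty)$ and depend on $\mathrm{sgn}\,t$ and $\mathrm{sgn}\,x$. Once this is organized, undoing the rescaling via $I(t,x)=|t|^{-(1+b)/2}\widetilde I(|t|^{-1/2}x)$ converts the uniform bounds on $\widetilde I(y)$ into \eqref{eq6}. The two constraints $-1<b$ and $b<2K-1$ in the hypothesis are then seen to be used exactly at the two extremes of the integration---the first for the integrability of $\eta^b$ near $0$, the second for convergence of the $K$-fold integration-by-parts tail at infinity.
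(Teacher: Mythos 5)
Your rescaling $\lambda=|t|^{-1/2}\eta$, $y=|t|^{-1/2}x$ is a clean reorganization, but the resulting argument is essentially the one the paper gives: split the integral near and away from the stationary point $\eta^{*}=\mp y/2$, apply Van der Corput's second-derivative test on the near piece, do $K$-fold non-stationary-phase integration by parts on the far piece, and use a direct power-counting bound near the origin. The paper accomplishes the near/far split with a smooth partition of unity $\Phi_1+\Phi_2=1$ in the original $\lambda$-variable and rescales only inside the Van der Corput step; the two arrangements are interchangeable, and the roles you assign to $b>-1$ (absolute convergence at $0$) and $b<2K-1$ (convergence of the $K$-fold integration-by-parts tail) are exactly right.

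There is, however, one concrete gap in your region (c), and it is instructive because the paper's appendix has the same one. Your "elementary comparison with $|y|^b>1$" presupposes $b\ge 0$. For $-1<b<0$ a sharper treatment of the interval $(0,1)$ (split at $\eta=1/|y|$, estimate $\int_0^{1/|y|}\eta^b\,d\eta$ directly, integrate by parts on $(1/|y|,1)$) produces a near-origin contribution of size $\sim |y|^{-b-1}$, which is dominated by $|y|^b$ only when $b\ge-\tfrac12$. This is not just a bookkeeping nuisance: taking $\psi(\lambda)=\lambda^b$ on $\Omega=(0,1)$ with $t=1$ and $x\to+\infty$ one finds $|I(1,x)|\sim\Gamma(b+1)\,x^{-b-1}$, which exceeds the claimed bound $Cx^{b}$ whenever $-1<b<-\tfrac12$, so the lemma as stated is actually false in that range. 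The same failure is present in the paper's derivation at \eqref{eqA6}, where passing from $Ct^{-(b+1)/2}$ to $C|t|^{-1/2-b}|x|^b$ in the regime $|t|^{-1/2}|x|>1$ is equivalent to $(|t|^{-1/2}|x|)^b\ge 1$, and this fails for $b<0$; likewise the inclusion $S_K^b(0,r_0)\subset S_K^{b/2}(0,r_0)$ invoked in the proof of Corollary \ref{cor2.6}(\romannumeral1) requires $b\ge 0$. None of this affects the applications, where the lemma is only ever used with $b\ge 0$, but both your plan and the paper's proof tacitly restrict to $b\ge 0$ even though the hypothesis reads $-1<b$; you should either add that restriction or handle the $b<0$ case by the $1/|y|$ split and note that the resulting bound is $\min(|y|^b,|y|^{-b-1})$, valid for $b\ge -\tfrac12$.
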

\begin{proof}
The estimate \eqref{eq6} follows  from \cite[Lemma 2.1, Lemma 2.2]{HHZ}, where oscillatory integrals with more general phase functions are studied. For the sake of completeness, we present a simplified proof in Appendix \ref{app}.
\end{proof}

As a corollary, we state the following result, which
 will be used in our proof of low and high energy part respectively.
\begin{corollary}\label{cor2.6}
(\romannumeral1)  Let $\Omega=(0, r_0)$  for some $r_0>0$ and $\psi(\lambda)\in S_K^b(\Omega)$ with $-1<b<2K-1$. Then
\begin{equation}\label{eq2.22}
|I(t,x)|\leq Ct^{-\frac{1+b}{2}}(1+|x|)^\frac{b}{2}.
\end{equation}
(\romannumeral2)  Let $\Omega=(r_0, \infty)$  for some $r_0>0$ and $d\in{N}$. Assume that $\psi(\lambda)=\psi_1(\lambda)\cdot
\psi_2(\lambda)$ such that $\frac{d^k}{d\lambda^k}\psi_1(\lambda)\in S_1^{0}(\Omega)$ for ecah $1 \leq k\leq [\frac d2] $ and $\psi_2(\lambda) \in S_{[ \frac d2] +1}^{[ \frac{d-1}{2}]}(\Omega)$,  then we have
\begin{equation}\label{eq2.23}
|I(t,x)|\leq Ct^{-\frac{d}{2}}(1+|x|)^{\frac{d-1}{2}}.
\end{equation}
\end{corollary}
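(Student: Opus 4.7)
The plan is to deduce both parts of Corollary \ref{cor2.6} directly from Lemma \ref{lm2.6}, the work being to verify its hypotheses and then to consolidate the two bounds it produces into the unified right-hand side stated in the corollary.

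For part (\romannumeral1), I would invoke Lemma \ref{lm2.6} with the given $K$ and $b$ (which satisfy $-1 < b < 2K-1$ by hypothesis). The lemma gives $|I(t,x)| \leq C t^{-(1+b)/2}$ when $|x| \leq t^{1/2}$ and $|I(t,x)| \leq C t^{-(1/2)-b}|x|^b$ when $|x| > t^{1/2}$. To merge these into a single bound, observe that the first is already dominated by $C t^{-(1+b)/2}(1+|x|)^{b/2}$ up to an absolute constant (distinguishing the sign of $b$ and using $|x|\leq t^{1/2}$ to control the extra factor), while in the second regime we rewrite
$$t^{-(1/2)-b}|x|^b = t^{-(1+b)/2}\cdot (|x|/t^{1/2})^{b}$$
and compare $(|x|/t^{1/2})^{b}$ with $(1+|x|)^{b/2}$ using $|x|\geq t^{1/2}$. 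In the potentially delicate sub-regime $|x| \gg t$, a supplementary non-stationary-phase integration by parts on the compact interval $\Omega = (0,r_0)$ -- exploiting that $|\partial_\lambda(t\lambda^2+\lambda x)| = |2t\lambda+x| \gtrsim |x|$ uniformly there -- provides arbitrary-order $|x|^{-N}$ decay, which is comfortably absorbed into the target bound.

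For part (\romannumeral2), the key step is to show $\psi = \psi_1\psi_2 \in S_{[\frac{d}{2}]+1}^{[\frac{d-1}{2}]}(\Omega)$, after which Lemma \ref{lm2.6} with $K = [\frac{d}{2}]+1$ and $b = [\frac{d-1}{2}]$ (these satisfy $-1<b<2K-1$ for every $d\geq 1$) together with the same consolidation as in (\romannumeral1) yields $|I(t,x)| \leq Ct^{-d/2}(1+|x|)^{(d-1)/2}$. The symbol membership follows from Leibniz's rule: for each $0 \leq k \leq [\frac{d}{2}]+1$,
$$\psi^{(k)}(\lambda) = \sum_{j=0}^k \binom{k}{j}\psi_1^{(j)}(\lambda)\,\psi_2^{(k-j)}(\lambda),$$
and the bound $|\psi_2^{(k-j)}(\lambda)| \leq C\lambda^{[\frac{d-1}{2}]-(k-j)}$ from $\psi_2 \in S_{[\frac{d}{2}]+1}^{[\frac{d-1}{2}]}$ combines with the boundedness of $\psi_1^{(j)}$ (and the mild $\lambda^{-1}$ decay of one additional derivative) arising from $\psi_1^{(j)} \in S_1^0$, together with the fact that $\lambda^{-1}\leq r_0^{-1}$ on $\Omega = (r_0,\infty)$, to yield $|\psi^{(k)}(\lambda)| \leq C\lambda^{[\frac{d-1}{2}]-k}$, as required.

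The principal technical obstacle, common to both parts, is the careful consolidation of the two cases produced by Lemma \ref{lm2.6} into a single expression of the stated form; this requires a short case analysis in the regime $|x| > t^{1/2}$, supplemented where necessary by the non-stationary-phase integration by parts mentioned above to handle the sub-regime $|x| \gg t$. All other steps reduce to routine calculus and Leibniz-rule manipulations on symbol classes.
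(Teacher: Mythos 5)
Part (\romannumeral2) of your proposal does not work, and this is the substantive part of the corollary. Two problems.

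First, the symbol-membership claim $\psi=\psi_1\psi_2\in S_{[\frac d2]+1}^{[\frac{d-1}{2}]}(\Omega)$ is false under the stated hypotheses. By Leibniz, $\psi^{(k)}=\sum_{j=0}^k\binom{k}{j}\psi_1^{(j)}\psi_2^{(k-j)}$. The hypotheses give $|\psi_1^{(j)}|\le C$ (not $\le C\lambda^{-j}$) and $|\psi_2^{(k-j)}|\le C\lambda^{[\frac{d-1}{2}]-(k-j)}$, so the $j$-th Leibniz term is bounded by $C\lambda^{[\frac{d-1}{2}]-k}\cdot\lambda^{j}$. On $\Omega=(r_0,\infty)$ the factor $\lambda^j$ is \emph{unbounded} for $j\ge1$; the inequality $\lambda^{-1}\le r_0^{-1}$ you invoke goes the wrong way for what you need. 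The best you can conclude is $|\psi^{(k)}|\lesssim\lambda^{[\frac{d-1}{2}]}$, which places $\psi$ in a much worse symbol class with no derivative gain. Second, even if one granted $\psi\in S_{[\frac d2]+1}^{[\frac{d-1}{2}]}$, feeding $K=[\frac d2]+1$ and $b=[\frac{d-1}{2}]$ into Lemma \ref{lm2.6} (or into the consolidated estimate \eqref{eq2.22}) gives only a factor $t^{-\frac{1+b}{2}}=t^{-\frac{1+[\frac{d-1}{2}]}{2}}$, which for $d\ge 2$ is strictly weaker than the required $t^{-\frac d2}$; for instance $d=3$ gives $t^{-1}$ rather than $t^{-3/2}$. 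So the decay in $t$ simply does not close. The paper obtains the extra factor $t^{-[\frac{d-1}{2}]}$ by a preliminary integration by parts against the Gaussian factor alone, using $\tfrac{1}{2i\lambda t}\tfrac{d}{d\lambda}e^{it\lambda^2}=e^{it\lambda^2}$ performed $[\frac{d-1}{2}]$ times, which brings down both $t^{-[\frac{d-1}{2}]}$ and $x^{s_1}$ and distributes derivatives onto $\psi_1$, $\psi_2$ and $\lambda^{-1}$ in a controlled way; only after that reduction is Lemma \ref{lm2.6} applied, with low $K$ and $b\in\{0,\tfrac12,1\}$. This step -- not a symbol-class computation for the product -- is the core of part (\romannumeral2) and is absent from your plan.

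Part (\romannumeral1) is conceptually salvageable but more elaborate than the paper's argument and contains an overstated step. The claim of ``arbitrary-order $|x|^{-N}$ decay'' from non-stationary phase on $(0,r_0)$ is not correct: $\psi\in S_K^b$ is only $O(\lambda^b)$ near the origin, so repeated integration by parts is limited and the achievable decay is of order $|x|^{-(b+1)}$ (up to endpoint effects), not arbitrary. That rate is in fact sufficient, but the case analysis across the regimes $|x|\le t^{1/2}$, $t^{1/2}<|x|\lesssim t$, $|x|\gg t$ has to be done carefully. The paper avoids all of this by a one-line observation: on the bounded interval $(0,r_0)$ one has $S_K^b(\Omega)\subset S_K^{b/2}(\Omega)$ for $b\ge 0$, so when $t^{-1/2}|x|\ge1$ one simply applies Lemma \ref{lm2.6} with exponent $b/2$ instead of $b$, and both regimes of \eqref{eq6} immediately produce the unified bound \eqref{eq2.22}. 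You should keep that trick in mind; note also that, being a bounded-domain inclusion, it is precisely what is \emph{not} available on $(r_0,\infty)$, which is another reason part (\romannumeral2) requires a different idea.
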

\begin{proof}
	The  proof is given in Appendix \ref{app}.
\end{proof}
\begin{remark}\label{rmk2.7}
In view of the Stone's formula \eqref{eq3.1}, oscillatory integrals of the form \eqref{eq6} appear naturally in the study of dispersive estimates for Schr\"{o}dinger operators. We mention that similar integrals  have been studied in \cite[Lemma 2.4]{RS} and \cite[Lemma 2.4]{Ya2}.

\end{remark}

 The following lemma can be seen in \cite[Lemma 3.8]{GV} or \cite[Lemma 6.3]{EG10}
\begin{lemma}\label{lm2.8}
Let $d\ge 1$. Then there is some absolute constant $C>0$ such that
\begin{equation}\label{eq2.20}
\int_{\mathbb{R}^d}|x-y|^{-k}\langle y\rangle^{-l}\,dy\leq C\langle x\rangle^{-\min\{k,\, k+l-d\}},
\end{equation}
provided  $l\ge 0$, $0\le k<d$ and $k+l>d$. As a consequence, we also have
\begin{equation}\label{eq2.21}
\int_{\mathbb{R}^d}\int_{\mathbb{R}^d}|x-y|^{-\sigma}\langle x-\tau\rangle^{-\beta_1}\langle y\rangle^{-\beta_2}\,dxdy\leq C\langle\tau\rangle^{-\sigma},
\end{equation}
provided $\beta_1>d$,  $\beta_2>d$ and $0\le \sigma<d$.
\end{lemma}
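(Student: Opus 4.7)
\textbf{Proof plan for Lemma \ref{lm2.8}.}

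The first inequality is a standard integral estimate and I would prove it by a dyadic decomposition around the two scales $0$ and $x$. For $|x|\le 2$ the claim is trivial: the right-hand side is bounded below by a positive constant, while the left-hand side is finite since $k<d$ makes the singularity $|x-y|^{-k}$ locally integrable, and $k+l>d$ ensures integrability at infinity. For $|x|\ge 2$, I would split $\R^d$ into three regions:
\begin{equation*}
A_1=\{|y|\le |x|/2\},\quad A_2=\{|y-x|\le |x|/2\},\quad A_3=\R^d\setminus(A_1\cup A_2).
\end{equation*}
On $A_1$ we have $|x-y|\gtrsim|x|$, so the contribution is bounded by $C|x|^{-k}\int_{|y|\le |x|/2}\langle y\rangle^{-l}\wrt y$, which gives $C|x|^{-k}$ if $l>d$ and $C|x|^{-(k+l-d)}$ if $l<d$. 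On $A_2$ we have $\langle y\rangle\gtrsim|x|$, so the contribution is bounded by $C|x|^{-l}\int_{|y-x|\le|x|/2}|x-y|^{-k}\wrt y=C|x|^{d-k-l}=C|x|^{-(k+l-d)}$ since $k<d$. On $A_3$ I would dyadically decompose in $|y|\ge |x|/2$: each annulus $\{|y|\sim R\}$ contributes at most $CR^{d-k-l}$, and summing over $R\gtrsim |x|$ (which converges since $k+l>d$) yields $C|x|^{-(k+l-d)}$. Combining all three regions gives the stated bound $\langle x\rangle^{-\min\{k,\,k+l-d\}}$.

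For the second inequality I would first apply Fubini and the first part to the inner $y$-integral. Since $\beta_2>d$, the first estimate with $k=\sigma,\,l=\beta_2$ gives $\min\{\sigma,\sigma+\beta_2-d\}=\sigma$, hence
\begin{equation*}
\int_{\R^d}|x-y|^{-\sigma}\langle y\rangle^{-\beta_2}\wrt y\le C\langle x\rangle^{-\sigma}.
\end{equation*}
The remaining single integral is then $C\int_{\R^d}\langle x-\tau\rangle^{-\beta_1}\langle x\rangle^{-\sigma}\wrt x$. After the change of variable $x\mapsto x+\tau$, this is of the same form as the first inequality, except $|x-y|^{-k}$ is replaced by the regular weight $\langle\cdot\rangle^{-\beta_1}$; the decomposition argument above carries over verbatim (and is easier since no local singularity is present). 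Applying it with $k=\sigma<d$ and $l=\beta_1>d$ yields $C\langle\tau\rangle^{-\min\{\sigma,\sigma+\beta_1-d\}}=C\langle\tau\rangle^{-\sigma}$, as required.

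The only delicate point in the plan is the case analysis in the first part when $l$ is close to $d$; in that borderline situation the estimate on $A_1$ naively produces a logarithmic factor in $|x|$, but since the hypotheses of the lemma only call for $k+l>d$, one can absorb this by choosing a slightly smaller exponent $l'<l$ with $k+l'>d$ and using $\langle y\rangle^{-l}\le\langle y\rangle^{-l'}$, which preserves the correct power of $\langle x\rangle$ on the right-hand side. Besides this, every step reduces to elementary integration in polar coordinates, so no further obstacle is expected.
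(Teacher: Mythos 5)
The paper does not actually prove this lemma: it is stated with a citation to \cite[Lemma 3.8]{GV} and \cite[Lemma 6.3]{EG10}, so there is no ``paper's own proof'' to compare against. Your approach via decomposition into the three regions around $0$, around $x$, and far away is the standard one in those references, and the main line of argument is correct. The reduction of \eqref{eq2.21} to \eqref{eq2.20} is also correct: for the inner $y$-integral you take $k=\sigma<d$, $l=\beta_2>d$, and for the remaining $x$-integral you observe $\langle x-\tau\rangle^{-\sigma}\le|x-\tau|^{-\sigma}$ so that \eqref{eq2.20} applies again with $k=\sigma$, $l=\beta_1$, each time giving $\min\{\sigma,\sigma+\beta_j-d\}=\sigma$.

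There is, however, a genuine flaw in the remark you add about the borderline case $l=d$. You propose to absorb the logarithm by lowering $l$ to some $l'<l$ with $k+l'>d$, claiming this ``preserves the correct power.'' It does not. When $l=d$, the stated exponent is $\min\{k,k+l-d\}=k$; replacing $l$ by $l'<d$ gives the exponent $\min\{k,k+l'-d\}=k+l'-d<k$, a strictly weaker bound $\langle x\rangle^{-(k+l'-d)}$. In fact the logarithm cannot be removed: for example with $d=1$, $k=1/2$, $l=1$, the contribution from the region $|y|\le|x|/2$ is genuinely of order $|x|^{-1/2}\log|x|$, so \eqref{eq2.20} as literally written fails at $l=d$. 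The correct reading of the lemma (and of the references) is that $l=d$ is tacitly excluded, or equivalently that the result is only asserted for $l\ne d$; this is harmless here because every application in this paper has $l=\delta>d+\tfrac32$ or similar, always strictly greater than $d$. You should simply drop the ``fix'' paragraph and note that the estimate is applied only with $l>d$ (or $l<d$), where the dyadic/regional computation closes without a logarithm.
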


\section{$L^1-L^{\infty}$ estimates for rank one perturbations--Proof of Theorem \ref{thm-1.1}}\label{sec3}
Inspired by the dispersive estimates for Schr\"{o}dinger operators $-\Delta+V$ (see e.g. \cite{RS,Sch}), our starting point is the spectral representation of the propagator via the Stone's formula:
\begin{equation}\label{eq3.1}
  \langle e^{-itH_{\alpha}}P_{ac}f, g\rangle=\frac{1}{2\pi i}\int_0^{\infty}e^{-it\lambda}\langle [R^{+}_{\alpha}(\lambda)-R^{-}_{\alpha}(\lambda)]f,\, g\rangle\,d\lambda,
\end{equation}
where $f, g$ are Schwartz functions, $P_{ac}$ denotes the projection onto the absolutely continuous spectrum of $H_{\alpha}$, and $R^{\pm}_{\alpha}(\lambda):=(H_{\alpha}-(\lambda\pm i0))^{-1}$ is the resolvent of the perturbed operator.

Compared with the case of Schr\"{o}dinger operators, our main difference here is the following Aronszajn-Krein formula (see \cite{Simon}): For $z\in\mathbb{C}\setminus[0, \infty)$,
\begin{equation}\label{eq3.2}
 R_{\alpha}(z)=R_{0}(z)-\frac{\alpha}{1+\alpha F(z)}R_{0}(z)\varphi\left\langle R_{0}(z)\cdot, \varphi\right\rangle ,
\end{equation}
where $F(z)$ is given by \eqref{eq1.1.1}.
From Lemma \ref{lmA.1} we know that the spectrum of $H_{\alpha}$  is purely absolutely continuous. Moreover,
plugging the Aronszajn-Krein formula \eqref{eq3.2} into \eqref{eq3.1} and  noting that
\begin{equation}\label{eq3.2.1}
e^{-itH_{0}}=\frac{1}{2\pi i}\int_0^{\infty}e^{-it\lambda}(R^{+}_{0}(\lambda)-R^{-}_{0}(\lambda))\,d\lambda,
\end{equation}
then the change of variables $\lambda\rightarrow\lambda^2$ in \eqref{eq3.1} leads to
\begin{align}\label{eq3.3}
e^{-itH_{\alpha}}-e^{-itH_{0}}&=-\frac{1}{\pi i}\int_0^{\infty}e^{-it\lambda^2}\frac{\alpha}{1+\alpha F^+(\lambda^2)}R^{+}_{0}(\lambda^2)\varphi\langle R^{+}_{0}(\lambda^2)\, \cdot, \varphi\rangle\lambda\,d\lambda\nonumber\\
&+\frac{1}{\pi i}\int_0^{\infty}e^{-it\lambda^2}\frac{\alpha}{1+\alpha F^-(\lambda^2)}R^{-}_{0}(\lambda^2)\varphi\langle R^{-}_{0}(\lambda^2)\, \cdot, \varphi\rangle\lambda\,d\lambda\nonumber\\
&:=U_d^+-U_d^-.
\end{align}
Hence it suffices to establish the $L^1-L^{\infty}$ estimates for the operator $U_d^{+}-U_d^{-}$. To this end, we first fix a small $0<\lambda_0<1$ (the specific value will be determined later), then we choose a smooth cut-off function $\chi(\lambda)$ such that
\begin{equation}\label{eq3.3.1}
  \chi(\lambda)=1\,\,\,\,\,\text{if}\,\,\,\lambda<\frac{\lambda_0}{2}; \qquad \chi(\lambda)=0\,\,\,\,\,\text{if}\,\,\,\lambda>\lambda_0.
\end{equation}
As usual we split $U_d^{\pm}$ into the  high energy part
\begin{equation}\label{eq3.4}
  U_d^{\pm, h}=\frac{-\alpha}{\pi i}\int_0^{\infty}e^{-it\lambda^2}\frac{1-\chi(\lambda)}{1+\alpha F^{\pm}(\lambda^2)}R^{\pm}_{0}(\lambda^2)\varphi\langle R^{\pm}_{0}(\lambda^2)\, \cdot, \varphi\rangle\lambda\,d\lambda,
\end{equation}
 and the  low energy part
\begin{equation}\label{eq3.5}
   U_d^{\pm, l}=\frac{-\alpha}{\pi i}\int_0^{\infty}e^{-it\lambda^2}\frac{\chi(\lambda)}{1+\alpha F^{\pm}(\lambda^2)}R^{\pm}_{0}(\lambda^2)\varphi\langle R^{\pm}_{0}(\lambda^2)\, \cdot, \varphi\rangle\lambda\,d\lambda.
\end{equation}

 The rest of this section is devoted to the study of $L^1-L^{\infty}$ bounds of $U_d^{\pm, h}$ and $U_d^{+, l}-U_d^{-, l}$ respectively. Theorem \ref{thm-1.1} follows immediately from Theorem \ref{thm3.1} and \ref{thm3.2} below.

\subsection{The high-energy contribution}\label{sec3.1}
\begin{theorem}\label{thm3.1}
Assume $d\ge 1$. Let $\varphi$ satisfy the  \textbf{Condition $H_{\alpha}$}. Then,
\begin{equation}\label{eq3.6}
\|U_d^{\pm, h}\|_{L^1-L^{\infty}}\leq C(d, c_0, \lambda_0, \alpha, \varphi)\cdot t^{-\frac{d}{2}}, \,\,\,\text{for}\,\,\, t>0,
\end{equation}
where $C(d, c_0, \lambda_0,  \alpha, \varphi)>0$ is a constant depending  on $d, c_0, \lambda_0$, $\alpha, \varphi$ and can be chosen as
$$
C(d, c_0, \lambda_0,  \alpha, \varphi)=C(d, c_0, \lambda_0)\cdot M^{2([\frac{d}{2}]+2)}\cdot \alpha (1+\alpha)^{[\frac{d}{2}]+1}
$$
for some absolute constant $C(d, c_0, \lambda_0)>0$ independent with $\alpha$ and $\varphi$.
\end{theorem}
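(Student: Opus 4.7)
The plan is to prove a uniform pointwise bound on the Schwartz kernel of $U_d^{\pm,h}$. Unwrapping $\langle R_0^{\pm}(\lambda^2)\cdot,\varphi\rangle$, the kernel equals
$$
K^{\pm}(t,x,y)=\frac{-\alpha}{\pi i}\int_{\mathbb{R}^{2d}}\varphi(x_1)\varphi(x_2)\,\mathcal{I}^{\pm}(t,x,y,x_1,x_2)\,dx_1dx_2,
$$
where $\mathcal{I}^{\pm}$ is a one-dimensional integral in $\lambda$ on $\{\lambda>\lambda_0/2\}$ whose integrand contains $e^{-it\lambda^2}$, the amplitude $(1-\chi(\lambda))\lambda/(1+\alpha F^{\pm}(\lambda^2))$, and the product of resolvents $R_0^{\pm}(\lambda^2,x,x_1)R_0^{\pm}(\lambda^2,x_2,y)$. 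The strategy is to pull out the phase $e^{\pm i\lambda r_+}$ with $r_+:=|x-x_1|+|x_2-y|$, recast $\mathcal{I}^{\pm}$ in the form \eqref{eq2} with a symbol amplitude, and invoke Corollary \ref{cor2.6}(ii) to obtain the $t^{-d/2}$ decay together with the spatial growth $(1+r_+)^{(d-1)/2}$.

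To carry out the extraction of the phase I would use the explicit formula \eqref{eq2.1} when $d=1$, the polynomial expansion \eqref{eq2.3} when $d\ge 3$ is odd, and the decomposition \eqref{eq3.50} when $d\ge 2$ is even. In the even case there are four contributions depending on whether each resolvent is taken in its $w_{\pm,>}$ or $w_{\pm,<}$ part; any piece containing a $w_{\pm,<}(\lambda r)$ factor is supported on $\lambda r\lesssim 1$, so at high $\lambda$ the spatial argument is forced into a bounded set and the estimates \eqref{eq3.25.11}--\eqref{eq3.52} supply the needed symbol bounds. After this step one is reduced, in each case, to
$$
\mathcal{I}^{\pm}=\int_{\lambda_0/2}^{\infty}e^{i(t\lambda^2\pm\lambda r_+)}\psi_1(\lambda)\,\psi_2(\lambda,x,x_1,x_2,y)\,d\lambda,
$$
with $\psi_1(\lambda)=(1-\chi(\lambda))/(1+\alpha F^{\pm}(\lambda^2))$ and $\psi_2$ collecting the remaining $\lambda$-powers (from $\lambda\,d\lambda$ and from \eqref{eq2.3}/\eqref{eq3.50}) together with $(x,y,x_1,x_2)$-dependent spatial weights.

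The two symbol verifications needed to apply Corollary \ref{cor2.6}(ii) are as follows. First, the spectral hypothesis \eqref{eq1.4} yields $|1+\alpha F^{\pm}|\ge c_0$, while a Fa\`a di Bruno expansion combined with Lemma \ref{lm2.1} expresses $\psi_1^{(k)}$ as a sum of terms of the form $\alpha^{j}F^{(j_1)}\cdots F^{(j_j)}/(1+\alpha F)^{j+1}$ with $j_1+\cdots+j_j=k\le [d/2]+1$, each controlled on $\{\lambda>\lambda_0/2\}$ by $C(\lambda_0,c_0)(1+\alpha)^{k}M^{2k}$. Hence $\psi_1\in S^{0}_{[d/2]+1}$ with an explicit bound of order $(1+\alpha)^{[d/2]+1}M^{2([d/2]+1)}$. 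Second, inspection of the resolvent formulas shows that $\psi_2$ grows at most like $\lambda^{[(d-1)/2]}$ with derivatives of order up to $[d/2]+1$ having the right symbol behavior, so $\psi_2\in S^{[(d-1)/2]}_{[d/2]+1}$ uniformly in the spatial parameters. Corollary \ref{cor2.6}(ii) then delivers
$$
|\mathcal{I}^{\pm}|\le C(d,c_0,\lambda_0)(1+\alpha)^{[d/2]+1}M^{2([d/2]+1)}\,t^{-d/2}\,W(x,x_1,x_2,y),
$$
where $W$ is a finite sum of products of the form $|x-x_1|^{-a}|x_2-y|^{-b}$ with $0\le a,b\le d-2$; the factor $(1+r_+)^{(d-1)/2}$ produced by Corollary \ref{cor2.6}(ii) is absorbed by the spatial prefactors already carried by $\psi_2$.

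Finally I would integrate this pointwise bound against $|\varphi(x_j)|\le M\langle x_j\rangle^{-\delta}$ with $\delta>d+\tfrac32$, using Lemma \ref{lm2.8} twice to obtain uniformity in $x,y$; this contributes the outstanding factor $M^2$, the external prefactor $\alpha$ in \eqref{eq3.4} yields the overall $\alpha$, and collecting constants produces exactly $C(d,c_0,\lambda_0)M^{2([d/2]+2)}\alpha(1+\alpha)^{[d/2]+1}$. The hardest step will be the even-dimensional bookkeeping: verifying uniformly in the spatial parameters that the four pieces arising from \eqref{eq3.50} all admit the required decomposition $\psi_1\psi_2$, in particular handling the case in which $\lambda$-derivatives hit $w_{\pm,<}(\lambda r)$ and pull down factors of $r$ that must be paid by the spatial prefactors without exceeding the allowed growth $\lambda^{[(d-1)/2]}$. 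A secondary difficulty is tracking the $\alpha$ and $M$ dependence cleanly across the Fa\`a di Bruno expansion so as to recover the stated polynomial bound rather than a larger power.
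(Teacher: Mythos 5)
Your plan works for $d\le 3$ but breaks down for $d>3$, and the gap is precisely the point where the paper's proof has to do real work. After pulling out the phase $e^{\pm i\lambda r_+}$, the product $R_0^{\pm}(\lambda^2,x,x_1)R_0^{\pm}(\lambda^2,x_2,y)\cdot\lambda$ produces amplitude factors $\lambda^{k_1+k_2+1}$ with $0\le k_1,k_2\le\frac{d-3}{2}$ in odd dimensions (cf.\ \eqref{eq2.3}), so the top power is $\lambda^{d-2}$; similarly, in even dimensions the $w_{\pm,>}\!\cdot\! w_{\pm,>}$ piece of \eqref{eq3.50}, after extracting $|x-x_1|^{1/2}|x_2-y|^{1/2}$, leaves a symbol of order $\lambda^{d-2}$. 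Your claim that ``inspection of the resolvent formulas shows $\psi_2$ grows at most like $\lambda^{[(d-1)/2]}$'' is therefore false once $d>3$: you would be invoking Corollary \ref{cor2.6}(ii) with $b=d-2$, which violates its hypothesis $b=[\frac{d-1}{2}]$. Stationary phase in $\lambda$ alone cannot pay for the extra $\lambda$-growth, so the pointwise bound $|\mathcal{I}^{\pm}|\lesssim t^{-d/2}(1+r_+)^{(d-1)/2}$ does not follow from your setup.

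The missing idea is the integration by parts in the \emph{spatial} variables $x_1,x_2$ that the paper performs in \eqref{eq3.16.1}--\eqref{eq3.19}: the differential operators $L_{x_1}=\frac{1}{i\lambda}\frac{x_1-x}{|x_1-x|}\cdot\nabla_{x_1}$ (and $L_{x_2}$) leave $e^{i\lambda(|x-x_1|+|x_2-y|)}$ invariant, and applying their adjoints $n_1,n_2$ times with $n_i=\max\{[k_i-\frac{d-3}{4}]+1,0\}$ brings the $\lambda$-power down to $k_1+k_2+1-n_1-n_2\le\frac{d-1}{2}$, at the cost of hitting $\varphi$ with up to $\beta_0=[\frac{d-3}{4}]+1$ derivatives and worsening the spatial singularities to $|x-x_1|^{-(d-1)}$ and $|x_2-y|^{-(d-1)}$ (which are then absorbed via Lemma \ref{lm2.8}). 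This is exactly where the smoothness hypothesis $\varphi\in C^{\beta_0}$ and the derivative decay \eqref{eq1.3} in Condition $H_\alpha$ enter; your proposal never uses them, which is a reliable signal that the plan cannot go through as written for $d>3$. (The analogous step is also needed for the $U_{d,1}^{\pm,h}$ piece in even $d>2$, with $n=[\frac{d-3}{4}]+1$ applications on each side; see \eqref{eq3.58.11}--\eqref{eq3.25.2}.) Apart from this, your bookkeeping of $\psi_1\in S^0_{[d/2]+1}$ via Lemma \ref{lm2.1}, the absorption of the $(1+r_+)^{(d-1)/2}$ factor by Lemma \ref{lm2.8}, and the tracking of the powers of $\alpha$ and $M$ all match the paper's strategy.
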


\begin{proof}
Let $U_d^{\pm, h}(t, x, y)$ denote the integral kernel of the operator $U_d^{\pm, h}$. Our aim is to prove point-wise estimates for $U_d^{\pm, h}(t, x, y)$, $t>0$.
To this end, it's convenient to define
\begin{equation}\label{eq3.4.1}
F^{\alpha, h}_{\pm}(\lambda^2):=\frac{1-\chi(\lambda)}{1+\alpha F^{\pm}(\lambda^2)}.
\end{equation}
First it follows from \eqref{eq1.4}  that
$|F^{\alpha, h}_{\pm}(\lambda^2)|\leq \frac{1}{c_0}$.
Next, we notice that for any $m=1,2,\cdots$,
\begin{equation}\label{eq.11.1.5}
\frac{d^m}{d\lambda^m}\left( \frac{1}{1+\alpha F^{\pm}(\lambda^2)}\right) =\sum_{n=1}^m\sum_{\mu_1+\mu_2+\cdots+\mu_n=m}
C_{\mu_1,\mu_2,\ldots,\mu_n}\,\cdot\,\frac{\alpha^n\prod^n_{s=1}\frac{d^{\mu_s}}{d\lambda^{\mu_s}}F^{\pm}(\lambda^2)}{\left( 1+\alpha F^{\pm}(\lambda^2)\right)^{n+1}},
\end{equation}
where $\mu_1, \mu_2,\cdots, \mu_n\ge 1$.
Moreover, by Lemma  \ref{lm2.1} and \eqref{eq1.4}, each  term in the RHS of \eqref{eq.11.1.5} satisfies
$$\left|\frac{\alpha^n\prod^n_{s=1}\frac{d^{\mu_s}}{d\lambda^{\mu_s}}F^{\pm}(\lambda^2)}{\left( 1+\alpha F^{\pm}(\lambda^2)\right)^{n+1}}\right|\leq CM^{2n}\alpha^{n}\lambda^{-n}\leq CM^{2m}(1+\alpha)^{m}\lambda^{-1},\quad \,\,\,\lambda>\frac{\lambda_0}{2},$$
where $C>0$ is an absolute constant independent of $\varphi$. In the inequality above,  we have used the fact that $M\geq\|\left\langle x\right\rangle^{-\delta}\|_{L^2}^{-1}$, which in turn follows from the assumption \eqref{eq1.2} and  $\|\varphi\|_{L^2}=1$.
Therefore, choose  $m=1, 2, \cdots, [\frac{d}{2}]+1$ in \eqref{eq.11.1.5} and notice that $1-\chi(\lambda)\in S^0_k\left((\frac{\lambda_0}{2}, \infty)\right)$ for any $k=0,1,2,\cdots$,  we obtain the following relation:
\begin{equation}\label{eq1.4.1}
M^{-2([\frac{d}{2}]+1)}(1+\alpha)^{-[\frac{d}{2}]-1}\frac{d^k}{d\lambda^k}F^{\alpha, h}_{\pm}(\lambda^2)\in S^0_1\left((\frac{\lambda_0}{2}, \infty)\right),\,\,\,\,\,\,\,\, 0\leq k\leq [\frac{d}{2}].
\end{equation}
Here, it deserves to mention that  in \eqref{eq1.4.1}, we view $M$ and $\alpha$ as parameters and it holds in the sense of \eqref{eq1.111}, i.e., the $L^{\infty}$ norm of the $k$-th derivative of the function in \eqref{eq1.4.1} is uniform with $M$ and $\alpha$. This will determine the upper bound of the constant in \eqref{eq3.6}, and we shall keep using this notation in what follows (see Lemma \ref{lm3.3}).

Due to the different behaviors of the resolvent in different dimensions, we further divide the high energy part into the following four cases.

$\bullet$ {\emph{case 1: d=1 \& d=3}}. First we consider the simplest case $d=1$, it follows from \eqref{eq2.1} that
\begin{equation}\label{eq3.7}
  U_1^{\pm, h}(t, x, y)=\frac{\alpha}{4\pi i}\int_{\mathbb{R}}\int_{\mathbb{R}}{I_1^{\pm, h}(t, |x-x_1|, |x_2-y|)\varphi(x_1)\varphi(x_2)\,dx_1dx_2},
\end{equation}
where
\begin{equation*}\label{eq3.8}
  I_1^{\pm, h}(t, |x-x_1|, |x_2-y|):=\int_0^{\infty}{e^{-it\lambda^2\pm i\lambda(|x-x_1|+ |x_2-y|)}F^{\alpha, h}_{\pm}(\lambda^2)\frac{d\lambda}{\lambda}}.
\end{equation*}
Hence by \eqref{eq1.4.1} we have
\begin{equation*}
\frac{1}{M^{2}(1+\alpha)\lambda}\cdot F^{\alpha, h}_{\pm}(\lambda^2)\in S^0_1\left((\frac{\lambda_0}{2}, \infty)\right).
\end{equation*}
Then we apply the oscillatory integral lemma \ref{lm2.6} with $\Omega=(\lambda_0/2, \infty)$, $b=0$ and $K=1$ to derive
\begin{align*}\label{eq3.11}
\left|I_1^{\pm, h}(t, |x-x_1|, |x_2-y|)\right|\leq C(1+\alpha)M^2\cdot t^{-\frac12}.
\end{align*}
This, along with \eqref{eq3.7}, yields that
\begin{align*}
\sup_{x, y\in\mathbb{R}}| U_1^{\pm, h}(t, x, y)|&\leq   C\alpha(1+\alpha)M^2\cdot t^{-\frac12}\int_{\mathbb{R}}\int_{\mathbb{R}}{|\varphi(x_1)||\varphi(x_2)|\,dx_1dx_2} \\
&\leq   C\alpha(1+\alpha)M^4\cdot t^{-\frac12},
\end{align*}
where the last inequality follows from the decay assumption \eqref{eq1.2}.
Therefore we prove \eqref{eq3.6} for $d=1$.

When $d=3$, we use the expression of  the free resolvent  \eqref{eq2.3} to write
\begin{equation}\label{eq3.12}
  U_3^{\pm, h}(t, x, y)=\frac{-\alpha}{16\pi^3 i}\int_{\mathbb{R}^3}\int_{\mathbb{R}^3}{I_3^{\pm, h}(t, |x-x_1|, |x_2-y|)\frac{\varphi(x_1)}{|x-x_1|}\frac{\varphi(x_2)}{|x_2-y|}\,dx_1dx_2},
\end{equation}
where
\begin{equation*}\label{eq3.13}
  I_3^{\pm, h}(t, |x-x_1|, |x_2-y|):=\int_0^{\infty}{e^{-it\lambda^2\pm i\lambda(|x-x_1|+ |x_2-y|)}F^{\alpha, h}_{\pm}(\lambda^2)\lambda\cdot d\lambda}.
\end{equation*}
We apply  (\romannumeral2) of Corollary \ref{cor2.6} with $\psi_1=M^{-4}(1+\alpha)^{-2}F^{\alpha, h}_{\pm}(\lambda^2)$ and $\psi_2=\lambda$, then one obtains that
\begin{equation}\label{eq3.15}
|I_3^{\pm, h}(t, |x-x_1|, |x_2-y|)|\leq C(1+\alpha)^2M^4(1+|x-x_1|+ |x_2-y|)\cdot t^{-\frac32}.
\end{equation}
This, together with \eqref{eq3.12}, implies that
\begin{align*}
\sup_{x, y\in\mathbb{R}^3}|U_3^{\pm, h}(t, x, y)| &\leq C\alpha(1+\alpha)^2M^4 t^{-\frac32}\cdot \int_{\mathbb{R}^3}\int_{\mathbb{R}^3}{\frac{1+|x-x_1|+ |x_2-y|}{|x-x_1||x_2-y|}\cdot|\varphi(x_1)||\varphi(x_2)|\,dx_1dx_2}\nonumber\\
&\leq C\alpha(1+\alpha)^2M^6\cdot t^{-\frac32},
\end{align*}
where the last inequality follows from the decay assumption \eqref{eq1.2} and Lemma \ref{lm2.8}.
Therefore we prove \eqref{eq3.6} for $d=3$.

$\bullet$ {\emph{case 2: $d>3$, odd.}} By \eqref{eq2.3} and \eqref{eq3.4}, the integral kernel $U_d^{+, h}(t, x, y)$ can be written as a linear combination of
\begin{align}\label{eq3.16}
  \frac{\alpha}{\pi i}\int_{\mathbb{R}^d}\int_{\mathbb{R}^d}\int_0^{\infty}{e^{-it\lambda^2+ i\lambda(|x-x_1|+ |x_2-y|)}F^{\alpha, h}_{+}(\lambda^2)\lambda^{k_1+k_2+1} d\lambda}\nonumber\\
  \cdot \frac{\varphi(x_1)}{|x-x_1|^{d-2-k_1}}\frac{\varphi(x_2)}{|x_2-y|^{d-2-k_2}}\,dx_1dx_2,
\end{align}
where $1\le k_1,\,k_2\le \frac{d-3}{2}$.
We still want to use oscillatory integral estimate \eqref{eq2.23} to obtain the upper bound $t^{-\frac d2}$. But when
$k_1+k_2+1$ is larger than $\frac{d-1}{2}$,  it no longer satisfies the conditions in Corollary \ref{cor2.6}. 
In order to overcome this difficulty, we borrow ideas from  \cite{EG10}. Since the function $\varphi$ admits certain smoothness when $d>3$, then one can use integration by parts arguments (note that there is an oscillating term $e^{i\lambda(|x-x_1|+ |x_2-y|)}$), and this is very different from the case $d\le 3$. To be more precise, let us define two operators
\begin{equation}\label{eq3.16.1}
  L_{x_1}:=\frac{1}{i\lambda}\frac{x_1-x}{|x_1-x|}\cdot\nabla_{x_1},\qquad\qquad L_{x_2}:=\frac{1}{i\lambda}\frac{x_2-y}{|x_2-y|}\cdot\nabla_{x_2},
\end{equation}
and their dual operators
\begin{equation}\label{eq3.16.2}
  L^*_{x_1}:=\frac{i}{\lambda}\nabla_{x_1}(\frac{x_1-x}{|x-x_1|}),\qquad\qquad L^*_{x_2}:=\frac{i }{\lambda}\nabla_{x_2}(\frac{x_2-y}{|x_2-y|}).
\end{equation}
Set
\begin{equation}\label{eq3.16.3}
 n_i=\max\left\{[k_i-\frac{d-3}{4}]+1,\, 0\right\},\,\,\,\,\,i=1,2.
\end{equation}
Then we have  $n_i\le [\frac{d-3}{4}]+1$ for $i=1, 2$. Now we apply $L_{x_1}$ $n_1$ times to the exponential $e^{i\lambda|x-x_1|}$,  and  apply $L_{x_2}$  $n_2$ times to the exponential $e^{i\lambda|x_2-y|}$. This yields that
\begin{align}\label{eq3.17}
& \int_{\mathbb{R}^d}\int_{\mathbb{R}^d}{e^{i\lambda(|x-x_1|+ |x_2-y|)}\frac{\varphi(x_1)}{|x-x_1|^{d-2-k_1}}\frac{\varphi(x_2)}{|x_2-y|^{d-2-k_2}}\,dx_1dx_2} \nonumber\\
&= \int_{\mathbb{R}^d}\int_{\mathbb{R}^d}{e^{i\lambda(|x-x_1|+ |x_2-y|)}(L^*_{x_1})^{n_1}\frac{\varphi(x_1)}{|x-x_1|^{d-2-k_1}}(L^*_{x_2})^{n_2}\frac{\varphi(x_2)}{|x_2-y|^{d-2-k_2}}\,dx_1dx_2}.
\end{align}
 We remark that, strictly speaking, the integration by parts should be performed with smooth cut off functions around the singularities. We omit it here for simplicity and refer to Remark \ref{rmk3.2} for the justification.
 Then the integral in \eqref{eq3.16} can be further written as
\begin{equation}\label{eq3.18.1}
  C\alpha\int_{\mathbb{R}^{2d}}{I_d^{+, h}(t, |x-x_1|, |x_2-y|)G_1(x, x_1)G_2(x_2, y)\,dx_1dx_2},
\end{equation}
where
\begin{equation}\label{eq3.19}
 I_d^{+, h}(t, |x-x_1|, |x_2-y|)=\int_0^{\infty}{e^{-it\lambda^2+ i\lambda(|x-x_1|+ |x_2-y|)}F^{\alpha, h}_{+}(\lambda^2)\lambda^{k_1+k_2+1-n_1-n_2} d\lambda}.
\end{equation}
We first estimate the functions $G_1, G_2$. By induction one has
\begin{align}\label{eq3.19.1}
&G_1(x, x_1)\nonumber\\
&=\sum_{i_1\cdots i_{n_1}}\sum_{\beta}{C_{\beta }\partial_{x_1}^{\beta_1}(\frac{x-x_1}{|x-x_1|}\cdot e_{i_1})\cdots \partial_{x_1}^{\beta_{n_1}}(\frac{x-x_1}{|x-x_1|}\cdot e_{i_{n_1}})\partial_{x_1}^{\beta_{n_1+1}}(\frac{1}{|x-x_1|^{d-2-k_1}})\partial_{x_1}^{\beta_{n_1+2}}\varphi},
\end{align}
where $1\le i_j\leq d$, and $e_{i_j}$ denotes the $i_j$-th unit vector. The multi-index $\beta_j\in \mathbb{N}_0^d$ satisfies $\beta_1+\cdots+\beta_{n_1+2}= \sum_{j=1}^{n_1}e_{i_j}$. Observe that
\begin{equation*}\label{eq3.19.2}
  |\partial_{x_1}^{\beta}(\frac{x-x_1}{|x-x_1|}\cdot e_{i_j})|\leq C|x-x_1|^{-|\beta|},
\end{equation*}
\begin{equation*}\label{eq3.20}
 |\partial_{x_1}^{\beta}(\frac{1}{|x-x_1|^{d-2-k_1}})|\leq\frac{C}{|x-x_1|^{d-2-k_1-|\beta|}}.
\end{equation*}
Notice that $\sum_{j=1}^{n_1+2}{|\beta_j|}=n_1$, we consider two extreme cases: The first is that when $|\beta_{n_1+2}|=n_1$, thus $\beta_1=\cdots=\beta_{n_1+1}=0$, and
\begin{equation*}\label{eq3.22}
|\partial_{x_1}^{\beta_1}(\frac{x-x_1}{|x-x_1|}\cdot e_{i_1})\cdots\partial_{x_1}^{\beta_{n_1+1}}(\frac{1}{|x-x_1|^{d-2-k_1}})|\leq \frac{C}{|x-x_1|^{d-2-k_1}};
\end{equation*}
the second is that when $|\beta_{n_1+2}|=0$, then
\begin{equation*}\label{eq3.23}
|\partial_{x_1}^{\beta_1}(\frac{x-x_1}{|x-x_1|}\cdot e_{i_1})\cdots\partial_{x_1}^{\beta_{n_1+1}}(\frac{1}{|x-x_1|^{d-2-k_1}})|\leq \frac{C}{|x-x_1|^{d-2-k_1+n_1}}.
\end{equation*}
Observe that $\frac{d-1}{2}\le d-2-k_1, d-2-k_1+n_1<d-1$.
Combining these with \eqref{eq3.19.1} and our decay assumption \eqref{eq1.3}, we obtain that
\begin{equation}\label{eq3.24}
  |G_1(x, x_1)|\leq C\cdot M\cdot \left(|x-x_1|^{-d+1}+|x-x_1|^{-\frac{d-1}{2}}\right)\cdot\langle x_1 \rangle^{-\delta},\,\,\,\,\delta>d+\frac{3}{2}.
\end{equation}
 $G_2$ is defined in the same way as in \eqref{eq3.19.1}, similarly, we have
\begin{equation}\label{eq3.25}
  |G_2(x_2, y)|\leq C\cdot M\cdot\left(|x_2-y|^{-d+1}+|x_2-y|^{-\frac{d-1}{2}}\right)\cdot\langle x_2 \rangle^{-\delta},\,\,\,\,\delta>d+\frac{3}{2}.
\end{equation}

Next we estimate the oscillatory integral \eqref{eq3.19}. By the definitions of  $n_1$ and $n_2$ in \eqref{eq3.16.3}, one has
 $$ k_1+k_2+1-n_1-n_2\leq \frac{d-1}{2}.$$
Applying  (\romannumeral2) of Corollary \ref{cor2.6} with $\psi_1=M^{-d-1}(1+\alpha)^{-\frac{d+1}{2}}F^{\alpha, h}_{\pm}(\lambda^2)$ and $\psi_2=\lambda^{k_1+k_2+1-n_1-n_2}$,   then we are able to control the integral \eqref{eq3.18.1} by
\begin{align}\label{eq3.31}
	& C\alpha \left|\int_{\mathbb{R}^{2d}}{I_d^{+, h}(t, |x-x_1|, |x_2-y|)G_1(x, x_1)G_2(x_2, y)\,dx_1dx_2}\right|\nonumber\\
	& \leq C\alpha(\alpha+1)^{\frac{d+1}{2}} M^{d+1} t^{-\frac{d}{2}}\int_{\mathbb{R}^{2d}}{(1+|x-x_1|+|x_2-y|)^{\frac{d-1}{2}}|G_1(x, x_1)||G_2(x_2, y)|\,dx_1dx_2}.
\end{align}
By our estimates on $G_1$ and $G_2$ in \eqref{eq3.24} and \eqref{eq3.25}, we obtain from Lemma \ref{lm2.8} that
\begin{equation}\label{eq3.32}
 	\sup_{x, y\in\mathbb{R}^d}\int_{\mathbb{R}^{2d}}{(1+|x-x_1|+|x_2-y|)^{\frac{d-1}{2}}|G_1(x, x_1)||G_2(x_2, y)|\,dx_1dx_2}\leq C\cdot M^2<\infty.
 \end{equation}
Therefore the desired estimate for $U_d^{+, h}(t, x, y)$  follows  by  \eqref{eq3.16},  \eqref{eq3.18.1}, \eqref{eq3.31} and \eqref{eq3.32}. The same argument also works for  $U_d^{-, h}(t, x, y)$ and   the proof of \eqref{eq3.6} for odd dimensions $d>3$ is complete.

$\bullet$ {\emph{case 3: $d=2$}.}
By \eqref{eq3.50} and \eqref{eq3.4} we write
\begin{equation}\label{eq3.26.1}
  U_2^{\pm, h}(t, x, y)=\frac{-\alpha}{\pi i}\int_{\mathbb{R}^2}\int_{\mathbb{R}^2}{I_2^{\pm, h}(t, |x-x_1|, |x_2-y|)\varphi(x_1)\varphi(x_2)\,dx_1dx_2},
\end{equation}
where
\begin{align}\label{eq3.27.1}
& I_2^{\pm, h}(t, |x-x_1|, |x_2-y|)\nonumber \\
&=\int_0^{\infty}{e^{-it\lambda^2\pm i\lambda(|x-x_1|+ |x_2-y|)}F^{\alpha, h}_{\pm}(\lambda^2)w_{\pm, >}(\lambda|x-x_1|)w_{\pm, >}(\lambda|x_2-y|)\cdot\lambda d\lambda}\nonumber \\
&+ \int_0^{\infty}{e^{-it\lambda^2\pm i\lambda|x-x_1|}F^{\alpha, h}_{\pm}(\lambda^2)w_{\pm, >}(\lambda|x-x_1|)w_{\pm, <}(\lambda|x_2-y|)\cdot\lambda d\lambda}\nonumber \\
&+ \int_0^{\infty}{e^{-it\lambda^2\pm i\lambda|x_2-y|}F^{\alpha, h}_{\pm}(\lambda^2)w_{\pm, <}(\lambda|x-x_1|)w_{\pm, >}(\lambda|x_2-y|)\cdot\lambda d\lambda}\nonumber \\
&+\int_0^{\infty}{e^{-it\lambda^2}F^{\alpha, h}_{\pm}(\lambda^2)w_{\pm, <}(\lambda|x-x_1|)w_{\pm, <}(\lambda|x_2-y|)\cdot\lambda d\lambda}\nonumber \\
&:=I_{2, 1}^{\pm, h}+I_{2, 2}^{\pm, h}+I_{2, 3}^{\pm, h}+I_{2, 4}^{\pm, h}.
\end{align}
We first deal with the term $I_{2, 1}^{\pm, h}$. Define
\begin{equation*}\label{eq3.28.1}
 \psi_{2, 1}(\lambda):=\lambda\cdot |x-x_1|^{\frac12}\cdot w_{\pm, >}(\lambda|x-x_1|)\cdot |x_2-y|^{\frac12}\cdot w_{\pm, >}(\lambda|x_2-y|).
\end{equation*}
Observe that by \eqref{eq3.51}, one has
\begin{equation*}
 |x-x_1|^{\frac12}\cdot w_{\pm, >}(\lambda|x-x_1|),\,\,\,\,\, |x_2-y|^{\frac12}\cdot w_{\pm, >}(\lambda|x_2-y|)\in S^{-\frac12}_2\left((\frac{\lambda_0}{2}, \infty)\right).
\end{equation*}
This yields  that
\begin{equation}\label{eq3.29.1}
\psi_{2, 1}(\lambda)\in S^{0}_2\left((\frac{\lambda_0}{2}, \infty)\right).
\end{equation}
Here, we view $|x-x_1|$ and $|x_2-y|$ as parameters (similar to \eqref{eq1.4.1}) and \eqref{eq3.29.1} holds in the sense of \eqref{eq1.111}, and we shall keep using this notation in the rest of the paper.
Therefore applying (\romannumeral2) of Corollary \ref{cor2.6} with  $\psi_1=M^{-4}(1+\alpha)^{-2}F^{\alpha, h}_{\pm}(\lambda^2)$ and  $\psi_2= \psi_{2, 1}(\lambda)$ we obtain that
\begin{align*}\label{eq3.30.1}
&\sup_{x,y\in\mathbb{R}^2}\left|\frac{\alpha}{\pi i}\int_{\mathbb{R}^2}\int_{\mathbb{R}^2}{I_{2,1}^{\pm, h}(t, |x-x_1|, |x_2-y|)\varphi(x_1)\varphi(x_2)\,dx_1dx_2}\right| \nonumber\\
&\leq \frac{C\alpha(1+\alpha)^2M^4}{t}\int_{\mathbb{R}^2}\int_{\mathbb{R}^2}{(1+|x-x_1|+ |x_2-y|)^{\frac12}|x-x_1|^{-\frac12}|x_2-y|^{-\frac12}|\varphi(x_1)||\varphi(x_2)|\,dx_1dx_2} \nonumber\\
&\leq \frac{C\alpha(1+\alpha)^2M^6}{t}.
\end{align*}
The methods to deal with $I_{2, 2}^{\pm, h}$, $I_{2, 3}^{\pm, h}$ and $I_{2, 4}^{\pm, h}$ are the same. Let us only take $I_{2, 2}^{\pm, h}$ for example. In this case, we define
\begin{equation*}
 \psi_{2, 2}(\lambda):=\lambda\cdot|x-x_1|^{\frac12}\cdot w_{\pm, >}(\lambda|x-x_1|)\cdot |x_2-y|^{\frac12}\cdot w_{\pm, <}(\lambda|x_2-y|).
\end{equation*}
Now it follows from  \eqref{eq3.51} and  \eqref{eq3.25.11}   that
\begin{equation*}
\psi_{2, 2}(\lambda)\in S^{0}_2\left((\frac{\lambda_0}{2}, \infty)\right).
\end{equation*}
Applying  (\romannumeral2) of Corollary \ref{cor2.6} with  $\psi_1=M^{-4}(1+\alpha)^{-2}F^{\alpha, h}_{\pm}(\lambda^2)$ and  $\psi_2= \psi_{2, 2}(\lambda)$ we have
\begin{align*}
&\sup_{x,y\in\mathbb{R}^2}\left|\frac{\alpha}{\pi i}\int_{\mathbb{R}^2}\int_{\mathbb{R}^2}{I_{2,1}^{\pm, h}(t, |x-x_1|, |x_2-y|)\varphi(x_1)\varphi(x_2)\,dx_1dx_2}\right| \nonumber\\
&\leq \frac{C\alpha(1+\alpha)^2M^4}{t}\int_{\mathbb{R}^2}\int_{\mathbb{R}^2}{(1+|x-x_1|)^{\frac12}|x-x_1|^{-\frac12}|x_2-y|^{-\frac12}|\varphi(x_1)||\varphi(x_2)|\,dx_1dx_2} \nonumber\\
&\leq \frac{C\alpha(1+\alpha)^2M^6}{t}.
\end{align*}
Therefore we prove the case for $d=2$.

$\bullet$ {\emph{case 4: $d>2$, even.}}
 In this case,
by \eqref{eq3.50} and \eqref{eq3.4} we have
\begin{equation}\label{eq3.53}
U_d^{\pm, h}(t, x, y)= U_{d, 1}^{\pm, h}(t, x, y)+U_{d, 2}^{\pm, h}(t, x, y)+U_{d, 3}^{\pm, h}(t, x, y)+U_{d, 4}^{\pm, h}(t, x, y),
\end{equation}
where
\begin{align}\label{eq3.54.11}
U_{d, 1}^{\pm, h}(t, x, y)&:=\frac{-\alpha}{\pi i}\int_{\mathbb{R}^{2d}}\int_0^{\infty}e^{-it\lambda^2\pm i\lambda(|x-x_1|+ |x_2-y|)}F^{\alpha, h}_{\pm}(\lambda^2)\lambda^{d-1}\nonumber \\
&\cdot \frac{w_{\pm, >}(\lambda|x-x_1|)}{|x-x_1|^{\frac{d}{2}-1}}\frac{w_{\pm, >}(\lambda|x_2-y|)}{|x_2-y|^{\frac{d}{2}-1}}\varphi(x_1)\varphi(x_2)\, d\lambda dx_1dx_2, \tag{$U^h_1$}
\end{align}
\begin{align}\label{eq3.55}
U_{d, 2}^{\pm, h}(t, x, y)&:=\frac{-\alpha}{\pi i}\int_{\mathbb{R}^{2d}}\int_0^{\infty}e^{-it\lambda^2\pm i\lambda|x-x_1|}F^{\alpha, h}_{\pm}(\lambda^2)\lambda^{\frac{d}{2}}\nonumber \\
&\cdot \frac{w_{\pm, >}(\lambda|x-x_1|)}{|x-x_1|^{\frac{d}{2}-1}}\frac{w_{\pm, <}(\lambda|x_2-y|)}{|x_2-y|^{d-2}\cdot}\varphi(x_1)\varphi(x_2)\, d\lambda dx_1dx_2, \tag{$U^h_2$}
\end{align}
\begin{align}\label{eq3.56}
U_{d, 3}^{\pm, h}(t, x, y)&:=\frac{-\alpha}{\pi i}\int_{\mathbb{R}^{2d}}\int_0^{\infty}e^{-it\lambda^2\pm i\lambda|x_2-y|}F^{\alpha, h}_{\pm}(\lambda^2)\cdot\lambda^{\frac{d}{2}} \nonumber \\
&\frac{w_{\pm, <}(\lambda|x-x_1|)}{|x-x_1|^{d-2}}\frac{w_{\pm, >}(\lambda|x_2-y|)}{|x_2-y|^{\frac{d}{2}-1}}\varphi(x_1)\varphi(x_2)\,d\lambda dx_1dx_2, \tag{$U^h_3$}
\end{align}
and
\begin{align}\label{eq3.57}
U_{d, 4}^{\pm, h}(t, x, y)&:=\frac{-\alpha}{\pi i}\int_{\mathbb{R}^{2d}}\int_0^{\infty}e^{-it\lambda^2}F^{\alpha, h}_{\pm}(\lambda^2)\cdot\lambda \nonumber \\
&\frac{w_{\pm, <}(\lambda|x-x_1|)}{|x-x_1|^{d-2}}\frac{w_{\pm, <}(\lambda|x_2-y|)}{|x_2-y|^{d-2}} \varphi(x_1)\varphi(x_2)\,d\lambda dx_1dx_2. \tag{$U^h_4$}
\end{align}
 For the integral $U_{d, 1}^{\pm, h}(t, x, y)$, notice the power of $\lambda$ is larger than $\frac{d}{2}$, then  the same issue occurs as in higher odd dimensions, and we shall use the same trick as before. Indeed, let $L_{x_1}$, $L_{x_2}$ and their dual operators be given by \eqref{eq3.16.1} and  \eqref{eq3.16.2}. Now set
\begin{equation}\label{eq3.58.11}
 n=[\frac{d-3}{4}]+1.
\end{equation}
Then we rewrite  $U_{d, 1}^{\pm, h}(t, x, y)$ as
\begin{align*}\label{eq3.59}
&U_{d, 1}^{\pm, h}(t, x, y)=\frac{-\alpha}{\pi i}\int_{\mathbb{R}^{2d}}\int_0^{\infty}e^{-it\lambda^2\pm i\lambda(|x-x_1|+ |x_2-y|)}F^{\alpha, h}_{\pm}(\lambda^2)\lambda^{d-1} d\lambda\nonumber\\
& \cdot(L^*_{x_1})^{n}\left(\frac{w_{\pm, >}(\lambda|x-x_1|)\varphi(x_1)}{|x-x_1|^{\frac{d}{2}-1}}\right)(L^*_{x_2})^{n}\left(\frac{w_{\pm, >}(\lambda|x_2-y|)\varphi(x_2)}{|x_2-y|^{\frac{d}{2}-1}}\right)\,dx_1dx_2,
\end{align*}
which can be further written as a combination of the terms
\begin{equation}\label{eq3.18}
C\alpha\int_{\mathbb{R}^{2d}}{I_{d, 1, \beta_1, \beta_2}^{\pm, h}(t, |x-x_1|, |x_2-y|)\tilde{G}_1(x, x_1)\tilde{G}_2(x_2, y)\,dx_1dx_2},
\end{equation}
where $0\le |\beta_1|, |\beta_2|\leq n$,
\begin{align}\label{eq3.19.3}
I_{d, 1, \beta_1, \beta_2}^{\pm, h}(t, |x-x_1|, |x_2-y|)&=\int_0^{\infty}e^{-it\lambda^2\pm i\lambda(|x-x_1|+ |x_2-y|)}F^{\alpha, h}_{\pm}(\lambda^2)\lambda^{d-1-2n}\nonumber\\
&\cdot (\lambda|x-x_1|)^{|\beta_1|}w^{(|\beta_1|)}_{\pm, >}(\lambda|x-x_1|)\cdot (\lambda|x_2-y|)^{|\beta_2|}w^{(|\beta_2|)}_{\pm, >}(\lambda|x_2-y|) d\lambda,
\end{align}
and the estimates for $\tilde{G}_1(x, x_1)$ and $\tilde{G}_2(x_2, y)$ are similar to \eqref{eq3.24} and \eqref{eq3.25}, moreover, they satisfy
\begin{equation}\label{eq3.24.1}
  |\tilde{G}_1(x, x_1)|\leq C\cdot M\cdot \left(|x-x_1|^{-d+1}+|x-x_1|^{-\frac{d}{2}+1}\right)\cdot\langle x_1 \rangle^{-\delta},\,\,\,\,\delta>d+\frac{3}{2},
\end{equation}
and
\begin{equation}\label{eq3.25.1}
  |\tilde{G}_2(x_2, y)|\leq C\cdot M\cdot \left(|x_2-y|^{-d+1}+|x_2-y|^{-\frac{d}{2}+1}\right)\cdot\langle x_2 \rangle^{-\delta},\,\,\,\,\delta>d+\frac{3}{2}.
\end{equation}
Now we set
$$
\psi_2:=\lambda^{|\beta_1|+|\beta_2|+d-1-2n}\cdot |x-x_1|^{\frac12+|\beta_1|}|x_2-y|^{\frac12+|\beta_2|}\cdot w^{(|\beta_1|)}_{\pm, >}(\lambda|x-x_1|)w^{(|\beta_1|)}_{\pm, >}(\lambda|x_2-y|).
$$
Observe that by \eqref{eq3.51}, we have
\begin{align*}
  \lambda^{|\beta_1|}|x-x_1|^{\frac12+|\beta_1|}\cdot w^{(|\beta_1|)}_{\pm, >}(\lambda|x-x_1|)&\in S^{-\frac12}_{\frac d2+1}\left((\frac{\lambda_0}{2}, \infty)\right), \\
  \lambda^{|\beta_2|}|x_2-y|^{\frac12+|\beta_2|}\cdot w^{(|\beta_2|)}_{\pm, >}(\lambda|x_2-y|)&\in S^{-\frac12}_{\frac d2+1}\left((\frac{\lambda_0}{2}, \infty)\right).
\end{align*}
Meanwhile, by the definition of $n$ in \eqref{eq3.58.11}, we have $n\geq \frac{d-2}{4}$ when $d$ is even. Then
$$d-1-2n\leq d-1-\frac{d-2}{2}= \frac d2.$$
Putting these together, we have
\begin{equation}\label{eq3.25.2}
\psi_2(\lambda) \in S^{\frac d2-1}_{\frac d2+1}\left((\frac{\lambda_0}{2}, \infty)\right).
\end{equation}
Based on \eqref{eq3.25.2} and \eqref{eq1.4.1},  we  are able to apply (\romannumeral2) of Corollary \ref{cor2.6} with
$$
\psi_1(\lambda):= M^{-2([\frac{d}{2}]+1)}(1+\alpha)^{-[\frac{d}{2}]-1}F^{\alpha, h}_{\pm}(\lambda^2)
$$
and $\psi_2$ above to the oscillatory integral $I_{d, 1, \beta_1, \beta_2}^{+, h}(t, |x-x_1|, |x_2-y|)$.  Then the integral \eqref{eq3.18} is controlled by
\begin{align}\label{equ3.25.3}
&|\eqref{eq3.18}|\nonumber\\
&\leq C\alpha(\alpha+1)^{\frac{d}{2}+1} M^{d+2} t^{-\frac{d}{2}}\int_{\mathbb{R}^{2d}}{(1+|x-x_1|+|x_2-y|)^{\frac{d-1}{2}}\frac{|\tilde{G}_1(x, x_1)|}{|x-x_1|^{\frac12}}\frac{|\tilde{G}_2(x_2, y)|}{|x_2-y|^{\frac12}}\,dx_1dx_2}\nonumber\\
& \leq C\alpha(1+\alpha)^{\frac{d}{2}+1} M^{d+4} t^{-\frac{d}{2}},
\end{align}
where in the last inequality we use Lemma \ref{lm2.8}.

Finally, We notice that for terms  $U_{d, 2}^{\pm, h}(t, x, y)$, $U_{d, 3}^{\pm, h}(t, x, y)$ and $U_{d, 4}^{\pm, h}(t, x, y)$
are the same form as \eqref{eq3.18} above. Further, by \eqref{eq3.51}  and \eqref{eq3.52}, one has
\begin{align*}
\lambda^{\frac{d}{2}}|x-x_1|^{\frac12}w_{\pm, >}(\lambda|x-x_1|)|x_2-y|^{\frac12}w_{\pm, <}(\lambda|x_2-y|)\in S^{\frac{d}{2}-1}_{\frac{d}{2}+1}\left((\frac{\lambda_0}{2}, \infty)\right),
\end{align*}
\begin{align*}
\lambda^{\frac{d}{2}}|x-x_1|^{\frac12}w_{\pm, <}(\lambda|x-x_1|)|x_2-y|^{\frac12}w_{\pm, >}(\lambda|x_2-y|)\in S^{\frac{d}{2}-1}_{\frac{d}{2}+1}\left((\frac{\lambda_0}{2}, \infty)\right),
\end{align*}
and
\begin{align*}
\lambda w_{\pm, <}(\lambda|x-x_1|)w_{\pm, <}(\lambda|x_2-y|)\in S^{\frac{d}{2}-1}_{\frac{d}{2}+1}\left((\frac{\lambda_0}{2}, \infty)\right).
\end{align*}
They play the same role as the function in \eqref{eq3.25.2}. Therefore the estimate \eqref{equ3.25.3} also holds for $U_{d, j}^{\pm, h}(t, x, y)$, $j=2, 3, 4$. This proves the case of even dimensions $d>2$.

Summing up, we finish the proof of Theorem \ref{thm3.1}.\end{proof}
\begin{remark}[Justification of integration by parts] \label{rmk3.2}
 When performing integration by parts with respect to $x_1$ and $x_2$, we should use smooth cutoff functions around the singularities to eliminate the boundary terms. We follow the same treatment in \cite[Sect.3.3]{EG10}. More precisely, let $\rho(t)$ ($t\in\mathbb{R}$) be a smooth cutoff function around $0$, i.e., $\rho(t)=1$ when $|t|>1$ and $\rho(t)=0$ when
 $|t|<1/2$. Note that $\sup_{\epsilon>0}|\frac{d^k}{dt^k}\rho(t/\epsilon)|\leq C_k|t|^{-k}$, then
 $$\sup_{\epsilon>0}|\partial_{x_1}^{\beta}\left( (\frac{x-x_1}{|x-x_1|}\cdot e_{i_1})\rho(|x-x_1|/\epsilon)\right) |\leq C|x-x_1|^{-|\beta|},$$
 which has the same size to $|\partial_{x_1}^{\beta}(\frac{x-x_1}{|x-x_1|})|$. Therefore in the proof Theorem \ref{thm3.1} and the following sections, we can omit the cutoff function in integration by parts with respect to spacial variables.
\end{remark}

\begin{remark}\label{rmk3.1}
We mention that during the proof in the high energy part, the only property that we need for $F^{\alpha, h}_{\pm}(\lambda^2)$ is \eqref{eq1.4.1}. This will be used in the analysis of finite rank perturbations.
\end{remark}

\subsection{The low-energy contribution}\label{sec3.2}

\begin{theorem}\label{thm3.2}
Assume $d\ge 1$. Let $\varphi$ satisfy the \textbf{Condition $H_{\alpha}$}. Let  $\lambda_0$ be given in Lemma \ref{lm2.2}, then,
\begin{equation}\label{eq.l1}
\|U_d^{+, l}-U_d^{-, l}\|_{L^1-L^{\infty}}\leq C(\varphi)\cdot t^{-\frac{d}{2}}, \,\,\,\text{for}\,\,\, t>0,
\end{equation}
where $C(\varphi)$ is a constant depending on $d, \varphi$  but uniform with $\alpha$. Further,  if $0<\alpha<1$ and $\lambda_0=\frac12$, then in the case $d\ge 3$ or $d=1,2$, $\int_{\mathbb{R}^d}\varphi(x)\,dx=0$, one has
\begin{equation}\label{eq.11.11}
\|U_d^{+, l}-U_d^{-, l}\|_{L^1-L^{\infty}}\leq C\cdot M^{2([\frac{d}{2}]+3)}\cdot\alpha\cdot t^{-\frac{d}{2}}, \,\,\,\text{for}\,\,\, t>0,
\end{equation}
where the absolute constant $C>0$ independent of $\alpha$ and $\varphi$.
\end{theorem}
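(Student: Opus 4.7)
My approach mirrors the high-energy analysis in Theorem \ref{thm3.1} but crucially exploits the cancellation between the $+$ and $-$ boundary values, without which the low-energy singularities of $R_0^\pm$ in $d=1,2$ and of $1/(1+\alpha F^\pm)$ could not be tamed. Writing $A^\pm(\lambda) := R_0^\pm(\lambda^2)\varphi$ and $G^\pm(\lambda) := 1/(1+\alpha F^\pm(\lambda^2))$, the bracketed difference in \eqref{eq3.5} decomposes as
\[
A^+\otimes A^+\, G^+ - A^-\otimes A^-\, G^- = \bigl[(A^+-A^-)\otimes A^+ + A^-\otimes(A^+-A^-)\bigr]G^+ + A^-\otimes A^-(G^+-G^-),
\]
with $G^+-G^- = -\alpha(F^+-F^-)G^+G^-$. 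Each summand exhibits a cancellation factor of order $\lambda^{d-2}$ at low energy: $R_0^+-R_0^-$ in the first two (via \eqref{eq2.1}, \eqref{equ6.2.8}) and $F^+-F^-$ in the third (via Lemma \ref{lm2.4}).

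Combining this cancellation with the factor $\lambda\,d\lambda$ from Stone's formula, the effective symbol in $\lambda$ is of order $\lambda^{d-1}$, which is precisely the input required by Corollary \ref{cor2.6}(i) with $b=d-1$ to produce the target decay $t^{-d/2}$. I would then verify that the multipliers $G^\pm$ and $\alpha(F^+-F^-)G^+G^-$ belong to $S_K^0((0,\lambda_0))$ with $K=[d/2]+1$ by using the spectral lower bound \eqref{eq1.4}, the Fa\`a-di-Bruno-type expansion \eqref{eq.11.1.5}, and the derivative estimates of Lemmas \ref{lm2.3} and \ref{lm2.4}; simultaneously the explicit phase/symbol decompositions \eqref{eq2.4}, \eqref{eq2.6}, \eqref{eq2.8}, \eqref{eq2.10} (or \eqref{equ6.2.8}) allow one to express each summand as an oscillatory integral of the form \eqref{eq1.6.2} with appropriate phase $e^{i\lambda(\pm|x-x_1|\pm|x_2-y|)}$.

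Applying Corollary \ref{cor2.6}(i) yields the pointwise bound $Ct^{-d/2}(1+|x-x_1|+|x_2-y|)^{(d-1)/2}$ for each resulting one-dimensional integral. The remaining spatial factors extracted from the resolvent expansions—such as $|x-x_1|^{2-d}$ when $d\geq 3$, logarithms when $d=2$, and $|x-x_1|$ when $d=1$—are paired with $|\varphi(x_1)||\varphi(x_2)|$ and integrated over $x_1,x_2\in\mathbb{R}^d$ using Lemma \ref{lm2.8} together with the decay hypothesis \eqref{eq1.2}; this delivers a uniform bound in $x,y$. Since the spectral assumption $|1+\alpha F^\pm|\geq c_0$ is itself uniform in $\alpha$, the resulting constant in \eqref{eq.l1} comes out uniform in $\alpha$ as well.

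For the refined bound \eqref{eq.11.11} with $\alpha\in(0,1)$, in the regime $d\geq 3$ or $d=1,2,\int\varphi=0$, Lemma \ref{lm2.2}(ii) supplies a uniform upper bound on $F^\pm$ on $(0,1/2)$, so $G^\pm$ and all its derivatives are uniformly bounded in $\alpha$; the single $\alpha$ already present in front of \eqref{eq3.5} then immediately produces the claimed $\alpha\cdot t^{-d/2}$. The main obstacle is the complementary case $d=1,2,\int\varphi\neq 0$: by Lemma \ref{lm2.2}(i), $F^\pm$ is singular at zero (order $1/\lambda$ for $d=1$, $\log(2/\lambda)$ for $d=2$), so $G^\pm$ vanishes (or decays logarithmically) at $\lambda=0$—a cancellation sufficient to prove \eqref{eq.l1} uniformly in $\alpha$, but whose derivatives blow up at the rate $1/\alpha$ as $\alpha\downarrow 0$, which prevents a clean extraction of an additional $\alpha$ factor from the symbol; this is exactly why \eqref{eq.11.11} is not claimed here (cf.\ Remark \ref{rmk3.3}). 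To handle the uniform-in-$\alpha$ bound \eqref{eq.l1} in this last delicate regime, I would split the low-energy range into $\lambda\lesssim\alpha$ and $\lambda\gtrsim\alpha$ and use representations of $G^\pm$ adapted to each subinterval so that the symbol estimates fall back into the uniform class $S_K^{d-1}$.
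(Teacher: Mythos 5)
Your telescoping decomposition is essentially the paper's \eqref{eq.17.10} (with the $\pm$'s grouped slightly differently), and the broad plan of power-counting in $\lambda$ plus Corollary \ref{cor2.6} is the right one in outline. However, there are several steps you treat as routine where the paper's proof has to do real work, and at least two of them would break the argument as written. In $d=1$, $R_0^+(\lambda^2,x,y)-R_0^-(\lambda^2,x,y)=\frac{i}{\lambda}\cos(\lambda|x-y|)$ carries exactly the same $\lambda^{-1}$ singularity as $R_0^\pm$ itself, so the first two summands of your decomposition do \emph{not} gain a power $\lambda^{d-2}$, and the combined symbol is not of order $\lambda^{d-1}$ at the kernel level. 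The paper abandons the decomposition for $d=1$ and bounds $U_1^{\pm,l}$ directly, recovering a bounded symbol either from the blow-up $|F^\pm|\gtrsim\lambda^{-1}$ inside $G^\pm$ when $\int\varphi\neq 0$ (Lemma \ref{lm3.2}(iv)) or from the Taylor-expansion trick \eqref{eq.l6.11} exploiting $\int\varphi=0$; your proposal contains neither mechanism. Relatedly, the claim that the multipliers belong to $S_K^0((0,\lambda_0))$ uniformly in $\alpha$ \emph{by using the spectral lower bound \eqref{eq1.4}} is insufficient in $d=1,2$, $\int\varphi\neq 0$: using only \eqref{eq1.4}, one gets $|(G^\pm)'|\lesssim\alpha|(F^\pm)'|/c_0^2\lesssim\alpha\lambda^{-2}$, which is not $O(\lambda^{-1})$ when $\lambda<\alpha$. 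The paper works instead with the sharper lower bounds \eqref{eq.11.1.3}--\eqref{eq.11.1.4} and proves symbol estimates for the combined quantities $\frac{\alpha}{\lambda}G^\pm$ and $\alpha\log\frac{1}{\lambda}\cdot G^\pm$ as in Lemma \ref{lm3.2}(iii)--(iv), which is exactly the bookkeeping your proposal leaves out.

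The second concrete failure is in odd $d\geq 5$. Expanding $R_0^+-R_0^-$ in the second and third summands via \eqref{eq2.8} yields the symbol $g^\pm_{d,k,j}(\lambda,|x-x_1|)$ of \eqref{eq.26}, and for $j=\frac{d-3}{2}$ its $\frac{d+1}{2}$-th $\lambda$-derivative contains the term $C_d|x-x_1|e^{\pm i\lambda|x-x_1|}$, which grows without bound in $|x-x_1|$. Applying Corollary \ref{cor2.6}(i) directly, as you propose, would therefore produce a kernel bound with an extra factor $|x-x_1|$ that the decay of $\varphi$ cannot absorb, and the $x_1$-integral would not close. The paper has to integrate by parts $\frac{d-1}{2}$ times in $\lambda$ using $\frac{-1}{2i\lambda t}\,\partial_\lambda e^{-it\lambda^2}=e^{-it\lambda^2}$ before the oscillatory-integral lemma becomes applicable — a step entirely missing from your plan. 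A similar issue arises in $d=2$, $\int\varphi=0$, where the $\log\lambda$ singularity of $w_{\pm,<}(\lambda|x-x_1|)$ is removed not by the $R_0^+-R_0^-$ or $F^+-F^-$ cancellation but by replacing $w_{\pm,<}$ with the corrected kernel $w'_{\pm,<}$ of \eqref{eq3.3.70}; again, your power-counting does not see this.
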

From Theorem \ref{thm3.2}, we see that when $\alpha$ is large, the upper bound in \eqref{eq.l1} is better than that in \eqref{eq.11.11}, while $\alpha$ is small, the upper bound in \eqref{eq.11.11} is better, and it will be used in the proof of Theorem \ref{thm1.6}. The reason that causes this difference lies in the following Lemma \ref{lm3.2} and \ref{lm3.3}. To state the result,
we first  set
\begin{equation}\label{eq.11.1}
F^{\alpha, l}_{\pm}(\lambda^2):=\frac{\chi(\lambda)}{1+\alpha F^{\pm}(\lambda^2)}.
\end{equation}
The following two lemmas consist of all the properties that we need for $F^{\alpha, l}_{\pm}$ in the low energy part.
\begin{lemma}\label{lm3.2}
Let $\lambda_0>0$ be given in Lemma \ref{lm2.2}.

(\romannumeral1)  If $d\ge 3$ or $d=1, 2$, $\int_{\mathbb{R}^d}{\varphi\,dx}=0$, then one has
\begin{align}\label{equ6.2.3}
  	\alpha F^{\alpha, l}_{\pm}(\lambda^2)&\in S_{[\frac{d}{2}]+1}^0((0,\lambda_0)).
\end{align}

(\romannumeral2)  If $d\ge 2$, then one has
\begin{align}\label{equ6.2.3.3}
 \alpha(F^{\alpha, l}_{+}(\lambda^2)-F^{\alpha, l}_{-}(\lambda^2))&\in S_{[\frac{d}{2}]+1}^{d-2}((0,\lambda_0)).
\end{align}

(\romannumeral3) Further, if $d=2$ and $\int_{\mathbb{R}^2}{\varphi\,dx}\ne 0$, then we have a gain of\,    $\log{\frac{1}{\lambda}}$ in the sense that
 \begin{align}
 \alpha\log{\frac{1}{\lambda}}\cdot F^{\alpha, l}_{\pm}(\lambda^2)&\in S_2^0\left( (0,\lambda_0)\right) ,\label{equ6.2.1}\\
 \alpha(\log{\frac{1}{\lambda}})^2\cdot (F^{\alpha, l}_{+}(\lambda^2)-F^{\alpha, l}_{-}(\lambda^2))&\in S_2^0\left( (0,\lambda_0)\right);\label{equ6.2.1.1}
 \end{align}

(\romannumeral4) if $d=1$ and $\int_{\mathbb{R}}{\varphi\,dx}\ne 0$, then  we have a gain of \, $\lambda^{-1}$ in the sense that
\begin{align}\label{eq.13.3.1}
 \frac{\alpha}{\lambda}\cdot F^{\alpha, l}_{\pm}(\lambda^2)\in S^{0}_{1}\left((0, \lambda_0)\right).
 \end{align}
\end{lemma}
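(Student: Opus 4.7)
The plan is to write $F^{\alpha,l}_{\pm}(\lambda^2)=\chi(\lambda)\,g^{\pm}(\lambda)$ with $g^{\pm}(\lambda)=1/(1+\alpha F^{\pm}(\lambda^2))$, and differentiate using the Faà di Bruno identity already recorded in \eqref{eq.11.1.5}. Because $\chi$ and its derivatives are uniformly bounded on $(0,\lambda_0)$, the desired symbol estimates reduce, via Leibniz, to estimates on $g^{\pm}$ (and on $g^+-g^-$ for part (ii)). In each case the analysis has two ingredients: first, apply the appropriate part of Lemma~\ref{lm2.2} to obtain a pointwise lower bound on $|1+\alpha F^{\pm}(\lambda^2)|$ strong enough to make $\alpha^{n+1}/|1+\alpha F^{\pm}|^{n+1}$ uniformly bounded in $\alpha$ (possibly with an extra small factor such as $\lambda$ or $1/\log(1/\lambda)$); second, substitute the derivative bounds on $F^{\pm}$ from Lemma~\ref{lm2.3} (or on the difference from Lemma~\ref{lm2.4}) into each summand.

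For part (i), in both scenarios Lemma~\ref{lm2.2}(ii) gives $\Re F^{\pm}(\lambda^2)\ge C_3(\varphi)>0$, hence $|1+\alpha F^{\pm}|\ge 1+\alpha C_3(\varphi)$ and consequently $|\alpha^{n+1}/(1+\alpha F^{\pm})^{n+1}|\le C$ uniformly. Inserting the bounds from Lemma~\ref{lm2.3}—each $F^{\pm(\mu)}$ contributes at most $\lambda^{-\mu}$, and in fact no singularity at all in the odd-dimensional case and in $d=1$ with $\int\varphi=0$—each summand in \eqref{eq.11.1.5} is controlled by $\lambda^{-k}$, as required. The one subtlety arises in even $d\ge 4$, where orders $d/2$ and $d/2+1$ of $F^\pm$ carry a single $\lambda^{-1}$; but the constraint $\mu_1+\cdots+\mu_n\le[d/2]+1$ forces at most one such $\mu_s\ge d/2$ per summand, so the total singularity is at worst $\lambda^{-[d/2]-1}$.

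For part (ii), the identity
$$F^{\alpha,l}_{+}-F^{\alpha,l}_{-}=-\chi(\lambda)\cdot\frac{\alpha(F^+-F^-)}{(1+\alpha F^+)(1+\alpha F^-)}$$
recasts the claim as a product estimate. The denominator is controlled either by Lemma~\ref{lm2.2}(ii) (for $d\ge 3$ and for $d=2$ with $\int\varphi=0$) or by Lemma~\ref{lm2.2}(i) (for $d=2$ with $\int\varphi\ne 0$, where $\Re F^{\pm}\to\infty$ as $\lambda\to 0$); in either case $\alpha^2/|(1+\alpha F^+)(1+\alpha F^-)|$ is uniformly bounded in $\alpha$. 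Distributing $\partial_\lambda^k$ by Leibniz and applying Faà di Bruno to each $(1+\alpha F^{\pm})^{-1}$ factor as in part (i), while using the bounds $|(F^+-F^-)^{(k)}|\lesssim M^2\lambda^{d-2-k}$ from Lemma~\ref{lm2.4}, yields membership in $S^{d-2}_{[d/2]+1}$.

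Parts (iii) and (iv) are quantitative refinements that exploit the precise blow-up of $F^{\pm}$ near zero. In (iii), Lemma~\ref{lm2.2}(i) gives $\Re F^{\pm}\gtrsim\log(2/\lambda)$, hence $|1+\alpha F^{\pm}|\ge 1+\alpha C_1\log(2/\lambda)$, which upgrades the ratio to $|\alpha\log(1/\lambda)/(1+\alpha F^{\pm})|\lesssim 1$; squaring the denominator for the difference promotes this to $(\log(1/\lambda))^2$. In (iv), Lemma~\ref{lm2.2}(i) gives $|\Im F^{\pm}|\gtrsim\lambda^{-1}$, so $|1+\alpha F^{\pm}|\ge\alpha C_1/\lambda$ and therefore $|\alpha/(\lambda(1+\alpha F^{\pm}))|\lesssim 1$ uniformly in $\alpha$. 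The main technical obstacle throughout is propagating these improved pointwise bounds through the Faà di Bruno hierarchy while keeping every estimate uniform in $\alpha$; concretely, one verifies case by case that each summand, after bounding the $(1+\alpha F^{\pm})$-factors and inserting Lemma~\ref{lm2.3}, produces a derivative estimate compatible with the prescribed symbol class—this is routine bookkeeping once the pointwise bounds in each regime are in hand.
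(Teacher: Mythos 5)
Your proposal is correct and follows essentially the same route as the paper: the Faà di Bruno identity \eqref{eq.11.1.5} combined with the lower bounds on $|1+\alpha F^{\pm}|$ from Lemma~\ref{lm2.2} and the derivative bounds from Lemmas~\ref{lm2.3} and~\ref{lm2.4}, with parts (iii) and (iv) exploiting the sharpened lower bounds \eqref{eq.11.1.4} and \eqref{eq.11.1.3}. One small point in your favor: for part (ii) with $d=2$ and $\int\varphi\ne 0$ you explicitly note that the denominator is still controlled via Lemma~\ref{lm2.2}(i) (since $F^{\pm}\notin S^0_2$ there but only derivatives of order $\ge 1$ appear in Faà di Bruno), whereas the paper proves (ii) by citing \eqref{equ6.2.3}, whose stated hypothesis excludes that subcase; your handling closes that small gap.
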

\begin{proof}
First, by Lemma \ref{lm2.2}, one has for all $d\ge 1$,
\begin{equation}\label{eq.11.1.2}
 |1+\alpha F^{\pm}(\lambda^2)|\ge \alpha C_1,\,\,\,\,\,\,0<\lambda<\lambda_0.
\end{equation}
In particular, when $d=1, 2$ and $\int_{\mathbb{R}^{d}}{\varphi\,dx}\ne 0$,
\begin{equation}\label{eq.11.1.3}
   |1+\alpha F^{\pm}(\lambda^2)|\ge \frac{\alpha C_1}{\lambda},\,\,\,\,\,\,0<\lambda<\lambda_0,\,\,\,d=1,
\end{equation}
and
\begin{equation}\label{eq.11.1.4}
   |1+\alpha F^{\pm}(\lambda^2)|\ge \frac{\alpha C_1}{|\log{\lambda}|},\,\,\,\,\,\,0<\lambda<\lambda_0,\,\,\,d=2.
\end{equation}

If $d\ge 3$ or $d=1, 2$, $\int_{\mathbb{R}^d}{\varphi\,dx}=0$, then by Lemma \ref{lm2.3} we obtain that
$$
F^{\pm}(\lambda^2)\in S_{[\frac{d}{2}]+1}^0((0,\lambda_0)).
$$
Therefore, \eqref{equ6.2.3} follows from the lower bound \eqref{eq.11.1.2} and the expression \eqref{eq.11.1.5}.
Since
$$F^{\alpha, l}_{+}(\lambda^2)-F^{\alpha, l}_{-}(\lambda^2)=\frac{\alpha\chi(\lambda)(F_{-}(\lambda^2)-F_{+}(\lambda^2))}{(1+\alpha F_{+}(\lambda^2))(1+\alpha F_{-}(\lambda^2))},
$$
then \eqref{equ6.2.3.3} follows from \eqref{equ6.2.3}  and  \eqref{eq2.18} in Lemma \ref{lm2.4}.

We are left to prove the special case that $d=1, 2$ and $\int_{\mathbb{R}^d}{\varphi\,dx}\ne 0$. By \eqref{eq.11.1.5},
\eqref{equ6.2.1} follows from \eqref{eq.11.1.4} and  \eqref{eq2.17.1} in Lemma \ref{lm2.3};
\eqref{equ6.2.1.1} follows from \eqref{eq.11.1.4}, \eqref{eq2.17.1} in Lemma \ref{lm2.3} and Lemma \ref{lm2.4};
finally, \eqref{eq.13.3.1} follows from \eqref{eq.11.1.3}  and \eqref{eq2.17}  in Lemma \ref{lm2.3}.
\end{proof}

When viewing $\alpha$ as a very small parameter in \eqref{equ6.2.3} and \eqref{equ6.2.3.3}, we have the following better properties concerning  $\alpha$:
\begin{lemma}\label{lm3.3}
 If $0<\alpha<1$ and we choose $\lambda_0=\frac12$ in the cut-off function \eqref{eq3.3.1}, then in the case $d\ge 3$ or $d=1, 2$, $\int_{\mathbb{R}^d}\varphi(x)\,dx=0$, one has
 \begin{align}\label{eq.11.1.2'}
  	M^{-2([\frac{d}{2}]+1)} F^{\alpha, l}_{\pm}(\lambda^2)&\in S_{[\frac{d}{2}]+1}^0\left((0, \frac12)\right) \tag{\ref{equ6.2.3}'}
\end{align}
and
\begin{align}\label{eq.11.1.3'}
 	M^{-2([\frac{d}{2}]+2)} (F^{\alpha, l}_{+}(\lambda^2)-F^{\alpha, l}_{-}(\lambda^2))&\in S_{[\frac{d}{2}]+1}^{d-2}\left((0, \frac12)\right)\tag{\ref{equ6.2.3.3}'}
\end{align}
in the sense of \eqref{eq.11.1.6} and \eqref{eq.11.1.7} with $C_k$ independent of $\varphi$.
\end{lemma}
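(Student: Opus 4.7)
The plan is to adapt the proof of Lemma \ref{lm3.2} to the regime $0<\alpha<1$, keeping careful track of how constants depend on $\alpha$ and $M$, and exploiting a stronger lower bound on the denominator $1+\alpha F^\pm(\lambda^2)$ than was used there.

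First I would exploit the uniform spectral bound from Condition $H_\alpha$, namely $|1+\alpha F^\pm(\lambda^2)|\ge c_0$ for all $\lambda>0$ (see \eqref{eq1.4}). This is a lower bound independent of $\alpha$, $\varphi$, and $M$, whereas the proof of Lemma \ref{lm3.2} relied on the weaker $\alpha$-dependent bound $|1+\alpha F^\pm(\lambda^2)|\ge \alpha C_1(\varphi)$ coming from Lemma \ref{lm2.2}. The bound $c_0^{-1}$ on $(1+\alpha F^\pm)^{-1}$ is precisely what allows us to drop the factor of $\alpha$ out front (which appeared in Lemma \ref{lm3.2}) and replace it by a power of $M$ on the left.

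Next I would estimate the derivatives $\partial_\lambda^j F^\pm(\lambda^2)$ for $0\le j\le [\frac{d}{2}]+1$ on $(0,\tfrac12)$ using Lemma \ref{lm2.3}: in each of the cases under consideration ($d\ge 3$, or $d\in\{1,2\}$ with $\int\varphi=0$) one has $|\partial^j F^\pm(\lambda^2)|\le C M^2 \lambda^{-e_j}$ with nonnegative integers $e_j$ satisfying $\sum_{s=1}^n e_{\mu_s}\le k$ for any decomposition $\mu_1+\cdots+\mu_n=k$ with $\mu_s\ge 1$. Combined with the Fa\`a di Bruno expansion \eqref{eq.11.1.5} and the lower bound $c_0$, this yields
\[
|\partial_\lambda^k(1+\alpha F^\pm(\lambda^2))^{-1}|\le C\sum_{n=1}^k \alpha^n M^{2n}\lambda^{-k}\le C\,\alpha M^{2([\frac{d}{2}]+1)}\lambda^{-k},\qquad 1\le k\le [\tfrac{d}{2}]+1,
\]
where I used $0<\alpha<1$ to bound $\alpha^n\le\alpha$, together with the fact that $\|\varphi\|_{L^2}=1$ and \eqref{eq1.2} force $M\ge \|\langle x\rangle^{-\delta}\|_{L^2}^{-1}$, so that the lower-order powers $M^{2n}$ can be absorbed into $M^{2([\frac{d}{2}]+1)}$. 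Multiplying by $\chi(\lambda)$ and using Leibniz (derivatives of $\chi$ produce bounded factors on the compact support of $\chi$), the same bound holds for $\partial^k F^{\alpha,l}_\pm$. Dividing both sides by $M^{2([\frac{d}{2}]+1)}$ gives the first claimed estimate.

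For the second estimate, I would start from
\[
F^{\alpha,l}_+(\lambda^2)-F^{\alpha,l}_-(\lambda^2)=\chi(\lambda)\cdot\frac{\alpha\bigl(F^-(\lambda^2)-F^+(\lambda^2)\bigr)}{(1+\alpha F^+(\lambda^2))(1+\alpha F^-(\lambda^2))},
\]
and expand via Leibniz on these four factors with derivative orders $k_1+k_2+k_3+k_4=k$. Lemma \ref{lm2.4} gives $|\partial^{k_2}(F^+-F^-)|\le C M^2\lambda^{d-2-k_2}$; each factor $(1+\alpha F^\pm)^{-1}$ contributes as in the previous paragraph. The total number of $M$'s in the product is at most $2+2k_3+2k_4\le 2([\frac{d}{2}]+2)$, the $\alpha$-weight is $\le 1$, and the combined $\lambda$-exponent is $d-2-(k-k_1)\ge d-2-k$ on $(0,\tfrac12)$, so the overall bound is $C M^{2([\frac{d}{2}]+2)}\lambda^{d-2-k}$. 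Dividing by $M^{2([\frac{d}{2}]+2)}$ yields the second claimed estimate. The main technical hurdle is the $\lambda$-power bookkeeping in even dimensions $d\ge 4$, where $\partial^{d/2}$ and $\partial^{d/2+1}$ of $F^\pm$ each carry an extra $\lambda^{-1}$; but only one such ``bad'' factor can appear in any single Fa\`a di Bruno summand (two would already exceed the total derivative order $k\le [\frac{d}{2}]+1$), which keeps the $\lambda$-loss from exceeding $\lambda^{-k}$. Everything else is routine once the uniform lower bound \eqref{eq1.4} is invoked in place of the $\alpha$-dependent bound used in Lemma \ref{lm3.2}.
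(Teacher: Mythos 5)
Your proof is correct and follows essentially the same route as the paper: replace the $\alpha$-dependent lower bound \eqref{eq.11.1.2} from Lemma \ref{lm2.2} by the uniform spectral assumption \eqref{eq1.4}, apply the Fa\`a di Bruno expansion \eqref{eq.11.1.5} together with the explicit $M^2$-bounds of Lemmas \ref{lm2.3} and \ref{lm2.4}, and absorb the lower powers of $M$ using $M\ge\|\langle x\rangle^{-\delta}\|_{L^2}^{-1}$. The paper's proof is terser (it just states these two modifications), while you additionally record the sharper $\alpha$-prefactor $\alpha^n\le\alpha$ and spell out the $\lambda$-power bookkeeping for even $d\ge 4$; both are correct refinements but not needed for the statement as phrased.
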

\begin{proof}
The proof of \eqref{eq.11.1.2'} and \eqref{eq.11.1.3'} are the same as \eqref{equ6.2.3} and \eqref{equ6.2.3.3}
except that: (\romannumeral1) when $0<\lambda<\frac12$ we replace \eqref{eq.11.1.2} by the spectral assumption \eqref{eq1.4}; (\romannumeral2) in \eqref{eq.11.1.5}, we chose $m=[\frac{d}{2}]+1$  and use Lemma \ref{lm2.3} and \ref{lm2.4} to obtain explicit upper bound on $M$. Note that  the  constants $C_0, C_1,\ldots$ are independent of $\varphi$. Therefore we conclude that
\begin{equation}\label{eq.11.1.6}
|\frac{d^k}{d\lambda^k}F^{\alpha, l}_{\pm}(\lambda^2)|\leq C_k\cdot M^{2([\frac{d}{2}]+1)}\cdot\lambda^{-k},\,\,\,\,\,\qquad\qquad\qquad\qquad~~~~~k=0, 1,\ldots, [\frac{d}{2}]+1,
\end{equation}
and
\begin{equation}\label{eq.11.1.7}
|\frac{d^k}{d\lambda^k}(F^{\alpha, l}_{+}(\lambda^2)-F^{\alpha, l}_{-}(\lambda^2))|\leq C_k\cdot M^{2([\frac{d}{2}]+2)}\cdot\lambda^{d-2-k},\,\,\quad k=0, 1,\ldots, [\frac{d}{2}]+1,
\end{equation}
where each $C_k$ is independent of $\varphi$.
\end{proof}
\begin{remark}\label{rmk3.3}
Actually, the proof above shows that \eqref{eq.11.1.2'} and \eqref{eq.11.1.3'} also hold when $d=1, 2$, $\int_{\mathbb{R}^d}\varphi(x)\,dx\ne 0$. We point out that, in this case, \eqref{equ6.2.1}--\eqref{eq.13.3.1} are the key properties to the dispersive estimate. However, \eqref{eq.11.1.2'} and \eqref{eq.11.1.3'} are weaker with respect to $\lambda$, and we cannot use them to obtain the decay $t^{-\frac{d}{2}}$. This is the reason that in \eqref{eq.11.11} (Theorem \ref{thm3.2}) and \eqref{eq1.5.0} (Theorem \ref{thm-1.1}), we exclude this situation.

\end{remark}
Now we are ready to prove Theorem \ref{thm3.2}. We remark that Lemma \ref{lm3.2} plays a key role when proving that the constant $C(\varphi)$ is uniformly with $\alpha$, and when $0<\alpha<1$, we use Lemma \ref{lm3.3} to obtain the better upper bound \eqref{eq.11.11}.

\emph{Proof of Theorem \ref{thm3.2}.}
First we prove \eqref{eq.l1}. Write
\begin{align}\label{eq.l7}
U_d^{+, l}-U_d^{-, l}&=\frac{-\alpha}{\pi i}\int_0^{\infty}e^{-it\lambda^2}\left(F^{\alpha, l}_{+}(\lambda^2)-F^{\alpha, l}_{-}(\lambda^2)\right)R^{+}_{0}(\lambda^2)\varphi\langle R^{+}_{0}(\lambda^2)\, \cdot, \varphi\rangle\lambda\,d\lambda \nonumber\\
&-\frac{\alpha}{\pi i}\int_0^{\infty}e^{-it\lambda^2}F^{\alpha, l}_{-}(\lambda^2)\left(R^{+}_{0}(\lambda^2)-R^{-}_{0}(\lambda^2)\right)\varphi\langle R^{+}_{0}(\lambda^2)\, \cdot, \varphi\rangle\lambda\,d\lambda\nonumber\\
&-\frac{\alpha}{\pi i}\int_0^{\infty}e^{-it\lambda^2}F^{\alpha, l}_{-}(\lambda^2)R^{-}_{0}(\lambda^2)\varphi\left\langle \left(R^{+}_{0}(\lambda^2)-R^{-}_{0}(\lambda^2)\right)\, \cdot, \varphi\right\rangle\lambda\,d\lambda\nonumber\\
&:=U_{d, 1}^{ l}+U_{d, 2}^{ l}+U_{d, 3}^{ l},
\end{align}
where we have used the algebraic identity:
\begin{equation}\label{eq.17.10}
  \prod_{k=1}^3{A_k^+}-\prod_{k=1}^3{A_k^-}=(A_1^+-A_1^-)A_2^+A_3^++A_1^-(A_2^+-A_2^-)A_3^++A_1^-A_2^-(A_3^+-A_3^-).
\end{equation}
%

Now we divide the proof into three cases.

$\bullet$ {\emph{case 1: $d=1$.}}
In this case, instead of using \eqref{eq.l7}, we estimate $U_1^{\pm, l}(t, x, y)$ directly. Since in $d=1$, $R^{+}_{0}(\lambda^2)-R^{-}_{0}(\lambda^2)$ and $R^{\pm}_{0}(\lambda^2)$ have the same singularity at $\lambda=0$ by \eqref{eq2.1}, then the estimates for $U_1^{+, l}-U_1^{-, l}$ and $U_1^{\pm, l}$ are the same. To proceed, we write
\begin{equation}\label{eq.l2}
  U_1^{\pm, l}(t, x, y)=\frac{1}{4\pi i}\int_{\mathbb{R}}\int_{\mathbb{R}}{I_1^{\pm, l}(t, |x-x_1|, |x_2-y|)\varphi(x_1)\varphi(x_2)\,dx_1dx_2},
\end{equation}
where
\begin{equation*}\label{eq.l3}
  I_1^{\pm, l}(t, |x-x_1|, |x_2-y|):=\int_0^{\infty}{e^{-it\lambda^2\pm i\lambda(|x-x_1|+ |x_2-y|)}\frac{\alpha}{\lambda}\cdot F^{\alpha, l}_{\pm}(\lambda^2)d\lambda}.
\end{equation*}
Observe that by Lemma \ref{lm3.2}, the behavior of $F^{\alpha, l}_{\pm}(\lambda^2)$ is closely related to whether or not $\int_{\mathbb{R}}{\varphi(x)\,dx}=0$, therefore we furhter break it into two subcases:

(\romannumeral1) When $\int_{\mathbb{R}}{\varphi(x)\,dx}\ne 0$, then it follows directly from  \eqref{eq.13.3.1} that
\begin{equation}\label{eq.13.3}
 \frac{\alpha}{\lambda}\cdot F^{\alpha, l}_{\pm}(\lambda^2)\in S^{0}_{1}\left((0, \lambda_0)\right).
\end{equation}
To estimate $I_1^{\pm, l}(t, |x-x_1|, |x_2-y|)$, we  apply the oscillatory integral estimate \eqref{eq2.22} in (\romannumeral1) of Corollary \ref{cor2.6} with  $\psi= \frac{\alpha}{\lambda}\cdot F^{\alpha, l}_{\pm}(\lambda^2)$. This, together with \eqref{eq.l2}, yields that
\begin{equation}\label{eq.l6}
\sup_{x, y\in\mathbb{R}}|U_1^{\pm, l}(t, x, y)|\leq C(\varphi)\cdot t^{-\frac12}.
\end{equation}

(\romannumeral2)  When $\int_{\mathbb{R}}{\varphi(x)\,dx}=0$, note that \eqref{eq.13.3.1} is no longer valid, instead we only have the weaker property \eqref{equ6.2.3}  (there is a loss of $1/\lambda$). We shall proceed differently and exploit the cancellation property of $\varphi$. In fact, observe that if  $\int_{\mathbb{R}}{\varphi(x_1)\,dx_1}=0$, then by \eqref{eq2.1},
\begin{align*}
  R_0^{\pm}(\lambda^2)\varphi&=\frac{\pm ie^{\pm i\lambda|x|}}{2\lambda}\int_{\mathbb{R}}{[e^{\pm i\lambda(|x-x_1|-|x|)}-1]\varphi(x_1)\,dx_1}\\
  &=\frac{-e^{\pm i\lambda|x|}}{2}\cdot\int_{\mathbb{R}}{(|x-x_1|-|x|) \cdot r_1(\lambda)\varphi(x_1)\,dx_1},
\end{align*}
where $r_1(\lambda):=\frac{1}{\lambda}\int_0^{\lambda}e^{\pm is\cdot(|x-x_1|-|x|)}\,ds$. Similarly,
 \begin{align*}
 \int_{}\varphi(x_2) R_0^{\pm}(\lambda^2)(x_2, y)\,dx_2&=\frac{\pm ie^{\pm i\lambda|y|}}{2\lambda}\int_{\mathbb{R}}{[e^{\pm i\lambda(|x_2-y|-|y|)}-1]\varphi(x_2)\,dx_2}\\
  &=\frac{-e^{\pm i\lambda|y|}}{2}\cdot \int_{\mathbb{R}}{(|x_2-y|-|y|) \cdot r'_1(\lambda)\varphi(x_2)\,dx_2},
\end{align*}
where $r_1'(\lambda):=\frac{1}{\lambda}\int_0^{\lambda}e^{\pm is\cdot(|x_2-y|-|y|)}\,ds$. By \eqref{eq3.5} we rewrite the kernel of $U_1^{\pm, l}$ as:
\begin{equation}\label{eq.l6.11}
  U_1^{\pm, l}(t, x, y)=\int_{\mathbb{R}}\int_{\mathbb{R}}{\tilde{I}_1^{\pm, l}(t, |x|, |y|)(|x-x_1|-|x|)(|x_2-y|-|y|)\varphi(x_1)\varphi(x_2)\,dx_1dx_2},
\end{equation}
where
\begin{equation*}\label{eq.16.12}
  \tilde{I}_1^{\pm, l}(t, |x|, |y|):=\frac{-1}{4\pi i}\int_0^{\infty}{e^{-it\lambda^2\pm i\lambda(|x|+ |y|)}\alpha F^{\alpha, l}_{\pm}(\lambda^2)\cdot r_1(\lambda)\cdot r_1'(\lambda)\cdot \lambda d\lambda}.
\end{equation*}
A direct computation shows that $r_1(\lambda),\, r'_1(\lambda)\in S^{0}_{1}\left((0, \lambda_0)\right)$. Combining this with  \eqref{equ6.2.3} implies that
$$\alpha F^{\alpha, l}_{\pm}(\lambda^2)\cdot r_1(\lambda)\cdot r_1'(\lambda)\cdot \lambda\in S^{1}_{1}\left((0, \lambda_0)\right)\subset S^{0}_{1}\left((0, \lambda_0)\right). $$
Again  by the oscillatory integral estimate \eqref{eq2.22} with  $b=0, K=1$, one has
$$
\sup_{x, y\in\mathbb{R}}|\tilde{I}_1^{\pm, h}(t, |x|, |y|)|\leq C(\varphi)\cdot t^{-\frac12}.
$$
This, together with \eqref{eq.l6.11}, yields that
\begin{align}\label{eq.l6.13}
 \sup_{x, y\in\mathbb{R}}|U_1^{\pm, l}(t, x, y)|&\leq C(\varphi)\cdot t^{-\frac12}\int_{\mathbb{R}}\int_{\mathbb{R}}{(|x-x_1|-|x|)(|x_2-y|-|y|)\varphi(x_1)\varphi(x_2)\,dx_1dx_2}\nonumber\\
 &\leq C(\varphi)\cdot t^{-\frac12},
\end{align}
where we have used triangle inequality $||x-x_1|-|x||\leq |x_1|$,  $||x_2-y|-|y||\leq |x_2|$ and the decay assumption \eqref{eq1.2}.  Combining \eqref{eq.l6} and \eqref{eq.l6.13}, we prove \eqref{eq.l1} for $d=1$.

$\bullet$ {\emph{case 2: $d\ge 3$, odd.}}

First, we deal with the term $U_{d, 1}^{ l}$. From the explicit expression of the free resolvent \eqref{eq2.3}, its integral kernel can be written as a linear combination of
\begin{align}\label{eq.l8}
\frac{\alpha}{\pi i}\int_{\mathbb{R}^d}\int_{\mathbb{R}^d}\int_0^{\infty}{e^{-it\lambda^2+ i\lambda(|x-x_1|+ |x_2-y|)}(F^{\alpha, l}_{+}(\lambda^2)-F^{\alpha, l}_{-}(\lambda^2))\lambda^{k_1+k_2+1}} \nonumber\\
\cdot\frac{\varphi(x_1)}{|x-x_1|^{d-2-k_1}}\frac{\varphi(x_2)}{|x_2-y|^{d-2-k_2}}d\lambda\,dx_1dx_2,
\end{align}
where $0\leq k_1, k_2\leq \frac{d-3}{2}$. We rewrite \eqref{eq.l8} as the form
\begin{equation}\label{eq.19.1}
\int_{\mathbb{R}^d}\int_{\mathbb{R}^d}{I_d^{l}(t, |x-x_1|, |x_2-y|)\frac{\varphi(x_1)}{|x-x_1|^{d-2-k_1}}\frac{\varphi(x_2)}{|x_2-y|^{d-2-k_2}}\,dx_1dx_2},\tag{\ref{eq.l8}'}
\end{equation}
where
\begin{align*}
I_d^{l}(t, |x-x_1|, |x_2-y|)&:=\frac{\alpha}{\pi i}\int_0^{\infty}{e^{-it\lambda^2+ i\lambda(|x-x_1|+ |x_2-y|)}(F^{\alpha, l}_{+}(\lambda^2)-F^{\alpha, l}_{-}(\lambda^2))\lambda^{k_1+k_2+1} d\lambda}.
\end{align*}
Then it follows from  \eqref{equ6.2.3.3} in Lemma \ref{lm3.2}  that for any $0\le k_1, k_2\le \frac{d-3}{2}$,
\begin{equation*}\label{eq.l9}
\alpha\cdot\left(F^{\alpha, l}_{+}(\lambda^2)-F^{\alpha, l}_{-}(\lambda^2)\right)\cdot\lambda^{k_1+k_2+1}\in  S^{d-1}_{\frac{d+1}{2}}\left((0, \lambda_0)\right).
\end{equation*}
Now we use oscillatory integral estimates \eqref{eq2.22} (with $b=d-1$, $K=\frac{d+1}{2}$ ) in Corollary \ref{cor2.6} to yield that
\begin{equation*}\label{eq.20}
|I_d^{l}(t, |x-x_1|, |x_2-y|)|\leq C(\varphi)(1+|x-x_1|+ |x_2-y|)^{\frac{d-1}{2}}\cdot t^{-\frac{d}{2}}.
\end{equation*}
Therefore by \eqref{eq.l8}, \eqref{eq.19.1} we have
\begin{align*}\label{eq.21}
&\sup_{x, y\in\mathbb{R}^d}|U_{d, 1}^{l}(t, x, y)|\nonumber\\
&\leq t^{-\frac{d}{2}}\sum_{k_1, k_2=0}^{\frac{d-3}{2}}C_{k_1, k_2, \varphi}\int_{\mathbb{R}^{2d}}{(1+|x-x_1|+ |x_2-y|)^{\frac{d-1}{2}}\frac{|\varphi(x_1)|}{|x-x_1|^{d-2-k_1}}\frac{|\varphi(x_2)|}{|x_2-y|^{d-2-k_2}}\,dx_1dx_2}\nonumber\\
&\leq C(\varphi)\cdot t^{-\frac{d}{2}},
\end{align*}
where the last inequality follows by \eqref{eq2.20} in Lemma \ref{lm2.8} and the decay assumption \eqref{eq1.2}.

Next, we handle the term $U_{d, 2}^{l}$. By \eqref{eq2.3} again, the kernel of $U_{d, 2}^{l}$ can be written as
\begin{equation}\label{eq.22}
U_{d, 2}^{l}(t, x, y)=\sum_{k=0}^{\frac{d-3}{2}}{C_{d,\,k}\cdot U_{d, 2,k}^{l}(t, x, y)},
\end{equation}
where
\begin{align}\label{eq.23}
U_{d, 2,k}^{l}(t, x, y)&=\frac{\alpha}{\pi i}\int_{\mathbb{R}^{2d}}\int_0^{\infty}e^{-it\lambda^2+ i\lambda\cdot|x_2-y|}F^{\alpha,l}_{-}(\lambda^2)\lambda^{k+1}\nonumber\\
&\cdot \left(R^{+}_{0}(\lambda^2,x,x_1)-R^{-}_{0}(\lambda^2,x,x_1)\right)\frac{\varphi(x_1)\varphi(x_2)}{|x_2-y|^{d-2-k}}d\lambda\,dx_1dx_2.
\end{align}
Now the key point is to decompose the difference $R^{+}_{0}(\lambda^2)-R^{-}_{0}(\lambda^2)$ in a suitable form. Using \eqref{eq2.3} and applying Taylor expansion for $e^{\pm i\lambda |x-x_1|}$ we can write (similar to \eqref{eq2.8})
\begin{equation*}\label{eq.24}
R^{\pm}_0(\lambda^2, x, x_1)=C_d\sum^{d-3}_{l=0}{d_l(\pm i\lambda)^l|x-x_1|^{l+2-d}}+\sum^{\frac{d-3}{2}}_{j=0}{c^{\pm}_j\lambda^j\int_0^{\lambda}e^{\pm is |x-x_1|}(\lambda-s)^{d-3-j}\,ds},
\end{equation*}
where $d_l$ satisfies (see \cite[Lemma 3.3]{J80}):
$$
d_l=0,\,\,\,\text{for}\,\,l=1, 3,\ldots, d-4.
$$
This implies that
\begin{equation*}
 R^{+}_{0}(\lambda^2, x, x_1)-R^{-}_{0}(\lambda^2, x, x_1)=\sum^{\frac{d-3}{2}}_{j=0}{\lambda^j\int_0^{\lambda}(c^{+}_j e^{+is |x-x_1|}-c^{-}_j e^{-is |x-x_1|})(\lambda-s)^{d-3-j}\,ds}.
\end{equation*}
Using this decomposition we rewrite $U_{d, 2,k}^{l}(t, x, y)$ in \eqref{eq.23} as
\begin{align}\label{eq.24.1}
U_{d, 2,k}^{l}(t, x, y)&=\sum^{\frac{d-3}{2}}_{j=0}\int_{\mathbb{R}^{2d}}(I_{d, k, j}^{l,+}(t, x, y)-I_{d, k, j}^{l,-}(t, x, y)) \cdot\frac{\varphi(x_1)\varphi(x_2)}{|x_2-y|^{d-2-k}}d\lambda\,dx_1dx_2\nonumber\\
&:=\sum^{\frac{d-3}{2}}_{j=0}U_{d, 2,k,j}^{l}(t, x, y),\,\,\tag{\ref{eq.23}'}
\end{align}
where
\begin{equation*}\label{eq.25}
 I_{d, k, j}^{l,\pm}(t, |x-x_1|, |x_2-y|):=\frac{\alpha}{\pi i}\int_0^{\infty}{e^{-it\lambda^2+ i\lambda|x_2-y|}F^{\alpha, l}_{-}(\lambda^2)g^{\pm}_{d, k, j}(\lambda, |x-x_1|)\,d\lambda},
\end{equation*}
and
\begin{equation}\label{eq.26}
g^{\pm}_{d, k, j}(\lambda, |x-x_1|)=c^{\pm}_j \lambda^{j+k+1}\cdot\int_0^{\lambda}e^{\pm is |x-x_1|}(\lambda-s)^{d-3-j}\,ds.
\end{equation}

We observe that for each $0\leq k\leq \frac{d-3}{2}$, $0\leq j\leq \frac{d-5}{2}$, there exists an absolute constant $C_{jk}<\infty$ such that for $0<\lambda<1$,
\begin{equation}\label{eq.26.1}
\left|\frac{d^s}{d\lambda^s}g^{\pm}_{d, k, j}(\lambda, |x-x_1|)\right|\leq C_{jk}\lambda^{d-1-s},\,\,\,0\leq s\leq \frac{d+1}{2}.
\end{equation}
This, together with \eqref{equ6.2.3}  in Lemma \ref{lm3.2}, indicates that
$$
\alpha F^{\alpha, l}_{-}(\lambda^2)g^{\pm}_{d, k, j}(\lambda, |x-x_1|)\in S^{d-1}_{\frac{d+1}{2}}\left((0, \lambda_0)\right).
$$
Applying \eqref{eq2.22} (with $b=d-1$, $K=\frac{d+1}{2}$ ) in Corollary \ref{cor2.6} shows that
\begin{equation*}\label{eq.27}
  |I_{d, k, j}^{l,\pm}(t, |x-x_1|, |x_2-y|)|\leq C(\varphi)t^{-\frac{d}{2}}(1+|x_2-y|)^{\frac{d-1}{2}}.
\end{equation*}
This, together with \eqref{eq.23}' and Lemma \ref{lm2.8}, implies that
\begin{align}\label{eq.28}
\sum^{\frac{d-5}{2}}_{j=0}\left|U_{d, 2,k,j}^{l}(t, x, y)\right|&\leq C(\varphi)t^{-\frac{d}{2}} \int_{\mathbb{R}^{2d}}(1+|x_2-y|)^{\frac{d-1}{2}}\frac{|\varphi(x_1)||\varphi(x_2)|}{|x_2-y|^{d-2-k}}d\lambda\,dx_1dx_2 \nonumber \\
&\leq C(\varphi)\cdot t^{-\frac{d}{2}}.
\end{align}

We are left to deal with the case $j=\frac{d-3}{2}$ in $U_{d, 2,k,j}$. In this case, the trouble is that \eqref{eq.26.1} doesn't hold for $s=\frac{d+1}{2}$, and this is due to
\begin{equation*}
  \frac{d^{\frac{d+1}{2}}}{d\lambda^{\frac{d+1}{2}}}\int_0^{\lambda}e^{\pm is |x-x_1|}(\lambda-s)^{\frac{d-3}{2}}\,ds=C_d|x-x_1|\cdot e^{\pm i\lambda |x-x_1|}.
\end{equation*}
There is a growth $|x-x_1|$ in the estimate above, and we cannot afford it in \eqref{eq.24.1} by using Lemma \ref{lm2.6} directly. Thus we proceed differently. Note that
$$\frac{-1}{2i\lambda t}\cdot\frac{d}{d\lambda}e^{-it\lambda^2}=e^{-it\lambda^2},$$
then we perform integration by parts $\frac{d-1}{2}$ times with this identity to derive the following
\begin{align}\label{eq.29}
 &I_{d, k, \frac{d-3}{2}}^{l,\pm}(t, |x-x_1|, |x_2-y|)= \sum_{s_1, s_2, s_3,s_4}C^{\pm}_{s_1 s_2 s_3 s_4}\alpha \cdot t^{-\frac{d-1}{2}}|x_2-y|^{s_1} \nonumber\\
 &\cdot \int_{0}^{\infty}{e^{-it\lambda^2+ i\lambda|x_2-y|}\frac{d^{s_2}}{d\lambda^{s_2}}F^{\alpha, h}_{-}(\lambda^2)\frac{d^{s_3}}{d\lambda^{s_3}}\int_0^{\lambda}e^{\pm is |x-x_1|}(\lambda-s)^{\frac{d-3}{2}}\,ds \cdot \lambda^{k-s_4} }d\lambda,
\end{align}
where $s_1, s_2, s_3, s_4$ are nonnegative integers satisfying
\begin{equation}\label{eq.29.6}
s_1+s_2+s_3+ s_4=\frac{d-1}{2}.
\end{equation}
(\romannumeral1)  If $s_3<\frac{d-1}{2}$, then note that   and a direct computation shows that
\begin{equation*}\label{eq.29.1}
\frac{d^{s_3}}{d\lambda^{s_3}}\int_0^{\lambda}e^{\pm is |x-x_1|}(\lambda-s)^{\frac{d-3}{2}}\,ds \cdot \lambda^{k-s_4} \in S_1^{\frac{d-1}{2}+k-s_3-s_4}\left((0,\lambda_0)\right).
\end{equation*}
Further, note that by \eqref{eq.29.6}, $\frac{d-1}{2}+k-s_2-s_3-s_4\ge 0$. Thus  it follows from \eqref{equ6.2.3}  in Lemma \ref{lm3.2} that
\begin{equation*}
\alpha\cdot \frac{d^{s_2}}{d\lambda^{s_2}}F^{\alpha, l}_{-}(\lambda^2)\cdot \frac{d^{s_3}}{d\lambda^{s_3}} \int_0^{\lambda}e^{\pm is |x-x_1|}(\lambda-s)^{\frac{d-3}{2}}\,ds \cdot \lambda^{k-s_4}\in  S_1^0\left((0,\lambda_0)\right).
\end{equation*}
Then we apply oscillatory integral estimates \eqref{eq2.22} (with $b=0$, $K=1$) in Corollary \ref{cor2.6} to obtain that
\begin{align*}\label{eq.29.2}
\alpha\cdot\int_{0}^{\infty}{e^{-it\lambda^2+ i\lambda|x_2-y|}\frac{d^{s_2}}{d\lambda^{s_2}}F^{\alpha, l}_{-}(\lambda^2) \cdot \frac{d^{s_3}}{d\lambda^{s_3}} \int_0^{\lambda}e^{\pm is |x-x_1|}(\lambda-s)^{\frac{d-3}{2}}\,ds \cdot \lambda^{k-s_4}d\lambda}\leq C(\varphi)\cdot t^{-\frac{1}{2}},
\end{align*}
where the constant $C(\varphi)$ is independent of $\alpha, x, x_1, x_2, y$.

(\romannumeral2) If $s_3=\frac{d-1}{2}$ (which implies $s_1=s_2=s_4=0$), observe that in this case we have
$$
\frac{d^{\frac{d-1}{2}}}{d\lambda^{\frac{d-1}{2}}}\int_0^{\lambda}e^{\pm is |x-x_1|}(\lambda-s)^{\frac{d-3}{2}}\,ds=Ce^{\pm i\lambda |x-x_1|},
$$
then the oscillatory integral in \eqref{eq.29} becomes
\begin{equation*}
\alpha\cdot \int_{0}^{\infty}{e^{-it\lambda^2+ i\lambda(|x_2-y|\pm |x-x_1|)}F^{\alpha, h}_{-}(\lambda^2)\lambda^{k}\,d\lambda}.
\end{equation*}
Now apply \eqref{equ6.2.3}   and \eqref{eq2.22} (with $b=0$, $K=1$) in Corollary \ref{cor2.6}, we derive that
\begin{equation*}\label{eq.29.3}
\alpha\cdot \int_{0}^{\infty}{e^{-it\lambda^2+ i\lambda(|x_2-y|\pm |x-x_1|)}F^{\alpha, h}_{-}(\lambda^2)\lambda^{k}\,d\lambda}
\leq C(\varphi)\cdot t^{-\frac{1}{2}}.
\end{equation*}
Combining (\romannumeral1)  and (\romannumeral2), and plugging them into \eqref{eq.29}, then we obtain from \eqref{eq.24.1} and Lemma \ref{lm2.8} that for any $0\le k\le \frac{d-3}{2}$,
\begin{align}\label{eq.29.4}
\left|U_{d, 2,k, \frac{d-3}{2}}^{l}(t, x, y)\right|&\leq  C(\varphi)\cdot t^{-\frac{d}{2}} \int_{\mathbb{R}^{2d}}(1+|x_2-y|)^{\frac{d-1}{2}}\frac{|\varphi(x_1)||\varphi(x_2)|}{|x_2-y|^{d-2-k}}d\lambda\,dx_1dx_2 \nonumber \\
&\leq  C(\varphi)\cdot t^{-\frac{d}{2}}.
\end{align}
Therefore it follows from \eqref{eq.24.1}, \eqref{eq.28} and \eqref{eq.29.4} that
\begin{equation}\label{eq.29.5}
\sup_{x, y\in\mathbb{R}^d}|U_{d, 2,k}^{l}(t, x, y)|\leq  C(\varphi)\cdot t^{-\frac{d}{2}}.
\end{equation}
Finally in view of \eqref{eq.22} and \eqref{eq.29.5}, we prove the desired estimates for  $U_{d, 2}^{l}$. Since the proof for  $U_{d, 3}^{l}$ is the same as  $U_{d, 2}^{l}$, we complete the proof of \eqref{eq.l1} in  higher odd dimensions.

$\bullet$ {\emph{case 3: $d\ge 2$}, even.}

In even dimensions, the expressions of $U_{d, 1}^{l}, U_{d, 2}^{l}, U_{d, 3}^{l}$ are more complicated than that of odd dimensions. Indeed, based on the expression of the free resolvent \eqref{eq3.50}, we first write the integral kernel of $U_{d, 1}^{l}$ as
$$
U_{d, 1}^{l}(t, x, y)=\sum_{j=1}^4{U_{d, 1, j}^{l}(t, x, y)},
$$
where
\begin{align}\label{eq3.54.5}
U_{d, 1, 1}^{l}(t, x, y)&:=\frac{-\alpha}{\pi i}\int_{\mathbb{R}^{2d}}\int_0^{\infty}e^{-it\lambda^2+ i\lambda(|x-x_1|+ |x_2-y|)}(F^{\alpha, l}_{+}(\lambda^2)-F^{\alpha, l}_{-}(\lambda^2))\cdot\lambda^{d-1}\nonumber \\
&\cdot \frac{w_{+, >}(\lambda|x-x_1|)}{|x-x_1|^{\frac{d}{2}-1}}\frac{w_{+, >}(\lambda|x_2-y|)}{|x_2-y|^{\frac{d}{2}-1}} \varphi(x_1)\varphi(x_2)\,d\lambda dx_1dx_2, \tag{$U^l_1$}
\end{align}
\begin{align}\label{eq3.55.5}
U_{d, 1, 2}^{l}(t, x, y)&:=\frac{-\alpha}{\pi i}\int_{\mathbb{R}^{2d}}\int_0^{\infty}e^{-it\lambda^2+ i\lambda|x-x_1|}(F^{\alpha, l}_{+}(\lambda^2)-F^{\alpha, l}_{-}(\lambda^2))\cdot\lambda^{\frac{d}{2}} \nonumber \\
&\cdot \frac{w_{+, >}(\lambda|x-x_1|)}{|x-x_1|^{\frac{d}{2}-1}}\frac{w_{+, <}(\lambda|x_2-y|)}{|x_2-y|^{d-2}\cdot}\varphi(x_1)\varphi(x_2)\,d\lambda dx_1dx_2, \tag{$U^l_2$}
\end{align}
\begin{align}\label{eq3.56.5}
U_{d, 1, 3}^{l}(t, x, y)&:=\frac{-\alpha}{\pi i}\int_{\mathbb{R}^{2d}}\int_0^{\infty}e^{-it\lambda^2+ i\lambda|x_2-y|}(F^{\alpha, l}_{+}(\lambda^2)-F^{\alpha, l}_{-}(\lambda^2))\cdot\lambda^{\frac{d}{2}} \nonumber \\
&\cdot\frac{w_{+, <}(\lambda|x-x_1|)}{|x-x_1|^{d-2}}\frac{w_{+, >}(\lambda|x_2-y|)}{|x_2-y|^{\frac{d}{2}-1}}\varphi(x_1)\varphi(x_2)\,d\lambda dx_1dx_2,\tag{$U^l_3$}
\end{align}
and
\begin{align}\label{eq3.57.2}
U_{d, 1, 4}^{l}(t, x, y):=&\frac{-\alpha}{\pi i}\int_{\mathbb{R}^{2d}}\int_0^{\infty}e^{-it\lambda^2}(F^{\alpha, l}_{+}(\lambda^2)-F^{\alpha, l}_{-}(\lambda^2))\cdot\lambda\nonumber \\
&\cdot \frac{w_{+, <}(\lambda|x-x_1|)}{|x-x_1|^{d-2}}\frac{w_{+, <}(\lambda|x_2-y|)}{|x_2-y|^{d-2}}\varphi(x_1)\varphi(x_2)\,d\lambda dx_1dx_2.\tag{$U^l_4$}
\end{align}

Next, in order to decompose $U_{d, 2}^{l}$ into a similar form, we use \eqref{equ6.2.8} to write
$$
U_{d, 2}^{l}(t, x, y)=\sum_{j=1}^4\sum_{\pm}{U^{\pm, l}_{d, 2, j}(t, x, y)},
$$
where
\begin{align}\label{eq3.54.1}
U_{d, 2, 1}^{\pm, l}(t, x, y)&:=\frac{-\alpha}{\pi i}\int_{\mathbb{R}^{2d}}\int_0^{\infty}e^{-it\lambda^2+ i\lambda(\pm|x-x_1|+ |x_2-y|)}F^{\alpha, l}_{-}(\lambda^2)\cdot\lambda^{d-1}\nonumber \\
&\cdot \frac{J_{\pm, >}(\lambda|x-x_1|)}{|x-x_1|^{\frac{d}{2}-1}}\frac{w_{+, >}(\lambda|x_2-y|)}{|x_2-y|^{\frac{d}{2}-1}} \varphi(x_1)\varphi(x_2)\,d\lambda dx_1dx_2,\tag{$U^l_5$}
\end{align}
\begin{align}\label{eq3.55.1}
U_{d, 2, 2}^{\pm, l}(t, x, y)&:=\frac{-\alpha}{\pi i}\int_{\mathbb{R}^{2d}}\int_0^{\infty}e^{-it\lambda^2\pm i\lambda|x-x_1|}F^{\alpha, l}_{-}(\lambda^2)\cdot\lambda^{\frac{d}{2}} \nonumber \\
&\cdot \frac{J_{\pm, >}(\lambda|x-x_1|)}{|x-x_1|^{\frac{d}{2}-1}}\frac{w_{+, <}(\lambda|x_2-y|)}{|x_2-y|^{d-2}\cdot}\varphi(x_1)\varphi(x_2)\,d\lambda dx_1dx_2,\tag{$U^l_6$}
\end{align}
\begin{align}\label{eq3.56.1}
U_{d, 2, 3}^{\pm, l}(t, x, y)&:=\frac{-\alpha}{\pi i}\int_{\mathbb{R}^{2d}}\int_0^{\infty}e^{-it\lambda^2+ i\lambda(\pm|x-x_1|+ |x_2-y|)}F^{\alpha, l}_{-}(\lambda^2)\cdot\lambda^{d-1}\nonumber \\
&\cdot\frac{J_{\pm, <}(\lambda|x-x_1|)}{|x-x_1|^{\frac{d}{2}-1}}\frac{w_{+, >}(\lambda|x_2-y|)}{|x_2-y|^{\frac{d}{2}-1}}\varphi(x_1)\varphi(x_2)\, d\lambda dx_1dx_2,\tag{$U^l_7$}
\end{align}
and
\begin{align}\label{eq3.57.1}
U_{d, 2, 4}^{\pm, l}(t, x, y)&:=\frac{\alpha}{\pi i}\int_{\mathbb{R}^{2d}}\int_0^{\infty}e^{-it\lambda^2\pm i\lambda|x-x_1|}F^{\alpha, l}_{-}(\lambda^2)\cdot\lambda^{\frac{d}{2}}\nonumber \\
&\cdot\frac{J_{\pm, <}(\lambda|x-x_1|)}{|x-x_1|^{\frac{d}{2}-1}}\frac{w_{+, <}(\lambda|x_2-y|)}{|x_2-y|^{d-2}}\varphi(x_1) \varphi(x_2)\,d\lambda dx_1dx_2,\tag{$U^l_8$}
\end{align}

$U_{d, 3}^{l}$ can be decomposed exactly the same way as $U_{d, 2}^{l}$ (just exchange the role between $|x_2-y|$ and $|x-x_1|$), thus we omit it.

Before proceeding, we need the following facts concerning  $w_{\pm, \gtrless}$ and $J_{\pm, \gtrless}$, which will be used later in estimating oscillatory integrals.

When $d\ge 2$ is even, it follows from \eqref{eq3.51} that
\begin{align}\label{equ6.2.5}
  w_{\pm, >}(\lambda|x-x_1|)&\in  S_{\frac{d}{2}+1}^0\left((0,\lambda_0)\right),\\
   |x-x_1|^{\frac12}w_{\pm, >}(\lambda|x-x_1|)&\in  S_{\frac{d}{2}+1}^{-\frac12}\left((0,\lambda_0)\right).\label{equ6.2.5.1}
\end{align}
And by \eqref{eq3.25.11} and \eqref{eq3.52} one has
\begin{align}\label{equ6.2.6}
(1+|x-x_1|^{-\frac12})^{-1}w_{\pm, <}(\lambda|x-x_1|)/\log{\lambda }&\in  S_{\frac{d}{2}+1}^0\left((0,\lambda_0)\right),\qquad \text{when}\,\,\,d=2,\\
w_{\pm, <}(\lambda|x-x_1|)&\in  S_{\frac{d}{2}+1}^{0}\left((0,\lambda_0)\right),\qquad \text{when}\,\,\,d>2, even,\label{equ6.2.7}
\end{align}
where in \eqref{equ6.2.6}, we have used the expansion \eqref{eq2.6}, the definition of $w_{\pm, <}$ (see \eqref{eq3.50} and below) as well as the following inequality
$$
|\log{\lambda|x-x_1|}|\leq C|\log{\lambda}|\cdot (1+|x-x_1|^{-\frac12}),\,\,\,\text{when}\,\,\, \lambda|x-x_1|\leq 1\,\,\,\text{and}\,\,\,0<\lambda<\lambda_0<1.
$$
Meanwhile, by \eqref{equ6.2.9} and the fact that $\text{supp}\,J_{\pm, >}(z)\subset[\frac12, \infty]$, $J_{\pm, >}(\cdot)$ satisfies
\begin{align}\label{equ6.2.5.2}
  |x-x_1|^{\frac12}J_{\pm, >}(\lambda|x-x_1|)&\in  S_{\frac{d}{2}+1}^{-\frac12}\left((0,\lambda_0)\right),\\
   |x-x_1|^{1-\frac d2}J_{\pm, >}(\lambda|x-x_1|)&\in  S_{\frac{d}{2}+1}^{\frac d2 -1}\left((0,\lambda_0)\right).\label{equ6.2.5.3}
\end{align}
By \eqref{equ6.2.10} and the fact that $\text{supp}\,J_{\pm, <}(z)\subset[0, 1]$,  we have
\begin{align}\label{equ6.2.7.1}
|x-x_1|^{\frac 12}J_{\pm, <}(\lambda|x-x_1|)&\in  S_{\frac{d}{2}+1}^{-\frac12}\left((0,\lambda_0)\right), \\
|x-x_1|^{1-\frac d2}J_{\pm, <}(\lambda|x-x_1|)&\in S_{\frac{d}{2}+1}^{\frac d2 -1}\left((0,\lambda_0)\right). \label{equ6.2.7.2}
\end{align}
We remark that, compared \eqref{equ6.2.7.2} with \eqref{equ6.2.6} and \eqref{equ6.2.7}, we find that $J_{\pm, <}(\cdot)$ belongs to a better symbol class  than $w_{\pm, <}(\cdot)$ for all $d\ge 2$. This plays a key role in the estimate of $U_{d, 2}^{l}$ and  $U_{d, 3}^{l}$.

We shall use properties \eqref{equ6.2.3}--\eqref{equ6.2.1.1}, as well as
 \eqref{equ6.2.5}--\eqref{equ6.2.7.2} to estimate integrals \eqref{eq3.54.5}--\eqref{eq3.57.1}.
We mention that in two dimension,  whether or not the integral $\int_{\mathbb{R}^2}{\varphi(x)\,dx}$ equals to zero may affect the behavior of $F^{\alpha, l}_{\pm}(\lambda^2)$ (see Lemma \ref{lm3.2}).
Therefore similar to $d=1$, we further divide the analysis into two subcases:

$\bullet$ {\emph{Subcase 1: $d=2$ and $\int_{\mathbb{R}^2}{\varphi(x)\,dx}\ne0$, or $d\ge 4$}, even.}

\emph{Step 1.} In this step we consider the term $U_{d, 1}^{ l}$. Since its kernel is the sum of
\eqref{eq3.54.5}, \eqref{eq3.55.5}, \eqref{eq3.56.5} and \eqref{eq3.57.2}, we present the proof in order.

 For the term \eqref{eq3.54.5}, the oscillatory integral we face is
  \begin{align}\label{eq3.58}
I_{d, 1, 1}^{l}(t, x, y)&:=\frac{\alpha}{\pi i}\int_0^{\infty}e^{-it\lambda^2+ i\lambda(|x-x_1|+ |x_2-y|)}(F^{\alpha, l}_{+}(\lambda^2)-F^{\alpha, l}_{-}(\lambda^2))\nonumber \\
&\cdot w_{+, >}(\lambda|x-x_1|)w_{+, >}(\lambda|x_2-y|)\lambda^{d-1}\,d\lambda.
\end{align}
We mention that in order to prove uniform bounds with respect to $x, y$, we shall use Lemma \ref{lm2.6} (which is finer than Corollary \ref{cor2.6}) to control $I_{d, 1, 1}^{l}$. This leads to the following discussion:
(\romannumeral1) In the region $|x-x_1|+ |x_2-y|\leq t^{\frac12}$, note that by \eqref{equ6.2.3.3}, \eqref{equ6.2.5}, we have
$$
\alpha(F^{\alpha, l}_{+}(\lambda^2)-F^{\alpha, l}_{-}(\lambda^2))w_{+, >}(\lambda|x-x_1|)w_{+, >}(\lambda|x_2-y|)\lambda^{d-1}\in S_{\frac{d}{2}+1}^{d-1}\left((0, \lambda_0)\right)
$$
(in fact, it belongs to the smaller class $S_{\frac{d}{2}+1}^{2d-3}\left((0, \lambda_0)\right)$ when $d\ge 4$, which is better than we need). Then apply Lemma \ref{lm2.6}  in the above region  with $b=d-1$ and $K=\frac{d}{2}+1$,  we have
$$
|I_{d, 1, 1}^{l}(t, x, y)|\leq C(\varphi)\cdot t^{-\frac{d}{2}}.
$$
Plugging this into \eqref{eq3.54.5}, it follows from Lemma \ref{lm2.8} and the decay assumption \eqref{eq1.2} that
\begin{align}\label{eq.59}
 \left|U_{d, 1, 1}^{l}(t, x, y)\right|&\leq C(\varphi)\cdot t^{-\frac{d}{2}} \int_{\mathbb{R}^{2d}}\frac{|\varphi(x_1)|}{|x-x_1|^{\frac{d}{2}-1}}\cdot\frac{|\varphi(x_2)|}{|x_2-y|^{\frac{d}{2}-1}} \,dx_1dx_2\nonumber \\
&\leq C(\varphi)\cdot t^{-\frac{d}{2}}.
\end{align}
(\romannumeral2)  In the region $t^{\frac12}<|x-x_1|+ |x_2-y|$, we notice that by \eqref{equ6.2.3.3} and \eqref{equ6.2.5.1}, we have
 $$
\alpha(F^{\alpha, l}_{+}(\lambda^2)-F^{\alpha, l}_{-}(\lambda^2))(|x-x_1|+ |x_2-y|)^{\frac12} w_{+, >}(\lambda|x-x_1|)w_{+, >}(\lambda|x_2-y|)\lambda^{d-1}\in S_{\frac{d}{2}+1}^{\frac{d-1}{2}}\left((0,\lambda_0)\right)
$$
(in fact, it belongs to the smaller class $S_{\frac{d}{2}+1}^{2d-7/2}\left((0, \lambda_0)\right)$ when $d\ge 4$). Now we apply Lemma \ref{lm2.6}  in the above region  with $b=\frac{d-1}{2}$ and $K=\frac{d}{2}+1$, then we have
$$
|I_{d, 1, 1}^{l}(t, x, y)|\leq C(\varphi)\cdot t^{-\frac{d}{2}}(|x-x_1|+ |x_2-y|)^{\frac{d}{2}-1}.
$$
Then it follows again  from Lemma \ref{lm2.8} and the decay assumption \eqref{eq1.2} that
\begin{align}\label{eq.60}
\left|U_{d, 1, 1}^{l}(t, x, y)\right|&\leq C(\varphi)\cdot t^{-\frac{d}{2}} \int_{\mathbb{R}^{2d}}\frac{(|x-x_1|+ |x_2-y|)^{\frac{d}{2}-1}}{|x-x_1|^{\frac{d}{2}-1}|x_2-y|^{\frac{d}{2}-1}}|\varphi(x_1)||\varphi(x_2)| \,dx_1dx_2 \nonumber \\
&\leq C(\varphi)\cdot t^{-\frac{d}{2}}.
\end{align}
Therefore, combining \eqref{eq.59} and \eqref{eq.60} we have
\begin{equation}\label{eq.61}
  \sup_{x, y}|U_{d, 1, 1}^{l}(t, x, y)|\leq C(\varphi)\cdot  t^{-\frac{d}{2}}.
\end{equation}

For the term \eqref{eq3.55.5},  the oscillatory integral we face becomes
\begin{align}\label{eq3.62}
I_{d, 1, 2}^{l}(t, x, y)&:=\frac{\alpha}{\pi i}\int_0^{\infty}e^{-it\lambda^2+i\lambda|x-x_1|}(F^{\alpha, l}_{+}(\lambda^2)-F^{\alpha, l}_{-}(\lambda^2))\nonumber \\
&\cdot w_{+, >}(\lambda|x-x_1|)w_{+, <}(\lambda|x_2-y|)\lambda^{\frac{d}{2}}\,d\lambda.
\end{align}
(\romannumeral1) In the region $|x-x_1|\leq t^{\frac12}$. When  $d\ge 4$, note that by \eqref{equ6.2.3.3}, \eqref{equ6.2.5} and \eqref{equ6.2.7} we have
$$
\alpha(F^{\alpha, l}_{+}(\lambda^2)-F^{\alpha, l}_{-}(\lambda^2))w_{+, >}(\lambda|x-x_1|)w_{+, <}(\lambda|x_2-y|)\cdot\lambda^{\frac{d}{2}}\in S_{\frac{d}{2}+1}^{d-1}\left((0,\lambda_0)\right)
$$
(in fact, it belongs to $S_{\frac{d}{2}+1}^{d-2+d/2}\left((0,\lambda_0)\right)$, which is better than we need when $d\ge 4$).  Now we apply Lemma \ref{lm2.6} in the above region with $b=d-1$ and $K=\frac{d}{2}+1$ to obtain that
\begin{align}\label{eq.63}
\left|U_{d, 1, 2}^{l}(t, x, y)\right|&\leq  C(\varphi)\cdot t^{-\frac{d}{2}} \int_{\mathbb{R}^{2d}}\frac{|\varphi(x_1)|}{|x-x_1|^{\frac{d}{2}-1}}\cdot\frac{|\varphi(x_2)|}{|x_2-y|^{d-2}} \,dx_1dx_2 \nonumber \\
&\leq  C(\varphi)\cdot t^{-\frac{d}{2}}.
\end{align}
When $d=2$, by \eqref{equ6.2.1.1}, \eqref{equ6.2.5}, \eqref{equ6.2.6} we see that
\begin{equation}\label{eq3.90.1}
  \alpha(F^{\alpha, l}_{+}(\lambda^2)-F^{\alpha, l}_{-}(\lambda^2))w_{+, >}(\lambda|x-x_1|)\cdot (1+|x_2-y|^{-\frac12})^{-1}w_{+, <}(\lambda|x_2-y|)\cdot\lambda\in S_{2}^{1}\left((0,\lambda_0)\right).
\end{equation}
Again we apply Lemma \ref{lm2.6} in the above region with $b=1$ and $K=2$ to derive that
\begin{align}\label{eq.63.1}
\left|U_{d, 1, 2}^{l}(t, x, y)\right|&\leq C(\varphi)\cdot t^{-1} \int_{\mathbb{R}^{2d}}|\varphi(x_1)|\cdot|\varphi(x_2)|(1+|x_2-y|^{-\frac12}) \,dx_1dx_2 \nonumber \\
&\leq C(\varphi)\cdot t^{-1}.
\end{align}
(\romannumeral2)  In the region  $t^{\frac12}<|x-x_1|$. When $d\ge 4$, note that  by \eqref{equ6.2.3.3}, \eqref{equ6.2.5.1} and \eqref{equ6.2.7}, then
$$
\alpha(F^{\alpha, l}_{+}(\lambda^2)-F^{\alpha, l}_{-}(\lambda^2))|x-x_1|^{\frac12} w_{+, >}(\lambda|x-x_1|)w_{+, <}(\lambda|x_2-y|)\lambda^{\frac{d}{2}}\in S_{\frac{d}{2}+1}^{\frac{d-1}{2}}\left((0,\lambda_0)\right)
$$
(in fact, it belongs to the smaller class $S_{\frac{d}{2}+1}^{(3d-5)/2}\left((0,\lambda_0)\right)$).
Now we apply Lemma \ref{lm2.6} in the above region with $b=\frac{d-1}{2}$ and $K=\frac{d}{2}+1$ to obtain that
\begin{align}\label{eq.64}
\left|U_{d, 1, 2}^{l}(t, x, y)\right|&\leq C(\varphi)\cdot t^{-\frac{d}{2}} \int_{\mathbb{R}^{2d}}\frac{|x-x_1|^{\frac{d}{2}-1}}{|x-x_1|^{\frac{d}{2}-1}|x_2-y|^{\frac{d}{2}-1}}|\varphi(x_1)||\varphi(x_2) | \,dx_1dx_2 \nonumber \\
&\leq C(\varphi)\cdot t^{-\frac{d}{2}}.
\end{align}
When $d=2$, by \eqref{equ6.2.1} \eqref{equ6.2.5.1}, and \eqref{equ6.2.6}, we have
\begin{equation}\label{eq3.90.2}
\alpha(F^{\alpha, l}_{+}(\lambda^2)-F^{\alpha, l}_{-}(\lambda^2)|x-x_1|^{\frac12}w_{+, >}(\lambda|x-x_1|)(1+|x_2-y|^{-\frac12})^{-1}w_{+, <}(\lambda|x_2-y|)\lambda\in S_{2}^{\frac12}\left((0,\lambda_0)\right).
\end{equation}
Again we apply Lemma \ref{lm2.6} in the above region with $b=\frac12$ and $K=2$ to derive that
\begin{align}\label{eq.63.1.1}
\left|U_{d, 1, 2}^{l}(t, x, y)\right|&\leq C(\varphi)\cdot t^{-1} \int_{\mathbb{R}^{2}}|\varphi(x_1)|\cdot|\varphi(x_2)|(1+|x_2-y|^{-\frac12})|x-x_1|^{\frac12} |x-x_1|^{-\frac12}  \,dx_1dx_2 \nonumber \\
&\leq C(\varphi)\cdot t^{-1}.
\end{align}
Therefore combining (\romannumeral1)  and (\romannumeral2)  we conclude that
\begin{equation}\label{eq.65}
  \sup_{x, y}|U_{d, 1, 2}^{l}(t, x, y)|\leq  C(\varphi)\cdot t^{-\frac{d}{2}}.
\end{equation}

The treatment for the term \eqref{eq3.56.5} is nearly the same as \eqref{eq3.55.5}, thus we omit.

For the term \eqref{eq3.57.2}, we shall deal with the following oscillatory integral
\begin{align}\label{eq3.66}
I_{d, 1, 4}^{l}(t, x, y)&:=\int_0^{\infty}e^{-it\lambda^2}(F^{\alpha, l}_{+}(\lambda^2)-F^{\alpha, l}_{-}(\lambda^2))
w_{+, <}(\lambda|x-x_1|)w_{+, <}(\lambda|x_2-y|)\lambda\,d\lambda.
\end{align}
Now it follows from \eqref{equ6.2.3.3} and \eqref{equ6.2.7} that when $d\ge 4$,
\begin{equation*}
\alpha\cdot(F^{\alpha, l}_{+}(\lambda^2)-F^{\alpha, l}_{-}(\lambda^2))w_{+, <}(\lambda|x-x_1|)w_{+, <}(\lambda|x_2-y|)\lambda\in S_{\frac{d}{2}+1}^{d-1}((0,\lambda_0)).
\end{equation*}
(In this case, it's exactly what we need when applying Corollary \ref{cor2.6}.)
And by \eqref{equ6.2.1.1}, \eqref{equ6.2.6}, it follows that when $d=2$,
\begin{align}\label{eq3.90.3}
&\alpha(F^{\alpha, l}_{+}(\lambda^2)-F^{\alpha, l}_{-}(\lambda^2))\nonumber \\
 & \cdot(1+|x-x_1|^{-\frac12})^{-1}w_{+, <}(\lambda|x-x_1|)\cdot(1+|x_2-y|^{-\frac12})^{-1}w_{+, <}(\lambda|x_2-y|)\cdot\lambda\in S_{2}^{1}((0,\lambda_0)).
\end{align}
By oscillatory integral estimates \eqref{eq2.22} in Corollary \ref{cor2.6} with $b=d-1, K=\frac d2 +1$, we  have when $d\ge 4$,
\begin{align}\label{eq.63.2}
\left|U_{d, 1, 4}^{l}(t, x, y)\right|&\leq  C(\varphi)\cdot t^{-\frac{d}{2}} \int_{\mathbb{R}^{2d}}\frac{|\varphi(x_1)|}{|x-x_1|^{d-2}}\cdot\frac{|\varphi(x_2)|}{|x_2-y|^{d-2}} \,dx_1dx_2 \nonumber \\
&\leq  C(\varphi)\cdot t^{-\frac{d}{2}},
\end{align}
and when $d=2$,
\begin{align}\label{eq.63.3}
\left|U_{d, 1, 2}^{l}(t, x, y)\right|&\leq Ct^{-1} \int_{\mathbb{R}^{2d}}|\varphi(x_1)|\cdot|\varphi(x_2)|(1+|x_2-y|^{-\frac12})(1+|x-x_1|^{-\frac12}) \,dx_1dx_2 \nonumber \\
&\leq  C(\varphi)\cdot t^{-1}.
\end{align}
Combining \eqref{eq.63.2} and  \eqref{eq.63.3} above we conclude that
\begin{equation}\label{eq.67}
\sup_{x, y}|U_{d, 1, 4}^{l}(t, x, y)|\leq  C(\varphi)\cdot t^{-\frac{d}{2}}.
\end{equation}
Therefore by \eqref{eq.61}, \eqref{eq.65} and \eqref{eq.67}, we finish the estimate for $U_{d, 1}^{ l}$ in \emph{subcase 1}.

\emph{Step 2.}  In this step we consider the term $U_{d, 2}^{ l}$ and prove estimates for \eqref{eq3.54.1}--\eqref{eq3.57.1}. We point out that the procedure is similar to Step 1 with minor modifications. Indeed, in the oscillatory integrals \eqref{eq3.58}, \eqref{eq3.62}, and \eqref{eq3.66},
$F^{\alpha, l}_{+}(\lambda^2)-F^{\alpha, l}_{-}(\lambda^2)$ is replaced by $F^{\alpha, l}_{-}(\lambda^2)$;
$w_{+, >}(\lambda|x-x_1|)$ is replaced by $J_{\pm, >}(\lambda|x-x_1|)$;
$w_{+, <}(\lambda|x-x_1|)$ is replaced by $J_{\pm, <}(\lambda|x-x_1|)$. More precisely,   in \eqref{eq3.54.1} and \eqref{eq3.56.1},  the oscillatory integrals we face are
\begin{align}\label{eq3.69}
&I_{d, 2, 1}^{\pm, l}(t, x, y):=\nonumber \\
&\frac{\alpha}{\pi i}\int_0^{\infty}e^{-it\lambda^2+i\lambda(\pm |x-x_1|+|x_2-y|)}F^{\alpha, l}_{-}(\lambda^2)
\cdot J_{\pm, >}(\lambda|x-x_1|)w_{+, >}(\lambda|x_2-y|)\lambda^{d-1}\,d\lambda,
\end{align}
and
\begin{align}\label{eq3.71}
	&I_{d, 2, 3}^{\pm, l}(t, x, y):= \nonumber \\
	&\frac{\alpha}{\pi i}\int_0^{\infty}e^{-it\lambda^2+i\lambda(\pm |x-x_1|+|x_2-y|)}F^{\alpha, l}_{-}(\lambda^2)\cdot J_{\pm, <}(\lambda|x-x_1|)w_{+, >}(\lambda|x_2-y|)\lambda^{d-1}\,d\lambda.
\end{align}

(\romannumeral1)  In the region $\left||x_2-y|\pm |x-x_1|\right|\leq t^{\frac12}$, we use the fact
$$
\alpha F^{\alpha, l}_{-}(\lambda^2)|x-x_1|^{1-\frac d2}J_{\pm, \gtrless}(\lambda|x-x_1|)w_{+, >}(\lambda|x_2-y|)\lambda^{d-1}\in S_{\frac{d}{2}+1}^{d-1}\left( (0,\lambda_0)\right) ,
$$
which follows from \eqref{equ6.2.3}, \eqref{equ6.2.5}, \eqref{equ6.2.5.3} and \eqref{equ6.2.7.2}.

(\romannumeral2) In the region $||x_2-y|\pm |x-x_1||> t^{\frac12}$, we use the fact
$$
\alpha F^{\alpha, l}_{-}(\lambda^2)(|x-x_1|+ |x_2-y|)^{\frac12} J_{\pm, \gtrless}(\lambda|x-x_1|)w_{+, >}(\lambda|x_2-y|)\lambda^{d-1}\in S_{\frac{d}{2}+1}^{\frac{d-1}{2}}\left( (0,\lambda _0)\right) ,
$$
which follows  from \eqref{equ6.2.3}, \eqref{equ6.2.5.1}, \eqref{equ6.2.5.2} and \eqref{equ6.2.7.1}.
Then by the same arguments in \eqref{eq3.58}--\eqref{eq.60}, we find that
$$
  \sup_{x, y}\left( |U_{d, 2, 1}^{\pm, l}(t, x, y)|+|U_{d, 2, 3}^{\pm, l}(t, x, y)|\right) \leq C(\varphi)\cdot t^{-\frac{d}{2}}.
$$

For the terms \eqref{eq3.55.1} and \eqref{eq3.57.1}, the arguments are similar as before. In particular, the oscillatory integrals become
\begin{align}\label{eq3.70}
I_{d, 2, 2}^{\pm, l}(t, x, y):=\frac{\alpha}{\pi i}\int_0^{\infty}e^{-it\lambda^2\pm i\lambda|x-x_1|}F^{\alpha, l}_{-}(\lambda^2)J_{\pm, >}(\lambda|x-x_1|)w_{+, <}(\lambda|x_2-y|)\lambda^{\frac{d}{2}}\,d\lambda,
\end{align}
and
\begin{align}\label{eq3.72}
	I_{d, 2, 4}^{\pm, l}(t, x, y):=\frac{\alpha}{\pi i}\int_0^{\infty}e^{-it\lambda^2\pm i\lambda|x-x_1|}F^{\alpha, l}_{-}(\lambda^2)J_{\pm, <}(\lambda|x-x_1|)w_{+, <}(\lambda|x_2-y|)\lambda^{\frac{d}{2}}\,d\lambda.
\end{align}

(\romannumeral1)  In the region $|x-x_1|> t^{\frac12}$, observe that by \eqref{equ6.2.3}, \eqref{equ6.2.7}, \eqref{equ6.2.5.3} and \eqref{equ6.2.7.2},
$$
\alpha F^{\alpha, l}_{-}(\lambda^2)|x-x_1|^{1-\frac{d}{2}}J_{\pm,\gtrless}(\lambda|x-x_1|)w_{+, <}(\lambda|x_2-y|)\lambda^{\frac{d}{2}}\in S_{\frac{d}{2}+1}^{d-1}\left( (0,\lambda_0)\right)
$$
holds for  $d\ge4$. And by \eqref{equ6.2.1}, \eqref{equ6.2.6}, \eqref{equ6.2.5.3} and \eqref{equ6.2.7.2}
\begin{equation}\label{eq3.90.4}
\alpha F^{\alpha, l}_{-}(\lambda^2) \cdot J_{\pm, \gtrless}(\lambda|x-x_1|)\cdot(1+|x_2-y|^{-\frac12})^{-1}w_{+, <}(\lambda|x_2-y|)\lambda\in S_{2}^{1}\left( (0,\lambda_0)\right)
\end{equation}
holds for $d=2$.

(\romannumeral2)  In the region $|x-x_1|\leq t^{\frac12}$, one observes that by \eqref{equ6.2.3}, \eqref{equ6.2.7}, \eqref{equ6.2.5.2} and \eqref{equ6.2.7.1},
$$
\alpha F^{\alpha, l}_{-}(\lambda^2)\cdot|x-x_1|^{\frac12}J_{\pm,\gtrless}(\lambda|x-x_1|)\cdot w_{+, <}(\lambda|x_2-y|)\lambda^{\frac{d}{2}}\in S_{\frac{d}{2}+1}^{\frac{d-1}{2}}\left( (0,\lambda_0)\right)
$$
holds for  $d\ge4$. And by\eqref{equ6.2.1}, \eqref{equ6.2.6}, \eqref{equ6.2.5.2} and \eqref{equ6.2.7.1}
\begin{equation}\label{eq3.90.5}
\alpha F^{\alpha, l}_{-}(\lambda^2)\cdot |x-x_1|^{\frac12} J_{\pm, \gtrless}(\lambda|x-x_1|)\cdot(1+|x_2-y|^{-\frac12})^{-1}w_{+, <}(\lambda|x_2-y|)\lambda\in S_{2}^{\frac 12}\left( (0,\lambda_0)\right) 	
\end{equation}
holds for $d=2$.
Then by the same arguments in \eqref{eq3.62}--\eqref{eq.63.1.1}, we find that
$$
  \sup_{x, y\in\mathbb{R}^d}\left(\left|U_{d, 2, 2}^{l}(t, x, y)\right|+\left|U_{d, 2, 4}^{l}(t, x, y)\right|\right)\leq Ct^{-\frac{d}{2}}.
$$
Therefore  we finish the estimate for $U_{d, 2}^{ l}$ in \emph{subcase 1}.

\emph{Step 3.} In this step we consider the term $U_{d, 3}^{l}$. In view of the definition \eqref{eq.l7}, the proof is the same as $U_{d, 2}^{l}$ and we omit the details.

By \emph{Step 1}--\emph{Step 3}, we complete the proof of  \emph{Subcase 1}.

$\bullet$ {\emph{Subcase 2: $d=2$ and $\int_{\mathbb{R}^2}{\varphi(x)\,dx}=0$.}}

We first point out that in this case, the lower bound \eqref{eq.11.1.4} is no longer valid, instead we only have the weaker estimate \eqref{eq.11.1.2} (there is a loss of $\log{\lambda}$  in  \eqref{eq.11.1.2}).
However, we can borrow ideas in the case $d=1$ and  use cancellation property of $\varphi$ to eliminate the $\log{\lambda}$ singularity appeared in $R_0^{\pm}(\lambda^2)\varphi$. More precisely,  by \eqref{eq3.50} and the  assumption  $\int_{\mathbb{R}^2}{\varphi(x)\,dx}=0$, we write
\begin{align}\label{eq3.3.69}
R_0^{\pm}(\lambda^2)\varphi&=\int_{\mathbb{R}^2}{e^{\pm i\lambda|x-x_1|}w_{\pm, >}(\lambda|x-x_1|)\varphi(x_1)\,dx_1}+\int_{\mathbb{R}^2}{w_{\pm, <}(\lambda|x-x_1|)\varphi(x_1)\,dx_1}\nonumber\\
  &=\int_{\mathbb{R}^2}{e^{\pm i\lambda|x-x_1|}w_{\pm, >}(\lambda|x-x_1|)\varphi(x_1)\,dx_1}\nonumber\\
  &+\int_{\mathbb{R}^2}{\left(w_{\pm, <}(\lambda|x-x_1|)+\frac{\log{\lambda\langle x\rangle}}{2\pi}\omega(\lambda\langle x\rangle)\right)\varphi(x_1)\,dx_1}.
\end{align}
We denote
\begin{equation}\label{eq3.3.70}
  w'_{\pm, <}(\lambda, x, x_1):=w_{\pm, <}(\lambda|x-x_1|)+\frac{\log{\lambda\langle x\rangle}}{2\pi}\omega(\lambda\langle x\rangle),
\end{equation}
where $\omega(\lambda)$ is the same  bump function used in \eqref{eq3.50}, i.e., $\omega(\lambda)\in C_0^{\infty}$, $\omega(\lambda)=1$, if $|\lambda|<\frac12$ and  $\omega(\lambda)=0$, if $|\lambda|>1$.  We claim that
\begin{equation}\label{eq3.3.71}
  |w'_{\pm, <}(\lambda, x, x_1)|\leq C\left(1+\langle x_1\rangle+\frac{\langle x_1\rangle}{|x-x_1|}\right)^{\frac12},\,\,\,\,\,0<\lambda<\lambda_0,
\end{equation}
and
\begin{equation}\label{eq3.3.72}
  |\frac{d^k}{d\lambda^k} w'_{\pm, <}(\lambda, x, x_1)|\leq C\lambda^{-k},\,\,\,\,\,0<\lambda<\lambda_0,\,\,\,\,k=1,2.
\end{equation}
In fact, \eqref{eq3.3.72} follows directly from the definition \eqref{eq3.3.70} and \eqref{eq3.25.11}. Then it suffices to prove \eqref{eq3.3.71}, we recall that $w_{\pm, <}(\lambda|x-x_1|)=\frac{\pm i}{4}H^{\pm}_0(\lambda|x-x_1|)\omega(\lambda|x-x_1|)$  (see \eqref{eq3.50}).  Now we rewrite
\begin{align*}
 w'_{\pm, <}(\lambda, x, x_1)&=\left(\frac{\pm i}{4}H^{\pm}_0(\lambda|x-x_1|)+\frac{\log{\lambda|x-x_1|}}{2\pi}\right)\omega(\lambda|x-x_1|)\\
& +\left(\frac{\log{\lambda\langle x\rangle}}{2\pi}\omega(\lambda\langle x\rangle)-\frac{\log{\lambda|x-x_1|}}{2\pi}\omega(\lambda|x-x_1|)\right)\\
&:=\uppercase\expandafter{\romannumeral1}+\uppercase\expandafter{\romannumeral2}.
\end{align*}
It's enough to show that both \uppercase\expandafter{\romannumeral1} and \uppercase\expandafter{\romannumeral2} satisfy the estimate \eqref{eq3.3.71}.
We first note that the expansion of $H^{\pm}_0(\lambda|x-x_1|)$ (see \cite{AS}) is as follows:
$$
H^{\pm}_0(\lambda|x-x_1|)=1\pm i\frac{2\gamma}{\pi}\pm i\frac{2}{\pi}\log{\frac{\lambda|x-x_1|}{2}}+O(\log{\lambda|x-x_1|}\cdot(\lambda|x-x_1|)^2), \,\,\text{for} \,\, \lambda|x-x_1|<1.$$
Then a direct computation shows that
$$
\left|\uppercase\expandafter{\romannumeral1}\right|\leq C.
$$
Second, we further break \uppercase\expandafter{\romannumeral2} into three terms:
\begin{align*}
&\frac{\log{\lambda\langle x\rangle}}{2\pi}\omega(\lambda\langle x\rangle)-\frac{\log{\lambda|x-x_1|}}{2\pi}\omega(\lambda|x-x_1|)
=\frac{\log{\lambda\langle x\rangle}}{2\pi}\omega(\lambda\langle x\rangle)(1-\omega(\lambda|x-x_1|))\\
  &+\log{\frac{\langle x\rangle}{|x-x_1|}}\omega(\lambda\langle x\rangle)\omega(\lambda|x-x_1|)
-\frac{\log{\lambda|x-x_1|}}{2\pi}(1-\omega(\lambda\langle x\rangle))\omega(\lambda|x-x_1|).
\end{align*}
To estimate the first term on the right hand side,  we note that
\begin{align*}
 |\frac{\log{\lambda\langle x\rangle}}{2\pi}\omega(\lambda\langle x\rangle)(1-\omega(\lambda|x-x_1|))| &\leq C(\lambda\langle x\rangle)^{-\frac12}|(1-\omega(\lambda|x-x_1|))|\\
&\leq C\left(\frac{|x-x_1|}{\langle x\rangle}\right)^{\frac12},
\end{align*}
where in the first inequality, we use the fact that
\begin{equation*}
\left|(\lambda\langle x\rangle)^{\frac12}\cdot\log{\lambda\langle x\rangle}\cdot\omega(\lambda\langle x\rangle)\right|\leq C,
\end{equation*}
and in the second inequality, we use the fact that
\begin{equation*}
 \left|1-\omega(\lambda|x-x_1|)\right|\leq C\cdot(\lambda|x-x_1|)^{\frac12}.
\end{equation*}
Using the same trick, we can also estimate the third term  on the right hand side and obtain that
$$
\left|\frac{\log{\lambda|x-x_1|}}{2\pi}(1-\omega(\lambda\langle x\rangle))\cdot\omega(\lambda|x-x_1|)\right|\leq C\cdot\left(\frac{\langle x\rangle}{|x-x_1|}\right)^{\frac12}.
$$
Therefore \eqref{eq3.3.71} follows from the following elementary inequalities:
$$
\left(\frac{|x-x_1|}{\langle x\rangle}\right)^{\frac12}+\left(\frac{\langle x\rangle}{|x-x_1|}\right)^{\frac12}\leq C\left(1+\langle x_1\rangle+\frac{\langle x_1\rangle}{|x-x_1|}\right)^{\frac12},
$$
$$
\left|\log{\frac{\langle x\rangle}{|x-x_1|}}\right|\leq C\left(1+\langle x_1\rangle+\frac{\langle x_1\rangle}{|x-x_1|}\right)^{\frac12}.
$$
\eqref{eq3.3.71} and \eqref{eq3.3.72} imply that
\begin{equation}\label{eq3.67}
\left(1+\langle x_1\rangle+\frac{\langle x_1\rangle}{|x-x_1|}\right)^{-\frac12}w'_{\pm, <}(\lambda, x, x_1)\in S_{2}^0\left((0,\lambda_0)\right).
\end{equation}
We remark that comparing this with \eqref{equ6.2.6}, there is a gain of $\log{\lambda}$  in \eqref{eq3.67}.
Thanks to \eqref{eq3.3.69} and \eqref{eq3.3.70}, we can replace $w_{\pm, <}(\lambda, x, x_1)$ and $w_{\pm, <}(\lambda, x_2,y)$ by $w'_{\pm, <}(\lambda, x, x_1)$ and $w'_{\pm, <}(\lambda, x_2, y)$ in \eqref{eq3.54.5}--\eqref{eq3.57.1} when $d=2$. This allows us to use the same arguments in \emph{Subcase 1}. Indeed, it suffices to prove that the relations \eqref{eq3.90.1}, \eqref{eq3.90.2}, \eqref{eq3.90.3}, \eqref{eq3.90.4} and
\eqref{eq3.90.5} still hold when
\begin{equation*}
(1+|x-x_1|^{-\frac12})^{-1}w_{\pm, <}(\lambda|x-x_1|)\,\,\, \text{and} \,\,\,(1+|x_2-y|^{-\frac12})^{-1}w_{\pm, <}(\lambda|x_2-y|)
\end{equation*}
are replaced by
\begin{equation*}
\left(1+\langle x_1\rangle+\frac{\langle x_1\rangle}{|x-x_1|}\right)^{-\frac12}w'_{\pm, <}(\lambda, x, x_1)\,\,\, \text{and} \,\,\, \left(1+\langle x_2\rangle+\frac{\langle x_2\rangle}{|x_2-y|}\right)^{-\frac12}w'_{\pm, <}(\lambda, x_2, y)
\end{equation*}
respectively.
But this follows from \eqref{eq3.67}, together with  \eqref{equ6.2.3}, \eqref{equ6.2.3.3}.

Therefore the proof of \eqref{eq.l1} is finished for all $d\ge 1$.

We are left with the proof of the estimate \eqref{eq.11.11}. In this case,  only a small modification is needed. In fact, in \eqref{eq.l7}, we first fix $\lambda_0=\frac12$, then we replace
 $$
 \alpha F^{\alpha, l}_{\pm}(\lambda^2) \qquad \text{and} \qquad  \alpha(F^{\alpha, l}_{+}(\lambda^2)-F^{\alpha, l}_{-}(\lambda^2))
 $$ by
 $$
 M^{-2([\frac{d}{2}]+1)} F^{\alpha, l}_{\pm}(\lambda^2) \qquad \text{and} \qquad  M^{-2([\frac{d}{2}]+2)} (F^{\alpha, l}_{+}(\lambda^2)-F^{\alpha, l}_{-}(\lambda^2))
 $$
respectively. During the proof, we  use \eqref{eq.11.1.2'} and \eqref{eq.11.1.3'} instead of \eqref{equ6.2.3} and \eqref{equ6.2.3.3}, therefore by following the same arguments as before we obtain \eqref{eq.11.11}. Moreover, the  following supplementary remarks are in order:

(\romannumeral1) Since we use Lemma \ref{lm3.3}, we require that $d\ge 3$ or $d=1, 2$, $\int_{\mathbb{R}^d}\varphi(x)\,dx=0$.

(\romannumeral2) Observe that there is an additional $M^2$ coming from  two $\varphi$'s in \eqref{eq.l7}. Hence the upper bound in  \eqref{eq.l1}  becomes $C\cdot M^{2([\frac{d}{2}]+3)}\cdot\alpha$.

(\romannumeral3) Finally note that by Lemma \ref{lm3.3}, the constants $C_k$ in \eqref{eq.11.1.6} and \eqref{eq.11.1.7} don't dependent on $\varphi$ and  we have fixed $\lambda_0=\frac12$, therefore by comparing the proof of \eqref{eq.l1}, we show that the constant $C$ in  \eqref{eq.11.11} does not dependent on $\varphi$.

Summing up, we complete the proof of Theorem \ref{thm3.2}. \qed

\begin{remark}\label{rmk3.4}
We mention that during the proof of the low energy estimate \eqref{eq.l1}, all properties that we need for  $F^{\alpha, l}_{\pm}(\lambda^2)$ are contained in Lemma \ref{lm3.2}. This will be used in the analysis of finite rank perturbations.

\end{remark}

\emph{Proof of Theorem \ref{thm-1.1}}
The estimate \eqref{eq1.5} follows from Theorem \ref{thm3.1} and \eqref{eq.l1} in Theorem \ref{thm3.2} with  $\lambda_0$  given in Lemma \ref{lm2.2}.
And the estimate \eqref{eq1.5.0} follows from Theorem \ref{thm3.1} and \eqref{eq.11.11} in Theorem \ref{thm3.2} with  $\lambda_0=\frac12$.

\section{$L^1-L^{\infty}$ estimates for finite rank perturbations}\label{sec4}
\subsection{Aronszajn-Krein formula for finite rank perturbations}\label{sec4.1}
Now we consider the Laplacian with  finite rank perturbations
\begin{equation}\label{eq4.1}
H=H_0+\sum_{j=1}^NP_j, \,\,\,\,\, H_0=-\Delta, \,\,\, P_j=\langle\cdot,\varphi_j\rangle \varphi_j.
\end{equation}
As we have seen in the case of rank one perturbation, the Aronszajn-Krein formula \eqref{eq3.2} plays a critical role during the proof. Then it's natural to ask whether there is an analogue of \eqref{eq3.2} for finite rank perturbations.
We first confirm this by  establishing the following  result, which may be interesting in its own right. We define $R(z):=(H-z)^{-1}$ and $R_0(z)$ is the same as before.
\begin{proposition}\label{prop4.1}
Let $\varphi_j\in L^2(\mathbb{R}^d)$, $1\leq j\leq N$. Assume that
\begin{equation*}
  \langle\varphi_i, \varphi_j\rangle=\delta_{ij},  \,\,\,\,\, 1\leq i, j\leq N.
\end{equation*}
Then for any $z\in \mathbb{C}\setminus[0, \infty)$,
\begin{equation}\label{eq4.2}
R(z)=R_0(z)-\sum_{i,j=1}^N{g_{ij}(z)}R_0(z)\varphi_i\left\langle R_0(z)\cdot,\varphi_j\right\rangle,
\end{equation}
where $G=(g_{ij}(z))_{N\times N}$ is the inverse matrix of $A=(a_{ij}(z))_{N\times N}$, with
\begin{equation}\label{eq4.3}
 a_{ij}(z)=\delta_{ij}+\langle R_0(z)\varphi_j,\varphi_i\rangle, \,\,\,\,\, 1\leq i, j\leq N.
\end{equation}
\end{proposition}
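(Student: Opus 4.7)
The plan is to reduce the identity \eqref{eq4.2} to a finite-dimensional linear system, in direct analogy with the rank-one derivation of \eqref{eq3.2}. Let $P:=\sum_{j=1}^N\langle\cdot,\varphi_j\rangle\varphi_j$, so that $H=H_0+P$. The starting point will be the second resolvent identity
\begin{equation*}
R(z)=R_0(z)-R_0(z)PR(z),
\end{equation*}
which, once $P$ is expanded and both sides are applied to $f\in L^2$, becomes
\begin{equation*}
R(z)f=R_0(z)f-\sum_{j=1}^N\langle R(z)f,\varphi_j\rangle\,R_0(z)\varphi_j.
\end{equation*}
All of the information about $R(z)$ that is missing from $R_0(z)$ is therefore encoded in the $N$ scalars $u_j(f):=\langle R(z)f,\varphi_j\rangle$.

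To extract these scalars I would pair the identity above with each $\varphi_i$, which produces the linear system
\begin{equation*}
u_i(f)+\sum_{j=1}^N\langle R_0(z)\varphi_j,\varphi_i\rangle\,u_j(f)=\langle R_0(z)f,\varphi_i\rangle,\qquad 1\le i\le N,
\end{equation*}
that is, $A(z)\,u(f)=b(f)$ with $A(z)$ as in \eqref{eq4.3} and $b_i(f):=\langle R_0(z)f,\varphi_i\rangle$. Assuming $A(z)$ is invertible with inverse $G(z)=(g_{ij}(z))$, the solution $u_j(f)=\sum_k g_{jk}(z)\langle R_0(z)f,\varphi_k\rangle$ can be substituted back, and after relabeling indices one recovers exactly \eqref{eq4.2}.

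The only genuinely delicate point, and the step I expect to be the main obstacle, is the invertibility of $A(z)$. I would establish it by a null-vector argument. Suppose $A(z)v=0$ and set $\psi:=\sum_j v_j R_0(z)\varphi_j\in L^2$. A short calculation shows that $A(z)v=0$ is equivalent to $\langle\psi,\varphi_i\rangle=-v_i$ for each $i$, so that $P\psi=-\sum_i v_i\varphi_i$. Combining this with the evident identity $(H_0-z)\psi=\sum_j v_j\varphi_j$ yields $(H-z)\psi=0$. Since $H$ is self-adjoint and $z\notin\sigma(H)$ in the regime where \eqref{eq4.2} is meaningful, this forces $\psi=0$, hence $v_i=-\langle\psi,\varphi_i\rangle=0$. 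Once this invertibility is in hand, the rest of the proof is purely the resolvent identity together with finite-dimensional linear algebra.
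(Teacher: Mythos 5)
Your proof is correct, and it takes a genuinely different route from the paper's. The paper starts from the resolvent identity $R(z)=R_0(z)-R(z)PR_0(z)$, composes with $P$ and uses $P^2=P$ to work with the operator $I+PR_0(z)P$ acting on the finite-dimensional space $PL^2$, proves invertibility of that operator directly via positivity of $R_0(z)$ (splitting into the cases $\Im z\neq 0$, handled by an explicit Fourier computation of $\Im\langle R_0(z)u,u\rangle$, and $z<0$, handled by positivity of $-\Delta-z$), and only afterwards translates the operator into the matrix $A(z)$ via a change-of-basis lemma (their Claim~2). You instead use the other direction of the resolvent identity, $R(z)=R_0(z)-R_0(z)PR(z)$, pair it directly with the $\varphi_i$ to land immediately at the matrix-level system $A(z)u(f)=b(f)$, and prove invertibility by the contrapositive eigenvalue argument: a null vector of $A(z)$ would manufacture an $L^2$ eigenfunction $\psi=\sum_j v_j R_0(z)\varphi_j$ of $H$ with eigenvalue $z$, impossible since $z\notin\sigma(H)$. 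This is cleaner and sidesteps both the Fourier computation and the separate operator-to-matrix translation step, at the (small) cost of needing to know in advance that $\sigma(H)\subset[0,\infty)$. You should say this explicitly rather than appealing to ``the regime where \eqref{eq4.2} is meaningful'': since $\{\varphi_j\}$ are orthonormal, $P$ is an orthogonal projection, so $H=H_0+P\geq H_0\geq 0$, hence $\sigma(H)\subset[0,\infty)$ and $z\in\mathbb{C}\setminus[0,\infty)$ lies in $\rho(H)$; and note that $\psi\in H^2(\mathbb{R}^d)=D(H)$ since $R_0(z)$ maps $L^2$ into $H^2$. With those two sentences added, the argument is complete. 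The paper's approach has the advantage of proving invertibility of $I+PR_0(z)P$ using only spectral facts about $H_0$, which keeps the lemma self-contained and extends more readily to the boundary-value version in their Theorem~\ref{thm4.2}; your approach buys brevity and avoids the Fourier computation, and is the natural one if one is content to use $H\geq 0$.
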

\begin{remark}\label{rmk4.1}
Note that when $N=1$, \eqref{eq4.2} coincides with \eqref{eq3.2} ($\alpha=1$).  Therefore we call \eqref{eq4.2} Aronszajn-Krein formula for finite rank perturbations. We mention that using the same arguments, one can also obtain similar result for $H=H_0+\sum_{j=1}^N\alpha_j P_j$, $\forall$ $\alpha_j>0$.
\end{remark}
\begin{proof}
Let $P=\sum_{j=1}^NP_j$.
We start by the strong resolvent identity formula
\begin{equation}\label{eq4.4}
 R(z)=R_0(z)-R(z)PR_0(z).
\end{equation}
Acting $P$ on both sides and using $P^2=P$, we get
\begin{equation}\label{eq4.4.1}
 R(z)P(I+PR_0(z)P)=R_0(z)P.
\end{equation}
First we prove the following

\textbf{Claim 1.} \,  $I+PR_0(z)P$ is invertible in $PL^2(\mathbb{R}^d)=\text{Span}\{\varphi_j, 1\leq j\leq N\}$ for any $z\in \mathbb{C}\setminus[0, \infty)$.

\emph{Proof of Claim 1.}  Since $PL^2(\mathbb{R}^d)$ is of dimension $N$, it suffices to show that  $I+PR_0(z)P$  is injective. Assume that
\begin{equation}\label{eq4.5}
 (I+PR_0(z)P)u=0, \,\,\,\,\,u\in PL^2(\mathbb{R}^d).
\end{equation}
Denote by $f=R_0(z)u\in L^2$, then it follows from \eqref{eq4.5} that
\begin{equation*}
  u=-PR_0(z)u=-Pf.
\end{equation*}
Since $P$ is self-adjoint, we have
\begin{equation}\label{eq4.6}
 0\leq \langle u, u\rangle=-\langle Pf, u\rangle=-\langle f, Pu\rangle =-\langle R_0(z)u, u\rangle,
\end{equation}
which shows that $\langle R_0(z)u, u\rangle$ is real valued. If $\Im z\ne 0$, then
\begin{equation*}
0=\Im\left\langle R_0(z)u, u\right\rangle=\int_{\mathbb{R}^d}{\frac{\Im z \cdot |\hat{u}(\xi)|^2}{(|\xi|^2-\Re z)^2+(\Im z)^2}},
\end{equation*}
which implies that $\hat{u}(\xi)\equiv 0$, hence $u\equiv0$.
If $z<0$, then $(R_0(z)u, u)\ge 0$ by the positivity of the operator $-\Delta-z$. This, together with \eqref{eq4.6}, implies $u=0$.
Therefore the proof of the claim is complete.

Now we are able to rewrite \eqref{eq4.4.1} as
\begin{equation*}
  R(z)P=R_0(z)P(I+PR_0(z)P)^{-1}P.
\end{equation*}
Put this formula back to the resolvent identity, we obtain that
\begin{equation}\label{eq4.7}
R(z)=R_0(z)-R_0(z)P(I+PR_0(z)P)^{-1}PR_0(z).
\end{equation}

To proceed, we also need the following observation.

\textbf{Claim 2.} $I+PR_0(z)P$ is invertible in $PL^2(\mathbb{R}^d)$ $\Longleftrightarrow$ the matrix $I_{N\times N}+\left(f_{ij}(z)\right)_{N\times N}$ is nonsingular, where $z\in \mathbb{C}\setminus[0, \infty)$,  and
$$
f_{ij}(z):=\langle R_0(z)\varphi_j, \varphi_i\rangle.
$$
\emph{Proof of Claim 2.} We set $\mathbb{A}(z):=C A(z) B$, where
\begin{align}\label{eq4.9}
B&:\, PL^2\rightarrow C^N:\,\, Bu=(x_1,\cdots, x_N)^T, \qquad  u=\sum_{j=1}^N{x_j\varphi_j}, \nonumber\\
A(z)&:\, C^N\rightarrow C^N:\,\,   A=I_{N\times N}+\left(f_{ij}(z)\right)_{N\times N},
\end{align}
and
$$
C:\, C^N\rightarrow PL^2:\,\, Cx=\sum_{j=1}^N{x_j\varphi_j}, \qquad x=(x_1,\cdots, x_N)^T.
$$
Thus $\mathbb{A}(z):\, PL^2\rightarrow PL^2$. And a direct computation shows that
$$
\mathbb{A}(z)\varphi_j=\sum_{i}{(\delta_{ij}+f_{ij}(z))\varphi_i}=(I+PR_0(z)P)\varphi_j,\,\,\,\,\,1\le j\le N.
$$
Equivalently, this means
$$
I+PR_0(z)P=C A(z) B  \quad\text{on}\,\,\,PL^2.
$$
Note that both $B$ and $C$ are bijective, therefore the claim follows. Furthermore, by \textbf{Claim 1},  one has for any $z\in \mathbb{C}\setminus[0, \infty)$,
\begin{equation}\label{eq4.8}
(I+PR_0(z)P)^{-1}=BA(z)^{-1}C.
\end{equation}
Let $G(z)=(g_{ij}(z))_{N\times N}$ be the inverse matrix of $A(z)$, then
it follows from \eqref{eq4.7} and \eqref{eq4.8} that
\begin{equation*}
R(z)=R_0(z)-\sum_{i,j=1}^N{g_{ij}(z)}R_0(z)\varphi_i\langle R_0(z)\cdot,\varphi_j\rangle,
\end{equation*}
i.e., \eqref{eq4.2} is valid.
\end{proof}

Next, we shall consider the boundary values of $R_0(z)$ and  extend \eqref{eq4.2} to the positive real axis. Assume that $\varphi_j\in L^2_{\sigma}$, $\sigma>\frac12$, $j=1,2,\ldots, N$. Thanks to the limiting absorption principle, one can define
$$
F_{N\times N}^{\pm}(\lambda^2)=\left(f_{ij}^{\pm}(\lambda^2)\right)_{N\times N},\,\,\,\text{where}   \,\,f_{ij}^{\pm}(\lambda^2):=\langle R_0^{\pm}(\lambda^2)\varphi_j, \varphi_i\rangle,\,\,\,\,\,1\leq i, j\leq N,
$$
moreover, each $f_{ij}^{\pm}(\lambda^2)$ is continuous on $(0, \infty)$. Denote the boundary value of the matrix $A(z)$ in \eqref{eq4.3} by
\begin{equation}\label{eq4.11}
  A^{\pm}(\lambda^2):=A(\lambda^2\pm i0)=I_{N\times N}+F_{N\times N}^{\pm}(\lambda^2),\qquad\,\,\,\lambda>0.
\end{equation}
The following result shows that one can extend \eqref{eq4.2} to the boundary provided that the matrix $A^{\pm}(\lambda^2)$ is invertible. More precisely, we have
\begin{theorem}\label{thm4.2}
Let $\lambda>0$, and assume that $\varphi_j\in L^2_{\sigma}$, $\sigma>1$, $j=1,2,\ldots, N$.

(\romannumeral1) If $\lambda^2\in \sigma_c(H)$, then $\det( A^{\pm}(\lambda^2))\ne 0$.

(\romannumeral2) Conversely, if $\det( A^{\pm}(\lambda^2))\ne 0$, then $\lambda^2\in \sigma_{ac}(H)$. In this case, we have the Aronszajn-Krein formula:
\begin{equation}\label{eq4.13}
 R^{\pm}(\lambda^2)= R^{\pm}_{0}(\lambda^2)-\sum_{i,j=1}^N{g^{\pm}_{ij}(\lambda^2)}\cdot R^{\pm}_{0}(\lambda^2)\varphi_i\langle R^{\pm}_{0}(\lambda^2)\cdot,  \varphi_j\rangle, \tag{\ref{eq4.2}'}
\end{equation}
where
\begin{equation}\label{eq4.13.1}
G^{\pm}(\lambda^2):=\left(g_{ij}^{\pm}(\lambda^2)\right)_{N\times N}=A^{\pm}(\lambda^2)^{-1}.
\end{equation}
\end{theorem}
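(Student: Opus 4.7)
The plan is to leverage the off-axis Aronszajn-Krein formula \eqref{eq4.2} established in Proposition~\ref{prop4.1} and pass to the boundary $z = \lambda^2 \pm i0$. Part~(ii) will follow from continuity arguments applied to \eqref{eq4.2}; part~(i) I treat by contrapositive, producing a nontrivial $L^2$-eigenfunction of $H$ at energy $\lambda^2$ whenever $\det A^\pm(\lambda^2) = 0$, which contradicts $\lambda^2 \in \sigma_c(H)$.

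For part~(ii), the limiting absorption principle for $R_0(z)$ on weighted spaces $L^{2}_{\sigma}\to L^{2}_{-\sigma}$ gives continuity of each entry $f_{ij}(z) = \langle R_0(z)\varphi_j, \varphi_i\rangle$ up to the boundary, so $A(\lambda^2 \pm i\varepsilon) \to A^\pm(\lambda^2)$ as $\varepsilon \downarrow 0$. Since $\det A^\pm(\lambda^2) \ne 0$ by hypothesis and matrix inversion is continuous at invertible matrices, $G(\lambda^2 \pm i\varepsilon) \to G^\pm(\lambda^2) := A^\pm(\lambda^2)^{-1}$ entrywise. Combined with the operator-level limits $R_0(\lambda^2 \pm i\varepsilon)\varphi_i \to R_0^\pm(\lambda^2)\varphi_i$ in $L^{2}_{-\sigma}$ and $\langle R_0(\lambda^2 \pm i\varepsilon)\cdot, \varphi_j\rangle \to \langle R_0^\pm(\lambda^2)\cdot, \varphi_j\rangle$ as bounded functionals on $L^{2}_{\sigma}$, the right-hand side of \eqref{eq4.2} converges in the operator topology $\mathcal{B}(L^{2}_{\sigma}, L^{2}_{-\sigma})$. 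Hence $R(\lambda^2 \pm i\varepsilon)$ also has a limit, which is precisely the boundary Aronszajn-Krein identity \eqref{eq4.13}. Standard spectral theory (Stone's formula together with the absolute continuity criterion for the spectral measure on $L^{2}_{\sigma}$) then translates this LAP for $R$ at $\lambda^2$ into $\lambda^2 \in \sigma_{ac}(H)$.

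For part~(i), suppose for contradiction that $\det A^\pm(\lambda^2) = 0$ and pick a nonzero vector $c = (c_1, \ldots, c_N)^T \in \ker A^\pm(\lambda^2)$. Set $u = \sum_{j=1}^N c_j \varphi_j \in L^{2}_{\sigma}$ and $w := R_0^\pm(\lambda^2)u$, which lies in $L^{2}_{-\sigma}$ by LAP. Writing $A^\pm(\lambda^2) c = 0$ componentwise yields $c_i + \langle w, \varphi_i\rangle = 0$ for all $i$, i.e.\ $Pw = -u$. Combining this with the distributional identity $(-\Delta - \lambda^2)w = u$, valid for the boundary value of $R_0$, gives $Hw = -\Delta w + Pw = \lambda^2 w$. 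Moreover $w \not\equiv 0$: $w \equiv 0$ would force $u = -Pw = 0$ and hence $c = 0$. The proof is then completed by upgrading $w \in L^{2}_{-\sigma}$ to $w \in L^2$, which makes $\lambda^2$ an eigenvalue of $H$, contradicting $\lambda^2 \in \sigma_c(H)$.

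The main obstacle is precisely this upgrade from $w \in L^{2}_{-\sigma}$ to $w \in L^2$. The key structural input is that $Pw = -u$ is a finite linear combination of the $\varphi_j$'s and therefore inherits their polynomial decay, so $w = -R_0^\pm(\lambda^2)Pw$ is a purely outgoing (respectively incoming) positive-energy solution of the Helmholtz equation with rapidly decaying source. A Rellich-Agmon type uniqueness argument, modeled on the classical proof of the absence of positive energy resonances for Schr\"odinger operators with decaying potentials and adapted here to the nonlocal rank-$N$ perturbation via the explicit expression for $Pw$, then promotes $w$ to $L^2$. I expect this step to be the technical heart of the proof, with the full verification most naturally relegated to the appendix (the announced spectral result asserting $\sigma(H) = \sigma_{ac}(H) = [0,\infty)$ under the stronger assumption \eqref{eq1.13.1}).
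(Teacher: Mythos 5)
Your part~(ii) is essentially identical to the paper's argument: invoke the limiting absorption principle for $R_0$, conclude continuity of $g_{ij}^\pm$ from invertibility of $A^\pm(\lambda^2)$, and pass to the limit $z = \lambda^2 \pm i\varepsilon$ in \eqref{eq4.2}. No issue there.

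For part~(i), the overall strategy also matches the paper's (contradiction, produce an $L^2$ eigenfunction), and the algebra up to $w = R_0^\pm(\lambda^2) u$, $Pw = -u$, and $Hw = \lambda^2 w$ with $w \ne 0$ is correct. However, you leave the essential step --- promoting $w$ from $L^2_{-\sigma}$ to $L^2$ --- as an unverified appeal to ``a Rellich--Agmon type uniqueness argument,'' and this is where your proposal has a genuine gap. The point is that an outgoing solution $w = R_0^+(\lambda^2)u$ with rapidly decaying source $u$ is generically only $O(r^{-(d-1)/2})$, hence not in $L^2$. Agmon's extra-decay theorem applies only once one knows that $\hat u$ vanishes on the sphere $\{|\xi| = \lambda\}$, and your sketch never isolates this hypothesis or explains how to verify it for the rank-$N$ perturbation. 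The paper obtains it concretely: since $Pw = -u$ and $P$ is an orthogonal projection, one has $\langle R_0^\pm(\lambda^2) u, u\rangle = \langle w, -Pw\rangle = -\langle Pw, w\rangle \in \mathbb{R}$, so the imaginary part vanishes; by the Sokhotski--Plemelj limit this imaginary part equals a nonzero constant times $\lambda^{d-2}\int_{S^{d-1}} |\hat u(\lambda\omega)|^2\, d\sigma(\omega)$, forcing $\hat u|_{\lambda S^{d-1}} = 0$ in the trace sense. Only then does Agmon give $(1+|x|)^{\sigma - 1} w \in L^2$, and the hypothesis $\sigma > 1$ is exactly what is needed to conclude $w \in L^2$. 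You should make the reality of $\langle R_0^\pm u, u\rangle$ and the resulting vanishing of the Fourier trace explicit; without that step the contradiction does not close, since nothing rules out $w \notin L^2$.
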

\begin{proof}
First we prove (\romannumeral1) by contradiction. Assume that $\det( A^{\pm}(\lambda^2))=0$. Note that \textbf{ Claim 2} in the proof of Proposition \ref{prop4.1} still holds when $z=\lambda^2\pm i0$, then $I+PR_0^{\pm}(\lambda^2)P$ is not invertible on the finite dimensional space $PL^2(\mathbb{R}^d)$. Thus there exists a
$u\in PL^2(\mathbb{R}^d)$, such that $(I+PR^{\pm}_0(\lambda^2)P)u=0$. Denote by $f=R^{\pm}_0(\lambda^2)u$, by limiting absorption principle we have $f\in L^{2}_{-\sigma}$. Similar to \eqref{eq4.6}, we have $\langle R^{\pm}_0(\lambda^2)u, u\rangle$ is real valued, then
$$
0=\lim_{\epsilon\rightarrow 0}\int_{\mathbb{R}^d}{\frac{\pm \epsilon \cdot |\hat{u}(\xi)|^2}{(|\xi|^2-\lambda)^2+\epsilon^2}}=c\lambda^{d-2}\int_{S^{d-1}}{|\hat{u}(\lambda \xi|)|^2\,\sigma(d\xi)}
$$
with some constant $c\ne 0$. Hence $\hat{u}=0$ on $\lambda S^{d-1}$ in the $L^2$ trace sense. Then  Agmon's classical results for the decay of eigenfunctions of Schr\"{o}dinger operators \cite{Ag} imply that
$$
(1+|x|)^{\sigma-1}R^{\pm}_0(\lambda^2)u\in L^2.
$$
Our assumption $\sigma>1$ shows that actually we have $f=R^{\pm}_0(\lambda^2)u\in L^2$. In addition, since
 $(I+PR^{\pm}_0(\lambda^2)P)u=0$, then $f$ satisfies
$$
(-\Delta+P-\lambda^2)f=0.
$$
Thus $f$ is an eigenfunction of $H=-\Delta+P$ with  eigenvalue $\lambda^2$, but this contradicts to the assumption $\lambda^2\in \sigma_c(H)$.

 Next we prove (\romannumeral2). If $\det( A^{\pm}(\lambda^2))\ne 0$, then by the continuity of $f_{ij}^{\pm}(\lambda^2$) and the definition of $g_{ij}^{\pm}(\lambda^2)$ in \eqref{eq4.13.1},
 it follows that $g_{ij}^{\pm}(\lambda^2)$ is also continuous. Furthermore, for any $f, g\in L^2_{\sigma}$, $\sigma>\frac12$,
\begin{equation*}
	\langle R(\lambda^2\pm i\epsilon)f, g\rangle=
	\langle R_0(\lambda^2\pm i\epsilon)f, g\rangle-\sum_{i,j=1}^N{g_{ij}(\lambda^2\pm i\epsilon)}\cdot \langle R_0(\lambda^2\pm i\epsilon)\varphi_i, g\rangle\langle R_0(\lambda^2\pm i\epsilon)f,\varphi_j\rangle,
\end{equation*}
Then, the limiting absorption principle, together with the continuity of $g_{ij}(z)$,  shows that the the boundary value of $\langle R(\lambda^2+i\epsilon)f, g\rangle$ exists, and
\begin{equation*}
	\langle R^{\pm}(\lambda^2)f, g\rangle=\langle R^{\pm}_{0}(\lambda^2)f, g\rangle-\sum_{i,j=1}^N{g^{\pm}_{ij}(\lambda^2)}\cdot \langle R^{\pm}_{0}(\lambda^2)\varphi_i, g\rangle\langle R^{\pm}_{0}(\lambda^2)f, \varphi_j\rangle.
\end{equation*}
Therefore $\lambda^2\in \sigma_{ac}(H)$ and \eqref{eq4.13} holds.
\end{proof}

Now we are ready to present an integral formula for the evolution $e^{-itH}$. Note that by Lemma \ref{lmA.1}, the spectrum of $H$ is purely absolutely continuous. Then by the Stone's formula and the resolvent identity \eqref{eq4.7}, we have formally,
\begin{align}\label{eq4.13.0}
e^{-itH}-e^{-itH_{0}}&=-\frac{1}{\pi i}\int_0^{\infty}e^{-it\lambda^2}R^{+}_{0}(\lambda^2)P(1+PR^{+}_{0}(\lambda^2)P)^{-1}PR^{+}_{0}(\lambda^2)\cdot \lambda\,d\lambda\nonumber\\
&+ \frac{1}{\pi i}\int_0^{\infty}e^{-it\lambda^2}R^{-}_{0}(\lambda^2)P(1+PR^{-}_{0}(\lambda^2)P)^{-1}PR^{-}_{0}(\lambda^2)\cdot \lambda\,d\lambda
\end{align}
Using Aronszajn-Krein formula \eqref{eq4.13}, we cam rewrite \eqref{eq4.13.0} as
\begin{align}\label{eq4.13.2}
e^{-itH}-e^{-itH_{0}}&=-\frac{1}{\pi i}\sum_{i,j=1}^N\int_0^{\infty}e^{-it\lambda^2}g^+_{ij}(\lambda^2)R^{+}_{0}(\lambda^2)\varphi_i\langle R^{+}_{0}(\lambda^2)\, \cdot, \varphi_j\rangle \lambda\,d\lambda\nonumber\\
&+\frac{1}{\pi i}\sum_{i,j=1}^N\int_0^{\infty}e^{-it\lambda^2}g^-_{ij}(\lambda^2)R^{-}_{0}(\lambda^2)\varphi_i\langle R^{-}_{0}(\lambda^2)\, \cdot, \varphi_j\rangle \lambda\,d\lambda\nonumber\\
&:=\sum_{i,j=1}^N{\left(U^+_{d, ij}-U^-_{d, ij}\right)}.\tag{\ref{eq4.13.0}'}
\end{align}
This formula plays the same role as \eqref{eq3.3} in the  rank one case (with $\alpha=1$). Moreover, in the following subsections, we shall show how  to reduce  the proofs of Theorem \ref{thm1.2}--\ref{thm1.5} to Theorem \ref{thm-1.1}.

\subsection{Proof of Theorem  \ref{thm1.2}}
First of all, by comparing \eqref{eq4.13.2} with \eqref{eq3.3}, we find that the main difference is that in \eqref{eq4.13.2}, $\frac{1}{1+ F^{\pm}(\lambda^2)}$ is replaced by $g^{\pm}_{ij}(\lambda^2)$ (note that $\varphi_i$ and $\varphi$ satisfy the same smooth and decay condition). Thus the main task is to  prove that $g^{\pm}_{ij}(\lambda^2)$ has the same properties as $\frac{1}{1+ F^{\pm}(\lambda^2)}$.  We recall that in the rank one case and $d=1, 2$,  whether $\int_{\mathbb{R}^d}{\varphi\,dx}=0$ or not may affect the analysis in the low energy part. Such problem also occurs in the finite rank perturbations, since the arguments are more complicated, we will treat them separately.

As in the rank one case, we  first  choose a smooth cut-off function $\chi(\lambda)$ satisfying \eqref{eq3.3.1} with $\lambda_0$  a small constant to be fixed later. Then we split \eqref{eq4.13.2} into the  high and low energy part respectively:
\begin{align*}
  U^{\pm, h}_{d, ij}&:=-\frac{1}{\pi i}\int_0^{\infty}e^{-it\lambda^2}(1-\chi(\lambda))g^{\pm}_{ij}(\lambda^2)R^{\pm}_{0}(\lambda^2)\varphi_i\langle R^{\pm}_{0}(\lambda^2)\, \cdot, \varphi_j\rangle\lambda\,d\lambda,\\
 U^{\pm, l}_{d, ij}&:=-\frac{1}{\pi i}\int_0^{\infty}e^{-it\lambda^2}\chi(\lambda)g^{\pm}_{ij}(\lambda^2)R^{\pm}_{0}(\lambda^2)\varphi_i\langle R^{\pm}_{0}(\lambda^2)\, \cdot, \varphi_j\rangle\lambda\,d\lambda.
\end{align*}
We  divide our proof into the following four steps.

{\bf Step 1. Estimates for high energy part when $d\ge 1$.} By Remark \ref{rmk3.1}, it's enough to prove that for any $\lambda_0>0$, there exists an absolute constant $C=C(N, \lambda_0, \varphi_1,\ldots, \varphi_N)>0$ such that for all $\lambda>\lambda_0$, and $1\leq i, j\leq N$, one has
\begin{equation}\label{eq4.14}
\begin{split}
|\frac{d^k}{d\lambda^k}g_{ij}^{\pm}(\lambda^2)|\leq
\begin{cases}C,\,\,\,\,\,\,\,\quad\quad  \mathrm{if}\,\,\,k=0,\\
C\lambda^{-1}, \,\, \,\,\,\quad \mathrm{if}\,\,\,~k=1,\ldots,[\frac{d}{2}]+1.
\end{cases}
\end{split}
\end{equation}
Indeed, we  observe that similar to the proof of \eqref{eq2.11.2} in Lemma \ref{lm2.1}, it follows from the decay assumptions on $\varphi_j$ that for all $\lambda>\lambda_0$,
\begin{equation}\label{eq4.14.1}
  |\frac{d^k}{d\lambda^k}f^{\pm}_{i,j}(\lambda^2)|\leq C\lambda^{-1}, \,\,\,\quad \mathrm{if}\,\,\,~k=0. 1,\ldots,[\frac{d}{2}]+1.
\end{equation}
Recall that by \eqref{eq4.13.1},
\begin{equation}\label{eq4.14.2}
\left(g_{ij}^{\pm}(\lambda^2)\right)_{N\times N}=A^{\pm}(\lambda^2)^{-1}=\frac{1}{\det{(A^{\pm}(\lambda^2))}}\text {adj}(A^{\pm}(\lambda^2)),
\end{equation}
where $\text{adj}(A^{\pm}(\lambda^2))$ is the adjugate matrix of $A^{\pm}(\lambda^2)$ (given in \eqref{eq4.11}).
Note that by assumption \eqref{eq1.13.1} one has $|\det{A^{\pm}(\lambda^2)}|\ge c_0>0$, thus by \eqref{eq4.14.1} and \eqref{eq4.14.2},  there is some positive constant depending on $N, \lambda_0, \varphi_1,\ldots, \varphi_N$ such that
\begin{equation}\label{eq4.14.2.0}
|g_{ij}^{\pm}(\lambda^2)|\leq C(N, \lambda_0, \varphi_1,\ldots, \varphi_N),
\end{equation}
which proves \eqref{eq4.14} for $k=0$. Further, we notice that
\begin{equation}\label{eq4.14.3}
  \frac{d^k}{d\lambda^k}A^{\pm}(\lambda^2)^{-1}=A^{\pm}(\lambda^2)^{-1}\sum_{s=1}^{k}\sum_{l_{m,1}+\ldots+l_{m,s}=k}
  C(l_{m,1},\cdots,l_{m,s})\prod_{m=1}^s\left[\frac{d^{l_{m,s}}}{d\lambda^{l_{m,s}}}(A^{\pm}(\lambda^2)) \cdot A^{\pm}(\lambda^2)^{-1}\right].
\end{equation}
This, together with \eqref{eq4.14.1}, proves \eqref{eq4.14} for $k=1,\ldots,[\frac{d}{2}]+1$.

Now we turn to the low energy estimates. Before proceeding, we point out the following fact that will be used for all $d\ge 1:$

\textbf{Fact 1.} All the results in Lemma \ref{lm2.3} and Lemma \ref{lm2.4} are still valid  when the inner product $F^{\pm}(\lambda^2)=\langle R^{\pm}_{0}(\lambda^2)\varphi,\varphi\rangle$ is replaced by $f^{\pm}_{i,j}=\langle R^{\pm}_{0}(\lambda^2)\varphi_i, \varphi_j\rangle$.

In fact, this follows immediately  since each $\varphi_j, j=1,\cdots, N$, has the same decay assumption as $\varphi$ in the rank one case.

{\bf Step 2. Estimates for Low energy part  when $d\ge 3$.}
By Remark \ref{rmk3.4}, it suffices to prove that
\begin{equation}\label{eq4.15}
g^{\pm}_{i,j}(\lambda^2)\in S^{0}_{[\frac{d}{2}]+1}((0, \lambda_0)), \,\,\,\qquad\,\,\,\,\, \,\qquad\mathrm{if}\,\,\, d\ge 3,\\
\end{equation}
and
\begin{equation}\label{eq4.16}
g^{+}_{i,j}(\lambda^2)-g^-_{i,j}(\lambda^2)\in S^{d-2}_{[\frac{d}{2}]+1}((0, \lambda_0)), \,\,\,\quad \mathrm{if}\,\,\, d\ge 3.
\end{equation}
In order to prove these, we note that by Lemma \ref{lm2.3},  \ref{lm2.4} and  \textbf{Fact 1} above, we have
\begin{equation}\label{eq4.18.1}
f^{\pm}_{i,j}(\lambda^2)\in S^{0}_{[\frac{d}{2}]+1}\left((0, \lambda_0)\right),
\end{equation}
and
\begin{equation}\label{eq4.18.2}
f^{+}_{i,j}(\lambda^2)-f^{-}_{i,j}(\lambda^2)\in S^{d-2}_{[\frac{d}{2}]+1}\left((0, \lambda_0)\right).
\end{equation}
Therefore the property \eqref{eq4.18.1}, together with \eqref{eq4.14.2}, \eqref{eq4.14.3} and the assumption \eqref{eq1.13.1}, yields \eqref{eq4.15}.
And
\eqref{eq4.16} follows from \eqref{eq4.18.2} and the following identity
\begin{equation*}
  A^+(\lambda^2)^{-1}-  A^-(\lambda^2)^{-1}=  A^+(\lambda^2)^{-1}\left(A^{-}(\lambda^2)-A^{+}(\lambda^2)\right) A^-(\lambda^2)^{-1}.
\end{equation*}
This proves the low energy part for $d\ge3$.

Combining {\bf Step 1} and {\bf Step 2} with  $\lambda_0=\frac12$, we complete the proof of Theorem \ref{thm1.2} when $d\ge 3$. We are left with the low energy estimates when $d=1, 2$.

{\bf Step 3. Low energy estimates when $d=1$.}
First,  we assume the special case
$$
\int_{\mathbb{R}}{\varphi_i}(x)\,dx=0, \,\,\,\,\,\,\text{for all}\,\,\,1\le i\le N.
$$
Repeat the same arguments as in the proof of \eqref{eq4.15}, we have
 \begin{equation*}\label{eq4.16.11}
g_{ij}^{\pm}(\lambda^2)\in S^{0}_{1}\left((0, \lambda_0)\right).
 \end{equation*}
Now we are able to reduce to the rank one case in $d=1$. Precisely,  the method to treat the low energy part of $U^{\pm}_{1, ij}$ is the same to the estimate of $U_1^{\pm, l}(t, x, y)$ ((when $\int_{\mathbb{R}}{\varphi}(x)\,dx=0$)) in the one dimensional case of  Theorem \ref{thm3.2}, except that we replace $\varphi$ by $\varphi_i$ ($1\le i\le N$).

Next, without loss of generality, assume that there exists some $1\le k_0\le N$ such that
\begin{equation}\label{eq4.16.1}
\int_{\mathbb{R}}{\varphi_i(x)\,dx}\ne 0, \,\,\,\,\text{for}\,\,\, 1\le i\le k_0;\qquad  \int_{\mathbb{R}}{\varphi_i(x)\,dx}=0, \,\,\,\,\text{for}\,\,\, k_0<i\le N.
\end{equation}
The proof for this part is much more delicate since the methods in rank one case can't be applied directly.
In particular, in rank one case, when $\int{\varphi(x)\,dx}\ne 0$, the function $\frac{1}{1+ F^{\pm}(\lambda^2)}$ has better properties due to \eqref{eq.11.1.3}. But  this doesn't seem to be obvious  from the definition of $(g^{\pm}_{ij}(\lambda^2))_{N\times N}$ (see \eqref{eq4.13.1}) in the finite rank case. Instead,
inspired by the works for the Schr\"{o}dinger operator $H=-\Delta+V$ (see \cite{Sch}),
we shall write the inverse $(I+PR_0^{\pm}(\lambda^2)P)^{-1}$ in a suitable form and then exploit the cancelation property from \eqref{eq4.16.1}.
Thus we denote by
\begin{equation}\label{eq4.18.3}
  U_N^{\pm, l}=-\frac{1}{\pi i}\int_0^{\infty}e^{-it\lambda^2}\chi(\lambda)R^{\pm}_{0}(\lambda^2)P(1+PR^{\pm}_{0}(\lambda^2)P)^{-1}PR^{\pm}_{0}(\lambda^2)\cdot \lambda\,d\lambda. \\
\end{equation}
Clearly, in view of \eqref{eq4.13.0}, \eqref{eq4.13.2} and the definition of $U^{\pm, l}_{d, ij}$, we have
\begin{equation*}
 U_N^{\pm, l}=\sum_{i,j}U^{\pm, l}_{d, ij}.
\end{equation*}
To proceed,
we recall the following abstract lemma (see e.g. in \cite[Lemma 2.1]{JN} or \cite{ES,GW15,GW17,Sch}).
\begin{lemma}\label{lm4.1}
Let $\mathscr{A}$ be a closed operator on a Hilbert space $\mathcal{H}$ and $S$ be a projection. Suppose $\mathscr{A}+S$ has a bounded inverse. Then  $\mathscr{A}$ has a bounded inverse if and only if
\begin{equation}\label{eq4.1601}
  \mathscr{B}=S-S(\mathscr{A}+S)^{-1}S
\end{equation}
has a bounded inverse in $S\mathcal{H}$, and, in this case,
\begin{equation}\label{eq4.1602}
 \mathscr{A}^{-1}=(\mathscr{A}+S)^{-1}+(\mathscr{A}+S)^{-1}S\mathscr{B}^{-1}S(\mathscr{A}+S)^{-1}.
\end{equation}
\end{lemma}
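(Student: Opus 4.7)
The plan is to prove both directions by direct algebraic manipulation, with the main tool being the second resolvent identity
\[
(\mathscr{A}+S)^{-1}-\mathscr{A}^{-1}=-\mathscr{A}^{-1}S(\mathscr{A}+S)^{-1}=-(\mathscr{A}+S)^{-1}S\mathscr{A}^{-1},
\]
which holds whenever both $\mathscr{A}$ and $\mathscr{A}+S$ are invertible. The key observation throughout is that since $S$ is a projection, the operator $S$ serves as the identity on $S\mathcal{H}$, and the outer $S$'s in the definition of $\mathscr{B}$ guarantee that $\mathscr{B}$ maps $S\mathcal{H}$ into itself.

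For the ``if'' direction, I would assume $\mathscr{B}$ has a bounded inverse on $S\mathcal{H}$ and directly verify that the operator
\[
T:=(\mathscr{A}+S)^{-1}+(\mathscr{A}+S)^{-1}S\mathscr{B}^{-1}S(\mathscr{A}+S)^{-1}
\]
satisfies $\mathscr{A}T=I$. Using $\mathscr{A}=(\mathscr{A}+S)-S$, one computes $\mathscr{A}(\mathscr{A}+S)^{-1}=I-S(\mathscr{A}+S)^{-1}$, so that
\[
\mathscr{A}T=I-S(\mathscr{A}+S)^{-1}+\bigl(S-S(\mathscr{A}+S)^{-1}S\bigr)\mathscr{B}^{-1}S(\mathscr{A}+S)^{-1}=I-S(\mathscr{A}+S)^{-1}+\mathscr{B}\mathscr{B}^{-1}S(\mathscr{A}+S)^{-1},
\]
and since $\mathscr{B}\mathscr{B}^{-1}$ acts as the identity on $S\mathcal{H}$ (which is where $S(\mathscr{A}+S)^{-1}$ lands), the right-hand side collapses to $I$. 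The check of $T\mathscr{A}=I$ is symmetric.

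For the ``only if'' direction, I would assume $\mathscr{A}^{-1}$ exists and exhibit the explicit bounded inverse
\[
\mathscr{B}^{-1}=S+S\mathscr{A}^{-1}S\quad\text{on }S\mathcal{H}.
\]
To verify this, for $v\in S\mathcal{H}$ (so $Sv=v$) the resolvent identity yields
$S\mathscr{A}^{-1}v=S(\mathscr{A}+S)^{-1}v+S(\mathscr{A}+S)^{-1}S\mathscr{A}^{-1}v$, and plugging this into
\[
\mathscr{B}(v+S\mathscr{A}^{-1}v)=v+S\mathscr{A}^{-1}v-S(\mathscr{A}+S)^{-1}v-S(\mathscr{A}+S)^{-1}S\mathscr{A}^{-1}v
\]
gives exactly $v$. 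Boundedness of $\mathscr{B}^{-1}$ is automatic from boundedness of $\mathscr{A}^{-1}$.

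I do not anticipate any real obstacle; the argument is essentially bookkeeping once the resolvent identity and the interpretation of $S$ as the identity on $S\mathcal{H}$ are in hand. The only point that requires care is distinguishing between $S$ viewed as an operator on $\mathcal{H}$ (where it is a projection) and as the identity on the reduced space $S\mathcal{H}$, so that ``$\mathscr{B}\mathscr{B}^{-1}S(\mathscr{A}+S)^{-1}=S(\mathscr{A}+S)^{-1}$'' is used correctly. Since this result is classical (and appears, e.g., in \cite{JN,ES,GW15,GW17,Sch}), I would keep the write-up very concise, presenting just the two verifications above.
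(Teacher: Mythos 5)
The paper itself states this as a known lemma and refers to \cite{JN,ES,GW15,GW17,Sch} without reproducing a proof, so there is no in-paper argument to compare against; your verification is the standard one for this Feshbach/Schur-complement-type inversion formula, and both computations are correct. The algebra in the ``if'' direction is right: $\mathscr{A}(\mathscr{A}+S)^{-1}=I-S(\mathscr{A}+S)^{-1}$, the factor $\bigl(I-S(\mathscr{A}+S)^{-1}\bigr)S$ is exactly $\mathscr{B}$, and since $S(\mathscr{A}+S)^{-1}$ has range in $S\mathcal{H}$ where $\mathscr{B}\mathscr{B}^{-1}$ acts as the identity, everything collapses to $I$; the check $T\mathscr{A}=I$ on $D(\mathscr{A})$ is indeed symmetric, using $S(\mathscr{A}+S)^{-1}\mathscr{A}=\mathscr{B}$ on $D(\mathscr{A})$.

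One small point you should tighten in the ``only if'' direction: you verify only that $S+S\mathscr{A}^{-1}S$ is a \emph{right} inverse of $\mathscr{B}$ on $S\mathcal{H}$, i.e.\ $\mathscr{B}(S+S\mathscr{A}^{-1}S)v=v$. For a bounded operator on an infinite-dimensional space a right inverse alone does not give invertibility, so you also need the left inverse identity $(S+S\mathscr{A}^{-1}S)\mathscr{B}v=v$. This follows by the same bookkeeping from the other form of the resolvent identity, $\mathscr{A}^{-1}=(\mathscr{A}+S)^{-1}+\mathscr{A}^{-1}S(\mathscr{A}+S)^{-1}$, which after multiplying by $S$ on both sides gives $S\mathscr{A}^{-1}v=S(\mathscr{A}+S)^{-1}v+S\mathscr{A}^{-1}S(\mathscr{A}+S)^{-1}v$ and hence $(S+S\mathscr{A}^{-1}S)\mathscr{B}v=v$. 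With that one-line addition the argument is complete; boundedness of $\mathscr{B}^{-1}$ is, as you say, immediate from boundedness of $\mathscr{A}^{-1}$.
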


In order to apply this lemma, we denote by
$$\mathscr{A}_{\pm}:=I+PR_0^{\pm}(\lambda^2)P, \qquad\text{and} \qquad \mathcal{H}:=PL^2=\text{span}\{\varphi_i\}_{i=1}^N.
$$
Note that by \eqref{eq2.4},
we have the following expansion
\begin{equation}\label{eq4.18}
R^{\pm}_0(\lambda^2)f=\frac{\pm i}{2\lambda}L_1f+L_2f+\mathscr{R}^{\pm}_1(\lambda)f,
\end{equation}
with
\begin{equation*}
  L_1f=\int_{\mathbb{R}}f(x)\,dx,\,\,\,\, L_2f=-\frac{1}{2}\int_{\mathbb{R}}|x-y|f(y)\,dy,\,\,\,\,\mathscr{R}^{\pm}_1(\lambda)f=\int_{\mathbb{R}}r^{\pm}_1(\lambda, |x-y|)f(y)\,dy.
\end{equation*}
Using \eqref{eq4.18} and the cancellation property \eqref{eq4.16.1}, we have
\begin{equation}\label{eq4.19}
\mathscr{A}_{\pm}=\frac{\pm i}{2\lambda}\sum_{i=1}^{k_0}P_i\cdot L_1\cdot\sum_{i=1}^{k_0}P_i+(I+PL_2P)+P\mathscr{R}^{\pm}_1(\lambda)P.
\end{equation}
Let $\mathscr{P}$ be the Riesz projection onto the null space of the operator $\sum_{i=1}^{k_0}P_i\cdot L_1\cdot\sum_{i=1}^{k_0}P_i$ on $\mathcal{H}$, and denote by $S_{\pm}=\frac{\pm i}{2\lambda}\mathscr{P}$. Then $\sum_{i=1}^{k_0}P_i\cdot L_1\cdot\sum_{i=1}^{k_0}P_i+\mathscr{P}$ is invertible, and we define
\begin{equation*}
B_0:=\left(\sum_{i=1}^{k_0}P_i\cdot L_1\cdot\sum_{i=1}^{k_0}P_i+\mathscr{P}\right)^{-1}.
\end{equation*}
Thus in view of \eqref{eq4.19}, we use the  Neumann series expansion to derive
\begin{equation}\label{eq4.20}
 (\mathscr{A}_{\pm}+S_{\pm})^{-1}=\mp2i\lambda B_0+4\lambda^2 B_0 (I+PL_2P) B_0+\mathscr{R}^{\pm}_2(\lambda),\qquad 0<\lambda\ll 1,
\end{equation}
where $\mathscr{R}^{\pm}_2(\lambda)=4\lambda^2B_0P\mathscr{R}^{\pm}_1(\lambda)PB_0+O(\lambda^3)$.
By \eqref{eq2.5} and the decay assumption on $\varphi_i$, it follows that $\mathscr{R}^{\pm}_2(\lambda)$ is  bounded on $\mathcal{H}$ and when $\lambda$ is small enough, one has
\begin{equation}\label{eq4.21}
\|\frac{d^k}{d\lambda^k}\mathscr{R}^{\pm}_2(\lambda)\|_{\mathcal{H}-\mathcal{H}}\leq C\lambda^{\frac52-k},\,\,\,\,\,\,k=0,\,1.
\end{equation}
Next, we shall compute the operator $\mathscr{B}$ defined in \eqref{eq4.1601}. Note that by the definition of  $\mathscr{P}$ and $B_0$, we have the relation
\begin{equation*}
\mathscr{P}B_0=B_0\mathscr{P}=\mathscr{P},\quad\,\,\,\,P\mathscr{P}=\mathscr{P}P=\mathscr{P}.
\end{equation*}
This, together with \eqref{eq4.20}, we have
\begin{align*}\label{eq4.22}
  \mathscr{B}_{\pm}& :=S_{\pm}-S_{\pm}(\mathscr{A}_{\pm}+S_{\pm})^{-1}S_{\pm}  \nonumber \\
& =  \frac{\pm i}{2\lambda}\mathscr{P}+\frac{1}{4\lambda^2}\mathscr{P}\left(\mp2i\lambda B_0+4\lambda^2 B_0 (I+PL_2P) B_0+\mathscr{R}^{\pm}_2(\lambda)\right)\mathscr{P}        \nonumber \\
& =  \mathscr{P}+\mathscr{P}L_2\mathscr{P}+\frac{1}{4\lambda^2}\mathscr{P}\mathscr{R}^{\pm}_2(\lambda)\mathscr{P}.
\end{align*}
We claim that
\begin{equation*}
  (\mathscr{P}+\mathscr{P}L_2\mathscr{P})^{-1}\,\, \text{exists\, on\, }\,\, \mathscr{P}\mathcal{H}.
\end{equation*}
 In fact, this follows that if $(\mathscr{P}+\mathscr{P}L_2\mathscr{P})f=0$, $f\in \mathscr{P}\mathcal{H}$, then by \eqref{eq4.19} we have
 \begin{align*}
  0=\langle(\mathscr{P}+\mathscr{P}L_2\mathscr{P})f, f\rangle=\lim_{\lambda\rightarrow 0^+}\langle(\mathscr{P}+\mathscr{P}R^{\pm}_0(\lambda^2)\mathscr{P})f, f\rangle
  =\langle(I+(-\Delta)^{-1})f, f\rangle,\qquad\,\,\,,
 \end{align*}
which implies that $f\equiv0$.

Therefore, using  the  Neumann series expansion and \eqref{eq4.21} we obtain
\begin{equation}\label{eq4.23}
 \mathscr{B}_{\pm}^{-1}=D_0+\mathscr{R}^{\pm}_3(\lambda),\qquad 0<\lambda\ll 1,
\end{equation}
where $\mathscr{R}^{\pm}_3=\frac{-1}{4\lambda^2}D_0\mathscr{P}\mathscr{R}^{\pm}_2(\lambda)\mathscr{P}D_0+O(\lambda)$, and $D_0:=(\mathscr{P}+\mathscr{P}L_2\mathscr{P})^{-1}$ is bounded on $\mathscr{P}\mathcal{H}$. By  \eqref{eq4.21}, one has for sufficiently small $\lambda>0$,
\begin{equation}\label{eq4.23.1}
\|\frac{d^k}{d\lambda^k}\mathscr{R}^{\pm}_3(\lambda)\|_{\mathscr{P}\mathcal{H}-\mathscr{P}\mathcal{H}}\leq C\lambda^{\frac12-k},\,\,\,\,\,\,k=0,\,1.
\end{equation}

Now we are in the position to write the expansion of $(I+PR_0^{\pm}(\lambda^2)P)^{-1}$ in $\mathcal{H}$. By \eqref{eq4.20}, \eqref{eq4.23} and \eqref{eq4.1602} in Lemma \ref{lm4.1}, one can choose some small $\lambda_0>0$ such that
\begin{equation}\label{eq4.24}
 (I+PR_0^{\pm}(\lambda^2)P)^{-1}=\mathscr{P}D_0\mathscr{P}+\mathscr{P}\mathscr{R}^{\pm}_3\mathscr{P}+\mathscr{R}^{\pm}_4(\lambda), \quad 0<\lambda<\lambda_0,
\end{equation}
where
$$
\mathscr{R}^{\pm}_4(\lambda)=\mp2i\lambda B_0\pm2i\lambda\mathscr{P}D_0\mathscr{P}(I+PL_2P) B_0\pm2i\lambda B_0(I+PL_2P)\mathscr{P}D_0\mathscr{P}+O(\lambda^{\frac32}).
 $$
It follows from \eqref{eq4.21} and \eqref{eq4.23.1} that
\begin{equation}\label{eq4.25}
  \|\frac{d^k}{d\lambda^k}\mathscr{R}^{\pm}_4(\lambda)\|_{\mathcal{H}-\mathcal{H}}\leq C\lambda^{1-k},\,\,\quad 0<\lambda<\lambda_0.
\end{equation}
Using \eqref{eq4.24} and  \eqref{eq4.25}, we are ready to reduce low-energy dispersive estimates to the rank one case. Indeed, let $\lambda_0$ be given in \eqref{eq4.24}, we rewrite $U_N^{\pm, l}$ (see \eqref{eq4.18.3}) as
\begin{align}\label{eq4.26}
   U_N^{\pm, l}&=\frac{-1}{\pi i}\int_0^{\infty}e^{-it\lambda^2}R^{\pm}_0(\lambda^2)\mathscr{P}\left( D_0+\mathscr{R}^{\pm}_3(\lambda)\right)\mathscr{P}R^{\pm}_0(\lambda^2)\lambda\cdot\chi(\lambda) d\lambda\nonumber\\
 &- \frac{1}{\pi i}\int_0^{\infty}e^{-it\lambda^2}R^{\pm}_0(\lambda^2)P\mathscr{R}^{\pm}_4(\lambda)PR^{\pm}_0(\lambda^2)\lambda\cdot\chi(\lambda) d\lambda\nonumber\\
 &:=  U_{N,1}^{\pm, l}+  U_{N,2}^{\pm, l}.
\end{align}

First, we deal with $U_{N,2}^{\pm, l}$. Note that from the explicit expression \eqref{eq2.1} the integral kernel of $U_{N,2}^{\pm, l}$ can be written as
\begin{equation*}
U_{N,2}^{\pm, l}(t, x, y)=\sum_{i,j=1}^{N}\int_{\mathbb{R}}\int_{\mathbb{R}}{I_{N,2,i,j}^{\pm, l}(t, |x-x_1|, |x_2-y|)\varphi_i(x_1)\varphi_j(x_2)\,dx_1dx_2},
\end{equation*}
where
\begin{equation*}
  I_{N,2,i,j}^{\pm, l}(t, |x-x_1|, |x_2-y|):=\frac{1}{4\pi i}\int_0^{\infty}{e^{-it\lambda^2\pm i\lambda(|x-x_1|+ |x_2-y|)}\frac{\langle \mathscr{R}^{\pm}_4(\lambda)\varphi_j, \varphi_i\rangle}{\lambda}\chi(\lambda)d\lambda}.
\end{equation*}
Since $\mathscr{R}_4(\lambda)$ satisfies \eqref{eq4.25}, thus we have
\begin{equation*}
 \frac{\langle \mathscr{R}^{\pm}_4(\lambda)\varphi_j, \varphi_i\rangle}{\lambda}\in S^0_1\left((0, \lambda_0)\right).
\end{equation*}
Hence by (\romannumeral1) of Corollary \ref{cor2.6}, we have
\begin{equation*}
 |I_{N,2,i,j}^{\pm, l}(t, |x-x_1|, |x_2-y|)|\leq C_ N\cdot t^{-\frac12}.
\end{equation*}
Therefore, it follows from the decay assumption of $\varphi_i$  that
\begin{equation}\label{eq4.26.1}
\sup_{x, y\in\mathbb{R}}|U_{N,2}^{\pm, l}(t, x, y)|\leq C(N, d, \varphi_1,\ldots, \varphi_N)\cdot  t^{-\frac12}.
\end{equation}

Next, we deal with the term $U_{N,1}^{\pm, l}$, in which  we shall exploit the orthogonality from the definition of $\mathscr{P}$. To be more precise, we write
\begin{align*}
e^{\mp i\lambda|x|} R^{\pm}_0(\lambda^2, x, x_1)&=\frac{\pm i}{2\lambda}e^{\pm i\lambda(|x-x_1|-|x|)} \\
&=\frac{\pm i}{2\lambda}-\frac{|x-x_1|-|x|}{2\lambda}\int_0^{\lambda}e^{\pm is\lambda(|x-x_1|-|x|)}\,ds.
\end{align*}
Insert this formula into the expression of $U_{N,1}^{\pm, l}$ in \eqref{eq4.26} and notice that
  $L_1\mathscr{P}=0$, then the integral kernel of  $U_{N,1}^{\pm, l}$  can be rewritten as
\begin{equation*}
U_{N,1}^{\pm, l}(t, x, y)=\sum_{i,j=1}^{N}\int_{\mathbb{R}}\int_{\mathbb{R}}{I_{N,1,i,j}^{\pm, l}(t, |x-x_1|, |x_2-y|)(|x-x_1|-|x|)\varphi_i(x_1)\varphi_j(x_2)\,dx_1dx_2},
\end{equation*}
where
\begin{equation*}
  I_{N,1,i,j}^{\pm, l}(t, |x-x_1|, |x_2-y|):=\frac{\pm 1}{4\pi}\int_0^{\infty}{e^{-it\lambda^2\pm i\lambda(|x|+ |x_2-y|)}\langle \mathscr{P}(D_0+\mathscr{R}^{\pm}_3)\mathscr{P}\varphi_j, \varphi_i\rangle g(\lambda)\chi(\lambda) d\lambda},
\end{equation*}
and
$$
 g(\lambda):=\frac{1}{\lambda}\int_0^{\lambda}e^{\pm is\lambda(|x-x_1|-|x|)}\,ds.
$$
A direct computation implies that
\begin{equation*}
  g(\lambda)\in S^0_1\left((0, \lambda_0)\right).
\end{equation*}
This, together with \eqref{eq4.23.1} and our decay assumption on $\varphi_j$ for $j=1,\cdots,N$, yields that
$$\langle \mathscr{P}(D_0+\mathscr{R}^{\pm}_3)\mathscr{P}\varphi_j, \varphi_i\rangle g(\lambda)\chi(\lambda)\in S^0_1\left((0, \lambda_0)\right)$$
Now we use (\romannumeral1) Corollary \ref{cor2.6} with $b=0, K=1$ to obtain
\begin{equation*}
 |I_{N,1}^{\pm, l}(t, |x-x_1|, |x_2-y|)|\leq C(N, d, \varphi_1,\ldots, \varphi_N)\cdot  t^{-\frac12}.
\end{equation*}
 Combining this and the decay assumption on $\varphi_j$, as well as the fact $||x-x_1|-|x||\leq |x_1|$, one has
\begin{equation}\label{eq4.27}
	\sup_{x, y\in\mathbb{R}}|U_{N,1}^{\pm, l}(t, x, y)|\leq C(N, d, \varphi_1,\ldots, \varphi_N)\cdot  t^{-\frac12}.
\end{equation}
Finally  by \eqref{eq4.26.1} and  \eqref{eq4.27}  we have
$$
\|U_N^{\pm, l}\|_{L^1-L^{\infty}}\leq C(N, d, \varphi_1,\ldots, \varphi_N)\cdot  t^{-\frac12}.
$$

{\bf Step 4. Low energy estimates when $d=2$.} The proof is almost the same as that of $d=1$, and we only give a sketch.
When
$$
\int_{\mathbb{R}^2}{\varphi_i}(x)\,dx=0, \,\,\,\,\,\,\text{for all}\,\,\,1\le i\le N.
$$
Similar to the proof of \eqref{eq4.15}, we have
 \begin{equation*}\label{eq4.16.11}
g_{ij}^{\pm}(\lambda^2)\in S^{0}_{2}\left((0, \lambda_0)\right).
 \end{equation*}
Thus the method to treat the low energy part of $U^{\pm}_{2, ij}$ is the same as the estimate of $U_2^{\pm, l}(t, x, y)$ in subcase 2  ($d=2$ and $\int_{\mathbb{R}^2}{\varphi}(x)\,dx=0$) of  Theorem \ref{thm3.2}, except that we replace $\varphi$ by $\varphi_i$ ($1\le i\le N$).

Next, assume without loss of generality that there exists some $1\le k_0\le N$ such that
\begin{equation}\label{eq4.16.15}
\int_{\mathbb{R}^2}{\varphi_i(x)\,dx}\ne 0, \,\,\,\,\text{for}\,\,\, 1\le i\le k_0;\qquad  \int_{\mathbb{R}^2}{\varphi_i(x)\,dx}=0, \,\,\,\,\text{for}\,\,\, k_0<i\le N.
\end{equation}
We follow the same strategy in {\bf Step 3}. By the resolvent expansion \eqref{eq2.6}, we have
\begin{equation}\label{eq4.32}
 (I+PR_0^{\pm}(\lambda^2)P)^{-1}=\mathscr{P}D_0\mathscr{P}+\frac{1}{\log{\lambda}}D^{\pm}_1+\mathscr{R}^{\pm}_4(\lambda), \qquad 0<\lambda<\lambda_0,
\end{equation}
where $\lambda_0$ is some small but fixed constant, $\mathscr{P}$ is the same as in  {\bf Step 3} and
\begin{align*}
  D_0:=( \mathscr{P}+\mathscr{P}L_2\mathscr{P})^{-1},\,\,\,\,\,L_2f:=\frac{-1}{2\pi}\int{log{|x-y|}f(y)\,dy}.
\end{align*}
Moreover, $\mathscr{P}D_0\mathscr{P}$, $D^{\pm}_1$, $\mathscr{R}^{\pm}_4(\lambda)$ are bounded operators on $\mathcal{H}$, and
\begin{equation}\label{eq4.32.1}
 \|\frac{d^k}{d\lambda^k}\mathscr{R}^{\pm}_4(\lambda)\|_{\mathcal{H}-\mathcal{H}}\leq
 	C \frac{\lambda^{-k}}{|\log{\lambda}|^2},\,\,\quad 0<\lambda<\lambda_0,\,\,\,0\le k\le 2.
\end{equation}
Similar to the low energy decomposition \eqref{eq.l7}, using \eqref{eq4.18.3} we have
\begin{align}\label{eq4.32.2}
U_N^{+, l}-U_N^{-, l}&=\frac{-1}{\pi i}\int_0^{\infty}e^{-it\lambda^2}R^{-}_{0}(\lambda^2)P\left((1+PR^{+}_{0}(\lambda^2)P)^{-1}-(1+PR^{-}_{0}(\lambda^2)P)^{-1}\right)PR^{+}_{0}(\lambda^2) \lambda\chi(\lambda)\,d\lambda \nonumber\\
&-\frac{1}{\pi i}\int_0^{\infty}e^{-it\lambda^2}\left(R^{+}_{0}(\lambda^2)-R^{-}_{0}(\lambda^2)\right)P(1+PR^{+}_{0}(\lambda^2)P)^{-1}PR^{+}_{0}(\lambda^2)\cdot \lambda\chi(\lambda)\,d\lambda \nonumber\\
&-\frac{1}{\pi i}\int_0^{\infty}e^{-it\lambda^2}R^{-}_{0}(\lambda^2)P(1+PR^{+}_{0}(\lambda^2)P)^{-1}P\left(R^{+}_{0}(\lambda^2)-R^{-}_{0}(\lambda^2)\right)\cdot \lambda\chi(\lambda)\,d\lambda \nonumber\\
&:=U_{N, 1}^{ l}+U_{N, 2}^{ l}+U_{N, 3}^{ l}.
\end{align}
Based on the expression \eqref{eq4.32} and the definition of $\mathscr{P}$, we are able to reduce the analysis of $U_{N, j}^{ l}$ ($1\le j\le 3$) to the rank one case with $\int_{\mathbb{R}^2}{\varphi}(x)\,dx=0$. We remark that the analysis of $U_{N, 1}^{ l}$ is the same as \eqref{eq3.54.5}--\eqref{eq3.57.2} and the analysis of $U_{N, 2}^{ l},U_{N, 3}^{ l}$ is the same as \eqref{eq3.54.1}-\eqref{eq3.57.1}. We only take the proof of $U_{N, 2}^{ l}$ as an example.

Instead of decomposing  $R_0^{\pm}(\lambda^2)\varphi$ as in \eqref{eq3.3.69},
using \eqref{eq4.32} and it's not hard to see that the kernel of $P(I+PR^{+}_{0}(\lambda^2)P)^{-1}PR^{+}_{0}(\lambda^2)$ can be written as
 \begin{align}\label{eq4.40.1}
&\left(P(I+PR^{+}_{0}(\lambda^2)P)^{-1}P(e^{\pm i\lambda|\cdot-y|}w_{\pm, >}(\lambda|\cdot-y|)+w_{\pm, <}(\lambda|\cdot-y|))\right)(x_1)\nonumber\\
&=\left(P(I+PR^{+}_{0}(\lambda^2)P)^{-1}Pe^{\pm i\lambda|\cdot-y|}w_{\pm, >}(\lambda|\cdot-y|)\right)(x_1)\nonumber\\
&+\left(\left(\mathscr{P}D_0\mathscr{P}+\frac{1}{\log{\lambda}}PD^{\pm}_1P+P\mathscr{R}^{\pm}_4(\lambda)P\right)w_{\pm, <}(\lambda|\cdot-y|)\right)(x_1)\nonumber\\
&=\sum_{i,j}^Ng_{ij}^{+}(\lambda^2)\varphi_i(x_1)\left\langle\, e^{\pm i\lambda|\cdot-y|}w_{\pm, >}(\lambda|\cdot-y|),\varphi_j(\cdot)\right\rangle\nonumber\\
&+\sum_{i,j}^N\left\langle\mathscr{P}D_0\mathscr{P}\varphi_j,\,\varphi_i\right\rangle\varphi_i(x_1)\left\langle w'_{\pm, <}(\lambda, y, \cdot),\, \varphi_j(\cdot)\right\rangle\nonumber\\
&+\sum_{i,j}^N\left\langle(\frac{1}{\log{\lambda}}D^{+}_1+\mathscr{R}^{+}_4(\lambda))\varphi_j,\,\varphi_i\right\rangle\varphi_i(x_1)\left\langle w_{\pm, <}(\lambda|\cdot-y|),\, \varphi_j(\cdot)\right\rangle\nonumber\\
&:=\uppercase\expandafter{\romannumeral1}+\uppercase\expandafter{\romannumeral2}+\uppercase\expandafter{\romannumeral3},
 \end{align}
where $w'_{\pm, <}(\lambda, y, x_2)$ is the same function defined in \eqref{eq3.3.70}. Indeed,  in the first equality above,  we use the expression of $R^{+}_{0}(\lambda^2)$; in the second equality, we take advantage of the orthogonality by using \eqref{eq4.32} and the fact that $\mathscr{P}L_1=0$, where $L_1f=\int_{\mathbb{R}^2}{f(x)\,dx}$ is the same as in Step 3.

Plugging \eqref{eq4.40.1} into $U_{N, 2}^{ l}$,  then the analysis of $U_{N, 2}^{ l}$ is very similar to the estimate of \eqref{eq3.54.1}-\eqref{eq3.57.1} in the rank one case, $d=2$.

Precisely, when dealing with the term \uppercase\expandafter{\romannumeral1}, we observe that the expressions coincide with  \eqref{eq3.54.1} and \eqref{eq3.56.1}, except that $\alpha F^{\alpha, l}_{-}$ is replaced by $g_{ij}^{+}(\lambda^2)\cdot\chi(\lambda)$  and $\varphi(x_1)$ ($\varphi(x_2)$) replaced by $\varphi_i(x_1)$ ($\varphi_j(x_2)$). Note that by \textbf{Fact 1},  both $\alpha F^{\alpha, l}_{-}$ and $g_{ij}^{+}(\lambda^2)\cdot\chi(\lambda)$ belong to $S_{2}^{0}\left( (0,\lambda_0)\right)$, then they satisfy the same estimates as \eqref{eq3.54.1} and \eqref{eq3.56.1}.

When dealing with the term $\uppercase\expandafter{\romannumeral2}$, we observe that the  expressions  coincide with \eqref{eq3.55.1} and  \eqref{eq3.57.1},  except that $\alpha F^{\alpha, l}_{-}w_{+, <}(\lambda|x_2-y|)$ is replaced by
   $$
\langle\mathscr{P}D_0\mathscr{P}\varphi_j,\,\varphi_i\rangle w'_{+, <}(\lambda, y, x_2)
 $$
 and $\varphi(x_1)$ ($\varphi(x_2)$) replaced by $\varphi_i(x_1)$ ($\varphi_j(x_2)$). Then by properties of $w'_{+, <}$ (see \eqref{eq3.67}),
\eqref{eq3.90.4}  and \eqref{eq3.90.5} remain valid when $\alpha F^{\alpha, l}_{-}$ is replaced by $\langle\mathscr{P}D_0\mathscr{P}\varphi_j,\,\varphi_i\rangle\cdot\chi(\lambda) $
 and  $(1+|x_2-y|^{-\frac12})^{-1}w_{+, <}(\lambda|x_2-y|)$	 is replaced by
$$\left(1+\langle x_2\rangle+\frac{\langle x_2\rangle}{|x_2-y|}\right)^{-\frac12}w'_{\pm, <}(\lambda, x_2, y).$$

When dealing with the term $\uppercase\expandafter{\romannumeral3}$, we observe that the  expressions  coincide with \eqref{eq3.55.1} and  \eqref{eq3.57.1},  except that $\alpha\cdot F^{\alpha, l}_{-}$ is replaced by
$\langle(\frac{1}{\log{\lambda}}D^{+}_1+\mathscr{R}^{+}_4(\lambda))\varphi_j,\,\varphi_i\rangle$
 and $\varphi(x_1)$ ($\varphi(x_2)$) replaced by $\varphi_i(x_1)$ ($\varphi_j(x_2)$). In this case,  by  \eqref{eq4.32.1}, we see that
 \eqref{eq3.90.4}  and \eqref{eq3.90.5} remain valid when $\alpha\cdot F^{\alpha, l}_{-}$ is replaced by
$\langle(\frac{1}{\log{\lambda}}D^{+}_1+\mathscr{R}^{+}_4(\lambda))\varphi_j,\,\varphi_i\rangle$.
 Therefore  they satisfy the same estimates as  \eqref{eq3.55.1} and  \eqref{eq3.57.1}.

 Combining {\bf Step 1} and {\bf Step 3} ({\bf Step 4}) with $\lambda_0$ given in \eqref{eq4.24} (\eqref{eq4.32}), we complete the proof of Theorem \ref{thm1.2} when $d=1$ ($d=2$).
 \qed

\begin{remark}\label{rmk3.5}
We point out that in the proof of Theorem \ref{thm1.2}, the constant $C(N, d, \varphi_1,\ldots, \varphi_N)$ in \eqref{eq1.13} may be extremely large with respect to $N$ (about $C^N\cdot N!$) if we use \eqref{eq4.14.2} to compute the inverse of the matrix $A_{ij}^{\pm}(\lambda^2)$. Indeed, note that each cofactor of $A_{ij}^{\pm}(\lambda^2)$ is the sum of  $(N-1)!$ terms, then
by \eqref{eq4.14.1} and \eqref{eq4.14.2}, it follows that
$$
\left| g_{ij}^{\pm}(\lambda^2)\right| \leq  (N-1)!\cdot (C\lambda^{-1})^{N-1},\qquad \text{for $\lambda>\lambda_0$ }.
$$
Furthermore, by using  \eqref{eq4.14.3}, one has similar estimates for $\frac{d^k}{d\lambda^k} g_{ij}^{\pm}(\lambda^2)$.
These facts show that the constant in \eqref{eq4.14} grows at least $C^{N-1} \cdot(N-1)!$, which in turn implies the same conclusion for the constant $C(N, d, \varphi_1,\ldots, \varphi_N)$ in \eqref{eq1.13}.

\end{remark}

\subsection{Proof of Theorem \ref{thm1.4}}\label{sec4.3}
 First of all, observe that by Plancherel's theorem and the assumption \eqref{eq0.19}, i.e., $\text{supp} \, \hat{\varphi_i}\cap \text{supp}\, \hat{\varphi_j}=\emptyset$, when  $i\ne j$,  one has
\begin{align*}
\langle R_0^{\pm}(\lambda^2)\varphi_j,\varphi_i\rangle=\lim_{\varepsilon\rightarrow 0}\int_{\mathbb{R}^d}{\frac{\hat{\varphi_j}(\xi)\cdot \bar{\hat{\varphi_i}}(\xi)}{|\xi|^2-\lambda^2\mp i\varepsilon}\,d\xi}
=0,\,\,\,\,\quad \text{if}\,\,\,\,i\ne j.
\end{align*}
This means that the matrix $I+F_{N\times N}^{\pm}(\lambda^2)$ is diagonal, i.e.,
\begin{equation}\label{eq4.32.3}
  I+F_{N\times N}^{\pm}(\lambda^2)=\begin{pmatrix}
        1+\langle R_0^{\pm}(\lambda^2)\varphi_1,\varphi_1\rangle&   & \\
        & & \ddots & \\
         &  &  & 1+\langle R_0^{\pm}(\lambda^2)\varphi_N,\varphi_N\rangle
    \end{pmatrix}.
\end{equation}
Second, since each $\varphi_i$ ($1\le i\le N$) is also assumed to satisfy the spectral assumption \eqref{eq1.4}, then it follows immediately that $I+F_{N\times N}^{\pm}(\lambda^2)$ is invertible, in particular, \eqref{eq1.13.1} is valid and the spectrum of $H$  is purely absolutely continuous (see Lemma \ref{lmA.1}).  Furthermore, recall that  $(g^{\pm}_{ij})_{N\times N}=(I+F_{N\times N}^{\pm})^{-1}$, and we have
\begin{equation}\label{eq4.34.1}
g^{\pm}_{ij}(\lambda^2)=
\begin{cases}
\frac{1}{1+\langle R_0^{\pm}(\lambda^2)\varphi_i,\varphi_i\rangle},\qquad \qquad\text{if}\,\,\,i=j,\\
 0,\qquad \qquad\qquad \quad\,\,\,\quad\text{if}\,\,\,i\ne j.
 \end{cases}
\end{equation}
Put this into \eqref{eq4.13.2}, we thus obtain that
\begin{align}\label{eq4.32.4}
e^{-itH}-e^{-itH_{0}}&=-\frac{1}{\pi i}\sum_{i=1}^N\int_0^{\infty}e^{-it\lambda^2}g^+_{ii}(\lambda^2)R^{+}_{0}(\lambda^2)\varphi_i\langle R^{+}_{0}(\lambda^2)\, \cdot, \varphi_i\rangle \lambda\,d\lambda\nonumber\\
&+\frac{1}{\pi i}\sum_{i=1}^N\int_0^{\infty}e^{-it\lambda^2}g^-_{ii}(\lambda^2)R^{-}_{0}(\lambda^2)\varphi_i\langle R^{-}_{0}(\lambda^2)\, \cdot, \varphi_i\rangle \lambda\,d\lambda.
\end{align}
Finally, note that by \eqref{eq4.34.1}, $g^{\pm}_{ii}(\lambda^2)$ coincides with $\frac{1}{1+ F^{\pm}(\lambda^2)}$
if we replace $\varphi_i$ by  $\varphi$.
Since  $\varphi_i$ and $\varphi$ satisfy the same assumptions, the integrals in \eqref{eq4.32.4} are the same as that in  \eqref{eq3.3}  with $\alpha=1$. Therefore the proof is complete by applying \eqref{eq1.5} in  Theorem \ref{thm-1.1}.  \qed

\subsection{Proof of Theorem \ref{thm1.5}}\label{sec4.4}
We  first establish the following Lemma, from which we know more about the structure of the matrix $I+F_{N\times N}^{\pm}(\lambda^2)$. This additional information plays a crucial role in our proof.

\begin{lemma}\label{lm4.2}
Assume that $d\ge 3$, $\lambda>0$, and let $\varphi_1,\ldots, \varphi_N$ be given by Theorem \ref{thm1.5}. Then there exists some $C_1=C(d, \varphi)>0$ such that
\begin{equation}\label{eq4.33}
|\langle R_0^{\pm}(\lambda^2)\varphi_j, \varphi_i\rangle|\leq
C_1\langle\tau_0\rangle^{-\frac{d-1}{2}},\,\,\,\quad \mathrm{for}\,\,\,i\ne j.\\
\end{equation}
\end{lemma}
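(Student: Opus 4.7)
The plan is to convert the spatial decay of the free resolvent kernel $R_0^\pm(\lambda^2, z)$ into decay in the separation $\tau_0$ through Lemma \ref{lm2.8}, using integration by parts in the exponential $e^{\pm i\lambda|z|}$ to neutralize any $\lambda$-growth when $\lambda$ is large. We will first use translation invariance to write
\begin{equation*}
\langle R_0^\pm(\lambda^2)\varphi_j,\varphi_i\rangle = \iint R_0^\pm(\lambda^2, x-y+\tau)\,\varphi(x)\varphi(y)\,dxdy, \qquad \tau := \tau_i - \tau_j,\ |\tau|\geq\tau_0,
\end{equation*}
and assume $\tau_0 \geq 1$ (the opposite case being trivial since $\langle\tau_0\rangle^{-(d-1)/2}$ is then bounded below).

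Combining the formula \eqref{eq2.3} in odd dimensions with the decomposition \eqref{eq3.50} (and the derivative bounds \eqref{eq3.51}) in even dimensions, the free resolvent kernel for $d \geq 3$ admits the pointwise bound
\begin{equation*}
|R_0^\pm(\lambda^2, z)| \leq C|z|^{-(d-2)} + C\lambda^{(d-3)/2}|z|^{-(d-1)/2},
\end{equation*}
and inside the second piece the oscillatory factor $e^{\pm i\lambda|z|}$ is isolable. For the first summand, Lemma \ref{lm2.8} (in the form \eqref{eq2.21}) will directly yield
\begin{equation*}
\iint \frac{|\varphi(x)||\varphi(y)|}{|x-y+\tau|^{d-2}}\,dxdy \leq C\langle\tau_0\rangle^{-(d-2)} \leq C\langle\tau_0\rangle^{-(d-1)/2},
\end{equation*}
the last inequality because $d - 2 \geq (d-1)/2$ for $d \geq 3$. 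For the second summand, we will split into $\lambda \leq 1$ and $\lambda > 1$: when $\lambda \leq 1$ the prefactor $\lambda^{(d-3)/2}$ is bounded by $1$ and another application of Lemma \ref{lm2.8} with exponent $(d-1)/2 < d$ finishes the estimate.

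When $\lambda > 1$, we will integrate by parts in $e^{\pm i\lambda|z|}$ via the identity $e^{\pm i\lambda|z|} = (\pm i\lambda)^{-1}\hat{z}\cdot\nabla_x e^{\pm i\lambda|z|}$ with $\hat{z}=z/|z|$, performing $k_1$ integrations in $x$ and $k_2$ in $y$ chosen so that $k_1 + k_2 = \lceil (d-3)/2 \rceil$ and $\max\{k_1,k_2\} \leq \beta_0 = [(d-3)/4]+1$. This distribution is admissible because $2\beta_0 \geq \lceil (d-3)/2 \rceil$ for every $d\geq 3$. Each integration by parts produces a factor $(\pm i\lambda)^{-1}$ and sends a derivative onto one of the smooth factors $\varphi$, $\hat{z}$, $w_{\pm,>}(\lambda|z|)$, or $|z|^{-(d/2-1)}$. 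Invoking \eqref{eq3.51} together with the decay and regularity of $\varphi$, each resulting sub-term will be controlled by
\begin{equation*}
C\,\lambda^{(d-3)/2 - k_1 - k_2}\,|z|^{-(d-1)/2 - m}\,|\partial^{l_1}\varphi(x)|\,|\partial^{l_2}\varphi(y)|
\end{equation*}
for some $m \geq 0$ and $l_i \leq k_i$. Since $\lambda > 1$ and $k_1+k_2 \geq (d-3)/2$, the $\lambda$-exponent is non-positive (with an innocuous residual $\lambda^{-1/2}$ in even dimensions), and Lemma \ref{lm2.8} applied with $\sigma = (d-1)/2 + m < d$ will then give the claimed bound $\leq C\langle\tau_0\rangle^{-(d-1)/2}$.

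The main technical obstacle will be the bookkeeping of the sub-terms produced by the integration by parts---verifying that derivatives landing on $\hat{z}$ or on $|z|^{-(d/2-1)}$ only strengthen the $\tau_0$-decay, and that the parity of $(d-3)/2$ is handled uniformly by the preliminary $\lambda \lessgtr 1$ split. Checking that the regularity budget of $\beta_0$ derivatives per variable is sufficient is a short arithmetic verification, and the case $d=3$ (where $(d-3)/2=0$) works directly from the pointwise estimate with no integration by parts at all, providing a useful sanity check.
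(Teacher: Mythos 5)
Your plan follows the same overall architecture as the paper — translation invariance to center at $\tau_i-\tau_j$, integration by parts against $e^{\pm i\lambda|z|}$ to neutralize the $\lambda$-growth, and Lemma \ref{lm2.8} to convert $|z|$-decay into $\langle\tau_0\rangle$-decay — and the even-dimensional branch, built on the Hankel decomposition \eqref{eq3.50}, works exactly as you describe: the factor $w_{\pm,>}(\lambda|z|)$, supported where $\lambda|z|\geq\frac12$ and satisfying \eqref{eq3.51}, guarantees that each derivative not landing on a $\varphi$ produces precisely one extra $|z|^{-1}$, so the sub-term bound $\lambda^{(d-3)/2-k_1-k_2}|z|^{-(d-1)/2-m}$ is correct and the $|z|$-exponent never exceeds $d-\frac32<d$.

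There is, however, a genuine gap in the odd-dimensional case when $d\geq 7$, and it stems from using the exact polynomial formula \eqref{eq2.3} together with a \emph{fixed} number $K=(d-3)/2$ of integrations by parts. Write the odd kernel as $C_de^{\pm i\lambda|z|}|z|^{2-d}P(\lambda|z|)$ with $P(s)=\sum_{k=0}^{(d-3)/2}c_ks^k$. The pointwise estimate $|R_0^\pm|\leq C|z|^{2-d}+C\lambda^{(d-3)/2}|z|^{-(d-1)/2}$ is true, but it is \emph{not} an additive decomposition of the kernel into a non-oscillatory piece and an oscillatory one; every summand $c_k(\lambda|z|)^k|z|^{2-d}e^{\pm i\lambda|z|}$ carries the oscillation. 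If you nevertheless integrate by parts $K$ times against $e^{\pm i\lambda|z|}$ on the whole sum, then in the Leibniz expansion there are sub-terms in which all $K$ derivatives land on the factors $|z|^{2-d}$ and $\hat z$ and none on $P$ or $\varphi$. Such a sub-term is bounded by $\lambda^{-K}|z|^{2-d-K}|P(\lambda|z|)|$, and in the region $\lambda|z|<1$ (which is non-empty for every $\lambda>1$, since there $z$ ranges over a ball of radius $1/\lambda$ about the point $x-y+\tau=0$), $P(\lambda|z|)$ is comparable to a non-zero constant, so the sub-term is of genuine size $\lambda^{-K}|z|^{2-d-K}$. Your claimed bound $\lambda^{(d-3)/2-K}|z|^{-(d-1)/2-m}$ is therefore wrong on this set. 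And the resulting $|z|$-exponent is $d-2+K=\frac{3d-7}{2}$, which satisfies $\frac{3d-7}{2}\geq d$ exactly when $d\geq 7$, so the hypothesis $\sigma<d$ of \eqref{eq2.21} fails and the $dz$-integral over a neighborhood of $z=0$ diverges. The cases $d=3,5$ escape because $K\leq 1$.

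The paper sidesteps this by treating the $k$-th summand of \eqref{eq2.3} separately and choosing $n_1+n_2=k$ (see \eqref{eq3.16.3}, \eqref{eq4.35.1}); the $\lambda$-power then cancels exactly and the worst $|z|$-exponent is $d-2<d$ (see \eqref{eq4.35.2}). Two ways to repair your argument: either (a) integrate by parts $k$ times on the $k$-th term as the paper does, or (b) insert a bump function $(1-\omega(\lambda|z|))$ (as in \eqref{eq3.50}, which also makes sense for odd $d$) so that the oscillatory piece is supported on $\{\lambda|z|\geq\frac12\}$; on that set $|P^{(a)}(\lambda|z|)|\lesssim(\lambda|z|)^{(d-3)/2-a}$, and the redistribution of powers that underlies your sub-term bound then becomes valid. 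Your preliminary split into $\lambda\lessgtr1$ does not by itself help, since the problematic set $\lambda|z|<1$ survives for all $\lambda>1$.
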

\begin{proof}
 First of all, we show that the claim follows immediately when $d=3$. Indeed,  by \eqref{eq2.3}, we have
\begin{align}
|\langle R_0^{\pm}(\lambda^2)\varphi_j, \varphi_i\rangle|&=\left|\int_{\mathbb{R}^3}\int_{\mathbb{R}^3}\frac{e^{\pm i\lambda|x-y|}}{4\pi|x-y|}\varphi_j(y)\varphi_i(x)\,dxdy\right| \label{eq4.33.2} \\
&\leq C\int_{\mathbb{R}^3}\int_{\mathbb{R}^3}\langle x-(\tau_i-\tau_j)\rangle^{-\delta}|x-y|^{-1}\langle y\rangle^{-\delta}\,dxdy \nonumber  \\
&\leq C\langle \tau_0\rangle^{-1},\label{eq4.33.1}
\end{align}
where in the first inequality, we make the change of variables $x-\tau_j=x', y-\tau_j=y'$ and use the decay assumption  of $\varphi$, the second inequality follows from Lemma \ref{lm2.8} and the assumption \eqref{eq1.16.1}, i.e., $|\tau_i-\tau_j|\ge \tau_0$.

Next, when $d>3$ is odd, we use \eqref{eq2.3} to obtain
\begin{align}\label{eq4.34}
\langle R_0^{\pm}(\lambda^2)\varphi_j, \varphi_i\rangle=\sum_{k=0}^{\frac{d-3}{2}}C_k(\mp2i\lambda)^k
\cdot\int_{\mathbb{R}^d}\int_{\mathbb{R}^d}\frac{e^{\pm i\lambda|x-y|}}{|x-y|^{d-2-k}}\varphi_j(y)\varphi_i(x)\,dxddy.
\end{align}
For each term in the sum \eqref{eq4.34}, we shall use $L_{x_i}$ and $L^*_{x_i}$   defined in \eqref{eq3.16.1} and \eqref{eq3.16.2} to perform integration by parts arguments. Precisely, for any fixed $k=0, 1,\ldots, \frac{d-3}{2}$, we have
\begin{align}\label{eq4.35}
&\lambda^k \int_{\mathbb{R}^d}\int_{\mathbb{R}^d}\frac{e^{\pm i\lambda|x-y|}}{|x-y|^{d-2-k}}\varphi_j(y)\varphi_i(x)\,dxdy\nonumber\\
&=\lambda^k \int_{\mathbb{R}^d}\int_{\mathbb{R}^d}\frac{e^{\pm i\lambda|x-y|}}{|x-y|^{d-2-k}}\varphi(y)\varphi(x-(\tau_i-\tau_j))\,dxdy\nonumber\\
&=\lambda^k \int_{\mathbb{R}^d}\int_{\mathbb{R}^d}e^{\pm i\lambda|x-y|}(L^*_{x})^{n_1}(L^*_{y})^{n_2}\left[\frac{\varphi(y)\varphi(x-(\tau_i-\tau_j))}{|x-y|^{d-2-k}}\right]\,dxdy.
\end{align}
We choose some fixed $n_1, n_2\in \mathbb{N}_0$ such that
\begin{equation}\label{eq4.35.1}
n_1+n_2=k,\qquad\,\, 0\le n_1, n_2\le [\frac{d-3}{4}]+1.
\end{equation}
Similar to the proof of \eqref{eq3.24} and \eqref{eq3.25} in  Theorem \ref{thm3.1} for the case $d>3$, we have
\begin{align}\label{eq4.35.2}
G_k(x, y, \tau_i, \tau_j, \lambda)&:= (L^*_{x})^{n_1}(L^*_{y})^{n_2}\left[\frac{\varphi(y)\varphi(x-(\tau_i-\tau_j))}{|x-y|^{d-2-k}}\right]\nonumber\\
&\leq  C\lambda^{-k}\langle x-(\tau_i-\tau_j)\rangle^{-\delta}\langle y\rangle^{-\delta}\left(\frac{1}{|x-y|^{d-2-k}}+\frac{1}{|x-y|^{d-2}}\right).
\end{align}
 By the definition of $L_{x_i}$ and $L^*_{x_i}$, we see that $\lambda^{k}G_k(x, y, \tau_i, \tau_j, \lambda)$ doesn't depend on $\lambda$. Denote by
\begin{equation}\label{eq4.35.3}
  G^{\pm}(x, y, \tau_i, \tau_j):=\sum_{k=0}^{\frac{d-3}{2}}C_k(\mp2i\lambda)^k G_k(x, y, \tau_i, \tau_j, \lambda).
\end{equation}
It follows from \eqref{eq4.35.2} that
\begin{equation}\label{eq4.35.4}
 |G^{\pm}(x, y, \tau_i, \tau_j)|\leq C\langle x-(\tau_i-\tau_j)\rangle^{-\delta}\langle y\rangle^{-\delta}\left(\frac{1}{|x-y|^{\frac{d-1}{2}}}+\frac{1}{|x-y|^{d-2}}\right).
\end{equation}
 Therefore, it follows from \eqref{eq4.34}, \eqref{eq4.35.3}, \eqref{eq4.35.4} and Lemma \ref{lm2.8} that
\begin{align}
|\langle R_0^{\pm}(\lambda^2)\varphi_j, \varphi_i\rangle|&=|\int_{\mathbb{R}^{2d}}{e^{\pm i\lambda|x-y|}G^{\pm}(x, y, \tau_i, \tau_j)\,dxdy}|\label{eq4.36}\\
&\leq C\int_{\mathbb{R}^d}\int_{\mathbb{R}^d}\langle x-(\tau_i-\tau_j)\rangle^{-\delta}\langle y\rangle^{-\delta}\left(\frac{1}{|x-y|^{\frac{d-1}{2}}}+\frac{1}{|x-y|^{d-2}}\right) \,dxdy\nonumber\\
&\leq C\langle\tau_0\rangle^{-\frac{d-1}{2}},\label{eq4.36.1}
\end{align}
which finishes the proof of the claim for $d>3$, odd.

Third, when $d>3$ is even, we fix a smooth cut-off function $\chi(\lambda)$ such that
$\chi(\lambda)=1$ if $\lambda<\frac12$ and $\chi(\lambda)=0$ if $\lambda>1$.
Then by \eqref{eq3.50}, we write
\begin{align}\label{eq4.36.2}
\langle R_0^{\pm}(\lambda^2)\varphi_j, \varphi_i\rangle & =\lambda^{\frac{d}{2}-1}\int_{\mathbb{R}^{2d}} {e^{\pm i\lambda|x-y|}(1-\chi(\lambda))\frac{w_{\pm, >}(\lambda|x-y|)}{|x-y|^{\frac{d}{2}-1}}\varphi_j(y)\varphi_i(x)\,dxdy}\nonumber\\
& +\lambda^{\frac{d}{2}-1}\int_{\mathbb{R}^{2d}} {e^{\pm i\lambda|x-y|}\chi(\lambda)\frac{w_{\pm, >}(\lambda|x-y|)}{|x-y|^{\frac{d}{2}-1}}\varphi_j(y)\varphi_i(x)\,dxdy}\nonumber\\
& +\int_{\mathbb{R}^{2d}}{\frac{w_{\pm, <}(\lambda|x-y|)}{|x-y|^{d-2}}\varphi_j(y)\varphi_i(x)\,dxdy}\nonumber\\
&:=\uppercase\expandafter{\romannumeral1}+\uppercase\expandafter{\romannumeral2}+\uppercase\expandafter{\romannumeral3}.
\end{align}
For the term \uppercase\expandafter{\romannumeral1}, we use the same approach as in odd dimensions by applying $L_{x_i}$ and $L^*_{x_i}$. Then we have
\begin{equation}\label{eq4.36.3}
 \uppercase\expandafter{\romannumeral1}=\lambda^{\frac{d}{2}-1}\int_{\mathbb{R}^{2d}} {e^{\pm i\lambda|x-y|}(1-\chi(\lambda))(L^*_{x})^{n_1}(L^*_{y})^{n_2}\left[\frac{w_{\pm, >}(\lambda|x-y|)}{|x-y|^{\frac{d}{2}-1}}\varphi_j(y)\varphi_i(x)\right]\,dxdy},
\end{equation}
where $0\le n_1, n_2\le [\frac{d-3}{4}]+1$ are fixed integers satisfying $n_1+n_2=\frac{d}{2}-1$. It follows from properties of $w_{\pm, >}$ (see \eqref{eq3.51}) and definition of $\varphi_i$ that
\begin{align}\label{eq4.36.4}
 & \left|\frac{d^k}{d\lambda^k}\left\{\lambda^{\frac{d}{2}-1}(1-\chi(\lambda))(L^*_{x})^{n_1}(L^*_{y})^{n_2}\left[\frac{w_{\pm, >}(\lambda|x-y|)}{|x-y|^{\frac{d}{2}-1}}\varphi_j(y)\varphi_i(x)\right]\right\}\right|\nonumber\\
&  \leq C\lambda^{-k}\langle x-\tau_i\rangle^{-\delta}\langle y-\tau_j\rangle^{-\delta}\left(\frac{1}{|x-y|^{\frac{d-1}{2}}}+\frac{1}{|x-y|^{d-2}}\right)
\end{align}
(the proof of \eqref{eq4.36.4} follows from the same arguments in proving \eqref{eq3.24.1} and \eqref{eq3.25.1}).
For  the term  \uppercase\expandafter{\romannumeral2}, by \eqref{eq3.51}, a direct computation yields that
\begin{equation}\label{eq4.36.5}
\left|\frac{d^k}{d\lambda^k}\left[\lambda^{\frac{d}{2}-1}\chi(\lambda)\frac{w_{\pm, >}(\lambda|x-y|)}{|x-y|^{\frac{d}{2}-1}}\varphi_j(y)\varphi_i(x)\right]\right|\leq C\lambda^{-k}\langle x-\tau_i\rangle^{-\delta}\langle y-\tau_j\rangle^{-\delta}\frac{1}{|x-y|^{\frac{d-1}{2}}}.
\end{equation}
Denote by
\begin{align}\label{eq4.36.6}
  \tilde{G}^{\pm}(x, y, \tau_i, \tau_j, \lambda)&:=\lambda^{\frac{d}{2}-1}(1-\chi(\lambda))(L^*_{x})^{n_1}(L^*_{y})^{n_2}\left[\frac{w_{\pm, >}(\lambda|x-y|)}{|x-y|^{\frac{d}{2}-1}}\varphi_j(y)\varphi_i(x)\right]\nonumber\\
  &+\left[\lambda^{\frac{d}{2}-1}\chi(\lambda)\frac{w_{\pm, >}(\lambda|x-y|)}{|x-y|^{\frac{d}{2}-1}}\varphi_j(y)\varphi_i(x)\right].
\end{align}
Similar to \eqref{eq4.36} we also have
\begin{align}
&|\langle R_0^{\pm}(\lambda^2)\varphi_j, \varphi_i\rangle|\nonumber\\
&=\left|\int_{\mathbb{R}^{2d}}{e^{\pm i\lambda|x-y|}\tilde{G}^{\pm}(x, y, \tau_i, \tau_j, \lambda)\,dxdy}+\int_{\mathbb{R}^{2d}}{\frac{w_{\pm, <}(\lambda|x-y|)}{|x-y|^{d-2}}\varphi_j(y)\varphi_i(x)\,dxdy}\right|\label{eq4.36.7}\\
&\leq C\int_{\mathbb{R}^d}\int_{\mathbb{R}^d}\langle x-(\tau_i-\tau_j)\rangle^{-\delta}\langle y\rangle^{-\delta}\left(\frac{1}{|x-y|^{\frac{d-1}{2}}}+\frac{1}{|x-y|^{d-2}}\right) \,dxdy\nonumber\\
&\leq C\langle\tau_0\rangle^{-\frac{d-1}{2}},\label{eq4.36.8}
\end{align}
where in the first equality, we use \eqref{eq4.36.2} and \eqref{eq4.36.6}; in the second inequality, we use \eqref{eq4.36.4}, \eqref{eq4.36.5} and the decay property of $w_{\pm, <}$ (see \eqref{eq3.52}); the last inequality follows from Lemma \ref{lm2.8}.

Therefore the Lemma follows from \eqref{eq4.33.1}, \eqref{eq4.36.1} and \eqref{eq4.36.8}.
\end{proof}
Based on Lemma \ref{lm4.2}, we have two important observations.

\textbf{Claim 1.}  $I+F_{N\times N}^{\pm}(\lambda^2)$ is strictly diagonally-dominant by rows if $\tau_0$ is sufficiently large.

\emph{Proof of Claim 1:}
In fact, we choose $\tau_0$ such that
\begin{equation}\label{eq4.38.1}
\tau_0>\left(\frac{2(N-1)C_1}{c_0}\right)^{\frac{2}{d-1}},
\end{equation}
where $c_0, C_1$ are given in \eqref{eq1.4} and \eqref{eq4.33} respectively. Then for any $\lambda>0$, by \eqref{eq4.33}, the spectral assumption \eqref{eq1.4} and the fact that $f^{\pm}_{ii}(\lambda^2)=\langle R_0^{\pm}(\lambda^2)\varphi, \varphi\rangle$, we have
\begin{equation*}
  \sum_{j=1, j\ne i}^{N}{|f^{\pm}_{ij}(\lambda^2)|}\leq (N-1)C_1\langle\tau_0\rangle^{-\frac{d-1}{2}}\leq \frac{c_0}{2}<1+f^{\pm}_{ii}(\lambda^2),\,\,\,\,\, 1\leq i\leq N.
\end{equation*}
Hence the claim follows. As a consequence of \textbf{Claim 1},  it follows that $\det( A^{\pm}(\lambda^2))\ne 0$, then by Theorem \ref{thm4.2},  we are allowed to use the Aronszajn-Krein formula \eqref{eq4.13}.\qed

Next, we show that the inverse matrix $\left(I+F_{N\times N}^{\pm}(\lambda^2)\right)^{-1}$ has a series expansion. To this end, we denote by
\begin{equation}\label{eq4.39}
 g^{\pm}_{\varphi}(\lambda^2):=\frac{1}{1+\langle  R_0^{\pm}(\lambda^2)\varphi, \varphi\rangle}.
\end{equation}
\textbf{Claim 2.} For any $\lambda>0$ and sufficiently large $\tau_0$, we have the series expansion
\begin{equation}\label{eq4.41}
\left(I+F_{N\times N}^{\pm}(\lambda^2)\right)^{-1}=\sum_{n=0}^{\infty}g^{\pm}_{\varphi}(\lambda^2){\left(-g^{\pm}_{\varphi}(\lambda^2)\cdot F_{\tau_0}^{\pm}(\lambda^2)\right)^n},
\end{equation}
where the $N\times N$ matrix $F_{\tau_0}^{\pm}$ is given by \eqref{eq4.41.5} below.

\emph{Proof of Claim 2:}  We note that by translation invariance, one has
$$
\langle  R_0^{\pm}(\lambda^2)\varphi, \varphi\rangle=\langle  R_0^{\pm}(\lambda^2)\varphi_i, \varphi_i\rangle.
$$
Then we make the following decomposition
\begin{equation}\label{eq4.41.4}
I+F_{N\times N}^{\pm}(\lambda^2)=\left(1+\langle  R_0^{\pm}(\lambda^2)\varphi, \varphi\rangle\right)\cdot I+F_{\tau_0}^{\pm}(\lambda^2),
\end{equation}
where
\begin{equation}\label{eq4.41.5}
  F_{\tau_0}^{\pm}(\lambda^2):=\begin{pmatrix}
        0 & \tiny{f_{12}^{\pm}(\lambda^2)} & \cdots  & \tiny{f_{1N}^{\pm}(\lambda^2)} \\
        \\
       \tiny{f_{21}^{\pm}(\lambda^2)} & 0 & \cdots & \tiny{f_{2N}^{\pm}(\lambda^2)}\\
         \vdots &  \vdots &  \ddots &   \vdots\\
        \tiny{f_{N1}^{\pm}(\lambda^2)} & \tiny{f_{N2}^{\pm}(\lambda^2)} & \cdots & 0
    \end{pmatrix}.
\end{equation}
In view of \eqref{eq4.39} and \eqref{eq4.41.4}, we see that
\begin{equation*}
 \left(I+F_{N\times N}^{\pm}(\lambda^2)\right)^{-1}=g^{\pm}_{\varphi}(\lambda^2)\left(I+g^{\pm}_{\varphi}\cdot F_{\tau_0}^{\pm}(\lambda^2)\right)^{-1}.
\end{equation*}
Then the expansion \eqref{eq4.41} follows if one can prove that the spectral radius satisfies
 $$
 \rho(g^{\pm}_{\varphi}\cdot F_{\tau_0}^{\pm}(\lambda^2))<1,
 $$
provided $\tau_0$ is  sufficiently large. In fact, let $\tau_0$ satisfy \eqref{eq4.38.1},
then by \eqref{eq4.33}, we have for all $\lambda>0$,
\begin{align*}
 |g^{\pm}_{\varphi}|\cdot\max_{i}\sum^N_{j=1,j\ne i}{|f_{ij}^{\pm}(\lambda^2)|}\leq \frac{1}{c_0}\cdot (N-1)C_1\langle\tau_0\rangle^{-\frac{d-1}{2}}\leq \frac12.
\end{align*}
Thus the spectral radius of $F_{\pm}$ satisfies
\begin{equation*}\label{eq4.40}
 \rho(g^{\pm}_{\varphi}\cdot F_{\tau_0}^{\pm}(\lambda^2))\leq \|g^{\pm}_{\varphi}\cdot F_{\tau_0}^{\pm}(\lambda^2)\|_{\infty}=|g^{\pm}_{\varphi}|\cdot\max_{i}\sum^N_{j=1,j\ne i}{|F_{ij}^{\pm}(\lambda^2)|}\leq \frac12.
\end{equation*}
This ends the proof of \textbf{Claim 2}.\qed

Now we are in a position to prove the dispersive estimate \eqref{eq1.17}.
Thanks to \eqref{eq4.41}, we are able to rewrite \eqref{eq4.13.2}  as
\begin{equation}\label{eq4.45}
e^{-itH}-e^{-itH_{0}}=(\Omega_0^+-\Omega_0^-)+\sum_{n=1}^{\infty}(-1)^n(\Omega_n^+-\Omega_n^-),
\end{equation}
where
\begin{equation}\label{eq4.45.1}
\Omega_0^{\pm}:=\frac{-1}{\pi i}\sum_{j=1}^N\int_0^{\infty}e^{-it\lambda^2}R_0^{\pm}(\lambda^2){P_j}R_0^{\pm}(\lambda^2)g^{\pm}_{\varphi}(\lambda^2)\lambda\,d\lambda,
\end{equation}
\begin{equation}\label{eq4.45.2}
\Omega_n^{\pm}:=\frac{-1}{\pi i}\sum_{i, j=1}^N\int_0^{\infty}e^{-it\lambda^2}R_0^{\pm}(\lambda^2)\varphi_i\left\langle R_0^{\pm}(\lambda^2)\cdot\,,\varphi_j \right\rangle b^{\pm}_{n, ij}(\lambda^2)\lambda\,d\lambda,
\end{equation}
and  $b^{\pm}_{n, ij}$  denotes the element in the i-th row and j-th column of the following matrix
\begin{equation}\label{eq4.45.3}
 (b_{n, ij})_{N\times N}:=g^{\pm}_{\varphi}(\lambda^2)\left(-g^{\pm}_{\varphi}(\lambda^2)\cdot F_{\tau_0}^{\pm}(\lambda^2)\right)^n,\qquad n\ge 1.
\end{equation}
 Note that $g^{\pm}_{\varphi}(\lambda^2)$ (given by \eqref{eq4.39}) equals $\frac{1}{1+F_{\pm}(\lambda^2)}$ in the rank one case. Then we have

\textbf{Fact 2.} In the high energy part, \eqref{eq1.4.1} holds when $F^{\alpha, h}_{\pm}(\lambda^2)$ is replaced by $g^{\pm}_{\varphi}(\lambda^2)$;
in the low energy part,  \eqref{equ6.2.3} and \eqref{equ6.2.3.3} in Lemma \ref{lm3.2} are valid when  $F^{\alpha, l}_{\pm}(\lambda^2)$ is replaced by $g^{\pm}_{\varphi}(\lambda^2)$.

We first deal with $\Omega_0^{+}-\Omega_0^{-}$. Observe that \eqref{eq4.45.1} coincides with \eqref{eq3.3} ( $\alpha=1$), except that $\varphi$ is replaced by $\varphi_j$. Then by \eqref{eq1.5} in  Theorem \ref{thm-1.1}, there is some $C=C(d, \varphi)>0$ such that
\begin{equation}\label{eq4.45.4}
\|\Omega_0^{+}-\Omega_0^{-}\|_{L^1-L^{\infty}}\leq CN\cdot t^{-\frac{d}{2}},\quad \,\,\,\text{for}\,\,\, t>0.
\end{equation}

Therefor it suffices to prove $\Omega_n^{+}-\Omega_n^{-}$ with $n\ge 1$. The analysis is more involved due to the term $b^{\pm}_{n, ij}(\lambda^2)$ and we can't reduce it to the rank one case directly.  Using the same cutoff function $\chi$ (see \eqref{eq3.3.1}) as in the rank one case, we write
\begin{align*}
  \Omega_n^{\pm}&=\frac{-1}{\pi i}\sum_{i, j=1}^N\int_0^{\infty}e^{-it\lambda^2}\chi(\lambda)R_0^{\pm}(\lambda^2)\varphi_i\left\langle R_0^{\pm}(\lambda^2)\cdot\,,\varphi_j \right\rangle b^{\pm}_{n, ij}(\lambda^2)\lambda\,d\lambda\\
  &-\frac{1}{\pi i}\sum_{i, j=1}^N\int_0^{\infty}e^{-it\lambda^2}(1-\chi(\lambda))R_0^{\pm}(\lambda^2)\varphi_i\left\langle R_0^{\pm}(\lambda^2)\cdot\,,\varphi_j \right\rangle b^{\pm}_{n, ij}(\lambda^2)\lambda\,d\lambda\\
  &:=\Omega_n^{\pm, l}+\Omega_n^{\pm, h}.
\end{align*}
We point out the following fact, which follows directly from the definition of $b^{\pm}_{n, ij}$.

\textbf{Claim 3.}  $b^{\pm}_{n, ij}$ is the sum of at most $N^{n-1}$ terms. And each term takes the form
$$
(-1)^n\left(g^{\pm}_{\varphi}(\lambda^2)\right)^{n+1}\cdot\prod^n_{s=1}f^{\pm}_{i_s, j_s}(\lambda^2), \,\,\,\text{where}\,\,i_s\ne j_s.
$$
We remark that this claim will be important to obtain the upper bound $CN^2$ and will be used in both high and low energy part.

Before proceeding, we observe that the function $f^{\pm}_{i_s, j_s}(\lambda^2)$ is not a "good" symbol (with respect to $\lambda$). For simplicity, we only consider $d=3$. By Lemma \ref{lm4.2},  $f^{\pm}_{i_s, j_s}(\lambda^2)$ is controlled by $\langle\tau_0\rangle^{-\frac{d-1}{2}}$. However, by \eqref{eq4.33.2}  one has
$$
\frac{d^k}{d\lambda^k}f^{\pm}_{i_s, j_s}(\lambda^2)=\frac{(\pm i)^k}{4\pi}\int_{\mathbb{R}^6}e^{\pm i\lambda|x-y|}|x-y|^{k-1}\varphi(x-\tau_{i_s})\varphi(y-\tau_{j_s})\,dxdy,
$$
we see that its second derivative may grow with respect to $\tau_0$. The issue that causes this problem is that there is an oscillatory term $e^{\pm i\lambda|x-y|}$. We shall overcome this difficulty by using the integral representation of $f^{\pm}_{i_s, j_s}(\lambda^2)$ (see \eqref{eq4.36} and \eqref{eq4.36.7}), then we put all the oscillatory terms together (see \eqref{eq4.51} \eqref{eq4.52} below). This allows us to reduce the analysis to the rank one case.

\emph{Step 1. High energy part.}
The kernel of  $\Omega_n^{\pm, h}$ can be expressed as
\begin{align}\label{eq4.46}
&\Omega_n^{\pm, h}(t, x, y)\nonumber\\
&=\sum_{i, j=1}^N\int_0^{\infty}e^{-it\lambda^2}(1-\chi(\lambda))b_{n,ij}(\lambda^2)R^{\pm}_0(\lambda^2, |x-x_1|)R^{\pm}_0(\lambda^2, |x_2-y|)\varphi(x_1)\varphi(x_2)\,dx_1dx_2.
\end{align}
Using \textbf{Claim 3}  as well as the expression of $f^{\pm}_{i_s, j_s}$ (see \eqref{eq4.33.2}, \eqref{eq4.36} for odd $d$ and \eqref{eq4.36.7} for even $d$), we further break it into two cases.

(\romannumeral1) When $d\ge 3$ is odd,  then by \eqref{eq4.33.2}, \eqref{eq4.36} we have
\begin{equation}\label{eq4.46.1}
\prod^n_{s=1}f^{\pm}_{i_s, j_s}(\lambda^2)=\int_{\mathbb{R}^{2nd}}e^{\pm i\lambda\sum_{s=1}^n{|y_s-z_s|}}\prod^n_{s=1}G^{\pm}(y_s,z_s, \tau_{i_s}, \tau_{j_s})dz_1dy_n\cdots dz_ldy_n.
\end{equation}
By  \textbf{Claim 3}, we see that  $\Omega_n^{\pm, h}(t, x, y)$ is a linear combination of at most $N^{n+1}$ terms. Moreover, each term takes the following form
\begin{align}\label{eq4.50}
&\sum_{k_1, k_2=0}^{\frac{d-3}{2}}C^{\pm}(k_1, k_2)\int_{\mathbb{R}^{2d(n+1)}}I_{n, k_1, k_2}^{\pm}(|x-x_1|, |x_2-y|, \sum_{s=1}^n{|y_s-z_s|})\nonumber\\
&\cdot \frac{\varphi_i(x_1)}{|x-x_1|^{d-2-k_1}}\frac{\varphi_j(x_2)}{|x_2-y|^{d-2-k_2}} \prod_{s=1}^nG^{\pm}(y_s,z_s, \tau_{i_s}, \tau_{j_s})dz_1dy_1\cdots dz_ndy_ndx_1dx_2,
\end{align}
where
\begin{align}\label{eq4.51}
&I_{n, k_1, k_2}^{\pm}(|x-x_1|, |x_2-y|, \sum_{s=1}^n{|y_s-z_s|})\nonumber\\
&=\int_0^{\infty}e^{-it\lambda^2\pm i\lambda\cdot(|x-x_1|+|x_2-y|+\Sigma_n)}(1-\chi(\lambda))\left(g^{\pm}_{\varphi}(\lambda^2)\right)^{n+1}\lambda^{k_1+k_2+1}d\lambda,
\end{align}
and
\begin{equation}\label{eq4.52}
 \Sigma_n=\sum_{s=1}^n{|y_s-z_s|}.
\end{equation}

When $d=3$, note that $k_1=k_2=0$ in \eqref{eq4.51}. Thanks to \textbf{Fact 2}, we apply  (\romannumeral2) in Corollary \ref{cor2.6} with $\psi_1=\left(g^{\pm}_{\varphi}(\lambda^2)\right)^{n+1}$, $\psi_2=\lambda$. Thus we have
\begin{equation}\label{eq4.52.2}
|I_{n, 0, 0}^{\pm}|\leq Ct^{-\frac32}\left\langle|x-x_1|+|x_2-y|+\Sigma_n\right\rangle.
\end{equation}

When $d>3$, the method to estimate \eqref{eq4.50} is parallel to \eqref{eq3.16}.
Indeed, we apply operators $L_{x_i}$ and $L^*_{x_i}$ to  \eqref{eq4.50} (see \eqref{eq3.16.1}--\eqref{eq3.17}), then
\begin{align*}
 \eqref{eq4.50}& = \sum_{k_1, k_2=0}^{\frac{d-3}{2}}C^{\pm}(k_1, k_2)\int_{\mathbb{R}^{2d(n+1)}}\tilde{I}_{n, k_1, k_2}^{\pm}(|x-x_1|, |x_2-y|, \sum_{s=1}^n{|y_s-z_s|})\nonumber\\
&\cdot G_1(x, x_1, \tau_i)G_2(x_2, y, \tau_j)\prod_{s=1}^nG^{\pm}(y_s,z_s, \tau_{i_s}, \tau_{j_s})dz_1dy_1\cdots dz_ndy_ndx_1dx_2,
\end{align*}
and
\begin{align}\label{eq4.52.1}
&\tilde{I}_{n, k_1, k_2}^{\pm}(|x-x_1|, |x_2-y|, \sum_{s=1}^n{|y_s-z_s|})\nonumber\\
&=\int_0^{\infty}e^{-it\lambda^2\pm i\lambda\cdot(|x-x_1|+|x_2-y|+\Sigma_n)}(1-\chi(\lambda))\left(g^{\pm}_{\varphi}(\lambda^2)\right)^{n+1}\lambda^{k_1+k_2-n_1-n_2+1}d\lambda,
\end{align}
where $n_1, n_2$ are given by \eqref{eq3.16.3}, $G_1(x, x_1, \tau_i),\, G_2(x_2, y, \tau_j)$ are given by \eqref{eq3.19.1} with $\varphi$ replaced by $\varphi_i$, $\varphi_j$ respectively. Similar to \eqref{eq3.24}-\eqref{eq3.25}, one has
$$
|G_1(x, x_1, \tau_i)|\leq  C\left(|x-x_1|^{-d+1}+|x-x_1|^{-\frac{d-1}{2}}\right)\cdot\langle x_1-\tau_i \rangle^{-\delta},\,\,\,\,\delta>d+\frac{3}{2},
$$
and
$$
| G_2(x_2, y, \tau_j)|\leq C\left(|x_2-y|^{-d+1}+|x_2-y|^{-\frac{d-1}{2}}\right)\cdot\langle x_2-\tau_j \rangle^{-\delta},\,\,\,\,\delta>d+\frac{3}{2}.
$$
By \textbf{Fact 2}, we apply  (\romannumeral2) in Corollary \ref{cor2.6} with $\psi_1=\left(g^{\pm}_{\varphi}(\lambda^2)\right)^{n+1}$, $\psi_2=\lambda^{k_1+k_2-n_1-n_2+1}$ (note that we have $k_1+k_2-n_1-n_2+1\leq \frac{d-1}{2}$). Then
\begin{equation}\label{eq4.52.3}
  \left|\tilde{I}_{n, k_1, k_2}^{\pm}(|x-x_1|, |x_2-y|, \sum_{s=1}^n{|y_s-z_s|})\right|\leq Ct^{-\frac d2}\langle|x-x_1|+|x_2-y|+\Sigma_n\rangle^{\frac{d-1}{2}}.
\end{equation}
Therefore by \eqref{eq4.52.2}, \eqref{eq4.52.3} and the fact that
$$
\langle|x-x_1|+|x_2-y|+\Sigma_n\rangle^{\frac{d-1}{2}}\leq \langle x-x_1\rangle^{\frac{d-1}{2}}+\langle x_2-y\rangle^{\frac{d-1}{2}}+\sum_{s=1}^n\langle z_s-y_s\rangle^{\frac{d-1}{2}},
$$
we obtain
\begin{align}\label{eq4.53}
 |\eqref{eq4.50}| &\leq C\sum_{k_1, k_2=0}^{\frac{d-3}{2}}t^{-\frac{d}{2}}
\int_{\mathbb{R}^{2d(n+1)}}\langle|x-x_1|+|x_2-y|+\Sigma_n\rangle^{\frac{d-1}{2}}\nonumber\\
&\cdot |G_1(x, x_1)G_2(x_2, y)| \prod_{s=1}^n|G^{\pm}(y_s,z_s, \tau_{i_s}, \tau_{j_s})|dz_1dy_1\cdots dz_ndy_ndx_1dx_2,\nonumber\\
&\leq Cn t^{-\frac{d}{2}} C(d, \varphi)^n\langle\tau_0\rangle^{-\frac{d-1}{2}(n-1)},
\end{align}
where in the second inequality we have used the following two observations: First, it follows from  \eqref{eq4.35.4} (the decay of $G^{\pm}$) and Lemma \ref{lm2.8} that
\begin{align*}
&\int_{\mathbb{R}^{2d}}\langle z_s-y_s\rangle^{\frac{d-1}{2}}|G^{\pm}(y_s,z_s, \tau_{i_s}, \tau_{j_s})|\,dz_sdy_s \\
 &\leq C\int_{\mathbb{R}^{2d}}(1+\frac{1}{|y_s-z_s|^{\frac{d-3}{2}}})\langle z_s-(\tau_{i_s}-\tau_{j_s})\rangle^{-\delta}\langle y_s\rangle^{-\delta}\,dz_sdy_s\\
 &\leq C,
\end{align*}
which doesn't depend on $\tau_0$; second, we have
\begin{align*}
  \int_{\mathbb{R}^{2(n-1)d}}\prod_{k=1,k\ne s}^n|G^{\pm}(y_k,z_k, \tau_{i_k}, \tau_{j_k})|&=\prod_{k=1,k\ne s}^n\int_{\mathbb{R}^{2d}}|G^{\pm}(y_k,z_k, \tau_{i_k}, \tau_{j_k})|dz_kdy_k\\
  &\leq C(d, \varphi)^{n-1}\langle\tau_0\rangle^{-\frac{d-1}{2}(n-1)}.
\end{align*}

(\romannumeral2) When $d\ge 4$ is even, then by \eqref{eq4.36.7}, we see that
 \begin{align}\label{eq4.54}
\prod^n_{s=1}f^{\pm}_{i_s, j_s}(\lambda^2)&=\sum_{k=0}^n\binom{n}{k}\int_{\mathbb{R}^{2nd}}e^{\pm i\lambda \cdot\Sigma_k}\prod_{s=1}^k{\tilde{G}^{\pm}(\lambda, z_s, y_s, \tau_{i_s}, \tau_{j_s})}\nonumber\\
&\cdot\prod_{s=k+1}^n{\frac{w_{\pm, <}(\lambda|z_s-y_s|)}{|z_s-y_s|^{d-2}}\varphi_{i_s}(z_s)\varphi_{j_s}(y_s)}\,dz_1dy_1\cdots dz_ndy_n\nonumber\\
&:=\sum_{k}\binom{n}{k}f_{k}^{\pm, n}(\lambda^2),
 \end{align}
 where $\Sigma_k:=\sum_{s=1}^k{|y_s-z_s|}$, which is defined in the same way as \eqref{eq4.52}. When $k=0$ or $k=n$, we use the convention that $\Sigma_0=0$ and $\prod_{s=m+1}^m=1$.

 By \textbf{Claim 3} and \eqref{eq4.54}, we see  from \eqref{eq4.46}  that  $\Omega_n^{\pm, h}(t, x, y)$ is a linear combination of at most $2^nN^{n+1}$ terms. Moreover, by \eqref{eq3.50}, each term can be written as the following sum
 \begin{equation*}
   \tilde{U}_{d, 1}^{\pm, h}(t, x, y)+\tilde{U}_{d, 2}^{\pm, h}(t, x, y)+\tilde{U}_{d, 3}^{\pm, h}(t, x, y)+\tilde{U}_{d, 4}^{\pm, h}(t, x, y),
 \end{equation*}
where $\tilde{U}_{d, j}^{\pm, h}(t, x, y)$ ($1\le j\le 4$) is defined in the same way as $U_{d, j}^{\pm, h}(t, x, y)$ (see \eqref{eq3.54.11}--\eqref{eq3.57}), except that we replace $F^{\alpha, h}_{\pm}(\lambda^2)$ by $$
\left(g^{\pm}_{\varphi}(\lambda^2)\right)^{n+1}\cdot f_{k}^{\pm, n}(\lambda^2)(1-\chi(\lambda)).
$$
Thus we can apply the same arguments used for $U_{d, j}^{\pm, h}$ to estimate $\tilde{U}_{d, j}^{\pm, h}$.


For the sake of completeness, we take $\tilde{U}_{d, 1}^{\pm, h}(t, x, y)$ as an example. Precisely, using the same computations as in \eqref{eq3.58.11}-\eqref{eq3.19.3}, we see that $\tilde{U}_{d, 1}^{\pm, h}(t, x, y)$ is a combination of
\begin{equation}\label{eq3.18.2}
C\int_{\mathbb{R}^{2(n+1)d}}{\tilde{I_k}^{\pm, h}(t)\tilde{G}_1(x, x_1, \tau_i)\tilde{G}_2(x_2, y, \tau_j)\prod_{s=k+1}^n\frac{\varphi_{i_s}(z_s)\varphi_{j_s}(y_s)}{|z_s-y_s|^{d-2}}\,dy_1dz_1\cdots dy_ndz_n dx_1dx_2},
\end{equation}
and
\begin{align}\label{eq3.19.33}
\tilde{I_k}^{\pm, h}(t)&=\int_0^{\infty}e^{-it\lambda^2\pm i \lambda(|x-x_1|+ |x_2-y|+\Sigma_k)}(1-\chi(\lambda))\nonumber\\
&\cdot (\lambda|x-x_1|)^{|\beta_1|}w^{(|\beta_1|)}_{\pm, >}(\lambda|x-x_1|)\cdot (\lambda|x_2-y|)^{|\beta_2|}w^{(|\beta_2|)}_{\pm, >}(\lambda|x_2-y|) \nonumber\\
&\cdot \prod_{s=1}^k{\tilde{G}^{\pm}(\lambda, z_s, y_s, \tau_{i_s}, \tau_{j_s})}\left(g^{\pm}_{\varphi}(\lambda^2)\right)^{n+1}\prod_{s=k+1}^n w_{\pm, <}(\lambda|z_s-y_s|) \lambda^{d-1-2([\frac{d-3}{4}]+1)}d\lambda,
\end{align}
where $\tilde{G}_1$ and $\tilde{G}_2$ are
 given by  \eqref{eq3.18} with $\varphi$ replaced by $\varphi_i$, $\varphi_j$ respectively. Similar to \eqref{eq3.24.1}-\eqref{eq3.25.1}, one has
 \begin{equation*}\label{eq3.24.11}
  |\tilde{G}_1(x, x_1, \tau_i)|\leq C\left(|x-x_1|^{-d+1}+|x-x_1|^{-\frac{d}{2}+1}\right)\cdot\langle x_1-\tau_i \rangle^{-\delta},\,\,\,\,\delta>d+\frac{3}{2},
\end{equation*}
and
\begin{equation*}\label{eq3.25.111}
  |\tilde{G}_2(x_2, y, \tau_j)|\leq C\left(|x_2-y|^{-d+1}+|x_2-y|^{-\frac{d}{2}+1}\right)\cdot\langle x_2-\tau_j \rangle^{-\delta},\,\,\,\,\delta>d+\frac{3}{2}.
\end{equation*}
Note that by \eqref{eq4.36.4},  \eqref{eq4.36.5}, \eqref{eq3.51} and \eqref{eq3.52}, we have
$$
\langle z_s-\tau_{i_s}\rangle^{\delta}\langle y_s-\tau_{j_s}\rangle^{\delta}\left(\frac{1}{|z_s-y_s|^{\frac{d-1}{2}}}+\frac{1}{|z_s-y_s|^{d-2}}\right)^{-1}\tilde{G}^{\pm}\in S^0_{\frac d2+1}\left((\frac{\lambda_0}{2}, \infty)\right),
$$
$$\lambda^{|\beta_1|}|x-x_1|^{\frac12+|\beta_1|}\cdot w^{(|\beta_1|)}_{\pm, >}(\lambda|x-x_1|)\cdot \lambda^{|\beta_2|}|x_2-y|^{\frac12+|\beta_2|}\cdot w^{(|\beta_2|)}_{\pm, >}(\lambda|x_2-y|)\in S^{-1}_{\frac d2+1}\left((\frac{\lambda_0}{2}, \infty)\right),$$
and
$$
w_{\pm, <}(\lambda|z_s-y_s|)\in S^0_{\frac d2+1}\left((\frac{\lambda_0}{2}, \infty)\right).
$$
Then by  \textbf{Fact 2}, (\romannumeral2) of Corollary \ref{cor2.6} and note that $d-1-2([\frac{d-3}{4}]+1)\leq \frac{d}{2}-1$, we have
\begin{align*}
 |\tilde{I_k}^{\pm, h}(t)|&\leq Ct^{-\frac{d}{2}}\langle |x-x_1|+ |x_2-y|+\Sigma_k\rangle^{\frac{d-1}{2}}|x-x_1|^{-\frac12}|x_2-y|^{-\frac12}\\
 &\cdot \prod_{s=1}^k\left(\frac{1}{|z_s-y_s|^{\frac{d-1}{2}}}+\frac{1}{|z_s-y_s|^{d-2}}\right) \langle z_s-\tau_{i_s}\rangle^{-\delta}\langle y_s-\tau_{j_s}\rangle^{-\delta}.
\end{align*}
Plugging this estimate into \eqref{eq3.19.33} and by the two observations after \eqref{eq4.53},  we deduce that
\begin{align*}\label{eq4.37}
 |\eqref{eq3.18.2}|&\leq  Ct^{-\frac{d}{2}}\int_{\mathbb{R}^{2d(l+1)}}\langle|x-x_1|+|x_2-y|+\Sigma_n\rangle^{\frac{d-1}{2}}|x-x_1|^{-\frac12}|x_2-y|^{-\frac12}\nonumber\\
 &\cdot|\tilde{G}_1(x, x_1, \tau_i)||\tilde{G}_2(x_2, y, \tau_j)| \prod_{s=1}^k\left(\frac{1}{|z_s-y_s|^{\frac{d-1}{2}}}+\frac{1}{|z_s-y_s|^{d-2}}\right) \nonumber\\
&\cdot\langle z_s-\tau_{i_s}\rangle^{-\delta}\langle y_s-\tau_{j_s}\rangle^{-\delta} \prod_{s=k+1}^n\frac{|\varphi_{i_s}(z_s)\varphi_{j_s}(y_s)|}{|z_s-y_s|^{d-2}}dz_1dy_1\cdots dz_ndy_n dx_1dx_2,\nonumber\\
&\leq Cn t^{-\frac{d}{2}} C(d, \varphi)^n\langle\tau_0\rangle^{-\frac{d-1}{2}(n-1)}.
\end{align*}
Therefore
$$|\tilde{U}_{d, 1}^{\pm, h}(t, x, y)|\leq Cn t^{-\frac{d}{2}} C(d, \varphi)^n\langle\tau_0\rangle^{-\frac{d-1}{2}(n-1)}.
$$
Combining (\romannumeral1) and (\romannumeral2), we conclude that
\begin{equation}\label{eq4.60}
  |\Omega_n^{\pm, h}(t, x, y)|\leq C t^{-\frac{d}{2}} N^{n+1}C(d, \varphi)^n\langle\tau_0\rangle^{-\frac{d-1}{2}(n-1)}.
\end{equation}

\emph{Step 2. Low energy part.} By \textbf{Claim 3}, we see that  $\Omega_n^{+, l}-\Omega_n^{-, l}$ is also a linear combination of at most $N^{n+1}$ terms. Moreover, each term can be written as the following form
$$
\Omega^{l}_{n,1}+\Omega^{l}_{n,2},
$$
where
\begin{align}
&\Omega^{l}_{n,1}:=\frac{-1}{\pi i}\int_0^{\infty}e^{-it\lambda^2}\chi(\lambda)\prod^n_{s=1}f^{+}_{i_s, j_s}(\lambda^2)\nonumber\\
&\cdot\left(R_0^{+}(\lambda^2)\varphi_i\left\langle R_0^{+}(\lambda^2)\cdot\,,\varphi_j \right\rangle g^{+}_{\varphi}(\lambda^2)^{n+1}\, -R_0^{-}(\lambda^2)\varphi_i\left\langle R_0^{-}(\lambda^2)\cdot\,,\varphi_j \right\rangle g^{-}_{\varphi}(\lambda^2)^{n+1}\right)\lambda\,d\lambda,\label{eq4.58}\\
&\Omega^{l}_{n,2}:=\frac{-1}{\pi i}\int_0^{\infty}e^{-it\lambda^2}\chi(\lambda) \left(\prod^n_{s=1}f^{+}_{i_s, j_s}(\lambda^2)-\prod^n_{s=1}f^{-}_{i_s, j_s}(\lambda^2)\right)\nonumber\\
&\cdot R_0^{-}(\lambda^2)\varphi_i\left\langle R_0^{-}(\lambda^2)\cdot\,,\varphi_j \right\rangle g^{-}_{\varphi}(\lambda^2)^{n+1}\lambda\,d\lambda,\label{eq4.59}.
\end{align}

$\bullet$  The estimate for $\Omega^{l}_{n, 1}$.

The expression of  $\Omega^{l}_{n,1}$ above is defined in the same way as $U_d^{+, l}- U_d^{-, l}$ in \eqref{eq3.5}, the main difference is that we replace $\alpha F^{\alpha, l}_{\pm}(\lambda^2)=\frac{\alpha\chi(\lambda)}{1+\alpha F^{\pm}(\lambda^2)}$ by
	$$
	\chi(\lambda)\left(g^{\pm}_{\varphi}(\lambda^2)\right)^{n+1}\cdot\prod^n_{s=1}f^{+}_{i_s, j_s}(\lambda^2).
	$$
Observe that for the term $\prod^n_{s=1}f^{+}_{i_s, j_s}(\lambda^2)$, we can apply the same arguments used in the high energy part  above.
Indeed, we embed  the integral representations  \eqref{eq4.46.1} and \eqref{eq4.54} into \eqref{eq4.58}. Then note that by the identity \eqref{eq.17.10} we can break the difference
$$
\left(R_0^{+}(\lambda^2)\varphi_i\left\langle R_0^{+}(\lambda^2)\cdot\,,\varphi_j \right\rangle g^{+}_{\varphi}(\lambda^2)^{n+1}\, -R_0^{-}(\lambda^2)\varphi_i\left\langle R_0^{-}(\lambda^2)\cdot\,,\varphi_j \right\rangle g^{-}_{\varphi}(\lambda^2)^{n+1}\right)
$$
into the sum of three terms. Therefore we are able to reduce the analysis of each term to  $U_{d, j}^{ l}$ ($1\le j\le 3$) in the rank one case (see \eqref{eq.l7}). To illustrate this, we only take one term (which corresponds to $U_{d, 1}^{ l}$, and $d\ge3$ is odd) as a model case. In this case, the kernel is the linear combination of
\begin{align}\label{eq4.61}
&\int_{\mathbb{R}^{2d(n+1)}}\tilde{I}_{n, k_1, k_2}^{l}(|x-x_1|, |x_2-y|, \Sigma_n)\nonumber\\
&\cdot \frac{\varphi_i(x_1)}{|x-x_1|^{d-2-k_1}}\frac{\varphi_j(x_2)}{|x_2-y|^{d-2-k_1}}\prod_{s=1}^nG^{\pm}(y_s,z_s, \tau_{i_s}, \tau_{j_s})dz_1dy_1\cdots dz_ndy_ndx_1dx_2,
\end{align}
where $0\le k_1, k_2\le \frac{d-3}{2}$ and
\begin{align*}
&\tilde{I}_{n, k_1, k_2}^{l}(|x-x_1|, |x_2-y|, \Sigma_n)\nonumber\\
&=\int_0^{\infty}e^{-it\lambda^2\pm i\lambda\cdot(|x-x_1|+|x_2-y|+\Sigma_n)}\chi(\lambda)\left(g^{+}_{\varphi}(\lambda^2)^{n+1}-g^{-}_{\varphi}(\lambda^2)^{n+1}\right)\lambda^{k_1+k_2+1}d\lambda,
\end{align*}
In view of \textbf{Fact 2}, we have
$$
\left(g^{+}_{\varphi}(\lambda^2)^{n+1}-g^{-}_{\varphi}(\lambda^2)^{n+1}\right)\lambda^{k_1+k_2+1}\in S^{d-1}_{\frac{d+1}{2}}\left((0,\, \lambda_0)\right).
$$
Then we apply (\romannumeral1) in Corollary \ref{cor2.6} with $b=d-1$, $K=\frac{d+1}{2}$ to derive that
$$
|\tilde{I}_{n, k_1, k_2}^{l}|\leq Ct^{-\frac{d}{2}}\langle|x-x_1|+|x_2-y|+\Sigma_n\rangle^{\frac{d-1}{2}}.
$$
Using the same arguments in the proof of \eqref{eq4.53},   we obtain that
 \begin{equation*}
   |\eqref{eq4.61}|\leq Cn t^{-\frac{d}{2}} C(d, \varphi)^n\langle\tau_0\rangle^{-\frac{d-1}{2}(n-1)}.
 \end{equation*}
 Therefore the kernel of $\Omega^{l}_{n,1}$ satisfies
 \begin{equation}\label{eq4.61.1}
   \sup_{x,y\in\mathbb{R}^d}|\Omega^{l}_{n,1}(t, x, y)|\leq Cn t^{-\frac{d}{2}} C(d, \varphi)^n\langle\tau_0\rangle^{-\frac{d-1}{2}(n-1)}.
 \end{equation}

%

$\bullet$  The estimate for $\Omega^{l}_{n, 2}$.

%
%
%
%
We point out  that the analysis of $\Omega^{l}_{n, 2}$ is also similar in spirit to the term $U_{d, 1}^{ l}$  in \eqref{eq.l7}. The main difference is that we need additional  decay of $\tau_0$ from
$$\prod^n_{s=1}f^{+}_{i_s, j_s}(\lambda^2)-\prod^n_{s=1}f^{-}_{i_s, j_s}(\lambda^2)  \qquad \text{in }\,\,\,\eqref{eq4.59}.$$
 Thus we shall proceed  slightly differently.
First we note that
\begin{equation}\label{eq4.63}
 \left(\prod^n_{s=1}f^{+}_{i_s, j_s}(\lambda^2)-\prod^n_{s=1}f^{-}_{i_s, j_s}(\lambda^2)\right)= \sum^n_{m=1}\prod^{m-1}_{s=1}f^{-}_{i_s, j_s}(\lambda^2)\,
 \cdot\,\left(f^{+}_{i_m, j_m}(\lambda^2)-f^{-}_{i_m, j_m}(\lambda^2)\right)\,\cdot\,\prod^n_{s=m+1}f^{+}_{i_s, j_s}(\lambda^2)
\end{equation}
The definition of $f^{\pm}_{i_m, j_m}(\lambda^2)$ and \eqref{equ6.2.8} show that
\begin{align}\label{eq4.64}
&f^{+}_{i_m, j_m}(\lambda^2)-f^{-}_{i_m, j_m}(\lambda^2) =\langle R_0^{+}(\lambda^2)\varphi_{j_m}, \varphi_{i_m}\rangle-\langle R_0^{-}(\lambda^2)\varphi_{j_m}, \varphi_{i_m}\rangle	\nonumber\\
&=\int_{\mathbb{R}^{2d}}\frac{\lambda^{\frac{d}{2}-1}}{|z_m-y_m|^{\frac{d}{2}-1}}\sum_{\pm}e^{\pm i\lambda|z_m-y_m|}\left(J_{\pm, >}(\lambda|z_m-y_m|)+J_{\pm, <}(\lambda|z_m-y_m|)\right) \varphi_{j_m}(z_m)\varphi_{i_m}(y_m)  \,dz_mdy_m\,,
\end{align}
where $J_{\pm, >}(\lambda|z_m-y_m|)$ and $J_{\pm,<}(\lambda|z_m-y_m|)$ satisfy \eqref{equ6.2.9} and \eqref{equ6.2.10} for  all  $d\geqslant 3$.

In view of \eqref{eq4.63},  we use \eqref{eq4.64} and apply \eqref{eq4.33.2}, \eqref{eq4.36}, \eqref{eq4.36.7} (the expressions of $f^{\pm}_{i_s, j_s}$) as well as the representations of the kernels $R_0^{\pm}(\lambda^2,x,y)$ into the integral \eqref{eq4.59}. Then the integral kernel of $\Omega^{l}_{n, 2}$ can be written as a sum of $n$ terms. We point out that the analysis of each term is very similar to  \emph{case 2} and \emph{case 3} in Theorem \ref{thm3.2} in the rank one case. For simplicity, we only give a brief sketch for even dimensions.

Using \eqref{eq4.54} and \eqref{eq4.64}, the kernel of $\Omega^{l}_{n, 2}$ can be represented as a sum of $n\cdot2^{n-1}$ terms with the following form:
$$\sum_{\pm}\tilde{\Omega}^{\pm,l}_{n, 2, 1}+\tilde{\Omega}^{\pm,l}_{n, 2, 2}+\tilde{\Omega}^{\pm,l}_{n, 2, 2}+\tilde{\Omega}^{\pm,l}_{n, 2, 4}\, ,$$
moreover, for $1\le r\le 4$,
\begin{align}\label{eq4.65}
\tilde{\Omega}^{\pm,l}_{n, 2, r}&:=C\int_{\mathbb{R}^{2(n+1)d}}\tilde{I}_{n, 2, r}^{\pm,l}(t) \varphi_i(x_1)\varphi_j(x_2) \nonumber \\ &\cdot\,\frac{\varphi_{i_m}(z_m)\varphi_{j_m}(y_m)}{|z_m-y_m|^{\frac{d}{2}-1}}\prod_{s=k_1+1}^{m-1}\frac{\varphi_{i_s}(z_s)\varphi_{j_s}(y_s)}{|z_s-y_s|^{d-2}}\,
	\prod_{s=k_2+1}^{n}\frac{\varphi_{i_s}(z_s)\varphi_{j_s}(y_s)}{|z_s-y_s|^{d-2}}\,dy_1dz_1\cdots dy_ndz_n dx_1dx_2,
 \end{align}
where $0\le k_1\le m-1$, $m\le k_2\le n$  and we use the convention that $\prod_{s=m+1}^m=1$.
In order to express $\tilde{I}_{n, 2, r}^{\pm,l}(t)$ in a unified form (with the same phase function), we denote by $$\tilde{w}_{+, <}(z):=e^{-iz}\cdot w_{+, <}(z).$$ Note that $\tilde{w}_{+, <}$ also satisfies \eqref{eq3.52}, and this is all we need for $\tilde{w}_{+, <}(z)$. Now the oscillatory integrals are
\begin{align*}
\tilde{I}_{n,2,1}^{\pm,l}(t)&:=\int_0^{\infty}e^{-it\lambda^2}e^{i\lambda(-|x-x_1|-|x_2-y|\pm|z_m-y_m|-\Sigma_{k_1}+\bar{\Sigma}_{k_2})}\chi(\lambda)g^{-}_{\varphi}(\lambda^2)^{n+1}\cdot\lambda^{\frac{3d}{2}-2}\nonumber \\
&\cdot\,\frac{w_{+, >}(\lambda|x-x_1|)}{|x-x_1|^{\frac{d}{2}-1}}\frac{w_{+, >}(\lambda|x_2-y|)}{|x_2-y|^{\frac{d}{2}-1}}H^{\pm}(\lambda)d\lambda,
\end{align*}

\begin{align*}
\tilde{I}_{n,2,2}^{\pm,l}(t)&:=\int_0^{\infty}e^{-it\lambda^2}e^{i\lambda(-|x-x_1|-|x_2-y|\pm|z_m-y_m|-\Sigma_{k_1}+\bar{\Sigma}_{k_2})}\chi(\lambda)g^{-}_{\varphi}(\lambda^2)^{n+1}\cdot\lambda^{d-1}\nonumber \\
&\cdot\,\frac{w_{+, >}(\lambda|x-x_1|)}{|x-x_1|^{\frac{d}{2}-1}}\frac{\tilde{w}_{+, <}(\lambda|x_2-y|)}{|x_2-y|^{d-2}}H^{\pm}(\lambda)d\lambda,		
\end{align*}

\begin{align*} \tilde{I}_{n,2,3}^{\pm,l}(t)&:=\int_0^{\infty}e^{-it\lambda^2}e^{i\lambda(-|x-x_1|-|x_2-y|\pm|z_m-y_m|-\Sigma_{k_1}+\bar{\Sigma}_{k_2})}\chi(\lambda)g^{-}_{\varphi}(\lambda^2)^{n+1}\cdot\lambda^{d-1}\nonumber \\
	&\cdot\,\frac{\tilde{w}_{+, <}(\lambda|x-x_1|)}{|x-x_1|^{d-2}}\frac{w_{+, >}(\lambda|x_2-y|)}{|x_2-y|^{\frac{d}{2}-1}}H^{\pm}(\lambda)d\lambda,		
\end{align*}

\begin{align*} \tilde{I}_{n,2,4}^{\pm,l}(t)&:=\int_0^{\infty}e^{-it\lambda^2}e^{i\lambda(-|x-x_1|-|x_2-y|\pm|z_m-y_m|-\Sigma_{k_1}+\bar{\Sigma}_{k_2})}\chi(\lambda)g^{-}_{\varphi}(\lambda^2)^{n+1}\cdot\lambda^{\frac{d}{2}}\nonumber \\
	&\cdot\,\frac{\tilde{w}_{+, <}(\lambda|x-x_1|)}{|x-x_1|^{d-2}}\frac{\tilde{w}_{+, <}(\lambda|x_2-y|)}{|x_2-y|^{d-2}}H^{\pm}(\lambda)d\lambda,		
\end{align*}	
where $\Sigma_{k_1}$ is given by \eqref{eq4.52}, and we also denote by
 $$\bar{\Sigma}_{k_2}:=\sum_{s=m+1}^{k_2}|z_s-y_s|,$$
 and
\begin{align} \label{eq4.57.1}
&H^{\pm}(\lambda):=\left(J_{\pm, >}(\lambda|z_m-y_m|)+J_{\pm, <}(\lambda|z_m-y_m|)\right)\,
\cdot\,\prod_{s=1}^{k_1}{\tilde{G}^{-}(\lambda, z_s, y_s, \tau_{i_s}, \tau_{j_s})} \nonumber \\
&\cdot\,\prod_{s=k_1+1}^{m-1} w_{-, <}(\lambda|z_s-y_s|)
\cdot\,\prod_{s=m+1}^{k_2}{\tilde{G}^{+}(\lambda, z_s, y_s, \tau_{i_s}, \tau_{j_s})}\,\cdot\,\prod_{s=k_2+1}^n w_{+, <}(\lambda|z_s-y_s|).
\end{align}
The estimate of $\tilde{\Omega}^{\pm,l}_{n, 2, j}$ is parallel to  $U_{d, 1, j}^{l}(t, x, y)$ (see \eqref{eq3.54.5}--\eqref{eq3.57.2}) for $j=1,2,3,4$. Indeed, note that by \eqref{eq4.36.4} and \eqref{eq4.36.5}, we have
\begin{equation*}
\langle z_s-\tau_{i_s}\rangle^{\delta}\langle y_s-\tau_{j_s}\rangle^{\delta}\left(\frac{1}{|z_s-y_s|^{\frac{d-1}{2}}}+\frac{1}{|z_s-y_s|^{d-2}}\right)^{-1}\tilde{G}^{\pm}\in S^0_{\frac d2+1}\left((0,\lambda_0)\right),	
\end{equation*}
Furthermore, by \eqref{equ6.2.5.2}--\eqref{equ6.2.7.2} (properties of $J_{\pm, \gtrless}$) and \eqref{equ6.2.5}--\eqref{equ6.2.7} (properties of $w_{\pm, \gtrless}$),
we have that if $$t^{-\frac12}|-|x-x_1|-|x_2-y|\pm|z_m-y_m|-\Sigma_{k_1}+\bar{\Sigma}_{k_2}|\leq1,$$
then by noting that
$$
|z_m-y_m|^{1-\frac{d}{2}}\cdot\left(J_{\pm, >}(\lambda|z_m-y_m|)+J_{\pm, <}(\lambda|z_m-y_m|)\right)\in S^{\frac{d}{2}-1}_{\frac d2+1}\left((0,\lambda_0)\right),	
$$
we have
\begin{align}\label{eq4.59.1}
|\tilde{I}_{n,2,r}^{\pm,l}(t)|&\leq Ct^{-\frac{d}{2}}|z_m-y_m|^{\frac{d}{2}-1}\cdot\,\prod_{s=1}^{k_1}\left( \langle z_s-\tau_{i_s}\rangle^{-\delta}\langle y_s-\tau_{j_s}\rangle^{-\delta}\left(\frac{1}{|z_s-y_s|^{\frac{d-1}{2}}}+\frac{1}{|z_s-y_s|^{d-2}}\right)\right)  \nonumber\\
&\cdot\,
\prod_{s=m+1}^{k_2}\left\{ \langle z_s-\tau_{i_s}\rangle^{-\delta}\langle y_s-\tau_{j_s}\rangle^{-\delta}\left(\frac{1}{|z_s-y_s|^{\frac{d-1}{2}}}+\frac{1}{|z_s-y_s|^{d-2}}\right)\right\} \nonumber\\
 &\cdot\,\left(\frac{1}{|x-x_1|^{\frac{d}{2}-1}}+\frac{1}{|x-x_1|^{d-2}}\right)\cdot\left(\frac{1}{|x_2-y|^{\frac{d}{2}-1}}+\frac{1}{|x_2-y|^{d-2}}\right).
\end{align}
If $$t^{-\frac12}|-|x-x_1|-|x_2-y|\pm|z_m-y_m|-\Sigma_{k_1}+\bar{\Sigma}_{k_2}|\geqslant1,$$
then  by noting that
$$
|z_m-y_m|^{\frac{1}{2}}\cdot\left(J_{\pm, >}(\lambda|z_m-y_m|)+J_{\pm, <}(\lambda|z_m-y_m|)\right)\in S^{-\frac{1}{2}}_{\frac d2+1}\left((0,\lambda_0)\right),	
$$
we have
\begin{align}\label{eq4.59.2}
	|\tilde{I}_{n,2,r}^{\pm,l}(t)|\leq& C t^{-\frac{d}{2}}|z_m-y_m|^{-\frac12}\cdot\,|-|x-x_1|-|x_2-y|\pm|z_m-y_m|-\Sigma_{k_1}+\bar{\Sigma}_{k_2}|^{\frac{d-1}{2}}
    \nonumber\\
	&\cdot\,\prod_{s=1}^{k_1}\left( \langle z_s-\tau_{i_s}\rangle^{-\delta}\langle y_s-\tau_{j_s}\rangle^{-\delta}\left(\frac{1}{|z_s-y_s|^{\frac{d-1}{2}}}+\frac{1}{|z_s-y_s|^{d-2}}\right)\right)  \nonumber\\
	&\cdot\,\prod_{s=m+1}^{k_2}\left( \langle z_s-\tau_{i_s}\rangle^{-\delta}\langle y_s-\tau_{j_s}\rangle^{-\delta}\left(\frac{1}{|z_s-y_s|^{\frac{d-1}{2}}}+\frac{1}{|z_s-y_s|^{d-2}}\right)\right)
\nonumber\\
 &\cdot\,\left(\frac{1}{|x-x_1|^{\frac{d-1}{2}}}+\frac{1}{|x-x_1|^{d-2}}\right)\cdot\left(\frac{1}{|x_2-y|^{\frac{d-1}{2}}}+\frac{1}{|x_2-y|^{d-2}}\right).
\end{align}
Plugging \eqref{eq4.59.1}  and \eqref{eq4.59.2} into \eqref{eq4.65},  and by the two observations after \eqref{eq4.53},  we therefore obtain that
\begin{equation}\label{eq4.59.3}
\sup_{x, y}|\Omega_{n,2}^{l}(t, x, y)|\leq C t^{-\frac{d}{2}}C(d, \varphi)^n\langle\tau_0\rangle^{-\frac{d-1}{2}(n-1)}.
\end{equation}
This, together with \eqref{eq4.61.1} and \textbf{Claim 3}, yields  that
\begin{equation} \label{eq4.88}
	\sup_{x, y}|\Omega_n^{+, l}(t, x, y)-\,\Omega_n^{-, l}(t, x, y)|\leq C t^{-\frac{d}{2}} N^{n+1}C(d, \varphi)^n\langle\tau_0\rangle^{-\frac{d-1}{2}(n-1)}.	
\end{equation}
Therefore combining \eqref{eq4.88} with \eqref{eq4.60} we deduce  that
\begin{equation}\label{eq4.88.1}
\sup_{x, y}|\Omega_n^{+}(t, x, y)-\,\Omega_n^{-}(t, x, y)|\leq C t^{-\frac{d}{2}} N^{n+1}C(d, \varphi)^n\langle\tau_0\rangle^{-\frac{d-1}{2}(n-1)}.
\end{equation}
Now  we choose a sufficiently large $\tau_0$ such that
$$
\tau_0>(2NC(d,\varphi))^{\frac{2}{d-1}},
$$
then
\begin{align*}
\left\| \sum_{n=1}^{\infty}(-1)^n(\Omega_n^+-\Omega_n^-)\right\|_{L^1-L^{\infty}}&\leq\sum_{n=1}^{\infty} CN^2t^{-\frac{d}{2}}\left(NC(d,\varphi)\langle\tau_0\rangle^{-\frac{d-1}{2}}\right)^{n-1} \\
  &\leq C N^2t^{-\frac{d}{2}}.
\end{align*}
 This completes the proof. \qed

%

\begin{remark}\label{rm4.1}
We note that the constant $N^2$ is caused by the term $\Omega_1^{+}-\,\Omega_1^{-}$ (see
\eqref{eq4.88}). It seems that the upper bound $N^2$ can't be improved from our proof. This is due to the fact that the following oscillatory integral estimates
$$
|t|^{\frac 12+b}\cdot \left|\int_0^{\infty}e^{-it\lambda^2\pm i\lambda x}(1-\chi(\lambda))\lambda^{b}d\lambda\right|\leq C|x|^{b}
$$
is sharp in the region $|t|^{-\frac 12}|x|>1$.
\end{remark}

\begin{remark}\label{rm4.2}
We compare our result with  the $L^p-L^{p'}$ decay estimate of Nier and Soffer in \cite[Theorem 1.1]{NS}, i.e., for $1<p\le2$,  $1<r<\min\{p,\frac{2d}{d+2}\}$ and $\left(\frac{1/p-1/2}{1/r-1/2}\right)<\theta\le  1 $,  they proved that
\begin{equation}\label{eq4.100}
\tau_0> (CN)^{\frac{1}{d(r-1)}}\Longrightarrow	\|e^{-itH}\|_{L^{p}-L^{p'}}\leq CN^{\theta}t^{-d(\frac{1}{p}-\frac12)}, \,\,\,\,\, t>0.
\end{equation}
Apply Riesz-Thorin interpolation between Theorem \ref{thm1.5} and the trivial $L^2$ conservation, it follows that for $1\le p\le 2$, $\tilde{\theta}=2(\frac1p-\frac12)$, one has
\begin{equation}\label{eq4.101}
\tau_0>(CN)^{\frac{2}{d-1}}\Longrightarrow	\|e^{-itH}\|_{L^{p}-L^{p'}}\leq CN^{2\tilde{\theta}}t^{-d(\frac{1}{p}-\frac12)}, \,\,\,\,\, t>0.
\end{equation}
Note that when $p$ is close to the endpoint 1, the upper bound of $N$ in \eqref{eq4.101} is worse than \eqref{eq4.100}, but the requirement for $\tau_0$ is much looser, since in \eqref{eq4.100}, $\tau_0\rightarrow \infty$ as $r\rightarrow 1$.

In the following we point out that   after a slight modification in our proof, we can also obtain \eqref{eq4.100}.
Indeed, by Riesz-Thorin interpolation theorem it suffices to prove that for $N\leq C{\tau_0}^{d(r-1)}$ , and $1<r<\min\{p,\frac{2d}{d+2}\}$, then
\begin{equation}\label{eq4.90}
\tau_0> (CN)^{\frac{1}{d(r-1)}}\Longrightarrow	  \|e^{-itH}\|_{L^r-L^{r'}}\leq CNt^{-d(\frac{1}{r}-\frac12)}, \,\,\,\,\, t>0.
\end{equation}
We only take $d=3$ as an example to show that \eqref{eq4.90} follows from the proof of Theorem \ref{thm1.5}.
In this case, we assume that $\varphi\in C^1(\mathbb{R}^3)$ and satisfies the decay assumption \eqref{eq1.3} with $\beta_0=1$.
Using the same notations as in  \eqref{eq4.45}--\eqref{eq4.45.2}, we have when $d=3$,
 $$\Omega_0^{\pm}(t,x,y):=\frac{C}{\pi i}\sum_{j=1}^N\int_{\mathbb{R}^3}\int_{\mathbb{R}^3}\int_0^{\infty}e^{-it\lambda^2\pm i\lambda(|x-x_1|+ |x_2-y|)}\frac{\varphi_j(x_1)}{|x-x_1|}\frac{\varphi_j(x_2)}{|x_2-y|}g^{\pm}_{\varphi}(\lambda^2)\lambda\,d\lambda\,dx_1dx_2.$$
In order to get additional decay concerning $\tau_j$, we apply the operator $L_{x_1}$ to the term $\frac{\varphi_j(x_1)}{|x-x_1|}$ (similar to the arguments in \eqref{eq3.16.1}-\eqref{eq3.24}), then $\Omega_0^{\pm}(t,x,y)$  can be written  as
 $$\frac{C}{\pi i}\sum_{j=1}^N\int_{\mathbb{R}^3}\int_{\mathbb{R}^3}\int_0^{\infty}e^{-it\lambda^2\pm i\lambda(|x-x_1|+ |x_2-y|)}G(x,x_1,\tau_j)\frac{\varphi_j(x_2)}{|x_2-y|}g^{\pm}_{\varphi}(\lambda^2)\,d\lambda\,dx_1dx_2,$$
 where $G(x,x_1,\tau_j)$ satisfies that

 $$|G(x,x_1,\tau_j)|\leq C\left(\frac{1}{|x-x_1|}+\frac{1}{|x-x_1|^2}\right)\left\langle x_1-\tau_j\right\rangle^{-\delta}.$$
 Since
 $g^{\pm}_{\varphi}(\lambda^2)\in S^0_1\left((0, \infty)\right)$, Lemma \ref{lm2.6}, together with Lemma \ref{lm2.8}, implies  that
 \begin{equation*}
  |\Omega_0^{\pm}(t,x,y)|\leq Ct^{-\frac12}\sum_{j=1}^N\left\langle x-\tau_j\right\rangle^{-1}\left\langle y-\tau_j\right\rangle^{-1}.
 \end{equation*}
 Note that the operator with integral kernel
 $\langle x-\tau_j\rangle^{-1}\langle y-\tau_j\rangle^{-1}$
 is bounded from  the Lorentz space $L^{\frac32,1}(\mathbb{R}^3)$ to the dual space $L^{3,\infty}(\mathbb{R}^3)$, moreover, this bound is independent with $\tau_i$ and $\tau_j$.
 Then
 \begin{equation}\label{eq4.88.11}
   \|\Omega_0^{+}-\Omega_0^{-}\|_{L^{\frac32,1}-L^{3,\infty}} \leq CNt^{-\frac12}.
 \end{equation}
Now we apply the off-diagonal Marcinkiewicz
 theorem (see \cite[Theorem 1.4.19]{gra}) to interpolate between \eqref{eq4.88.11} and \eqref{eq4.45.4}, it follows that when $1<r<\frac32$,
 \begin{equation*}
  \|\Omega_0^{+}-\Omega_0^{-}\|_{L^r-L^{r'}}\leq CNt^{-3(\frac{1}{r}-\frac12)}.
 \end{equation*}

  Similarly, by \textbf{ Claim 3}, $\Omega_n^{\pm}(t,x,y)$ can be written as a sum of at most $N^{n+1}$ terms, and each term has the following form:
\begin{equation} \label{eq4.99.1}
\int_{\mathbb{R}^6}\int_{\mathbb{R}^{6n}}I^{\pm}(t)G(x,x_1,\tau_i)\frac{\varphi_j(x_2)}{|x_2-y|}\prod_{s=1}^n\frac{\varphi_{j_s}(y_s)\varphi_{i_s}(z_s)}{|z_s-y_s|}\,\,dz_1dy_1\cdots dz_ndy_ndx_1dx_2,
\end{equation}
where
$$I^{\pm}(t)=\int_0^{\infty}e^{-it\lambda^2\pm i\lambda(|x-x_1|+ |x_2-y|)+\Sigma_n}\left( g^{\pm}_{\varphi}(\lambda^2)\right)^{n+1}d\lambda.$$
 Lemma \ref{lm2.6} also shows that
 $$|\eqref{eq4.99.1}|\leq Ct^{-\frac12}\left\langle \tau_0\right\rangle ^{-n} \left\langle x-\tau_i\right\rangle^{-1}\left\langle y-\tau_j\right\rangle^{-1}.$$
 Then
 \begin{equation*}
     \|\Omega_n^{+}-\Omega_n^{-}\|_{L^{\frac32,1}-L^{3,\infty}} \leq CN^{n+1}C(d,\varphi)^n\left\langle \tau_0\right\rangle ^{-n}t^{-\frac12}.
 \end{equation*}
 Using the same interpolation arguments between this and \eqref{eq4.88.1}, one has
\begin{equation} \label{eq.4.99.2}
\left\| \Omega_n^+-\Omega_n^-\right\|_{L^r-L^{r'}}\leq C N^{n+1}C(d,\varphi)^n\left\langle \tau_0\right\rangle ^{-n+1-3(r-1)}t^{-3(\frac{1}{r}-\frac12)}.
\end{equation}
If we choose  $N\leq \frac12C(d,\varphi){\tau_0}^{3(r-1)}$ with $1<r<\frac32$, \eqref{eq4.88} and \eqref{eq.4.99.2} prove the claim when $d=3$.
\end{remark}

\section{$L^1-L^{\infty}$ estimates for trace class perturbations--Proof of Theorem \ref{thm1.6}  }\label{sec5}
We first establish an  Aronszajn-Krein type formula for the perturbations in Theorem \ref{thm1.6}. To this end, we define $R(z):=(H-z)^{-1}$ for $z\in \mathbb{C}\setminus[0, \infty)$, where $H$ is defined in Theorem \ref{thm1.6}, and set $P:=\sum_{j=1}^{+\infty}P_j$. Following the same arguments  in \eqref{eq4.4}--\eqref{eq4.7} and noting that $PA=AP=A$, we obtain that $I+AR_0(z)P$ is invertible on $PL^2(\R^d)$ and
\begin{equation} \label{eq5.1.1}
R(z)=R_0(z)-R_0(z)P(I+AR_0(z)P)^{-1}AR_0(z), \,\,\,\,\text{for $z\in \mathbb{C}\setminus[0, \infty)$}.	
\end{equation}
Further,  by our assumption we have $\text{supp} \hat{\varphi_i}\cap \text{supp} \hat{\varphi_j}=\emptyset$, for $i\ne j,$
thus if  $z\in \mathbb{C}\setminus[0, \infty)$,  we have
$$I+AR_0(z)P=\sum_{j=1}^{+\infty}\left( 1+\lambda_j\left\langle R_0(z)\varphi_j,\,\varphi_j\right\rangle\right) P_j
\,\,\,\, \text{on }\,\,\,PL^2(\R^d). $$
In view of the above identity, the invertibility of $I+AR_0(z)P$ on $PL^2(\R^d)$ shows that
$$ 1+\lambda_j\left\langle R_0(z)\varphi_j,\,\varphi_j\right\rangle\neq 0, \qquad \forall \,\,i\ge 1,$$
moreover,  we have on $PL^2(\R^d)$,
$$(I+AR_0(z)P)^{-1}=\sum_{j=1}^{+\infty}\left( 1+\lambda_j\left\langle R_0(z)\varphi_j,\,\varphi_j\right\rangle\right)^{-1} P_j,
\,\,\,\,\text{for $z\in \mathbb{C}\setminus[0, \infty)$}. $$
This, together with \eqref{eq5.1.1} and the fact that $P_jA=\lambda_jP_j$, yields that
\begin{equation}\label{eq5.1.2}
R(z)=R_0(z)-\sum_{j=1}^{+\infty}\lambda_j\cdot \left( 1+\lambda_j\left\langle R_0(z)\varphi_j,\,\varphi_j\right\rangle\right)^{-1}R_0(z)P_jR_0(z), \,\,\,\,\text{for $z\in \mathbb{C}\setminus[0, \infty)$}.	
\end{equation}
Based on \eqref{eq5.1.2} and the limiting absorption principle for $R_0(z)$, we have
\begin{proposition}\label{pro5.1}
Let  A be given in Theorem \ref{thm1.6}, then $(0, \infty)\subset\sigma_{ac}(H)$ and we have the following Aronszajn-Krein type formula:
\begin{equation}\label{eq5.1.3}
R^{\pm}(\lambda^2)=R_0^{\pm}(\lambda^2)-\sum_{j=1}^{+\infty}\lambda_j\cdot \left( 1+\lambda_jf_{jj}^{\pm}(\lambda^2)\right) ^{-1}R_0^{\pm}(\lambda^2)P_jR_0^{\pm}(\lambda^2),\,\,\,\,\text{for $\lambda>0$}.
\end{equation}
\end{proposition}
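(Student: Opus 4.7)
The plan is to derive \eqref{eq5.1.3} by passing to the boundary in the interior resolvent identity \eqref{eq5.1.2} as $z=\lambda^2\pm i\varepsilon\to\lambda^2\pm i0$, and then read off the absolute continuity via Stieltjes inversion. I would fix $\lambda>0$ and test \eqref{eq5.1.2} against a pair $f,g\in L^2_\sigma$ with $\sigma>\tfrac12$. Since $P_jh=\langle h,\varphi_j\rangle\varphi_j$, pairing yields
\begin{equation*}
\langle R(z)f,g\rangle=\langle R_0(z)f,g\rangle-\sum_{j=1}^{+\infty}\frac{\lambda_j\,\langle R_0(z)\varphi_j,g\rangle\,\overline{\langle R_0(\bar z)\varphi_j,f\rangle}}{1+\lambda_j\,\langle R_0(z)\varphi_j,\varphi_j\rangle}.
\end{equation*}
For each fixed $j$, the limiting absorption principle (applicable because $\varphi_j\in L^2_\sigma$ by (i)) gives the term-wise convergence of numerator and denominator, and the spectral assumption \eqref{eq1.88.0} ensures that the limiting denominator does not vanish; so each term converges to the corresponding term in \eqref{eq5.1.3}.

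The main technical point is to justify exchanging the limit $\varepsilon\to 0^+$ with the infinite sum; this is where \eqref{eq1.88.1} enters decisively. By Cauchy--Schwarz and the uniform-in-$\varepsilon$ weighted $L^2$ bound from the limiting absorption principle,
\begin{equation*}
\bigl|\langle R_0(z)\varphi_j,g\rangle\bigr|\cdot\bigl|\langle R_0(\bar z)\varphi_j,f\rangle\bigr|\le C\|\varphi_j\|_{L^2_\sigma}^{2}\|f\|_{L^2_\sigma}\|g\|_{L^2_\sigma},
\end{equation*}
with $C$ independent of $j$ and uniform for $\lambda$ in a compact subinterval of $(0,\infty)$. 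From (i) one has $\|\varphi_j\|_{L^2_\sigma}\le C M_j$ (choosing $\sigma$ slightly above $\tfrac12$), while the uniform hypothesis \eqref{eq1.88.0}, together with continuity of $\langle R_0(z)\varphi_j,\varphi_j\rangle$ in $z$, yields $|1+\lambda_j\langle R_0(\lambda^2\pm i\varepsilon)\varphi_j,\varphi_j\rangle|\ge c_0/2$ for all $j$ and all small enough $\varepsilon$. Hence the $j$-th summand is dominated by $C\lambda_jM_j^{2}\|f\|_{L^2_\sigma}\|g\|_{L^2_\sigma}$; since the $L^2$-normalization forces $M_j\ge \|\langle x\rangle^{-\delta}\|_{L^2}^{-1}>0$, the series $\sum_j\lambda_jM_j^{2}$ is controlled by \eqref{eq1.88.1}, and dominated convergence delivers \eqref{eq5.1.3}.

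Finally, for the spectral statement, the very same bounds show that for any $f\in L^2_\sigma$ the function $\lambda\mapsto \tfrac{1}{\pi}\Im\langle R^{+}(\lambda^{2})f,f\rangle$ is continuous on $(0,\infty)$: each summand in \eqref{eq5.1.3} is continuous in $\lambda$ by the limiting absorption principle, and the majorant $C\lambda_jM_j^{2}$ guarantees locally uniform convergence. By Stieltjes inversion, the spectral measure $d\langle E_H(\mu)f,f\rangle$ admits a continuous density with respect to Lebesgue measure on $(0,\infty)$; polarizing and using the density of $L^2_\sigma$ in $L^2$, one obtains $(0,\infty)\subset\sigma_{ac}(H)$. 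I expect the most delicate point of the whole argument to be precisely the uniform-in-$j$ lower bound $c_0/2$ on the denominators: without \eqref{eq1.88.0} holding uniformly over $j$, the dominated convergence argument would break down and one could not even write \eqref{eq5.1.3} as a convergent series.
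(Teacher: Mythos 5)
Your argument is essentially the paper's proof: pass to boundary values in the interior identity \eqref{eq5.1.2} via the limiting absorption principle, use the uniform lower bound $c_0/2$ on the denominators, and control the tail of the series by the summability of $\lambda_j M_j^2$ (coming from \eqref{eq1.88.1} together with $M_j\gtrsim 1$). The paper works with operator norms in $B(L^2_\sigma,L^2_{-\sigma})$ rather than testing against $f,g$, but this is cosmetic.

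One step deserves slightly more care than you give it. You assert that \eqref{eq1.88.0} ``together with continuity'' yields $|1+\lambda_j\langle R_0(\lambda^2\pm i\varepsilon)\varphi_j,\varphi_j\rangle|\ge c_0/2$ for all $j$ and all small enough $\varepsilon$, but continuity alone produces an $\varepsilon_0=\varepsilon_0(j)$. To get a $j$-independent $\varepsilon_0$ one must observe that
$$\bigl|\lambda_j\langle (R_0(\lambda^2\pm i\varepsilon)-R_0^{\pm}(\lambda^2))\varphi_j,\varphi_j\rangle\bigr|\le C\lambda_jM_j^2\,\|R_0(\lambda^2\pm i\varepsilon)-R_0^{\pm}(\lambda^2)\|_{L^2_\sigma\to L^2_{-\sigma}},$$
which tends to $0$ as $\varepsilon\to 0$ \emph{uniformly in $j$} precisely because $\sup_j\lambda_jM_j^2<\infty$. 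You already have the ingredient ($\sum_j\lambda_jM_j^2<\infty$), you just haven't wired it into this step; the paper makes the connection explicit at the start of its proof. Also, your closing diagnosis is slightly off: the delicate point is not whether \eqref{eq1.88.0} is uniform in $j$ (it is, by hypothesis with constant $c_0$), but whether the lower bound survives the perturbation to $\varepsilon>0$ with an $\varepsilon_0$ independent of $j$.
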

\begin{proof}
We first note that by Remark $(\textbf{c}_1)$ (below Theorem \ref{thm1.6}), $M_j$ has a fixed lower bound, then \eqref{eq1.88.1} implies that
\begin{equation}\label{eq5.1.4}
	\sum_{j=1}^{+\infty}\lambda_j\cdot {M_j}^2<+\infty,
\end{equation}
 which, in turn, shows that the sequence $\{\lambda_j{M_j}^2\}_{j\ge 1}$ is uniformly bounded. Therefore, by our assumption (\romannumeral2) and  the continuity of $R_0(z)$ for $\text{Re}z\geq 0$ (or $\text{Re}z\leq 0$) in weighted $L^2$ spaces,  for any fixed $\lambda>0$, we can choose a sufficiently small $\varepsilon_0>0$ such that
$$|1+\lambda_j\left\langle R_0(\lambda^2\pm i\varepsilon)\varphi_j,\,\varphi_j\right\rangle|>c_0/2 $$
holds uniformly for $0\leq\varepsilon<\varepsilon_0$. In addition, by  assumption (\romannumeral1), one has for $\sigma>1/2$
$$\left\|  R_0(\lambda^2\pm i\varepsilon)P_j R_0(\lambda^2\pm i\varepsilon)\right\|_{L^2_\sigma-L^2_{-\sigma}}\leq
C \left\|R_0(\lambda^2\pm i\varepsilon)\right\|_{L^2_\sigma-L^2_{-\sigma}}^2\cdot {M_j}^2.$$
It follows from the above three inequalities that, for any fixed $\lambda>0$,
\begin{equation}\label{eq5.1.4.1}
\sum_{j=1}^{+\infty}\lambda_j\cdot \left( 1+\lambda_j\left\langle R_0(\lambda^2\pm i\varepsilon)\varphi_j,\,\varphi_j\right\rangle\right)^{-1}R_0(\lambda^2\pm i\varepsilon)P_jR_0(\lambda^2\pm i\varepsilon)
\end{equation}
is convergent uniformly for $0<\varepsilon<\varepsilon_0$ in weighted $L^2$ spaces.
Now choose $z=\lambda^2\pm i\varepsilon$ and let $\varepsilon\rightarrow 0$ in \eqref{eq5.1.2}, then combining
the limiting absorption principle with the uniform convergence of \eqref{eq5.1.4.1}, we obtain \eqref{eq5.1.3}, and the series is convergent in weighted $L^2$ spaces.

Finally the absolutely continuous spectrum is a direct consequence of the \eqref{eq5.1.3} and the fact that $R_0^{\pm}(\lambda^2)$ is locally uniformly bounded from $L^2_\sigma$ to $L^2_{-\sigma}$ with $\sigma>1/2$.
\end{proof}

\emph{Proof of Theorem \ref{thm1.6}.} First, we note that by Lemma \ref{lmA.1}, the spectrum of $H$  is purely absolutely continuous. Then applying \eqref{eq5.1.3} to Stone's formula, it is not difficult to obtain that
\begin{equation}\label{eq5.1.5}
	e^{-itH}-e^{-itH_0}=\sum_{j=1}^{+\infty}(e^{-itH_{\lambda_j}}-e^{-itH_0}),
\end{equation}
where $e^{-itH_{\lambda_j}}=e^{-it(H_0+\lambda_jP_j)}$ is the Sch\"{o}dinger operator with rank one perturbations. This guarantees the proof can be reduced to the rank one case. Note that the trace condition $\sum_{j=1}^{+\infty}\lambda_j<+\infty$ indicates the fact that the number of $\lambda_j,\,\,\lambda_j\geq1$ is finite.

In order to use the upper bound \eqref{eq1.5.0} in Theorem \ref{thm-1.1}, we divide into two cases:

\textbf{Case 1:} When $d\ge 3$, by \eqref{eq5.1.5} and Theorem \ref{thm-1.1}, we have

\begin{align*}
\|e^{-itH}-e^{-itH_0}\|_{L^1-L^{\infty}}
&\leq
\sum_{\lambda_j\geq 1}\|e^{-itH_{\lambda_j}}-e^{-itH_0}\|_{L^1-L^{\infty}}+\sum_{\lambda_j<1}\|e^{-itH_{\lambda_j}}-e^{-itH_0}\|_{L^1-L^{\infty}}\\
&\leq \sum_{\lambda_j\geq1}C(\lambda_j,\varphi_j)t^{-\frac d2}+
\sum_{\lambda_j< 1}C\lambda_j\cdot M^{2([\frac{d}{2}]+3)}t^{-\frac d2}\\
&\leq C(A)t^{-\frac d2},\,\,\,   t>0,
\end{align*}
where the first sum is convergent since the number of terms is finite, the second sum is convergent by the assumption \eqref{eq1.88.1}.

\textbf{Case 2:} When $d=1, 2$,
note that  $\text{supp} \hat{\varphi_i}\cap \text{supp} \hat{\varphi_j}=\emptyset,$ for $i\ne j$, therefore, there is at most one $j_0>0$ such that $\int_{\mathbb{R}^d}\varphi_{j_0}(x)\,dx\neq0$. Again by  \eqref{eq5.1.5} and Theorem \ref{thm-1.1}, we have
\begin{align*}
	&\|e^{-itH}-e^{-itH_0}\|_{L^1-L^{\infty}}\\
	&\leq \|e^{-itH_{\lambda_{j_0}}}-e^{-itH_0}\|_{L^1-L^{\infty}}+\sum_{\lambda_j\geq 1}\|e^{-itH_{\lambda_j}}-e^{-itH_0}\|_{L^1-L^{\infty}}+\sum_{\lambda_j< 1,j\neq j_0}\|e^{-itH_{\lambda_j}}-e^{-itH_0}\|_{L^1-L^{\infty}}\\
	&\leq C(A)t^{-\frac d2},\,\,\, t>0.
\end{align*}
This completes the proof of Theorem \ref{thm1.6}.\qed

\begin{remark}\label{rmk5.1}
In this remark, we show that there exist examples of $\{\varphi_j\}^{+\infty}_{j=1}$  satisfying the assumptions of Theorem \ref{thm1.6}.  First we choose  a $L^2$-normalized real-valued function $\varphi$ satisfying \eqref{eq1.2} and \eqref{eq1.3}.
We also assume that $\text{supp}\,\hat{\varphi}\subset B^d(0,1)$. Notice that
 $$
\|R^{\pm}_0(\lambda^2)\|_{L^2_{\sigma}-L^2_{-\sigma}}\leq C(k, \lambda_0)\cdot\lambda^{-1},\,\,\,\,\quad \lambda>\lambda_0.
$$
Then there exists a large number $L>1$, such that when $\lambda>L$, we have
\begin{equation}\label{eq5.2.1}
	|\left\langle R_0^{\pm}(\lambda^2)\varphi,\,\varphi\right\rangle|<1/2.
\end{equation}
Now we set  $A:=\sum_{j=1}^{+\infty}\lambda_j\langle\cdot\,, \varphi_j\rangle \varphi_j$ with
\begin{equation}\label{eq5.7}
 \varphi_j:=e^{i(L+2j)e_1\cdot x}\varphi(x),\quad\text{and}\quad  \lambda_j=2^{-j},
\end{equation}
where $e_1=(1,0,0,\cdots)\in \R^d$. Clearly we have $\hat{\varphi_j}=\hat{\varphi}(\xi-(L+2j)e_1)$ and thus
$$
\text{supp} \hat{\varphi_i}\cap \text{supp} \hat{\varphi_j}=\emptyset, \,\,\,\,i\ne j.
$$
It follows directly from the definition \eqref{eq5.7} that $\|\varphi_j\|_{L^2_{\sigma}}=\|\varphi\|_{L^2_{\sigma}}$, thus  \eqref{eq5.2.1} holds uniformly for $\varphi_j$. Furthermore,  observe that \eqref{eq1.2} and \eqref{eq1.3}
are satisfied for  $\varphi_j$ with
$$
M_j=C\cdot M\cdot (L+2j)^{[\frac d2]+1}
$$
for some absolute constant $M>0$.
 Since
  $$\text{Re}\left\langle R_0^{\pm}(\lambda^2)\varphi_j,\,\varphi_j\right\rangle=\int{\frac{|\hat{\varphi_j}(\xi)|^2}{|\xi|^2-\lambda^2}d\xi}\geq0
  ,\,\,\, \text{for $\lambda\leq L$},$$
this,  together with \eqref{eq5.2.1}, indicates that
$$
|1+\lambda_j f^{\pm}_{jj}(\lambda^2)|>1/2\quad  \text{holds\, uniformly \,for\,}\,\, \varphi_j.
$$ Furthermore, a direct computation shows that
 $$\sum_{j=1}^{+\infty}\lambda_j\cdot M_j^{2[\frac d2]+6}=C\cdot M^{2[\frac d2]+6}\sum_{j=1}^{+\infty}{2^{-j}\cdot (L+2j)^{([\frac d2]+1)(2[\frac d2]+6)}}<+\infty.$$

\end{remark}

\begin{appendix}

\renewcommand{\appendixname}{Appendix\,\,}
\section{Proof of Lemma  \ref{lm2.6}  and Corollary \ref{cor2.6}}\label{app}
\emph{ Proof of Lemma  \ref{lm2.6}.}
Let $\Omega=(0, r_0)$. We first write $\Omega=\Omega_1\cup\Omega_2$ where
\begin{equation*}
\Omega_1=\{\lambda\in\Omega;~\mbox{$|2\lambda+\frac xt|<\frac12|\frac xt|$}\}
\end{equation*}
and
\begin{equation*}
\Omega_2=\{\lambda\in\Omega;~\mbox{$|2\lambda+\frac xt|>\frac14|\frac xt|$}\}.
\end{equation*}
Choose the following partition of unity subordinate to this covering:
\begin{equation*}
\Phi_1(\lambda)=\Phi\left((2\lambda+\mbox{$\frac xt$})/\mbox{$\frac12|\frac xt|$}\right)~\mathrm{and}~\Phi_2(\lambda)=1-\Phi_1(\lambda),
\end{equation*}
where $\Phi\in C_0^\infty(\mathbb{R})$ such that $\Phi(\lambda)=1$ when $|\lambda|\leq\frac12$ and $\Phi(\lambda)=0$ when $|\lambda|\geq 1$.
Now $I(t,x)$ is split into
\begin{equation*}
I_j(t,x)=\int_{\Omega_j}e^{i(t\lambda^2+x\cdot\xi)}\Phi_j(\lambda)\psi(\lambda)d\lambda,~~j=1, 2.
\end{equation*}
We shall prove \eqref{eq6} with $I$ replaced by $I_j$, $j=1,2$.

{\bf Step 1. Estimates for $I_2$.} We shall decompose the integral properly and use the integration-by-parts arguments. To proceed, we further split $I_{2}$ into
\begin{equation*}
\begin{split}
I_{2}(t,x)
&=\int_{\Omega_2}e^{i(t\lambda^2+x\cdot\lambda)}\Phi(t^{\frac12}\lambda)\Phi_2(\lambda)\psi(\lambda)d\lambda\\
&+\int_{\Omega_2}e^{i(t\lambda^2+x\cdot\lambda)}(1-\Phi(t^{\frac12}\lambda))\Phi_2(\lambda)\psi(\lambda)d\lambda,\\
&:= I_{21}(t,x)+I_{22}(t,x).
\end{split}
\end{equation*}
Obviously, for the term $I_{21}$, using the fact that $\psi(\lambda)\in S_K^{b}(\Omega)$  we have
\begin{equation}\label{eqA1}
|I_{21}|\leq C\int_{|\lambda|\leq t^{-\frac12}}\lambda^b\,d\lambda\leq Ct^{-\frac{1+b}{2}}.
\end{equation}
In order to estimate the term $I_{22}$, we denote by
\begin{equation*}
Df:=\frac{1}{2t\lambda+x}\cdot\frac{d}{d\lambda}f,\qquad\quad D_{*}f:=-\frac{d}{d\lambda}(\frac{1}{2t\lambda+x}\cdot f).
\end{equation*}
Integration by parts $K$ times leads to
\begin{equation}\label{eqA2}
\begin{split}
I_{22}=&\int_{\Omega_2}e^{i(t\lambda^2+x\cdot\lambda)}D_*^K\left((1-\Phi(t^{\frac12}\lambda))\Phi_2(\lambda)\psi(\lambda)\right)d\lambda\\
=& \sum_{j=0}^KC_j\int_{\Omega_2}e^{i(t\lambda^2+x\cdot\lambda)}\frac{(2t)^j}{(2t\lambda+x)^{K+j}}\frac{d^{K-j}}{d\lambda^{K-j}}\left((1-\Phi(t^{\frac12}\lambda))\Phi_2(\lambda)\psi(\lambda)\right)d\lambda,
\end{split}
\end{equation}
where we have used the identity
\begin{equation*}
 \frac{d^{j}}{d\lambda^{j}}\frac{1}{2t\lambda+x}=\frac{(-2t)^j}{(2t\lambda+x)^{j+1}}.
\end{equation*}
To proceed, we also need  the following  facts. First, since $\lambda\in \Omega_2$, we have
\begin{equation*}
|2t\lambda+x|>\frac{2}{5|t\lambda|},
\end{equation*}
thus
\begin{equation}\label{eqA3}
|\frac{(2t)^j}{(2t\lambda+x)^{K+j}}|\leq Ct^{-K}\lambda^{-K-j}.
\end{equation}
Second, since $|\frac{d^{j}}{d\lambda^{j}}((1-\Phi(t^{\frac12}\lambda)))|\leq C\lambda^{-j}$ and $\psi(\lambda)\in S_K^{b}(\Omega)$, then
\begin{equation}\label{eqA4}
\left|\frac{d^{K-j}}{d\lambda^{K-j}}((1-\Phi(t^{\frac12}\lambda))\Phi_2(\lambda)\psi(\lambda))\right|\leq C\lambda^{b-K+j}.
\end{equation}
Now it follows from \eqref{eqA2}, \eqref{eqA3} and \eqref{eqA4} that
\begin{equation}\label{eqA5}
 |I_{22}|\leq Ct^{-K}\int_{t^{-\frac12}}^{r_0}{\lambda^{b-2K}\,d\lambda}\leq Ct^{-\frac{b+1}{2}}.
\end{equation}
Combining \eqref{eqA1} and \eqref{eqA5}, we obtain that
\begin{equation}\label{eqA6}
\begin{split}
|I_{2}(t,x)|\leq Ct^{-\frac{b+1}{2}}\leq\begin{cases}C|t|^{-\frac{1+b}{2}},\,\,\,\qquad \qquad  \mathrm{if}\,\,\,~|t|^{-\frac12}|x|\leq 1,\\
C|t|^{-\frac12-b}|x|^{b}, \,\,\, \,\,\,\quad\quad             \mathrm{if}\,\,\,~|t|^{-\frac12}|x|> 1,
\end{cases}
\end{split}
\end{equation}
which completes the proof of $I_{2}(t,x)$.

{\bf Step 2. Estimates for $I_1$.} We shall use van der Corput's Lemma after a scaling argument. In fact, a change of variables $\lambda\rightarrow x\lambda/t$ yields that
\begin{align}\label{eqA7}
I_1(t,x)&=\int_{\Omega_1}e^{i(t\lambda^2+x\cdot\lambda)}\Phi_1(\lambda)\psi(\lambda)d\lambda\nonumber\\
&=\frac{x}{t}\int_{\Omega'_1}e^{i\frac{x^2}{t}(\lambda^2+\lambda)}\tilde{\Phi}_1(\lambda)\psi(x\lambda/t)d\lambda,
\end{align}
where $\tilde{\Phi}_1(\lambda)=\Phi(4\lambda+2)$ and $\Omega'_1=\{\lambda:\, |2\lambda+1|<\frac12\}$. Note that $\frac{d^{2}}{d\lambda^{2}}(\lambda^2-\lambda)=2>0$, we apply van der Corput's Lemma (see e.g. \cite[p. 334]{St}) to \eqref{eqA7} and use the fact $\psi(\lambda)\in S_K^{b}(\Omega)$ to derive that
\begin{align}\label{eqA8}
|I_1(t,x)|&\leq |t|^{-\frac12}\left(|\psi(3x/4t)|+ |\frac{x}{t}|\int_{\Omega'_1}{|\psi'(x\lambda/t)|d\lambda}\right)\nonumber\\
 &\leq C|t|^{-\frac12-b}|x|^{b}.
\end{align}
Furthermore, when $|x|\leq |t|^{\frac12}$, \eqref{eqA8}  implies that
\begin{equation}\label{eqA9}
 |I_1(t,x)|\leq C|t|^{-\frac{1+b}{2}}.
\end{equation}
Therefore combing \eqref{eqA8} and \eqref{eqA9}, we prove the desired estimate for $I_1$

Note that the above arguments also work  for $\Omega=(r_0, \infty)$ and the proof of Lemma \ref{lm2.6} is complete.

\begin{remark}\label{rmk2.1}
The oscillatory integral $I(t,x)$	should be considered in the sense of distribution when $\Omega=(r_0, \infty)$. That is, we define define $I_\epsilon(t,x)$  with $\psi(\lambda)$ replaced by $e^{-\epsilon\lambda}\psi(\lambda)$ for $\epsilon\in(0,1)$ in \eqref{eq2}, then one has that $I_\epsilon(t,x)\rightarrow I(t,x)$ in the sense of distribution. It suffices to prove $I_\epsilon(t,x)$ satisfies \eqref{eq6} with constant $C$ independent of $\epsilon$. Then $I(t,x)$ should satisfy the same estimate.
\end{remark}

 \emph{Proof of Corollary \ref{cor2.6}.}
We first prove  (\romannumeral1). On the one hand, if $t^{-\frac12}|x|\leq 1$, then apply Lemma \ref{lm2.6} with $\psi(\lambda)\in S_K^b(\Omega)$, one has
$$
|I(t,x)|\leq Ct^{-\frac{1+b}{2}}\leq Ct^{-\frac{1+b}{2}}(1+|x|)^\frac{b}{2}.
$$
On the other hand, note that when $\Omega=(0, r_0)$, one has  $\psi(\lambda)\in S_K^b(\Omega)\subset S_K^{ b/2}(\Omega)$. Thus if $t^{-\frac12}|x|\geq 1$,  then we apply Lemma \ref{lm2.6} with  $\psi(\lambda)\in S_K^{b/2}(\Omega)$ to obtain that
$$
|I(t,x)|\leq  Ct^{-\frac{1+b}{2}}(1+|x|)^\frac{b}{2}.
$$
This proves \eqref{eq2.22}.

Next, we prove  (\romannumeral2). In order to prove \eqref{eq2.23}, we perform integration by parts $[\frac{d-1}{2}]$ times in the integral  \eqref{eq2}  by using the following identity
                 $$\frac{1}{2i\lambda t}\cdot\frac{d}{d\lambda}e^{it\lambda^2}=e^{it\lambda^2}.$$
Then we see that
\begin{align}\label{eqA11}
I(t,x)=\sum_{s_1+s_2+s_3+s_4=[\frac{d-1}{2}]}C_{s_1 s_2 s_3 s_4}t^{-[\frac{d-1}{2}]}x^{s_1}\int_\Omega e^{i(t\lambda^2+\lambda\cdot x)}\frac{d^{s_2}}{d\lambda^{s_2}}\psi_1(\lambda)\frac{d^{s_3}}{d\lambda^{s_3}}\psi_2(\lambda)\cdot \lambda^{-[\frac{d-1}{2}]-s_4} d\lambda.
\end{align}
Meanwhile, since $\psi_2(\lambda) \in S_{[ \frac d2] +1}^{[ \frac{d-1}{2}]}(\Omega)$ and $s_3\leq [\frac{d-1}{2}]$, we have
\begin{align*}
 \frac{d^{s_3}}{d\lambda^{s_3}}\psi_2(\lambda)\cdot \lambda^{-[\frac{d-1}{2}]-s_4}&\in S^{-s_3-s_4}_{1}(\Omega)\in S_{1}^{0}(\Omega),\qquad \text{if $d$ is odd},\\
 \frac{d^{s_3}}{d\lambda^{s_3}}\psi_2(\lambda)\cdot \lambda^{-[\frac{d-1}{2}]-s_4}&\in  S^{-s_3-s_4}_{2}(\Omega) \in S_{2}^{0}(\Omega),\qquad \text{if $d$ is even}.
\end{align*}
Note that $\frac{d^k}{d\lambda^k}\psi_1(\lambda)\in S_1^{0}(\Omega)$ for every $1 \leq k\leq [ \frac d2]$, then one has
\begin{align}
\frac{d^{s_2}}{d\lambda^{s_2}}\psi_1(\lambda)\frac{d^{s_3}}{d\lambda^{s_3}}\psi_2(\lambda)\cdot \lambda^{-[\frac{d-1}{2}]-s_4}&\in S_{1}^{0}(\Omega),\qquad \qquad\quad\,\,\,\,\,\text{if $d$ is odd}, \label{eqA12}\\
\frac{d^{s_2}}{d\lambda^{s_2}}\psi_1(\lambda)\frac{d^{s_3}}{d\lambda^{s_3}}\psi_2(\lambda)\cdot \lambda^{-[\frac{d-1}{2}]-s_4}& \in S_{2}^{1} (\Omega)\cap S^{\frac12}_{1} (\Omega),\qquad \text{if $d$ is even}.\label{eqA13}
\end{align}

When $d$ is odd, by \eqref{eqA12}, we apply Lemma \ref{lm2.6} with $K=1,\,b=0$. Then the oscillatory integral in \eqref{eqA11} is estimated by $Ct^{-\frac12}$ and \eqref{eq2.23} follows.

When  $d$ is even, in view of \eqref{eqA13}, we apply  Lemma \ref{lm2.6} with $K=1,b=\frac12$ if $t^{-\frac12}|x|\geq 1$, and $K=2, b=1$ if $t^{-\frac12}|x|\leq 1$. Then the oscillatory integral in \eqref{eqA11} is estimated by $Ct^{-1}$. Therefore \eqref{eq2.23} also holds and the proof is complete.

%
%
%
\section{Absence of singular spectrum}\label{app2}
\begin{lemma}\label{lmA.1}
The Hamiltonians ($H_{\alpha}$ or $H$) in Theorem \ref{thm-1.1}--\ref{thm1.6} have only absolutely continuous spectrum.
\end{lemma}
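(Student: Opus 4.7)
The plan is to establish in succession: (i) $\sigma_{\mathrm{ess}}(H) = [0,\infty)$; (ii) $H$ has no eigenvalues; (iii) $\sigma_{sc}(H) = \emptyset$. Item (i) is immediate from Weyl's theorem on invariance of the essential spectrum under compact perturbations, since each $P_j$ is of rank one and, in the setting of Theorem~\ref{thm1.6}, the full perturbation $A = \sum_j \lambda_j P_j$ is trace class by Remark~($\textbf{c}_1$).

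For (ii), I would first rule out eigenvalues in $\mathbb{C}\setminus[0,\infty)$ exactly as in Claim~1 of the proof of Proposition~\ref{prop4.1}: an $L^2$-eigenfunction $f$ with $Hf = zf$, $z\notin[0,\infty)$, satisfies $(I+PR_0(z)P)Pf = 0$ on $PL^2$, and the injectivity of $I+PR_0(z)P$ proved there (adapted to the rank-one weighting $\alpha P$ and, in the trace-class case, to the operator $I+AR_0(z)P$ considered in \S\ref{sec5}) forces $Pf = 0$ and hence $f = -R_0(z)Pf = 0$. To exclude a positive embedded eigenvalue $\lambda^2>0$, suppose $f\in L^2$ satisfies $Hf = \lambda^2 f$, and set $u = Pf$. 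On the Fourier side $(|\xi|^2-\lambda^2)\hat f(\xi) = -\hat u(\xi)$, and the requirement $\hat f\in L^2$ forces $\hat u|_{\lambda S^{d-1}}=0$ in the trace sense; consequently $f = -R_0^{\pm}(\lambda^2)u$ with either choice of sign producing the same distribution, and applying $P$ gives $(I+PR_0^{\pm}(\lambda^2)P)u = 0$ on $PL^2$. In the finite-rank setting this reads $A^{\pm}(\lambda^2)Bu = 0$ in the notation of \eqref{eq4.9}, so the spectral hypothesis $|\det(I+F_{N\times N}^{\pm}(\lambda^2))|\ge c_0$ (respectively $|1+\alpha F^{\pm}(\lambda^2)|\ge c_0$ in the rank-one case) gives $u=0$. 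For Theorem~\ref{thm1.6}, the orthogonality $\operatorname{supp}\hat\varphi_i\cap\operatorname{supp}\hat\varphi_j=\emptyset$ diagonalises the analogous equation and reduces it to $(1+\lambda_j f_{jj}^{\pm}(\lambda^2))\langle f,\varphi_j\rangle = 0$ for every $j$, so \eqref{eq1.88.0} forces every $\langle f,\varphi_j\rangle$ to vanish. In all cases $Pf=0$ reduces the eigenvalue equation to $(-\Delta-\lambda^2)f=0$ in $L^2$, which has only the trivial solution. The value $\lambda=0$ is excluded by the nonnegativity of the perturbation: $H\ge 0$ and $Hf=0$ forces $\|\nabla f\|_{L^2}=0$, so $f$ is constant and, being in $L^2(\mathbb{R}^d)$, must vanish.

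For (iii), I would invoke the Aronszajn--Krein formulas \eqref{eq3.2}, \eqref{eq4.13} and \eqref{eq5.1.3}. Under the respective spectral hypotheses, each of them exhibits $R^{\pm}(\lambda^2)$ as a locally continuous function of $\lambda>0$ in the operator norm $L^2_\sigma\to L^2_{-\sigma}$ for any $\sigma>1/2$. By Stone's formula this continuity of the boundary values of the resolvent implies that the spectral measure $\langle E_\Omega(H)f,g\rangle$ is absolutely continuous on $(0,\infty)$ for $f,g$ in the dense subspace $L^2_\sigma$, and a density argument extends absolute continuity to all of $L^2$, yielding $\sigma_{sc}(H)=\emptyset$.

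The main technical obstacle I foresee is the trace-class case of Theorem~\ref{thm1.6}, where one must justify that under \eqref{eq1.88.0} and \eqref{eq1.88.1} the infinite sum in \eqref{eq5.1.3} converges in the weighted operator norm $L^2_\sigma\to L^2_{-\sigma}$ uniformly for $\varepsilon\in(0,\varepsilon_0)$, so that $\varepsilon\to 0^+$ can be exchanged with the summation. This uniform convergence is precisely what is proved in Proposition~\ref{pro5.1}, so the argument for both (ii) and (iii) in that setting ultimately reduces to invoking that proposition.
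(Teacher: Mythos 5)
Your argument is correct and follows the same overall blueprint as the paper: Weyl's theorem for $\sigma_{\mathrm{ess}}(H)=[0,\infty)$, the Aronszajn--Krein formula combined with the limiting absorption principle and the spectral hypothesis to control the spectrum on $(0,\infty)$, and nonnegativity of the perturbation to exclude an eigenvalue at $0$. The organization differs in one respect: you separately and explicitly rule out positive embedded eigenvalues via the Agmon-type vanishing $\hat u|_{\lambda S^{d-1}}=0$ (essentially re-deriving the relevant half of Theorem~\ref{thm4.2}(\romannumeral1)) and then handle singular continuous spectrum by Stone's formula plus a density argument, whereas the paper compresses both steps into the single observation that local uniform boundedness of $R^{\pm}(\lambda^2)$ in $B(L^2_{\sigma},L^2_{-\sigma})$ already forces $(0,\infty)$ to be purely absolutely continuous. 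The paper also invokes the Kato--Rosenblum theorem to record that the a.c.\ parts of $H$ and $-\Delta$ are unitarily equivalent; your route does not need that input. Your version is somewhat more self-contained and explicit; the paper's is shorter because it packages the two exclusions (embedded point spectrum and singular continuous spectrum) into a single standard consequence of the resolvent bound. Both are valid, and your identification of Proposition~\ref{pro5.1} as the load-bearing input for the trace-class case is exactly right.
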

\begin{proof}
First we consider the following general case
\begin{equation}\label{eqB1}
H=-\Delta+A, \quad\,\,\,\,\,\,\, A:=\sum_{j=1}^{+\infty}\lambda_jP_j, \,\,\,\,\,  \lambda_j\ge 0.
\end{equation}
where $P_j=\langle\cdot\,, \varphi_j\rangle \varphi_j$ and orthonormal sets $\{\varphi_j\}_{j=1}^{\infty}$ satisfy the decay condition \eqref{eq1.2}, in addition, we also assume that $\sum_{j=1}^{+\infty}\lambda_j<\infty$. Then $A$ is a trace class perturbation. By Weyl's Theorem and the fact that $H\ge 0$, we have
 $$
 \sigma(H)=\sigma_{\text{ess}}(H)=[0,\,\infty).
 $$ Moreover, it's a classical result of Kato-Rosenblum (see e.g. in \cite[Chapter  \uppercase\expandafter{\romannumeral10}]{Ka}) that the wave operators exist and are complete and the absolutely continuous parts of $H$ and $H_0=-\Delta$ are unitary equivalent.

Second, let $H_{\alpha}$ be the operator in Theorem \ref{thm-1.1}, note by the decay condition \eqref{eq1.2}, the limiting absorption principle yields that the boundary values of the free resolvent $R^{\pm}_0(\lambda^2)$ are locally uniformly bounded  in $B(L^2_{\sigma}-L^2_{-\sigma})$ ($\sigma>\frac12$) for all $d\ge 1$ when $\lambda>0$, then  by the spectral assumption \eqref{eq1.4} and  the  Aronszajn-Krein formula \eqref{eq3.2}, it follows that the perturbed resolvent $R^{\pm}(\lambda^2)$ shares the same property. In particular, this implies that  $(0, \infty)$ is purely  absolutely continuous. Similarly, for the operators in Theorem \ref{thm1.2}--\ref{thm1.6}, by the spectral assumptions \eqref{eq1.13.1} and \eqref{eq1.88.0}, as well as the Aronszajn-Krein type formulae \eqref{eq4.13} and \eqref{eq5.1.3}, it follows that $(0, \infty)$ is purely  absolutely continuous (see also in Theorem \ref{thm4.2}).

Finally, we show that  the bottom of the spectrum of $H$ in \eqref{eqB1}, i.e., $\lambda=0$ is not an eigenvalue. Indeed, if $\lambda=0$ is an eigenvalue, then there exists some $0\ne u\in D(H)=H^2(R^d)$ such that
$$
Hu=-\Delta u+Au=0,
$$
since both $-\Delta$ and $A$ are nonnegative, this shows that $\|\nabla u\|_{L^2}=\sum_{\lambda_j}|(\varphi_j, u)|^2=0$. This implies that $u\equiv 0$, which is a contradiction. Thus $0$ cannot be an eigenvalue.

Summing up we have proved Lemma \ref{lmA.1}.
\end{proof}

\end{appendix}

\noindent
\section*{Acknowledgments}
S. Huang was supported by the National Natural Science Foundation of China under grants 12171178 and 12171442. H. Cheng and Q. Zheng were supported by the National Natural Science Foundation of China under grant 12171178.

\end{document}